\tikzset{fuzzy/.style={decorate, decoration={random steps,segment length=3pt,amplitude=2pt}}}
\declaretheoremstyle[spaceabove=\topsep,spacebelow=0pt,bodyfont=\normalfont]{scdef}
\declaretheoremstyle[spaceabove=\topsep,spacebelow=0pt,bodyfont=\itshape]{scthm}
\declaretheoremstyle[spaceabove=\topsep,spacebelow=0pt,headfont=\normalfont\itshape,notefont=\normalfont\itshape,notebraces={}{},headformat={\NAME\NOTE},postheadspace=1em,qed=\qedsymbol]{scprf}
\declaretheorem[style=scthm,numberwithin=section,name=Theorem,    refname={Theorem,Theorems},        Refname={Theorem,Theorems}]        {Thm}
\declaretheorem[style=scthm,sharenumber=Thm,     name=Lemma,      refname={Lemma,Lemmas},            Refname={Lemma,Lemmas}]            {Lem}
\declaretheorem[style=scthm,sharenumber=Thm,     name=Corollary,  refname={Corollary,Corollaries},   Refname={Corollary,Corollaries}]   {Cor}
\declaretheorem[style=scthm,sharenumber=Thm,     name=Proposition,refname={Proposition,Propositions},Refname={Proposition,Propositions}]{Prop}
\declaretheorem[style=scthm,sharenumber=Thm,     name=Question,   refname={Question,Questions},      Refname={Question,Questions}]      {Ques}
\declaretheorem[style=scdef,sharenumber=Thm,     name=Definition, refname={Definition,Definitions},  Refname={Definition,Definitions}]  {Def}
\declaretheorem[style=scdef,sharenumber=Thm,     name=Remark,     refname={Remark,Remarks},          Refname={Remark,Remarks}]          {Rem}
\declaretheorem[style=scthm,sharenumber=Thm,     name=Fact,       refname={Fact,Facts},              Refname={Fact,Facts}]              {Fact}
\declaretheorem[style=scprf,unnumbered,          name=Proof]{Prf}
\def\blfootnote{\gdef\@thefnmark{}\@footnotetext}
\setlist[enumerate,1]{label={(\roman*)}}
\DeclareTextFontCommand{\red}{\bfseries\color{red}}
\DeclareTextFontCommand{\green}{\bfseries\color{ForestGreen}}
\def\reallynopagebreak{\par\nopagebreak\@nobreaktrue}\makeatother
\def\svdots{\vbox{\baselineskip=4pt \lineskiplimit=0pt \kern2pt \hbox{.}\hbox{.}\hbox{.}\vspace{1pt}}}
\DeclareMathOperator{\id}{id}
\DeclareMathOperator{\tr}{tr}
\DeclareMathOperator{\SL}{SL}
\DeclareMathOperator{\PGL}{PGL}
\DeclareMathOperator{\SO}{SO}
\DeclareMathOperator{\rO}{O}
\DeclareMathOperator{\Hom}{Hom}
\DeclareMathOperator{\Span}{span}
\DeclareMathOperator{\Sym}{Sym}
\DeclareMathOperator{\diam}{diam}
\DeclareMathOperator{\dist}{dist}
\DeclareMathOperator{\Haus}{Haus}
\DeclareMathOperator{\Isom}{Isom}
\DeclareMathOperator{\PU}{PU}
\newcommand{\F}{\mathcal F}
\newcommand{\dd}{\mathrm d}
\newcommand{\RP}{\mathbb{R}\mathrm{P}}
\newcommand{\CHyp}{\mathbb{C}\mathbb{H}}
\newcommand{\bR}{\mathbb{R}}
\newcommand{\bC}{\mathbb{C}}
\newcommand{\bZ}{\mathbb{Z}}
\newcommand{\bN}{\mathbb{N}}
\newcommand{\bH}{\mathbb{H}}
\newcommand{\tred}{t_{\mathrm{red}}}
\newcommand{\tcrit}{t_{\mathrm{crit}}}
\newcommand{\bx}{\square}
\newcommand{\bxp}{\square'}
\newcommand{\bxpp}{\square''}
\newcommand{\CH}[2]{\mathrm{CH}_{#1}(#2)}
\begin{document}

\title{Anosov triangle reflection groups in \texorpdfstring{$\SL(3,\bR)$}{SL(3,R)}}

\author{Gye--Seon Lee, Jaejeong Lee \& Florian Stecker}
\date{}

\maketitle
\blfootnote{\textup{2020} \textit{Mathematics Subject Classification.} 22E40, 51F15, 57S30}
\blfootnote{\textit{Key words and phrases.} reflection groups, discrete subgroups of Lie groups, Anosov representations}

\begin{abstract}
  We identify all Anosov representations of compact hyperbolic triangle reflection groups into the higher rank Lie group $\SL(3,\bR)$. Specifically, we prove that such a representation is Anosov if and only if either it lies in the Hitchin component of the representation space, or it lies in the ``Barbot component'' and the product of the three generators of the triangle group has distinct real eigenvalues.
  Unlike representations in the Hitchin component, Anosov representations in the Barbot component have non--convex boundary maps.
\end{abstract}

\tableofcontents


\section{Introduction}

Given a finitely generated group $\Gamma$ and a Lie group $G$, it is a natural problem to find all discrete subgroups of $G$ isomorphic to $\Gamma$, or more precisely, all discrete and faithful representations of $\Gamma$ into $G$. When $\Gamma$ is a fundamental group of a manifold or orbifold $M$, this problem is closely linked to the study of geometric structures on $M$ \cite{GoldmanBook}.

If $\Gamma$ is the fundamental group of a closed surface $S$ of genus $g \geq 2$ and $G = \mathrm{PGL}(2,\mathbb{R})$, which is isomorphic to $\mathrm{Isom}(\mathbb{H}^2)$, the isometry group of the hyperbolic plane, then the discrete and faithful representations $\Gamma \rightarrow G$ are fully understood: they form a union of two connected components of the representation space $\mathrm{Hom}(\Gamma,G)$, and each  representation up to conjugation corresponds to a hyperbolic structure on $S$.
Each of these components modulo conjugation is called the \emph{Teichmüller space} of $S$.

In this paper, we are interested in the problem where $\Gamma$ is a compact hyperbolic triangle reflection group and $G = \PGL(3,\bR) \cong \SL(3,\bR)$, the group of automorphisms of the projective plane $\RP^2$. Even for such ``small'' $\Gamma$ and $G$, it is still an open problem to understand all discrete and faithful representations of $\Gamma$ into $G$. Nevertheless, our main theorem characterizes all representations that are \emph{Anosov}, a strengthening of discrete (see \autoref{thm:main}).
This raises the question:

\begin{Ques}
Let $\Gamma$ be a compact hyperbolic triangle reflection group. Is a representation $\rho \colon \Gamma \rightarrow \PGL(3,\bR)$ discrete and faithful if and only if $\rho$ lies in the closure of the space of Anosov representations in $\mathrm{Hom}(\Gamma, \PGL(3,\bR))$?
\end{Ques}

\subsection{Anosov representations}

Anosov representations are discrete representations of a word--hyperbolic group $\Gamma$ into a Lie group $G$ with good dynamical properties. They have received a lot of attention and have been actively studied in recent years; see for example \cite{Labourie,GuichardWienhard,KLP1,GGKW,BPS}. Anosov representations have two key properties that set them apart from general discrete ones. The first is the existence of {\em boundary maps} (see \autoref{def:Anosov}), which in fact characterizes Anosov representations with Zariski dense image. The second key property is {\em openness}: small deformations of Anosov representations are also Anosov. Hence if an Anosov representation is not isolated, it provides a family of new discrete representations.

Examples of Anosov representations of surface groups include \emph{Hitchin representations} in a real split simple Lie group like $\PGL(3,\bR)$, or \emph{maximal representations} in a simple Lie group of Hermitian type.
Similarly to surface group representations in $\PGL(2,\bR)$, these representations form a closed and open subset of the representation space, hence a union of connected components.
Such a component is called a {\em higher Teichm\"{u}ller space}; see \cite{GuichardWienhard2,Wienhard}.

In general, however, the set of Anosov representations is not closed in the representation space. An example is the component of surface group representations in $\mathrm{PGL}(3,\mathbb{R})$ that contains a discrete and faithful representation whose action on the projective plane fixes a point and preserves a line disjoint from it; see \cite{Barbot}. It is known to contain Anosov representations as well as non--Anosov representations. The shape of the space of Anosov representations, or even the number of its connected components, is not known in this case.

\subsection{Results}

The space of representations of a surface group into $\SL(3,\mathbb{R})$ is too high--dimensional for a classification of Anosov representations to be feasible. For instance, the space of Hitchin representations for a closed surface of genus $g \geq 2$ (modulo conjugation) has dimension $16g - 16$.
One can decrease the dimension by specializing to surface groups with more symmetries (see \autoref{rem:surface_groups}).
So we focus on the compact hyperbolic reflection group
\[\Gamma = \Gamma_{p_1,p_2,p_3} = \langle s_1,s_2,s_3 \mid s_1^2 = s_2^2 = s_3^2 = (s_2s_3)^{p_1} = (s_3s_1)^{p_2} = (s_1s_2)^{p_3} = 1 \rangle\]
where $2 \leq p_1 \leq p_2 \leq p_3 < \infty$ and $\frac{1}{p_1} + \frac{1}{p_2} + \frac{1}{p_3} < 1$. It is isomorphic to the group generated by reflections along the sides of the triangle with dihedral angles $\frac{\pi}{p_1},\frac{\pi}{p_2},\frac{\pi}{p_3}$ in the hyperbolic plane. Then the space of characters $\chi(\Gamma,\SL(3,\bR))$, which is the space of semisimple representations modulo conjugation, has dimension $0$ or $1$; see \autoref{lem:chi_coxeter} and \autoref{lem:homeoToR}.

  As in the surface group case, there is a unique \emph{Hitchin component} in $\chi(\Gamma,\SL(3,\bR))$, consisting of those representations which can be continuously deformed to a discrete and faithful representation into $\SO(2,1)$ \cite{ChoiGoldman, ALS}.
  If $p_1,p_2,p_3$ are all odd, there is also a unique component containing discrete and faithful representations whose action in the projective plane fixes a point and preserves a line disjoint from it (see \autoref{sec:reducible}).
  We call it the \emph{Barbot component} as it is analogous to the component studied in \cite{Barbot}.
  Our main theorem is
  
\begin{Thm}\label{thm:main}
  Let $\rho\colon \Gamma_{p_1,p_2,p_3} \rightarrow \mathrm{SL}(3,\mathbb{R})$ be a representation.
  Then $\rho$ is Anosov if and only if
  \begin{enumerate}
  \item either $\rho$ is in the Hitchin component,
  \item or $p_1,p_2,p_3$ are odd, $\rho$ is in the Barbot component, and $\rho(s_1 s_2 s_3)$ has distinct real eigenvalues.
  \end{enumerate}

  In case (ii), the set of Anosov characters in the Barbot component is the complement of a compact interval, as sketched in \autoref{fig:barbot_component}.
\end{Thm}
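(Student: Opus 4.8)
The plan is to reduce the statement, via part~(i) of the theorem, to a one-variable question about the element $s_1 s_2 s_3$ along the Barbot component, and then to analyze that question using the explicit model of the component.

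By part~(i) (case all $p_i$ odd), a character in the Barbot component is Anosov if and only if $\rho(s_1 s_2 s_3)\in\SL(3,\bR)$ has three distinct real eigenvalues, equivalently the discriminant of its characteristic polynomial $x^3-a x^2+b x-1$ is positive, where $a=\tr\rho(s_1 s_2 s_3)$ and $b=\tr\rho(s_1 s_2 s_3)^{-1}$ (using $\det=1$). Using \autoref{lem:homeoToR} I would parametrize the Barbot component as $t\mapsto[\rho_t]$, $t\in\bR$, normalized so that $t=0$ is the reducible representation $\rho_2\oplus\rho_1$ of \autoref{sec:reducible}. Along this curve $a(t)$ and $b(t)$ are polynomials in a coordinate on the curve — a trace, respectively a sum of principal minors, of a polynomial family of matrices — so the discriminant $\Delta(t)$ is a polynomial in $t$, and it suffices to prove that $Z:=\{t\in\bR:\Delta(t)\le 0\}$ is a nonempty compact interval; its complement is then the Anosov locus.

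Two of the needed facts are comparatively routine. First, $0\notin Z$: here $\rho_1$ is the sign character $s_i\mapsto-1$ (forced, since all $p_i$ are odd and the component contains faithful representations), so $\rho_1(s_1 s_2 s_3)=-1$, while $\rho_2(s_1 s_2 s_3)\in\GL(2,\bR)$ has determinant $(-1)^3=-1$ and hence two real eigenvalues $\mu,-\mu^{-1}$ of opposite sign, with $\mu\neq 1$ — otherwise $\rho_2(s_1 s_2 s_3)$ would be an involution, forcing $(s_1 s_2 s_3)^2=1$ in $\Gamma$ (as $\rho_2$ is faithful), which is false since $s_1 s_2 s_3$ has infinite order. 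Thus $\rho_0(s_1 s_2 s_3)$ has pairwise distinct eigenvalues $\mu,-\mu^{-1},-1$ and $\Delta(0)>0$. Second, $Z$ is bounded: as $t\to\pm\infty$ the representation leaves every compact subset of the character variety, and since $\tr\rho_t(s_i)$ and $\tr\rho_t(s_i s_j)$ take only finitely many values (the corresponding group elements having finite order) it is the matrix $\rho_t(s_1 s_2 s_3)$ that must leave every compact subset of $\GL(3,\bR)$; the explicit form of $\rho_t(s_1 s_2 s_3)$ then shows that one of its eigenvalues tends to $\pm\infty$, one to $0$, and the third stays in a compact subset of $\bR\setminus\{0\}$, so the eigenvalues are real and distinct, i.e.\ $\Delta(t)>0$, for $|t|$ large. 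Equivalently, $\Delta$ has positive leading coefficient.

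The real content is that $Z$ is nonempty and \emph{connected}. Nonemptiness is the assertion that the condition in part~(i) is not automatic on the Barbot component, and it is visible from the explicit formula for $\Delta$: there is a $t$ at which $\rho_t(s_1 s_2 s_3)$ has a non-real pair of eigenvalues. Connectedness is the crux, and I expect it to be the main obstacle: a priori $\{\Delta\le 0\}$ could be a union of several intervals, and ruling this out amounts to showing that, as $t$ runs over $\bR$, the eigenvalues of $\rho_t(s_1 s_2 s_3)$ undergo a single ``complex excursion''. The clean route is to use the explicit coordinates from \autoref{sec:reducible} to write $\Delta(t)$ as a concrete one-variable polynomial and check that it changes sign exactly twice, with $\Delta<0$ strictly between its two real roots $t_-<t_+$ and $\Delta>0$ outside $[t_-,t_+]$; tracking which two eigenvalues collide identifies $(t_-,t_+)$ as exactly the interval on which a complex conjugate pair is present, so $Z=[t_-,t_+]$, a compact interval, as depicted in \autoref{fig:barbot_component}. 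Alternatively one could seek a conceptual argument — for instance that $\tr\rho_t(s_1 s_2 s_3)$ is a global coordinate on the Barbot component and that the locus where $\rho_t(s_1 s_2 s_3)$ fails to have distinct real eigenvalues is convex in it — but either way, controlling the global shape of $\Delta$ (as opposed to its behaviour near $t=0$ and near the ends) is where the work lies.
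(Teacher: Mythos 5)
Your proposal does not prove the theorem; it proves only the final addendum (that the Anosov locus in the Barbot component is the complement of a compact interval), and it does so by taking part~(i) of the theorem as given. But part~(i) and part~(ii) are the content of the theorem, and the vast majority of the paper. Proving them requires: (a) showing that an Anosov $\rho$ must be of type $(1,1,1)$ or type $(\frac{p_1-1}{2},\frac{p_2-1}{2},\frac{p_3-1}{2})$ with $t_\rho>0$ (this is \autoref{prop:non_anosov}, an argument using the topology of the limit curve in $\RP^2$ and \autoref{lem:nonintersecting_symmetric_curves}); (b) invoking Choi--Goldman and Labourie for the Hitchin component; (c) noting that Anosovness forces distinct real eigenvalues for infinite-order elements, giving the necessity of the eigenvalue condition in the Barbot case; and, above all, (d) proving \emph{sufficiency} in the Barbot case, which is the entire content of Sections~\ref{sec:limit_curve}--\ref{sec:transversality}: constructing a continuous equivariant map into $\RP^2$ via nested boxes, extending it to the flag manifold by duality, and proving transversality. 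None of this appears in your proposal, so as a proof of \autoref{thm:main} it has a fundamental gap.

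Even restricted to the piece you do address, your argument is a sketch that stops short of its own crux. You correctly identify the key difficulty --- showing that the discriminant locus is \emph{connected} in the parameter --- and correctly observe that your routine claims (nonvanishing at the reducible point, positivity near the ends) do not settle it. The paper resolves exactly this point in \autoref{lem:coxeter_eigenvalues}, by passing to coordinates $u=\frac{x+y}{2}$, $v=\frac{x-y}{2}$ (with $x=\tr\rho(s_1s_2s_3)$, $y=\tr\rho(s_3s_2s_1)$), and comparing the hyperbola $v^2=f(u)$ parametrizing the Barbot component against the discriminant curve $v^2=g_\pm(u)$. A derivative comparison shows the relevant branch of $v^2=f(u)$ crosses $v^2=g_+(u)$ in exactly one pair $(u_{\mathrm{crit}},\pm v_{\mathrm{crit}})$, which is what pins down $\tcrit$ and gives the compact interval $[\tcrit^{-1},\tcrit]$. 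Your ``clean route'' of writing $\Delta(t)$ out explicitly and counting sign changes would in principle work, but you do not carry it out, and it is precisely the step you flag as ``where the work lies.'' Finally, a small point of hygiene: your parametrization placing a reducible point at $t=0$ with $t\in\bR$ does not match the paper's $t_\rho\in(0,\infty)$ for the Barbot component, where the two reducible characters sit at $t_\rho=\tred^{\pm1}$ rather than at a single interior point; this is a harmless reparametrization, but it means your ``$t\to\pm\infty$'' analysis needs to be rephrased as $t_\rho\to 0^+$ and $t_\rho\to\infty$, and the reducible point you test is only one of two.
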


It is known that every representation in the Hitchin component is Anosov and has a convex boundary map \cites{ChoiGoldman,Labourie}.
In contrast, the boundary maps of Anosov representations in the Barbot component are not convex.
Our main contribution in \autoref{thm:main} is to construct this non--convex boundary map assuming that $\rho(s_1s_2s_3)$ has distinct eigenvalues, from which we can deduce the Anosov property.
This construction uses the geometry of conics in $\RP^2$ and ideas inspired by the works of Schwartz \cite{Schwartz} and Sullivan \cite[Section 9]{Sullivan}.
To complete the proof of \autoref{thm:main}, we use topological constraints imposed by the boundary map to show that there are no Anosov examples in the other components.

  As far as the authors know, \autoref{thm:main} is the first instance where a \emph{non--closed} Anosov space in $\Hom(\Gamma, G)$ is completely identified for a non--elementary hyperbolic group $\Gamma$ and a \emph{higher rank} simple Lie group $G$.

\begin{center}
  \begin{tikzpicture}
    \begin{scope}[yshift=-3.2cm]
      \begin{scope}[xshift=1.35cm]
        \node at (0,0) {\includegraphics[width=4.5cm,bb=0cm 2cm 33.5cm 22cm,clip]{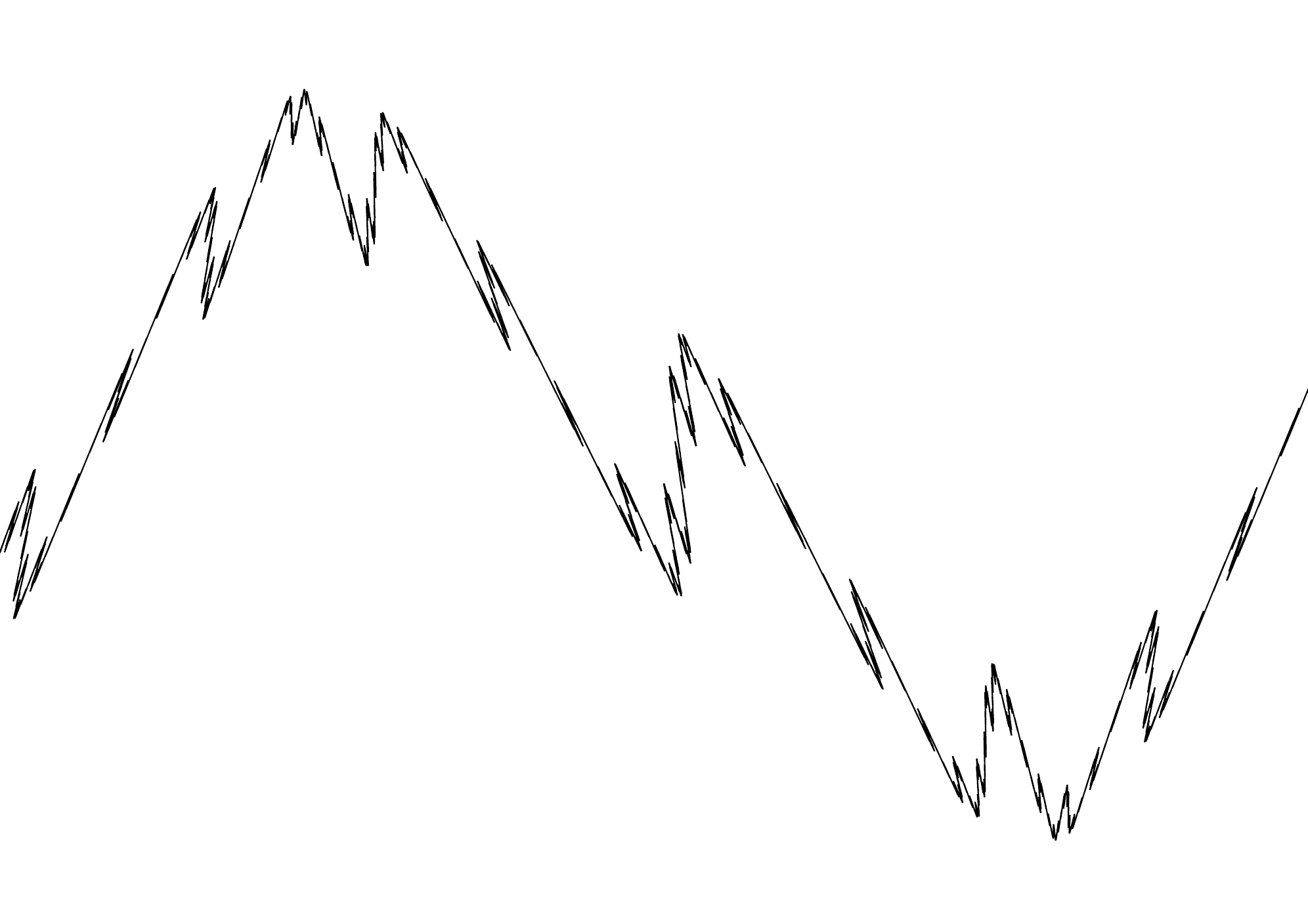}};
        \draw (-2.25,-1.5) -- (-2.25,1.5) -- (2.25,1.5) -- (2.25,-1.5) -- cycle;
        \node[orange] at (2,1.2) {A};
      \end{scope}
      \begin{scope}[xshift=6cm]
        \node at (0,0) {\includegraphics[width=4.5cm,bb=0cm 2cm 33.5cm 22cm,clip]{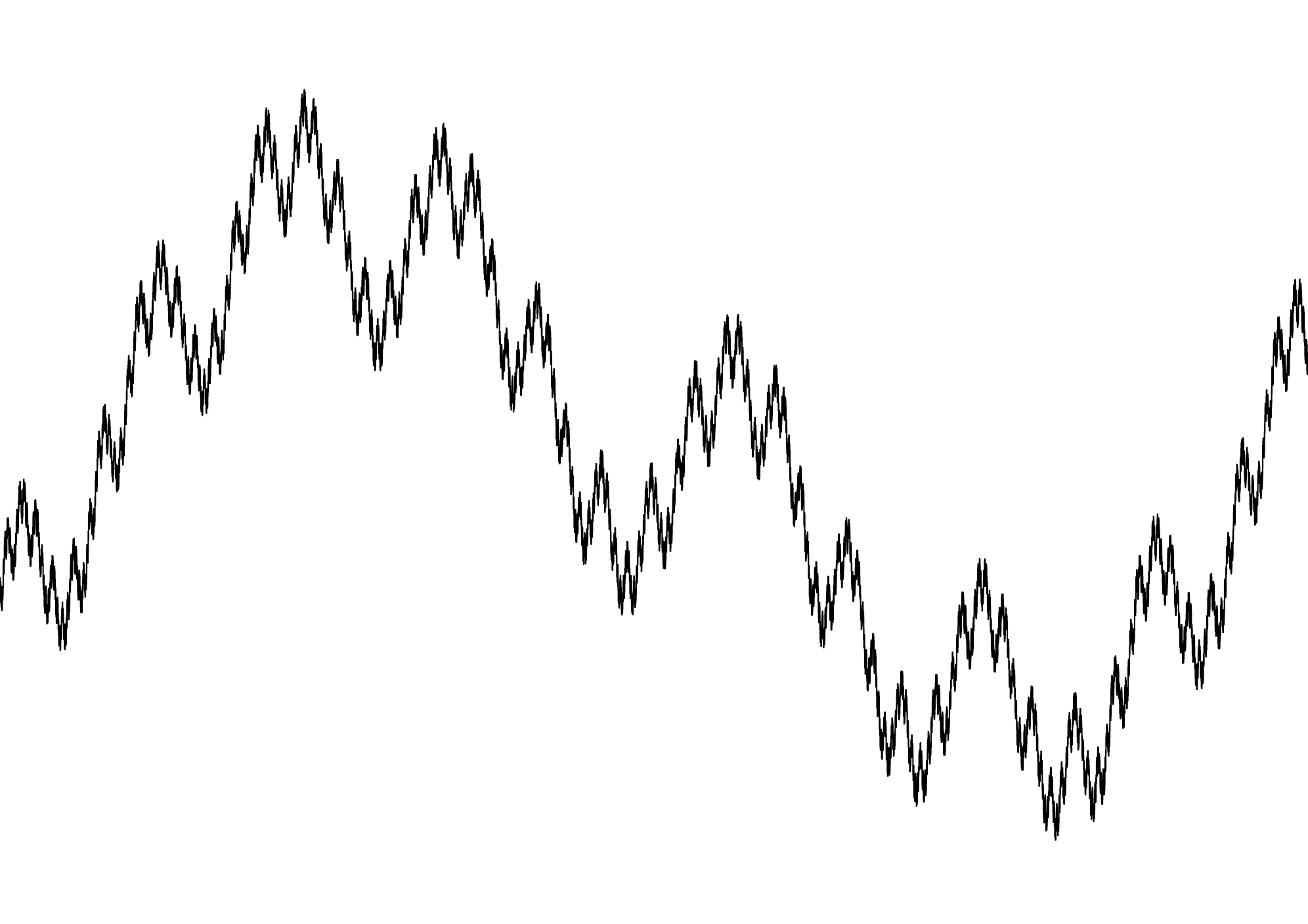}};
        \draw (-2.25,-1.5) -- (-2.25,1.5) -- (2.25,1.5) -- (2.25,-1.5) -- cycle;
        \node[purple] at (2,1.2) {B};
      \end{scope}
      \begin{scope}[xshift=10.65cm]
        \node at (0,0) {\includegraphics[width=4.5cm,bb=0cm 2cm 33.5cm 22cm,clip]{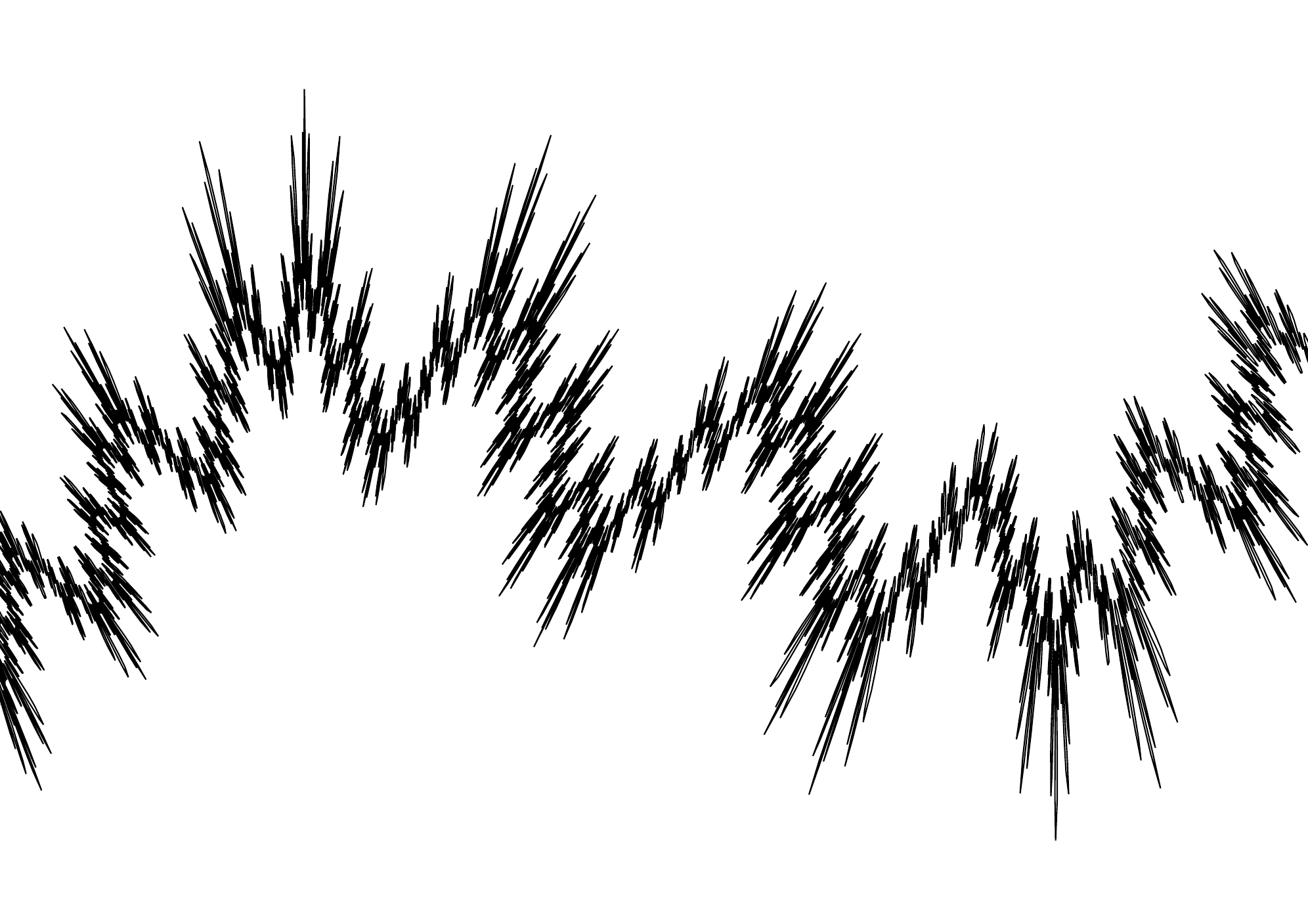}};
        \draw (-2.25,-1.5) -- (-2.25,1.5) -- (2.25,1.5) -- (2.25,-1.5) -- cycle;
        \node[blue] at (2,1.2) {C};
      \end{scope}
    \end{scope}

    \begin{scope}[yshift=-0.5cm]
      \draw[gray,thick] (0,0) -- (12,0);
      \draw[(->,very thick,ForestGreen] (8,0) -- (12,0);
      \draw[(->,very thick,ForestGreen] (4,0) -- (0,0);
      \fill[ForestGreen] (2,0) circle (0.08);
      \fill[ForestGreen] (10,0) circle (0.08);
      
      \draw[->,thick] (1,-0.7) node[anchor=north] {\scriptsize reducible} -- (1.9,-0.1);
      \draw[->,thick] (11,-0.7) node[anchor=north] {\scriptsize reducible} -- (10.1,-0.1);
      
      \node[anchor=north] at (6,-0.4) {\scriptsize discrete faithful};
      \node[anchor=north] at (6,-0.7) {\scriptsize but not Anosov};
      \draw[->,thick] (5,-0.4) -- (4.1,-0.12);
      \draw[->,thick] (7,-0.4) -- (7.9,-0.12);
      
      \node[ForestGreen] at (2,1.0) {\scriptsize $\rho$ Anosov};
      \node[ForestGreen] at (2,0.7) {\scriptsize $\rho(s_1s_2s_3)$ has};
      \node[ForestGreen] at (2,0.4) {\scriptsize distinct real eigenvalues};
      \node[ForestGreen] at (10,1.0) {\scriptsize $\rho$ Anosov};
      \node[ForestGreen] at (10,0.7) {\scriptsize $\rho(s_1s_2s_3)$ has};
      \node[ForestGreen] at (10,0.4) {\scriptsize distinct real eigenvalues};
            
      \node[gray] at (6,1.0) {\scriptsize $\rho$ not Anosov};
      \node[gray] at (6,0.7) {\scriptsize $\rho(s_1s_2s_3)$ has};
      \node[gray] at (6,0.4) {\scriptsize non--real eigenvalues};
      
      \node[orange] at (0.5,-0.35) {A};
      \node[purple] at (2,-0.35) {B};
      \node[blue] at (3.95,-0.35) {C};      
    \end{scope}
  \end{tikzpicture}
  
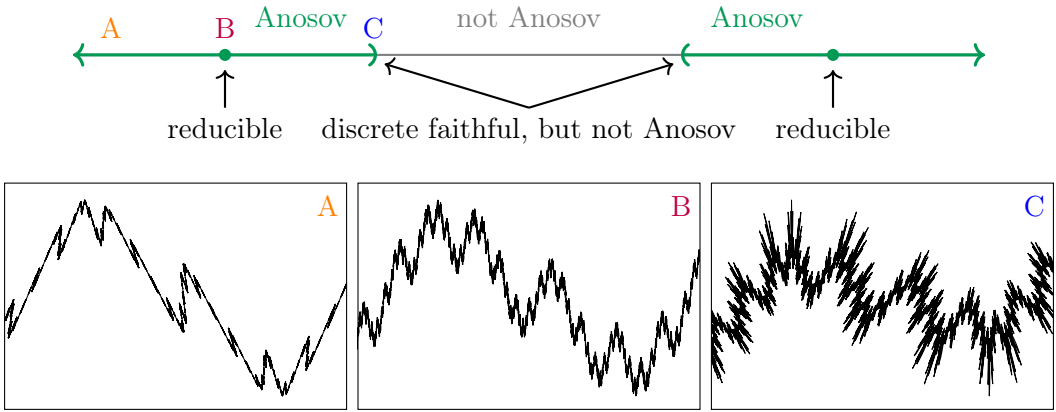
\captionof{figure}{(Top) A sketch of the Barbot component showing the two open intervals of Anosov representations, each of them containing a single reducible representation. (Bottom) Images of boundary maps into $\RP^2$ for three different representations of the $(3,3,5)$ triangle group. In case (B) the reducible representation shown is not semisimple, but line--irreducible (see \autoref{def:irreducible}).}
  \label{fig:barbot_component}
\end{center}

It is known that the representations on the boundary of the Anosov set are still discrete and faithful; see \autoref{rem:discrete_faithful_closed}. We show in \autoref{thm:barbot_transverse} that they also admit continuous injective boundary maps.
However, they fail to be Anosov as their boundary maps are not \emph{transverse}; see \autoref{sec:transversality}.

An interesting consequence of \autoref{thm:main}, which demonstrates the explicitness of its criterion, is that the Anosov property can be checked with a few equalities and inequalities involving only the traces of group elements up to word length $3$.
Writing this out, we obtain the following (see also \autoref{lem:coxeter_eigenvalues} and \autoref{fig:goldman}).

\begin{Cor}\label{cor:traces}
  Let $\rho \colon \Gamma_{p_1,p_2,p_3} \to \SL(3,\bR)$ be a representation. Assume $p_1,p_2,p_3 \geq 3$ and define $c_k = 2\cos\frac{\pi}{p_k}$ for $k \in \{1,2,3\}$, and
  \[t_1 = \tr \rho(s_2s_3), \ \  t_2 = \tr \rho(s_3s_1), \ \  t_3 = \tr \rho(s_1s_2), \ \  x = \tr \rho(s_1s_2s_3), \ \  y = \tr \rho(s_3s_2s_1).\]
  Then $\rho$ is Anosov if and only if one of the following holds:
  \begin{enumerate}
  \item $t_k = c_k^2 - 1$ for all $k$ and $x + t_1 + t_2 + t_3 < 0$, or
  \item $p_1,p_2,p_3$ are odd, $t_k = 1 - c_k$ for all $k$, and $x^2y^2 - 4x^3 - 4y^3 + 18xy - 27 > 0$.
  \end{enumerate}
\end{Cor}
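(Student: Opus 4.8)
\begin{Prf}
  The plan is to rewrite, case by case, the geometric criteria of \autoref{thm:main} --- membership in the Hitchin or the Barbot component, and the distinctness of the real eigenvalues of $\rho(s_1 s_2 s_3)$ --- as the trace (in)equalities in (i) and (ii), using throughout that $p_1\ge 3$ makes $c_1,c_2,c_3\in[1,2)$. First I would collect the relevant trace identities. Each $\rho(s_i)$ is an involution of determinant $1$, hence $\Id$ or conjugate to $\diag(1,-1,-1)$; in the latter case $\tr\rho(s_i)=-1$, and writing $\rho(s_i)=-\Id+\alpha_i\otimes b_i$ with $b_i(\alpha_i)=2$ and labelling $\{i,j,l\}=\{1,2,3\}$ so that $(s_is_j)^{p_l}=1$, one computes
  \[
    t_l=\tr\rho(s_is_j)=b_i(\alpha_j)\,b_j(\alpha_i)-1,\qquad
    x+t_1+t_2+t_3=b_1(\alpha_2)\,b_2(\alpha_3)\,b_3(\alpha_1),
  \]
  the analogous formula $y+t_1+t_2+t_3=b_1(\alpha_3)\,b_3(\alpha_2)\,b_2(\alpha_1)$, and hence the identity $(x+t_1+t_2+t_3)(y+t_1+t_2+t_3)=(t_1+1)(t_2+1)(t_3+1)$. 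Since $(s_js_k)^{p_l}=1$ forces $\rho(s_js_k)$ to have order dividing $p_l$, each $t_l$ takes only the finitely many values $1+2\cos\tfrac{2\pi a}{p_l}$, $a\in\bZ$; so $t_l$ is locally constant on the character variety, which by Lemmas~\ref{lem:chi_coxeter} and \ref{lem:homeoToR} is, on each stratum where $t_1,t_2,t_3$ are fixed, determined by $x$ alone (with $y$ recovered from the identity above --- legitimate on the strata appearing below, where $\prod_l(t_l+1)\ne 0$).

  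Next I would identify the Hitchin component. Since $t_l$ is locally constant and the component connected, $t_l$ there equals its value at a representation into $\SO(2,1)$, namely $1+2\cos\tfrac{2\pi}{p_l}=c_l^2-1$. On the stratum $\{t_l=c_l^2-1\ \forall l\}$ one has $b_i(\alpha_j)b_j(\alpha_i)=c_l^2\ne 0$, so the triple product $x+t_1+t_2+t_3$ is continuous and nowhere zero there; as it equals $-c_1c_2c_3<0$ at the $\SO(2,1)$ point, it stays negative on the Hitchin component. Conversely $\{t_l=c_l^2-1\ \forall l,\ x+t_1+t_2+t_3<0\}$ is connected (an interval, in the coordinate $x$) and contains the $\SO(2,1)$ characters, so --- the Hitchin component consisting of irreducible representations --- every representation whose character lies there belongs to the Hitchin component. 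Thus membership in the Hitchin component is equivalent to (i).

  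I would then handle the Barbot component and the eigenvalue condition. When all $p_l$ are odd, $t_l$ is again constant on the Barbot component, equal to its value at the reducible representations defining it, which act on their $2$-dimensional invariant summand through the Fuchsian boundary action on $\partial\bH^2=\RP^1$; there $t_l=1+2\cos\tfrac{2\pi a}{p_l}=1-c_l$ with $a=\tfrac{p_l-1}{2}$. Since $1-c_l$ can be the trace of an element of $\SL(3,\bR)$ of order dividing $p_l$ only when $p_l$ is odd, the equalities $t_l=1-c_l$ themselves force all $p_l$ odd. For the eigenvalues, $\det\rho(s_1s_2s_3)=1$ and $\tr\rho(s_3s_2s_1)=y$ (the trace of $\rho(s_1s_2s_3)^{-1}$) give $\rho(s_1s_2s_3)$ the characteristic polynomial $\lambda^3-x\lambda^2+y\lambda-1$, whose discriminant --- substitute $(b,c,d)=(-x,y,-1)$ into $18bcd-4b^3d+b^2c^2-4c^3-27d^2$ --- is $x^2y^2-4x^3-4y^3+18xy-27$; hence $\rho(s_1s_2s_3)$ has three distinct real eigenvalues exactly when this quartic is positive. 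By \autoref{thm:main}(i) the Anosov representations inside the Barbot component are precisely those for which $\rho(s_1s_2s_3)$ has distinct real eigenvalues. Combining this with \autoref{thm:main} --- Anosov iff Hitchin, or Barbot with distinct real eigenvalues, in the all-odd case, and Anosov iff Hitchin otherwise --- and observing that degenerate representations (some $\rho(s_i)=\Id$ or $\rho(s_js_k)=\Id$) have elementary image, are not Anosov, and fail both (i) and (ii) because then some $t_l\in\{-1,3\}$, a value equal to neither $c_l^2-1$ nor $1-c_l$ for $p_l\ge 3$, one obtains exactly the asserted dichotomy.

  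The main obstacle I anticipate is not these computations but the bookkeeping of which \emph{connected piece} of each $t$-stratum is the Hitchin, respectively Barbot, component --- in particular the fact that (ii) needs no sign condition on $x+t_1+t_2+t_3$ even though the stratum $\{t_l=1-c_l\ \forall l\}$ is disconnected (the triple product $x+t_1+t_2+t_3$ takes either sign there), only one piece being the Barbot component. For (ii) to be correct one must know, from the character-variety analysis underlying \autoref{thm:main}, that on the other piece $\rho(s_1s_2s_3)$ never has three distinct real eigenvalues, so that the quartic inequality excludes that piece on its own, exactly as the explicit sign condition excludes the analogous piece in (i); this, and the identification of the Anosov locus in the Barbot component with the complement of a compact interval, is what \autoref{thm:main} and \autoref{fig:barbot_component} provide.
\end{Prf}
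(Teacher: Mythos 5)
Your proof is correct and follows the same route as the paper's: translate ``type $(q_1,q_2,q_3)$'' and ``sign of $t_\rho$'' into trace conditions via \eqref{eq:trrot}--\eqref{eq:trcox_inverse_general} and \eqref{eq:trho_definition}, rule out non-Coxeter $\rho$ because then some $t_l\in\{-1,3\}$, and translate ``distinct real eigenvalues'' into \eqref{eq:discriminant}. One small correction to the last paragraph: the fact you correctly identify as needed --- that on the piece with $t_\rho<0$ of the stratum $\{t_l=1-c_l\}$ the discriminant is never positive, so (ii) can omit the sign condition --- is not something \autoref{thm:main} or \autoref{fig:barbot_component} actually asserts (those concern Anosovness, not the eigenvalues on the non-Barbot piece); the right reference is \autoref{lem:coxeter_eigenvalues}, whose first bullet says $t_\rho<0$ forces two non-real eigenvalues.
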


\begin{Rem}\label{rem:surface_groups}
Our result also gives some information about Anosov representations of surface groups: when $p_1$, $p_2$ and $p_3$ are odd, the fundamental group $\pi_1(S_g)$ of the orientable surface $S_g$ of genus $g$ is a subgroup of $\Gamma_{p_1,p_2,p_3}$ of finite index if and only if
$$g = \frac{k}{2} \left( 1 - \frac{1}{p_1} - \frac{1}{p_2} - \frac{1}{p_3}  \right) \mathrm{lcm}(p_1,p_2,p_3) + 1 \quad \textrm{for any } k \in \mathbb{N} $$
where $\mathrm{lcm}(p_1,p_2,p_3)$ denotes the least common multiple of $p_1$, $p_2$ and $p_3$; see \cite{EEK}. In that case, the representations of $\Gamma_{p_1,p_2,p_3}$ provide families of representations of the surface group $\pi_1(S_g)$, and among such surface group representations our main theorem characterizes the Anosov ones.
\end{Rem}

\begin{Rem}
  Instead of compact hyperbolic triangle reflection groups, one may consider the ``ideal hyperbolic triangle'' reflection group $W_3 = \mathbb{Z}/2 * \mathbb{Z}/2 * \mathbb{Z}/2$ along with certain unipotent conditions in order to obtain representations that preserve a circular limit curve.
    Such representations are not Anosov but may be regarded as ``relatively Anosov'' in a suitable sense \cite{KLRelativeAnosov,Zhu,Weisman,ZhuZimmer1}.

    When the target Lie group $G$ is a non--compact real form of $\SL(3,\bC)$ the corresponding (relative) character space is also $1$--dimensional and similar results have been obtained.
    More specifically, when $G = \Isom(\CHyp^2)\cong\PU(2,1)$, the isometry group of the {\em complex} hyperbolic plane, Goldman and Parker \cite{GoldmanParker} considered the representations $W_3\to G$ that map the standard generators of $W_3$ to distinct, order two, complex reflections and satisfy the condition that any product of two distinct generators is parabolic.
    Among those representations, they conjectured exactly which ones are discrete and faithful.
    The conjecture was proved by Schwartz in \cite{Schwartz_Annals,Schwartz_GT}.
    Analogously, when $G=\SL(3,\bR)$, Kim and Lee \cite{KimLee} identified representations $W_3\to G$ with an invariant circular limit curve in the flag manifold, among the representations that map the standard generators of $W_3$ to distinct involutions and satisfy the condition that any product of two distinct generators is ``quasi--unipotent''.
\end{Rem}

\begin{Rem}
  Let $\Gamma_{N,\infty,\infty}$ be the reflection group obtained from a non--compact hyperbolic triangle with one vertex of angle $\pi/N$ and two ideal vertices. Recently, Filip and Fougeron \cite{FilipFougeron} constructed a relative Anosov representation $\rho_N : \Gamma_{N,\infty,\infty} \rightarrow \mathrm{PSp}(4,\mathbb{R})$. They introduced ``cones'' in $\mathbb{R}^4$ on which the group $\Gamma_{N,\infty,\infty}$ plays ping--pong, and related these cones with ``crooked surfaces'' to produce a non--empty domain of discontinuity in the Lagrangian Grassmannian $\mathrm{LGr}(\mathbb{R}^4)$.
\end{Rem}

\subsection{Overview}

In \autoref{sec:prelims} we parametrize the space of characters, and review some properties of triangle reflection groups and Anosov representations.
We also give a proof of \autoref{cor:traces} in \autoref{sec:reducible}, assuming \autoref{thm:main}.
Then, in \autoref{sec:non_anosov}, we show that only the Hitchin and Barbot components can contain Anosov representations.

The remainder of the paper is devoted to proving that, if $\rho$ is in the Barbot component and $\rho(s_1s_2s_3)$ has distinct real eigenvalues, then $\rho$ is Anosov.
We do this by approximating its boundary map with a collection of ``boxes'' in $\RP^2$, similarly to the approach in \cite{Schwartz}.
First, we show in \autoref{sec:limit_curve} that if a subset of $\RP^2$ is mapped into itself by certain elements of $\Gamma$, iterating this makes the resulting nested sets converge to a continuous boundary map.
We then construct boxes which have this property in \autoref{sec:nested_boxes}, using the eigenvectors of $\rho(s_1s_2s_3)$.
This yields a continuous boundary map into $\RP^2$.
Note that the arguments in \autoref{sec:limit_curve} are more general than those in other sections.
That is, they might also apply to other cocompact discrete subgroups of the isometry group of the hyperbolic plane, not only to hyperbolic triangle reflection groups.
In \autoref{sec:duality} we use duality to extend the boundary map to a map into the flag manifold.
Finally, we show in \autoref{sec:transversality} that the resulting map is transverse, and therefore $\rho$ is Anosov.
Combining everything, we prove \autoref{thm:main} at the end of \autoref{sec:transversality}.

\subsection{Acknowledgements}

We are thankful for helpful conversations with Jean--Philippe Burelle, Jeffrey Danciger, and Richard Evan Schwartz.
We also thank the referees for many valuable comments which improved the paper significantly.

G.-S.~Lee was supported by the European Research Council under ERC--Consolidator Grant 614733 and by the National Research Foundation of Korea (NRF) grant funded by the Korean government (MSIT) (No 2020R1C1C1A01013667).
J.~Lee is supported by the grants NRF--2019R1F1A1047703 and NRF--2020R1C1C1A01013667.
F.~Stecker received support from the Klaus Tschira Foundation, the RTG 2229 grant of the German Research Foundation and ERC grants 614733 and 715982.

\section{Triangle reflection groups in \texorpdfstring{$\SL(3,\bR)$}{SL(3,R)}}\label{sec:prelims}

\subsection{Parametrizing Coxeter representations}\label{sec:Coxeter_representations}

Our goal is to find all Anosov representations of the compact hyperbolic triangle reflection group
\[\Gamma = \Gamma_{p_1,p_2,p_3} = \langle s_1,s_2,s_3 \mid s_1^2 = s_2^2 = s_3^2 = (s_2s_3)^{p_1} = (s_3s_1)^{p_2} = (s_1s_2)^{p_3} = 1 \rangle\]
into $\SL(3,\bR)$, where $2 \leq p_1,p_2,p_3 < \infty$ and $\frac{1}{p_1} + \frac{1}{p_2} + \frac{1}{p_3} < 1$.
We call the product $s_1s_2s_3$ the \emph{Coxeter element}.

In this section, we shall parametrize those representations $\rho \colon \Gamma \to \SL(3,\bR)$ which send the generators $s_1,s_2,s_3$ to pairwise distinct non--trivial involutions in $\SL(3,\bR)$.
We call such a representation a \emph{Coxeter representation}, and denote the space of them by $\Hom^{\mathrm{Cox}}(\Gamma,\SL(3,\bR))$.
It is easy to see that there are only finitely many conjugacy classes of non--Coxeter representations, and that their images are always trivial, $\bZ/2\bZ$, or a finite dihedral group.
$\Hom^{\mathrm{Cox}}(\Gamma,\SL(3,\bR))$ is embedded into $\SL(3,\bR)^3$ as the images of the generators $s_1,s_2,s_3$ and inherits its topology from this embedding.

The \emph{space of Coxeter characters} is
\[\chi^{\mathrm{Cox}}(\Gamma, \SL(3,\bR)) = \Haus(\Hom^{\mathrm{Cox}}(\Gamma, \SL(3,\bR))/\SL(3,\bR))\]
where $\SL(3,\bR)$ acts on the space of representations by conjugation, and $\Haus(X)$ is the \emph{Hausdorff quotient} or \emph{Hausdorffification} of a topological space $X$. That is the quotient of $X$ by the equivalence relation $\sim$ defined by
\[x \sim y \Longleftrightarrow \text{$x \approx y$ for every equivalence relation $\approx$ such that $X / \mathord{\approx}$ is Hausdorff.}\]
In particular, two points $x,y \in X$ with intersecting closures $\overline{\{x\}}$ and $\overline{\{y\}}$ represent the same element of $\Haus(X)$.
So two representations have the same character if their conjugacy classes have intersecting closures. The space of semisimple representations modulo conjugation is Hausdorff and it may be identified with the Hausdorff quotient of the representation space; see \cite{Luna1,Luna2,RichardsonSlodowy}.

An involution $\sigma$ in $\SL(3,\bR)$ can be written as
\[ \sigma = b \otimes \alpha - 1,  \quad \textrm{i.e.} \quad \sigma(v) = \alpha(v)b - v,\quad \forall v\in \bR^3,\]
where $\alpha$ is a linear functional and $b$ is a vector of $\bR^3$ such that $\alpha(b)=2$. It uniquely determines the pair $(\alpha, b)$ up to the action of $\bR^*$ by $\lambda \cdot (\alpha, b) = (\lambda\alpha,\lambda^{-1}b)$.

\begin{Lem}[{\cite[Chapter III]{GoldmanThesis}}]\label{lem:two_involutions}
Let $\sigma_1 = b_1 \otimes \alpha_1 - 1$, $\sigma_2 = b_2 \otimes \alpha_2 - 1$ be two distinct involutions in $\SL(3,\bR)$ and $p \geq 2$ an integer. Then $(\sigma_1\sigma_2)^p = 1$ if and only if
\begin{itemize}
\item $\alpha_1(b_2)\,\alpha_2(b_1) = 4 \cos^2\big(\tfrac{q}{p}\pi\big)$, where $1 \leq q \leq \tfrac{p}{2}$, and
\item $\alpha_1(b_2)$ and $\alpha_2(b_1)$ are either both zero or both non--zero.
\end{itemize}
\end{Lem}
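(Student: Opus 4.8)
The plan is to reduce the problem to a $2$-dimensional computation in the plane $\Span(b_1,b_2)$, since both $\sigma_1$ and $\sigma_2$ act trivially on a complementary subspace that plays no role. Concretely, write $\sigma_i = b_i \otimes \alpha_i - 1$; each $\sigma_i$ fixes the hyperplane $\ker\alpha_i$ pointwise up to sign — more precisely, $\sigma_i$ acts as $-1$ on $\ker\alpha_i$ and as $+1$ on $\bR b_i$. The product $\sigma_1\sigma_2$ therefore acts as the identity on $\ker\alpha_1 \cap \ker\alpha_2$, which is at least $1$-dimensional in $\bR^3$. So after choosing a basis adapted to the decomposition $\bR^3 = \Span(b_1,b_2) \oplus (\ker\alpha_1\cap\ker\alpha_2)$ (valid when $b_1,b_2$ are independent; the degenerate cases where $b_1 \parallel b_2$ or the $\alpha_i$ are proportional must be handled separately, but there $\sigma_1=\sigma_2$ or the product is easily seen to have infinite order unless $\alpha_1(b_2)=\alpha_2(b_1)=0$), the matrix of $\sigma_1\sigma_2$ is block-diagonal with a $1\times 1$ identity block and a $2\times 2$ block $A$ acting on $\Span(b_1,b_2)$.

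Next I would compute the $2\times 2$ block explicitly. In the (non-orthonormal) basis $\{b_1,b_2\}$ of $V=\Span(b_1,b_2)$, set $\mu = \alpha_1(b_2)$ and $\nu = \alpha_2(b_1)$, and recall $\alpha_i(b_i)=2$. One finds $\sigma_2(b_1) = \nu b_2 - b_1$, $\sigma_2(b_2) = 2b_2 - b_2 = b_2$, and similarly $\sigma_1(b_1) = b_1$, $\sigma_1(b_2) = \mu b_1 - b_2$. Composing, $\sigma_1\sigma_2$ sends $b_1 \mapsto \sigma_1(\nu b_2 - b_1) = \nu(\mu b_1 - b_2) - b_1 = (\mu\nu - 1)b_1 - \nu b_2$ and $b_2 \mapsto \sigma_1(b_2) = \mu b_1 - b_2$, so
\[
A = \begin{pmatrix} \mu\nu - 1 & \mu \\ -\nu & -1 \end{pmatrix}, \qquad \det A = -(\mu\nu-1) + \mu\nu = 1, \qquad \tr A = \mu\nu - 2.
\]
Since $\det A = 1$, the element $\sigma_1\sigma_2$ has order $p$ (and is not the identity) precisely when $A$ is conjugate to a rotation by $\tfrac{2\pi q}{p}$ for some $q$ coprime to... — more carefully, when $A$ is an elliptic element of $\SL(2,\bR)$ of order exactly $p$, which happens iff $\tr A = 2\cos\tfrac{2\pi q}{p}$ for some integer $q$ with $1 \le q \le p/2$; using $\tr A = \mu\nu - 2 = 2\cos\tfrac{2\pi q}{p}$ and the identity $2 + 2\cos 2\phi = 4\cos^2\phi$ gives $\mu\nu = 2 + 2\cos\tfrac{2\pi q}{p} = 4\cos^2\tfrac{q\pi}{p}$, which is the first bullet. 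I should be slightly careful: the condition $(\sigma_1\sigma_2)^p = 1$ allows $\sigma_1\sigma_2$ to have order a proper divisor of $p$, but writing the trace as $2\cos\tfrac{2\pi q}{p}$ with $1\le q\le p/2$ (not requiring $\gcd(q,p)=1$) captures exactly all these cases, including $q = p/2$ when $p$ is even, where $\sigma_1\sigma_2 = -1|_V$.

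**The second bullet** — that $\mu$ and $\nu$ are both zero or both nonzero — comes from the degenerate case analysis: if exactly one of $\mu,\nu$ vanishes, say $\mu = 0 \ne \nu$, then $A = \begin{pmatrix} -1 & 0 \\ -\nu & -1\end{pmatrix}$ is $-1$ times a unipotent, hence has infinite order, so $(\sigma_1\sigma_2)^p \ne 1$ for every $p$; thus this case is excluded, matching the statement. Conversely when $\mu = \nu = 0$ we get $A = -I$, so $(\sigma_1\sigma_2)^2 = 1$, consistent with $\mu\nu = 0 = 4\cos^2\tfrac{q\pi}{p}$ forcing $q/p = 1/2$. Finally I would note that the whole argument is invariant under the $\bR^*$-rescaling $(\alpha_i,b_i)\mapsto(\lambda_i\alpha_i,\lambda_i^{-1}b_i)$, since $\mu\nu = \alpha_1(b_2)\alpha_2(b_1)$ is, so the condition is well-posed on the involutions themselves. \textbf{The main obstacle} I anticipate is not the linear algebra — which is routine once the right basis is fixed — but rather bookkeeping the degenerate configurations cleanly: $b_1,b_2$ dependent, $\alpha_1,\alpha_2$ dependent, and the interplay with $\sigma_1\ne\sigma_2$, making sure each such case is either genuinely impossible or falls under the stated dichotomy, and that the range $1\le q\le p/2$ is exactly the right normalization for $\cos^2$ to be well-defined and to hit every root of unity order dividing $p$.
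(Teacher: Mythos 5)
Your approach is correct in outline but genuinely different from the paper's. You reduce to a $2\times 2$ matrix $A$ on the $\sigma_i$-invariant plane $\Span(b_1,b_2)$ and appeal to the elliptic/parabolic classification of $\SL(2,\bR)$, whereas the paper stays in $3\times 3$: it computes $\tr\sigma_1\sigma_2 = \alpha_1(b_2)\alpha_2(b_1)-1$ directly, uses that $1$ is automatically an eigenvalue (from $\ker\alpha_1\cap\ker\alpha_2$), and deduces from $(\sigma_1\sigma_2)^p=1$ that $\sigma_1\sigma_2$ is diagonalizable with eigenvalues $1,e^{\pm 2\pi iq/p}$. Your $\tr A = \mu\nu - 2$ is exactly $\tr\sigma_1\sigma_2 - 1$, so the two computations match; your version buys a conceptual anchor in the $\SL(2,\bR)$ classification, at the cost of more explicit degenerate-case bookkeeping, which the paper's route mostly sidesteps (it needs only the observation that $b_1\otimes\alpha_2$ and $b_2\otimes\alpha_1$ are linearly independent when $\sigma_1\neq\sigma_2$).

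There is one concrete inaccuracy to repair: your parenthetical assertion that the direct sum $\bR^3 = \Span(b_1,b_2)\oplus(\ker\alpha_1\cap\ker\alpha_2)$ is ``valid when $b_1,b_2$ are independent'' is not quite right. Even with both $b_1,b_2$ and $\alpha_1,\alpha_2$ independent, the $1$-dimensional common kernel can sit inside $\Span(b_1,b_2)$; this happens exactly when $4-\mu\nu = 0$, i.e., $\mu\nu=4$, in which case $\sigma_1\sigma_2$ is block upper-triangular rather than block-diagonal. You must either (a) handle $\mu\nu=4$ separately — there $\tr A = 2$ so both eigenvalues of $A$ are $1$, yet $A=I$ would force $\mu=\nu=0$ contradicting $\mu\nu=4$, so $A$ is a nontrivial unipotent and $(\sigma_1\sigma_2)^p\neq 1$, consistent with $4\cos^2(q\pi/p)=4$ being impossible for $1\le q\le p/2$ — or (b) avoid the splitting altogether: $\Span(b_1,b_2)$ is invariant and $\sigma_1\sigma_2$ acts trivially on the quotient, so $\sigma_1\sigma_2\sim\begin{pmatrix}A & c\\ 0 & 1\end{pmatrix}$, and then $(\sigma_1\sigma_2)^p=1$ forces $A^p=I$, while conversely $A^p=I$ together with $1\notin\operatorname{spec}(A)$ kills the extension term $(A^{p-1}+\cdots+I)c$ since $(A-I)(A^{p-1}+\cdots+I)=0$ and $A-I$ is invertible; the leftover case $1\in\operatorname{spec}(A)$ with $A^p=I$ reduces again to $\mu\nu=4$ and $A=I$, already excluded.
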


\begin{Prf}
  The subspace $\ker \alpha_1 \cap \ker \alpha_2$ of $\bR^3$ is at least 1--dimensional, so $1$ is an eigenvalue of $\sigma_1\sigma_2$.
  A computation shows that
  \begin{equation}\label{eq:computation}
    \sigma_i\sigma_j = \alpha_i(b_j)\, b_i \otimes \alpha_j - b_i \otimes \alpha_i - b_j \otimes \alpha_j + 1 \quad \textrm{ for } \{ i, j\} = \{ 1, 2 \},
  \end{equation}
  \[\tr \sigma_1\sigma_2 = \alpha_1(b_2) \alpha_2(b_1) - 1.\]
  Now we assume $(\sigma_1\sigma_2)^p = 1$.
  Then $\sigma_1\sigma_2$ must be complex diagonalizable with eigenvalues $1,e^{2\pi i q/p},e^{-2\pi i q/p}$, where $1 \leq q < p$.
  Possibly replacing $q$ by $p-q$, we can assume $q \leq \frac{p}{2}$.
  So $\alpha_1(b_2)\alpha_2(b_1) = \tr\sigma_1\sigma_2 + 1 = 4 \cos^2(\frac{q}{p}\pi)$.
  If $q = \frac{p}{2}$, we also have $\sigma_1\sigma_2 = \sigma_2\sigma_1$.
  Since $b_1 \otimes \alpha_2$ and $b_2 \otimes \alpha_1$ are linearly independent in the space of $3 \times 3$ matrices, this implies $\alpha_1(b_2) = \alpha_2(b_1) = 0$.

  Conversely, if $\alpha_1(b_2)\alpha_2(b_1) = 4\cos^2(\frac{q}{p}\pi)$ with $1 \leq q < \frac{p}{2}$, then $\sigma_1\sigma_2$ has eigenvalues $1,e^{2\pi i q/p},e^{-2\pi i q/p}$, so $(\sigma_1\sigma_2)^p = 1$.
  If $\alpha_1(b_2) = \alpha_2(b_1) = 0$, then $p$ is even and $\sigma_1\sigma_2 = \sigma_2\sigma_1$ by \eqref{eq:computation}, so $(\sigma_1\sigma_2)^p = 1$.
\end{Prf}

This motivates the following definition. A real matrix $A = (a_{ij})_{1 \leq i,j \leq 3}$ is called a \emph{Cartan matrix} if
\begin{enumerate}
\item $a_{ii}=2$ for all $i = 1,2,3$,
\item $a_{ij}a_{ji} = 4 \cos^2\big(\tfrac{q_k}{p_k}\pi\big)$ for integers $1 \leq q_k \leq \tfrac{p_k}{2}$ and $\{i,j,k\} = \{1,2,3\}$,
\item if $a_{ij} = 0$ then $a_{ji} = 0$ for all $i,j = 1,2,3$.
\end{enumerate}

Two Cartan matrices are \emph{equivalent} if they are conjugated by a diagonal matrix.
We denote by $\mathscr C$ the space of Cartan matrices modulo equivalence.
It parametrizes the Coxeter characters as follows.

\begin{Prop}\label{lem:chi_coxeter}
  The map
  \[\Psi \colon \chi^{\mathrm{Cox}}(\Gamma,\SL(3,\bR)) \to \mathscr C, \quad [\rho] \mapsto (\alpha_i(b_j))_{1 \leq i,j \leq 3},\]
  where $\rho(s_i) = b_i \otimes \alpha_i - 1$ and $\alpha_i(b_i) = 2$ for all $i = 1,2,3$, is a homeomorphism.
\end{Prop}

\begin{Prf}
  First note that since $b_i \in \bR^3$ and $\alpha_i \in (\bR^3)^*$ are only determined up to the action of $\bR^*$, this gives us the matrix $(\alpha_i(b_j))_{1 \leq i,j \leq 3}$ up to equivalence.
  It is a Cartan matrix by \autoref{lem:two_involutions}.
  So $\Psi$ is well--defined as a map from $\Hom^{\mathrm{Cox}}(\Gamma,\SL(3,\bR))$, and it is continuous.
  Every continuous map from a topological space $X$ to a Hausdorff space $Y$ induces a unique continuous map from $\Haus(X)$ to $Y$.
  So since $\mathscr C$ is Hausdorff and $\Psi$ is conjugation invariant, it descends to a map from $\chi^{\mathrm{Cox}}(\Gamma,\SL(3,\bR))$.

  We construct a continuous map $\Psi'$, which will be the inverse of $\Psi$, as follows:
  For a Cartan matrix $C$ we set $\rho_C(s_i) = e_i \otimes \gamma_i - 1$ where $\{e_i\}_{i=1}^3$ is the standard basis of $\bR^3$, and $\gamma_i \in (\bR^3)^*$ is the $i$--th row of $C$.
  \autoref{lem:two_involutions} ensures that this indeed defines a representation of $\Gamma$.
  If $\Lambda$ is a diagonal matrix with entries $\lambda_1,\lambda_2,\lambda_3$ then the $i$--th row of $\Lambda C \Lambda^{-1}$ is $\lambda_i \gamma_i \Lambda^{-1}$, so $\rho_{\Lambda C \Lambda^{-1}}(s_i) = \Lambda \rho_C(s_i) \Lambda^{-1}$.
  Hence we can define $\Psi'$ by setting $\Psi'([C]) = [\rho_C]$.
  It is easy to see that $\Psi \circ \Psi'$ is the identity map.

  To see that $\Psi' \circ \Psi$ is also the identity, let $\rho$ be a representation defining $b_i$ and $\alpha_i$ as before, and let $A$ be the matrix with $i$--th row $\alpha_i$ and $B$ the matrix with $i$--th column $b_i$, for all $i$. Then the corresponding Cartan matrix is $C = AB$.
  We want to show that $[\rho_C] = [\rho]$.
  The matrix $B$ need not be invertible, but we can write $B = \widetilde B P$ for an invertible matrix $\widetilde B$ and a projection $P$ (that is $P^2 = P$).
  If we set $P_n = P + \frac{1}{n}(1 - P)$ then $P_nP = PP_n = P$, so $CP_n^{-1} = A\widetilde B P P_n^{-1} = C$ and $P_n^{-1}\widetilde B^{-1}B = P_n^{-1}P = P$.
  Further $A\widetilde B P_n$ converges to $C$, so we have
\begin{align*}
    P_n \rho_C(s_i) P_n^{-1} &= (P_ne_i) \otimes (\gamma_i P_n^{-1}) - 1 & \xrightarrow{n\to\infty} \quad (Pe_i) \otimes \gamma_i - 1, \\
    P_n^{-1}\widetilde B^{-1}\rho(s_i)\widetilde B P_n &= (P_n^{-1}\widetilde B^{-1} b_i) \otimes (\alpha_i \widetilde B P_n) - 1 & \xrightarrow{n\to\infty} \quad (Pe_i) \otimes \gamma_i - 1.
\end{align*}
  So the conjugacy classes of $\rho$ and $\rho_C$ have intersecting closures, which implies that they represent the same character in $\chi^{\mathrm{Cox}}(\Gamma,\SL(3,\bR))$.
\end{Prf}

\begin{Def}\label{def:irreducible}
  We call a representation $\rho \colon \Gamma \to \SL(3,\bR)$
  \begin{itemize}
  \item \emph{point--irreducible} if it does not preserve any one--dimensional subspace of $\bR^3$,
  \item \emph{line--irreducible} if it does not preserve any two--dimensional subspace of $\bR^3$,
  \item \emph{semisimple} if it is a product of irreducible representations.
  \end{itemize}  
\end{Def}

Every Coxeter character $[\rho]$ has a point--irreducible, a line--irreducible, and a semisimple representative.
Note that if $\rho$ is irreducible, then it is point--irreducible, line--irreducible, and semisimple at once.
An example for a line--irreducible one is $\rho_C$ as constructed in the proof of \autoref{lem:chi_coxeter}, and a point--irreducible representation can be obtained by a dual construction.

\subsection{The space of Cartan matrices}\label{sec:Cartan_matrices}

\autoref{lem:chi_coxeter} showed that the Coxeter characters are parametrized by Cartan matrices.
Luckily, the space of Cartan matrices $\mathscr C$ is quite simple.
It consists of a number of connected components homeomorphic to $\bR$ and possibly some isolated points.

\begin{Def}\label{def:cartan_matrix}
  Let $A = (a_{ij})_{i,j}$ be the Cartan matrix corresponding to a Coxeter representation $\rho$.
  We say $A$ \emph{is of type $(q_1,q_2,q_3)$} if $1 \leq q_k \leq \tfrac{p_k}{2}$ and
\[a_{ij}a_{ji} = c_k^2, \qquad c_k \coloneqq 2\cos\big(\tfrac{q_k}{p_k}\pi\big)\]
for all $\{i,j,k\} = \{1,2,3\}$.
These \emph{2--cyclic products} $a_{ij}a_{ji}$, as well as \emph{3--cyclic products} $a_{ij}a_{jk}a_{ki}$, are well--defined by the equivalence class $[A]$ of $A$.
We also say a Coxeter representation $\rho \colon \Gamma_{p_1,p_2,p_3} \to \SL(3,\bR)$ \emph{is of type $(q_1,q_2,q_3)$} if its Cartan matrix is.
\end{Def}

Since $a_{ij}a_{ji}$ can take only a discrete set of values, the space $\mathscr C_{q_1,q_2,q_3} \subset \mathscr C$ of Cartan matrices of type $(q_1,q_2,q_3)$ is a union of connected components.

\begin{Lem}\label{lem:homeoToR}
  If $q_k = \tfrac{p_k}{2}$ for some $k \in \{1,2,3\}$ then $\mathscr{C}_{q_1,q_2,q_3}$ is a single point.
  Otherwise it has two connected components, each homeomorphic to $\bR$.
\end{Lem}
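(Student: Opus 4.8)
The plan is to describe the set $\widetilde{\mathscr C}_{q_1,q_2,q_3}$ of Cartan matrices of type $(q_1,q_2,q_3)$ together with the conjugation action of the group $D$ of diagonal matrices explicitly, by exhibiting a complete set of invariants. Write $A = (a_{ij})$ and $c_k = 2\cos(q_k\pi/p_k)$, so that $c_k = 0$ exactly when $q_k = \tfrac{p_k}{2}$. By condition (iii) in the definition of a Cartan matrix, the relation $a_{ij}a_{ji} = c_k^2$ for $\{i,j,k\} = \{1,2,3\}$ then forces $a_{ij} = a_{ji} = 0$ when $c_k = 0$, and forces $a_{ij},a_{ji}$ to be nonzero when $c_k \neq 0$. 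Conjugating by $\diag(\lambda_1,\lambda_2,\lambda_3)$ multiplies $a_{ij}$ by $\lambda_i\lambda_j^{-1}$, so the \emph{$2$--cyclic products} $a_{ij}a_{ji} = c_k^2$ and the two \emph{$3$--cyclic products}
\[ P \coloneqq a_{12}a_{23}a_{31}, \qquad Q \coloneqq a_{21}a_{32}a_{13} \]
are $D$--invariant, and satisfy $PQ = (a_{12}a_{21})(a_{23}a_{32})(a_{13}a_{31}) = (c_1c_2c_3)^2$.

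First I would dispose of the case $q_k = \tfrac{p_k}{2}$ for some $k$; say $q_1 = \tfrac{p_1}{2}$, the other cases being symmetric (and if more than one $q_k$ equals $\tfrac{p_k}{2}$ the argument only simplifies, as more entries vanish). Then $a_{23} = a_{32} = 0$ and $A$ is determined by $a_{12},a_{21},a_{13},a_{31}$ with $a_{12}a_{21} = c_3^2$ and $a_{13}a_{31} = c_2^2$. Normalizing with $\lambda_1 = 1$, $\lambda_2 = a_{12}/c_3$ (or $\lambda_2 = 1$ if $c_3 = 0$) and $\lambda_3 = a_{13}/c_2$ (or $\lambda_3 = 1$ if $c_2 = 0$), one checks directly that $A$ is conjugate to the single matrix with $a_{12} = a_{21} = c_3$, $a_{13} = a_{31} = c_2$ and $a_{23} = a_{32} = 0$. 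Hence $\mathscr C_{q_1,q_2,q_3}$ is a single point.

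In the remaining case all $q_k < \tfrac{p_k}{2}$, so every off--diagonal entry of every Cartan matrix of this type is nonzero; in particular the $D$--action modulo scalars is free. I claim $A \mapsto P$ induces a homeomorphism $\mathscr C_{q_1,q_2,q_3} \to \bR^* = \bR\setminus\{0\}$, which proves the lemma since $\bR^*$ has two components, each homeomorphic to $\bR$. For surjectivity and continuity of an inverse, send $t \in \bR^*$ to the Cartan matrix $A(t)$ with $a_{12} = a_{23} = 1$, $a_{21} = c_3^2$, $a_{32} = c_1^2$, $a_{31} = t$, $a_{13} = c_2^2/t$; this depends continuously on $t$ and has $P = t$. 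For injectivity, if $A, A'$ have the same type and $P(A) = P(A')$ (hence $Q(A) = Q(A')$), put $\lambda_1 = 1$, $\lambda_2 = a_{12}/a_{12}'$, $\lambda_3 = a_{13}/a_{13}'$; the $2$--cyclic relations give $\lambda_2 a_{21} = a_{21}'$ and $\lambda_3 a_{31} = a_{31}'$, and the equalities of $P$ and $Q$ give $\lambda_2\lambda_3^{-1}a_{23} = a_{23}'$ and $\lambda_3\lambda_2^{-1}a_{32} = a_{32}'$, so $\diag(\lambda_1,\lambda_2,\lambda_3)$ conjugates $A$ to $A'$. Since $P$ is continuous and $D$--invariant on $\widetilde{\mathscr C}_{q_1,q_2,q_3}$, it descends to a continuous map on $\mathscr C_{q_1,q_2,q_3}$, which together with $t \mapsto [A(t)]$ forms a pair of mutually inverse continuous maps.

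The computations involved are all elementary; the one place that needs care is verifying that $P$ (with the fixed $2$--cyclic products) is a \emph{complete} invariant of the $D$--orbit, i.e.\ that the single diagonal matrix $\diag(1,\,a_{12}/a_{12}',\,a_{13}/a_{13}')$ matches all six off--diagonal entries simultaneously — this is exactly the step where the identity $PQ = (c_1c_2c_3)^2$ is used. A secondary, purely topological point to keep in mind is that $\mathscr C_{q_1,q_2,q_3}$ carries the quotient topology, so the final argument should be phrased as ``continuous bijection with continuous inverse'' rather than invoking any compactness.
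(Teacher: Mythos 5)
Your proposal is correct and takes essentially the same approach as the paper: after disposing of the degenerate case by normalizing to a symmetric matrix, both proofs identify the $3$--cyclic product $a_{12}a_{23}a_{31}$ (the paper uses the rescaling $t_\rho = -a_{12}a_{23}a_{31}/(c_1c_2c_3)$; you use $P$ directly) as a complete invariant of the diagonal conjugation orbit and conclude $\mathscr C_{q_1,q_2,q_3} \cong \bR\setminus\{0\}$. You spell out the completeness check (matching all six entries via the identity $PQ = (c_1c_2c_3)^2$) and the continuity of the two mutually inverse maps in more detail than the paper does, but the underlying argument is identical.
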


\begin{Prf}
  If $q_k = \tfrac{p_k}{2}$ for some $k$, then $a_{ij} = a_{ji} = 0$ for $\{i,j,k\} = \{1,2,3\}$. The Cartan matrix $A$ is therefore equivalent to a symmetric matrix which is determined by $(q_1,q_2,q_3)$ alone.
  For example, if $q_1 = \frac{p_1}{2}$ then $A$ is
\[\begin{pmatrix}2 & a_{12} & a_{13} \\ a_{21} & 2 & 0 \\ a_{31} & 0 & 2\end{pmatrix} \sim \begin{pmatrix} 2 & c_3 & c_2 \\ c_3 & 2 & 0 \\ c_2 & 0 & 2 \end{pmatrix}.\]
So $\mathscr C_{q_1,q_2,q_3}$ is just a single point in this case.

If $q_k < \tfrac{p_k}{2}$ for all $k \in \{1,2,3\}$ then $A$ is equivalent to a matrix of the form
\[\begin{pmatrix}2 & -c_3 & -c_2 \\ -c_3 & 2 & -tc_1 \\ -c_2 & -t^{-1}c_1 & 2\end{pmatrix}\]
where $t \in \bR \setminus \{0\}$ (the minus signs are just a convention).
This representative is unique since $t = - a_{12}a_{23}a_{31} / c_1c_2c_3$, which only depends on the equivalence class. So $\mathscr C_{q_1,q_2,q_3} \cong \bR \setminus \{0\}$.
\end{Prf}

Let $\rho \colon \Gamma_{p_1,p_2,p_3} \to \SL(3,\bR)$ be a Coxeter representation of type $(q_1,q_2,q_3)$ with $q_k < \frac{p_k}{2}$ for all $k \in \{1,2,3\}$.
Let $(a_{ij})_{i,j}$ be the Cartan matrix of $\rho$.
We define its \emph{parameter} by
\begin{equation}
  t_\rho = -\frac{a_{12}\,a_{23}\,a_{31}}{c_1c_2c_3}.\label{eq:trho_definition}
\end{equation}
It can take any non--zero real value and parametrizes the Coxeter characters of type $(q_1,q_2,q_3)$.

We can express the 2--cyclic products and 3--cyclic products by traces:
\begin{gather}
  \tr \rho(s_1s_2) = a_{12}a_{21} - 1, \qquad \tr \rho(s_2s_3) = a_{23}a_{32} - 1, \qquad \tr \rho(s_3s_1) = a_{31}a_{13} - 1, \label{eq:trrot} \\
  \tr \rho(s_1s_2s_3) = a_{12}a_{23}a_{31} - a_{12}a_{21} - a_{23}a_{32} - a_{31}a_{13} + 3, \label{eq:trcox_general} \\
  \tr \rho(s_3s_2s_1) = a_{21}a_{32}a_{13} - a_{12}a_{21} - a_{23}a_{32} - a_{31}a_{13} + 3 . \label{eq:trcox_inverse_general}
\end{gather}

We immediately see from this that the determinant of the Cartan matrix is
\begin{equation}
  \label{eq:determinant_trace}
  \det\begin{pmatrix}2 & a_{12} & a_{13} \\ a_{21} & 2 & a_{23} \\ a_{31} & a_{32} & 2 \end{pmatrix} = \tr\rho(s_1s_2s_3) + \tr\rho(s_3s_2s_1) + 2.
\end{equation}

\begin{Lem}\label{lem:reducible_cartan_matrix}
  Let $\rho \colon \Gamma \to \SL(3,\bR)$ be a Coxeter representation of type $(q_1,q_2,q_3)$ with $q_i < \frac{p_i}{2}$ for all $i$.
  Then $\rho$ is reducible if and only if the determinant of its Cartan matrix is zero.
\end{Lem}

\begin{Prf}
  Writing $\rho(s_i) = b_i \otimes \alpha_i - 1$ for all $i$ as above, the Cartan matrix having zero determinant means that either $(b_1,b_2,b_3)$ or $(\alpha_1,\alpha_2,\alpha_3)$ are linearly dependent.
  Then the span of the $b_i$ or the intersection of the kernels of the $\alpha_i$ are a proper invariant subspace, so $\rho$ is reducible.

  Conversely, let $\rho$ be reducible, so it preserves a proper subspace $W \subset \bR^3$.
  Then, for all $i$, either $b_i \in W$ or $\alpha_i|_W = 0$.
  By the assumption $q_i < \frac{p_i}{2}$ and \autoref{lem:two_involutions} we have $\alpha_i(b_j) \neq 0$ for all $i$ and $j$, so either $b_i \in W$ for all $i$ or $\alpha_i|_W = 0$ for all $i$.
  In the first case the $b_i$ are linearly dependent and in the second case the $\alpha_i$ are linearly dependent, so in either case the determinant of the Cartan matrix is zero.
\end{Prf}

\begin{Lem}\label{lem:coxeter_eigenvalues}
  Let $\rho \colon \Gamma \to \SL(3,\bR)$ be a Coxeter representation. Assume that $p_1,p_2,p_3$ are odd and $\rho$ is of type $\big(\tfrac{p_1-1}{2},\tfrac{p_2-1}{2},\tfrac{p_3-1}{2}\big)$. Then there is a real number $\tcrit > 1$ such that $\rho(s_1s_2s_3)$
    \begin{itemize}
    \item has two non--real eigenvalues if $t_\rho < 0$ or $t_\rho \in (\tcrit^{-1},\tcrit)$,
    \item is not diagonalizable and has a negative eigenvalue $\lambda$ of algebraic multiplicity 2 if $t_\rho \in \{\tcrit^{-1},\tcrit\}$ (with $\lambda < -1$ if $t_\rho = \tcrit$ and $\lambda > -1$ if $t_\rho = \tcrit^{-1}$),
    \item has real eigenvalues with distinct absolute values if $t_\rho \in (0,\tcrit^{-1}) \cup (\tcrit,\infty)$.
    \end{itemize}
\end{Lem}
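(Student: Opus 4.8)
By Lemmas~\ref{lem:chi_coxeter} and~\ref{lem:homeoToR}, the character $[\rho]$ is recorded by the parameter $t\coloneqq t_\rho\in\bR\setminus\{0\}$ of the normal--form Cartan matrix of type $\big(\tfrac{p_1-1}{2},\tfrac{p_2-1}{2},\tfrac{p_3-1}{2}\big)$, whose entries are built from $c_k=2\cos\big(\tfrac{p_k-1}{2p_k}\pi\big)=2\sin\big(\tfrac{\pi}{2p_k}\big)\in(0,1]$ (with $c_k=1$ iff $p_k=3$) and $t$. Since $\tfrac{1}{p_1}+\tfrac{1}{p_2}+\tfrac{1}{p_3}<1$ with the $p_k$ odd, they are not all $3$, so $p\coloneqq c_1c_2c_3\in(0,1)$ and $m\coloneqq 3-c_1^2-c_2^2-c_3^2\in(0,3)$. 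By \eqref{eq:trcox_general} and \eqref{eq:trcox_inverse_general}, $\tr\rho(s_1s_2s_3)=m-pt=:x$ and $\tr\rho(s_3s_2s_1)=m-p/t=:y$, so, as $\rho(s_1s_2s_3)\in\SL(3,\bR)$, its characteristic polynomial is $\chi_t(\lambda)=\lambda^3-x\lambda^2+y\lambda-1$. The three cases of the statement are exactly the three signs of its discriminant $\Delta=x^2y^2-4x^3-4y^3+18xy-27$: negative gives one real root and a conjugate pair, zero a repeated real root, positive three distinct real roots.

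\textbf{Reduction to a cubic in $u=t+1/t$.} Since $\Delta$ is symmetric in $x,y$, and $x,y$ are the two roots of $z^2-(2m-pu)z+(m^2+p^2-mpu)$ with $u\coloneqq t+1/t$, the discriminant is a cubic polynomial $\delta(u)$ with leading coefficient $4p^3>0$; and $t\in\bR\setminus\{0\}$ corresponds to $u\in(-\infty,-2]\cup[2,\infty)$. I would then prove: (a) $\delta(2)=(m-p-3)^3(m-p+1)<0$ and $\delta(-2)=(m+p-3)^3(m+p+1)<0$, which follows from $0<m<3$, $0<p<1$ and the inequality $c_1c_2c_3<c_1^2+c_2^2+c_3^2$ (a consequence of AM--GM together with $p<1$); and (b) $\delta$ has exactly one real root $u_*$ with $|u_*|\ge2$, and it satisfies $u_*>2$. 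Granting (a) and (b), let $\tcrit>1$ be the solution of $t+1/t=u_*$: since $\delta$ is a cubic with positive leading term, $\delta(\pm2)<0$, and $u_*$ is its only root outside $(-2,2)$, we get $\delta<0$ on $(-\infty,-2]$ and $\delta$ changes sign at $u_*$, so $\Delta<0$ for every $t<0$, while for $t>0$ we have $\Delta<0$, $\Delta=0$, $\Delta>0$ according as $u<u_*$, $u=u_*$, $u>u_*$, i.e.\ as $t_\rho\in(\tcrit^{-1},\tcrit)$, $t_\rho\in\{\tcrit^{-1},\tcrit\}$, $t_\rho\in(0,\tcrit^{-1})\cup(\tcrit,\infty)$, which is the claim.

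\textbf{Counting the critical values (the main obstacle).} The heart of the argument is (b), and I expect it to be the delicate step. If $\delta(u)=0$ with $|u|\ge2$ then $x,y$ are real, so $\chi_t$ has a real double root $r\ne0$ and a simple root $r^{-2}$; setting $v\coloneqq r+1/r$ (so $|v|\ge2$) one computes $x+y=v^2+2v-2$ and $xy=2v^3-6v+5$, and eliminating $u$ from these and $x+y=2m-pu$, $xy=m^2+p^2-mpu$ leaves exactly one equation $H(v)=p^2$, where $H(v)\coloneqq 2v^3-mv^2-(2m+6)v+m^2+2m+5$; conversely each real $v$ with $H(v)=p^2$ and $|v|\ge2$ yields such a root $u=\tfrac{1}{p}(2m+2-v^2-2v)$ of $\delta$. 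Now $H'$ vanishes only at $v=-1$ and at $v_0\coloneqq\tfrac{m+3}{3}\in(1,2)$, with $H(-1)=m^2+3m+9\ge9>p^2$ a local maximum and $H(v_0)=\big(\tfrac{3-m}{3}\big)^3$ a (positive) local minimum; and the key estimate $p^2=(c_1c_2c_3)^2\le\big(\tfrac{c_1^2+c_2^2+c_3^2}{3}\big)^3=H(v_0)$ is precisely AM--GM applied to $c_1^2,c_2^2,c_3^2$, while $H(-2)=(m+1)^2>p^2$. Hence $H(v)=p^2$ has exactly one solution $v_*<-2$, plus a solution at $v_0\in(1,2)$ only when $c_1=c_2=c_3$; since $|v_0|<2$, the unique solution with $|v|\ge2$ is $v_*$. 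Therefore $\delta$ has exactly one real root with $|u|\ge2$, which by $\delta(\pm2)<0<\delta(+\infty)$ lies in $(2,\infty)$ --- this is $u_*$, and it is a genuine sign change of $\delta$. Everything here hinges on identifying the shape of $H$, in particular that its local minimum equals $\big(\tfrac{3-m}{3}\big)^3$ and is dominated by $p^2$ exactly via AM--GM.

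\textbf{Two loose ends.} At $t_\rho\in\{\tcrit^{-1},\tcrit\}$ the double eigenvalue $r$ above satisfies $r\in(-1,0)$ (so the simple eigenvalue $r^{-2}$ is $>1$); to see that $\rho(s_1s_2s_3)$ is not diagonalizable, I would use the explicit matrix of $\rho_C(s_1s_2s_3)$ (the normal--form representative) and observe that the last two rows of $\rho_C(s_1s_2s_3)-r\,\Id$ cannot be proportional, as that would force $(1-r)^2=-c_1^2r$, i.e.\ $r^2+(c_1^2-2)r+1=0$, which has no real solution since $(c_1^2-2)^2<4$; hence $\rho(s_1s_2s_3)-r\,\Id$ has rank $2$ and $r$ has geometric multiplicity $1$. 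Finally, for $t_\rho\in(0,\tcrit^{-1})\cup(\tcrit,\infty)$ the three real roots of $\chi_t$ have pairwise distinct absolute values, because a pair $\{a,-a\}$ of opposite real roots would force $p^2=(m+a^{-2})(m+a^2)=m^2+1+m(a^2+a^{-2})\ge(m+1)^2$, hence $p\ge m+1$, contradicting $p<1\le m+1$.
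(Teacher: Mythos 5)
Your proof is correct, and the route is genuinely different from the paper's. The paper changes to the variables $u=\tfrac{x+y}{2}$, $v=\tfrac{x-y}{2}$ and proves the claim by comparing the hyperbola $v^2=f(u)$ traced out by the family with the zero locus $v^2=g_\pm(u)$ of the discriminant, using monotonicity of $g_\pm-f$ and $f-g_-$ to show one branch of the hyperbola avoids the discriminant locus while the other meets it in exactly two points; non--diagonalizability at the critical parameter is then established by a geometric argument (a diagonalizable Coxeter element would force a global fixed point, hence $\det A=0$, i.e.\ $u_{\mathrm{crit}}=-1$). You instead exploit the observation that $\Delta$ is symmetric in $(x,y)$, hence a \emph{cubic polynomial} $\delta$ in $u=t+t^{-1}$, and you count its roots on $|u|\ge 2$ by passing to the parameter $v=r+r^{-1}$ of the repeated eigenvalue, reducing everything to the shape of the explicit cubic $H(v)$ whose local minimum equals $\big(\tfrac{3-m}{3}\big)^3$; the crucial comparison $p^2\le H(v_0)$ is then literally AM--GM on $c_1^2,c_2^2,c_3^2$, and the factorizations $\delta(\pm 2)=(m\mp p-3)^3(m\mp p+1)$ are elementary. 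Your approach is more algebraic and self--contained, and makes the role of AM--GM transparent, whereas the paper's is more geometric. You also handle non--diagonalizability differently, by a rank computation on the normal--form matrix $\rho_C(s_1s_2s_3)-r\,\Id$. The one point you should make explicit there: this argument applies to an arbitrary Coxeter representation $\rho$ of the same type and parameter only because at $t_\rho\in\{\tcrit^{-1},\tcrit\}$ the Cartan matrix is non--singular (your $v_*<-2$ gives $v_*^2+2v_*\ne0$, i.e.\ $x+y+2\ne0$), so $\rho$ is irreducible and hence conjugate to $\rho_C$; for a reducible $\rho$ with the same character the conjugacy to $\rho_C$ would fail. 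Likewise, in the root--count step you only need the forward map (root $u$ of $\delta$ with $|u|\ge 2$) $\mapsto$ (solution $v$ of $H=p^2$ with $|v|>2$) to be injective together with $\delta(2)<0$ forcing a root in $(2,\infty)$; the converse direction you mention is unnecessary, which slightly simplifies the bookkeeping.
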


\begin{figure}[ht!]
\labellist
\pinlabel {$u$} at 465 17
\pinlabel {$v$} at 7 445

\pinlabel {$v^2 = g_{+}(u)$} at 370 423
\pinlabel {$v^2 = g_{-}(u)$} at 380 230
\pinlabel {\textcolor{darkgray}{\small reducible}} at 180 50

\pinlabel {\textcolor{blue}{\small Barbot}} at 140 400
\pinlabel {$v^2 = u^2 - 1$} at 85 150

\endlabellist
\centering
\includegraphics[width=0.57\textwidth]{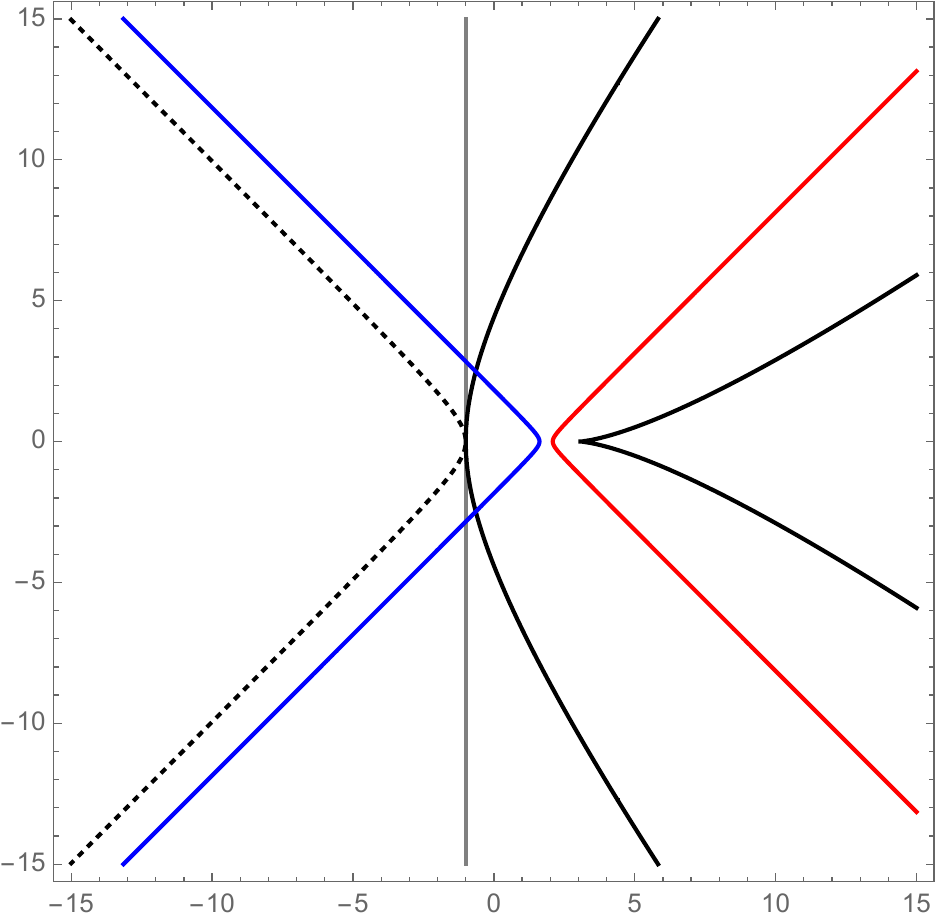}
\caption{The possible values of $u = \frac{\tr\rho(s_1s_2s_3) + \tr\rho(s_3s_2s_1)}{2}$ and $v = \frac{\tr\rho(s_1s_2s_3) - \tr\rho(s_3s_2s_1)}{2}$ for representations of type $(\frac{p_1-1}{2},\frac{p_2-1}{2},\frac{p_3-1}{2})$ in blue or in red. The curves are drawn in the case of $(p_1,p_2,p_3) = (5,5,5)$. The blue curve represents the Barbot component to be defined in \autoref{def:hitchin_and_barbot}.}
\label{fig:goldman}
\end{figure}

\begin{proof}
By the proof of \autoref{lem:homeoToR}, the Cartan matrix of $\rho$ is uniquely equivalent to
\begin{equation*}
A = \begin{pmatrix}
  2    & -c_3              & -c_2 \\
  -c_3 & 2                 & -t_\rho c_1 \\
  -c_2 & -t_\rho^{-1} c_1  & 2
\end{pmatrix},
\end{equation*}
where $t_\rho \neq 0$ and $c_i = 2\cos\big(\tfrac{p_i-1}{2 p_i}\pi\big)$. Then \eqref{eq:trcox_general} and \eqref{eq:trcox_inverse_general} above show that:
\begin{align}
x & = \tr \rho(s_1s_2s_3) = -t_\rho c_1c_2c_3 - c_1^2 - c_2^2 - c_3^2 + 3 \label{eq:trcox} \\
y & = \tr \rho(s_3s_2s_1) = -t_\rho^{-1} c_1c_2c_3 - c_1^2 - c_2^2 - c_3^2 + 3 \label{eq:trcoxinverse}
\end{align}
Thus, the variables $x$ and $y$ satisfy
\[ p(x,y) := (x - 3 + c_1^2 + c_2^2 + c_3^2 )(y - 3 + c_1^2 + c_2^2 + c_3^2 ) = c_1^2 c_2^2 c_3^2. \]
From the discriminant of the characteristic polynomial (see \cite[Section 1.7]{Goldman}) we find that $\rho(s_1s_2s_3)$ has real eigenvalues if and only if $\delta(x,y) \geq 0$ (which are distinct if and only if $\delta(x,y) > 0$), where
\begin{equation}
  \delta(x,y) := x^2 y^2 - 4 (x^3+y^3) + 18 xy - 27.\label{eq:discriminant}
\end{equation}

Using a change of variables ($u = \frac{x + y}{2}$, $v = \frac{x - y}{2}$), we obtain that:
\begin{align*}
p(x,y) = c_1^2 c_2^2 c_3^2 \quad & \Leftrightarrow \quad v^2 = (u-3 + c_1^2 + c_2^2 + c_3^2)^2 - (c_1c_2c_3)^2 \eqqcolon f(u) \\
\delta(x,y)  = 0 \quad & \Leftrightarrow \quad v^2 = u^2 + 12 u + 9 \pm 2(2u+3)^{\frac{3}{2}} \eqqcolon g_{\pm}(u)
\end{align*}
In fact, $\delta(x,y) < 0$ if and only if $g_-(u) < v^2 < g_+(u)$.
We set $u_{\pm} := 3 - c_1^2 - c_2^2 - c_3^2 \pm c_1c_2c_3$, which are the two solutions of the equation $f(u)=0$.
The points $(u,v) = (u_\pm,0)$ correspond to the Coxeter characters with $t_\rho = \mp 1$.
Since $p_i \geq 3$ we have $0 < c_i \leq 1$, and $c_i \leq 2\cos(\frac{2\pi}{5})$ for at least one $i \in \{1,2,3\}$.
Then a computation shows that $0 \leq u_- < u_+ < 3$.

Note that $g_+(u) \geq 0 \Leftrightarrow u \geq -1$ and $g_-(u) \geq 0 \Leftrightarrow u \geq 3$.
We claim that $f(u) \neq g_-(u)$ for all $u \geq 3$, $f(u) \neq g_+(u)$ for all $u \geq u_+$, and that there is exactly one $u \in (-1, u_-)$ with $f(u) = g_+(u)$, which we call $u_{\mathrm{crit}}$.
Note that $f(u_{\mathrm{crit}}) > 0$.
This will show that the red curve in \autoref{fig:goldman} (the component of $v^2 = f(u)$ containing $u_+$) does not intersect the black curve $v^2 = g_\pm(u)$ and the blue curve (containing $u_-$) intersects the black curve in exactly two points $(u_{\mathrm{crit}}, \pm v_{\mathrm{crit}})$, where $v_{\mathrm{crit}} = \sqrt{f(u_{\mathrm{crit}})}$.

We prove the claim by computing derivatives.
Since
\[ \frac{\dd f}{\dd u} = 2(u - 3 + c_1^2 + c_2^2 + c_3^2) \quad \textrm{and} \quad
\frac{\dd g_{\pm}}{\dd u} = 2u + 12 \pm 6 \sqrt{2u+3},\]
we have
\begin{align*}
  \frac{\dd f}{\dd u} - \frac{\dd g_-}{\dd u} = 6\sqrt{2u+3} - 18 + 2(c_1^2 + c_2^2 + c_3^2) \geq 0 &\qquad \text{for all $u \geq 3$}, \\
  \frac{\dd g_+}{\dd u} - \frac{\dd f}{\dd u} = 6\sqrt{2u+3} + 18 - 2(c_1^2 + c_2^2 + c_3^2) \geq 0 &\qquad \text{for all $u \geq -1$.}
\end{align*}
This proves the claim, combined with the initial values
\[(f - g_-)(3) = f(3) > 0, \quad (g_+ - f)(-1) = -f(-1) < 0, \quad (g_+ - f)(u_-) = g_+(u_-) > 0.\]
As a result, for all $t_\rho < 0$ the discriminant has the same sign as for $t_\rho = -1$, i.e. $\rho(s_1 s_2 s_3)$ has non--real eigenvalues.
For $t_\rho > 0$, the discriminant changes sign exactly at the two points $(u_{\mathrm{crit}}, \pm\sqrt{f(u_{\mathrm{crit}})})$, corresponding to $t_\rho \in \{\tcrit^{-1}, \tcrit\}$.

In the case $t_\rho \in (0,\tcrit^{-1}) \cup (\tcrit,\infty)$ if the eigenvalues of $\rho(s_1s_2s_3)$ were of the form $\lambda,-\lambda,-\lambda^{-2}$, this would imply $v^2 = u^2 - 1$ with $u \leq -1$.
But since $f(u) - u^2 + 1$ is linear, decreasing, and has a positive value at $u = -1$, the curves $v^2 = u^2 - 1$ and $v^2 = f(u)$ don't intersect at $u \leq -1$.

Finally, in the case $t_\rho \in \{\tcrit^{-1},\tcrit\}$, $\rho(s_1s_2s_3)$ is not diagonalizable:
assume it were, then it fixes a projective line pointwise.
The intersection point $p$ of this line with the reflection line of $\rho(s_1)$ is fixed by $\rho(s_2s_3)$.
This element has a unique fixed point, which is fixed by $\rho(s_2)$ and $\rho(s_3)$ individually.
So all of $\rho(\Gamma)$ fixes $p$, hence $\rho$ is reducible.
By \autoref{lem:reducible_cartan_matrix} and \eqref{eq:determinant_trace} this implies $u_{\mathrm{crit}} = -1$, a contradiction.

If $t_\rho = \tcrit$ the eigenvalues of $\rho(s_1s_2s_3)$ are of the form $\lambda,\lambda,\lambda^{-2}$ with $\lambda < 0$ ($\lambda \geq 0$ would imply $u \geq 3$ but we showed $u_{\mathrm{crit}} < u_- <3$ above).
Then $(\lambda - \lambda^{-1})(2 - \lambda - \lambda^{-1}) = x - y = (-t_\rho + t_\rho^{-1})c_1c_2c_3 < 0$, hence $\lambda < -1$.
Analogously, we get $\lambda > -1$ if $t_\rho = \tcrit^{-1}$.
\end{proof}

\begin{Rem}\label{rem:critical_moment}
  As $t_\rho$ approaches $\tcrit$ from above, the attracting and neutral fixed points of $\rho(s_1s_2s_3)$ in $\RP^2$ merge, and so do its repelling and neutral fixed lines.
  If $t_\rho$ approaches $\tcrit^{-1}$ from below, it's instead the repelling and neutal fixed points, as well as the attracting and neutral fixed lines, which merge.
\end{Rem}

\begin{Rem}
  The proof of \autoref{lem:coxeter_eigenvalues} shows that if $q_k < \frac{p_k}{2}$ for all $k \in \{1,2,3\}$, then the space $\mathscr{C}_{q_1,q_2,q_3}$ of Coxeter characters of type $(q_1,q_2,q_3)$ may be identified with an algebraic curve defined by the polynomial equation $p(x,y) = c_1^2 c_2^2 c_3^2$ in the plane $\mathbb{R}^2$, where $x$ and $y$ are the traces of $\rho(s_1 s_2 s_3)$ and $\rho(s_3 s_2 s_1)$ respectively.
\end{Rem}

\subsection{From \texorpdfstring{$\PGL(2,\bR)$}{PGL(2,R)} to \texorpdfstring{$\SL(3,\bR)$}{SL(3,R)}}\label{sec:reducible}

The Hitchin and Barbot components in $\chi^{\mathrm{Cox}}(\Gamma,\SL(3,\bR))$ are distinguished by the fact that they contain certain representations factoring through $\PGL(2,\bR)$ or $\SL^\pm(2,\bR)$.
We will describe these now.
We start with a discrete and faithful representation $\rho_0\colon\Gamma\to\PGL(2,\bR)$.
It is unique up to conjugation since there is a unique hyperbolic triangle with fixed angles, up to isometry.

Let $\iota \colon \PGL(2,\bR) \to \PGL(3,\bR) \cong \SL(3,\bR)$ be the irreducible embedding, which is unique up to conjugation.
Concretely, it can be realized by the action of $\PGL(2,\bR)$ on the projectivization of the symmetric square $\Sym^2\bR^2 \cong \bR^3$.
The composition
\[\rho_{F} = \iota \circ \rho_0 \colon \Gamma \to \SL(3,\bR)\]
is called a Fuchsian representation.
The Hitchin component will be the component of $\chi^{\mathrm{Cox}}(\Gamma,\SL(3,\bR))$ containing $\rho_F$.
We identify it by the following lemma, a proof of which can be found e.g. in \cite[Proposition 24]{Vinberg}.

\begin{Lem}\label{lem:irreducible_identify}
  The representation $\rho_{F}$ is of type $(1,1,1)$ and has the parameter $t_{\rho_{F}} = 1$.  
\end{Lem}

A second way to create special $\SL(3,\bR)$ representations out of $\rho_0$ is by using the embedding
\[\jmath \colon \SL^\pm(2,\bR) \to \SL(3,\bR), \quad A \mapsto \begin{pmatrix}A & 0 \\ 0 & \det(A)\end{pmatrix}.\]
Here $\SL^\pm(2,\bR)$ is the group of $2 \times 2$ matrices with determinant $\pm 1$.
This requires lifting $\rho_0$ to $\SL^\pm(2,\bR)$, which is possible if and only if $p_1,p_2,p_3$ are all odd.

To see this, we first note that each $\rho_0(s_i)$ is a hyperbolic involution acting on $\RP^1$ with two distinct fixed points. In order to specify a lift of $\rho_0(s_i)$ in $\SL^\pm(2,\bR)$ we put an arbitrary order on these fixed points and regard them as representing an oriented geodesic $s_i^-s_i^+$ in $\bH^2$. The lift $\widetilde{\rho_0}(s_i)$ corresponding to $s_i^-s_i^+$ is defined as the reflection having $s_i^+$ as the $(+1)$-eigenspace and $s_i^-$ as the $(-1)$-eigenspace.

\begin{center}
  \begin{tikzpicture}[scale=1.6]
    \begin{scope}
      \begin{scope}[shift={(210:0.53)},rotate=0]
        \draw[fill=black!20] (226:0.15) arc (227:370:0.15) -- (0,0) -- cycle;
      \end{scope}
      \begin{scope}[shift={(330:0.53)},rotate=120]
        \draw[fill=black!20] (226:0.15) arc (227:370:0.15) -- (0,0) -- cycle;
      \end{scope}
      \begin{scope}[shift={(90:0.53)},rotate=240]
        \draw[fill=black!20] (226:0.15) arc (227:370:0.15) -- (0,0) -- cycle;
      \end{scope}

      \draw[very thick] (0,0) circle (1);
      \draw[->,thick,red] (84.45:1) arc (-5.55:-29.975:2.2) coordinate (M1);
      \draw[->,thick,red] (84.45+120:1) arc (-5.55+120:120-29.975:2.2) coordinate (M2);
      \draw[->,thick,red] (84.45+240:1) arc (-5.55+240:240-29.975:2.2) coordinate (M3);

      \draw[thick,red] (M1) arc (-29.975:-54.4:2.2);
      \draw[thick,red] (M2) arc (120-29.975:120-54.4:2.2);
      \draw[thick,red] (M3) arc (240-29.975:240-54.4:2.2);

      \node at (80:1.2) {$s_1^-$};
      \node at (100:1.2) {$s_3^+$};
      \node at (120+80:1.2) {$s_2^-$};
      \node at (120+100:1.2) {$s_1^+$};
      \node at (240+80:1.2) {$s_3^-$};
      \node at (240+100:1.2) {$s_2^+$};

      \node at (135:0.65) {$\scriptstyle\pi - \frac{\pi}{p_2}$};
      \node at (120+130:0.65) {$\scriptstyle\pi - \frac{\pi}{p_3}$};
      \node at (240+125:0.65) {$\scriptstyle\pi - \frac{\pi}{p_1}$};
    \end{scope}

    \begin{scope}[xshift=3cm]
      \begin{scope}[shift={(210:0.53)},rotate=180]
        \draw[fill=black!20] (229:0.15) arc (229:373:0.15) -- (0,0) -- cycle;
      \end{scope}
      \begin{scope}[shift={(330:0.53)},rotate=300]
        \draw[fill=black!20] (229:0.15) arc (229:373:0.15) -- (0,0) -- cycle;
      \end{scope}
      \begin{scope}[shift={(90:0.53)},rotate=60]
        \draw[fill=black!20] (229:0.15) arc (229:373:0.15) -- (0,0) -- cycle;
      \end{scope}

      \draw[very thick] (0,0) circle (1);
      \draw[->,thick,red] (120+95.55:1) arc (   -54.4:   -29.975:2.2) coordinate (M1);
      \draw[->,thick,red] (240+95.55:1) arc (120-54.4:120-29.975:2.2) coordinate (M2);
      \draw[->,thick,red] (    95.55:1) arc (240-54.4:240-29.975:2.2) coordinate (M3);

      \draw[thick,red] (M1) arc (   -29.975:   -5.55:2.2);
      \draw[thick,red] (M2) arc (120-29.975:120-5.55:2.2);
      \draw[thick,red] (M3) arc (240-29.975:240-5.55:2.2);

      \node at (80:1.2) {$s_1^+$};
      \node at (100:1.2) {$s_3^-$};
      \node at (120+80:1.2) {$s_2^+$};
      \node at (120+100:1.2) {$s_1^-$};
      \node at (240+80:1.2) {$s_3^+$};
      \node at (240+100:1.2) {$s_2^-$};

      \node at (50:0.65) {$\scriptstyle\pi - \frac{\pi}{p_2}$};
      \node at (120+50:0.65) {$\scriptstyle\pi - \frac{\pi}{p_3}$};
      \node at (240+50:0.65) {$\scriptstyle\pi - \frac{\pi}{p_1}$};
    \end{scope}
  \end{tikzpicture}
\end{center}

If $0<\theta<\pi$ is the angle between the two intersecting oriented geodesics $s_1^-s_1^+$ and $s_2^-s_2^+$ then $\theta=\frac{\pi}{p_3}$ or $\theta=\pi-\frac{\pi}{p_3}$ depending on the chosen orientations.
The product $\widetilde{\rho_0}(s_1s_2)$ is conjugate in $\SL^\pm(2,\bR)$ to the rotation matrix $R(\theta)=\begin{psmallmatrix}\cos\theta&-\sin\theta\\\sin\theta&\cos\theta\end{psmallmatrix}$.
In order for $\rho_0$ to lift it is necessary that $R(\theta)^{p_3} = \id$.
But $R(\pi/p_3)^{p_3} = -\id$ and $R(\pi - \pi/p_3)^{p_3} = (-1)^{p_3}(-\id)$, so $\rho_0$ can lift only if $p_3$ is odd.

If $p_1,p_2,p_3$ are odd, there are two possible lifts of $\rho_0$, corresponding to the choices of orientations with angles $\pi - \frac{\pi}{p_k}$ between $s_i^-s_i^+$ and $s_j^-s_j^+$, for all $\{i,j,k\} = \{1,2,3\}$; see the pictures above.
Let $\widetilde{\rho_0} \colon \Gamma \to \SL^\pm(2,\bR)$ be the unique lift with $\tr \widetilde\rho_0(s_1s_2s_3) < 0$.
Composing with $\jmath$, we obtain the two representations
\[\rho_{\mathrm{red}}, \rho_{\mathrm{red}}' \colon \Gamma \to \SL(3,\bR), \qquad \rho_{\mathrm{red}}(\gamma) = \jmath(\widetilde{\rho_0}(\gamma)), \quad \rho_{\mathrm{red}}'(\gamma) = \jmath((-1)^{\ell(\gamma)}\widetilde{\rho_0}(\gamma)) \quad \forall \gamma \in \Gamma,\]
where $\ell(\gamma)$ is the word length of $\gamma$.

\begin{Lem}\label{lem:reducible_identify}
  The representations $\rho_{\mathrm{red}}$ and $\rho_{\mathrm{red}}'$ are of type $(\tfrac{p_1-1}{2},\tfrac{p_2-1}{2},\tfrac{p_3-1}{2})$ and are the only reducible representations of this type.
  If we define $\tred \coloneqq t_{\rho_{\mathrm{red}}}$ then $\tred > \tcrit > 1$ and $t_{\rho_{\mathrm{red}}'} = \tred^{-1}$.

  Furthermore, $\rho_{\mathrm{red}}(s_1s_2s_3)$ has eigenvalues $-\lambda,-1,\lambda^{-1}$, for some $\lambda > 1$.
\end{Lem}

\begin{Prf}
  We have $\tr \jmath(A) = \tr(A) + \det(A)$, so
  \[\tr \rho_{\mathrm{red}}(s_is_j) = \tr \jmath(\widetilde{\rho_0}(s_is_j)) = 2\cos(\pi - \tfrac{\pi}{p_k}) + 1 = 4\cos^2(\tfrac{p_k-1}{2p_k}\pi) - 1.\]
  Hence $\rho_{\mathrm{red}}$ is of type $\big(\tfrac{p_1-1}{2},\tfrac{p_2-1}{2},\tfrac{p_3-1}{2}\big)$.
  To find $t_{\rho_{\mathrm{red}}}$ we use that $\rho_{\mathrm{red}}$ is reducible, so the determinant of its Cartan matrix vanishes by \autoref{lem:reducible_cartan_matrix}.
  Hence by \eqref{eq:determinant_trace}
  \[-c_1c_2c_3(t_{\rho_{\mathrm{red}}} + t_{\rho_{\mathrm{red}}}^{-1}) = \tr \rho_{\mathrm{red}}(s_1s_2s_3) + \tr \rho_{\mathrm{red}}(s_3s_2s_1) = -2.\]
  This equation has exactly two solutions, which are positive and inverses of each other.

  In $\SL^\pm(2,\bR)$ we have $\tr A^{-1} = \tr A / \det A$, so our convention $\tr\widetilde{\rho_0}(s_1s_2s_3) < 0$ implies that $\tr\widetilde{\rho_0}(s_3s_2s_1) > 0$.
  Hence $\tr\rho_{\mathrm{red}}(s_1s_2s_3) < \tr\rho_{\mathrm{red}}(s_3s_2s_1)$.
  Due to this and since $\rho_{\mathrm{red}}(s_1s_2s_3)$ has a $-1$ eigenvalue, its eigenvalues must be of the form $-\lambda, -1, \lambda^{-1}$ for $\lambda > 1$.
  By \eqref{eq:trcox} and \eqref{eq:trcoxinverse} we also have $t_{\rho_{\mathrm{red}}} > t_{\rho_{\mathrm{red}}}^{-1}$.
  The inequality $t_{\rho_{\mathrm{red}}} > \tcrit$ then follows from \autoref{lem:coxeter_eigenvalues} and the fact that $\rho_{\mathrm{red}}(s_1s_2s_3)$ has three distinct real eigenvalues.
\end{Prf}

\begin{Def}\label{def:hitchin_and_barbot}
  A Coxeter representation $\rho \colon \Gamma_{p_1,p_2,p_3} \to \SL(3,\bR)$ is in the
  \begin{itemize}
  \item \emph{Hitchin component} if $\rho$ has type $(1,1,1)$ and $t_\rho > 0$,
  \item \emph{Barbot component} if $\rho$ has type $(\frac{p_1-1}{2},\frac{p_2-1}{2},\frac{p_3-1}{2})$ and $t_\rho > 0$.
  \end{itemize}
\end{Def}

With these definitions, the Hitchin component contains the Fuchsian representation $\rho_F$ and the Barbot component contains $\rho_{\mathrm{red}}$ and $\rho_{\mathrm{red}}'$ by \autoref{lem:irreducible_identify} and \autoref{lem:reducible_identify}.
Now we can prove \autoref{cor:traces} from \autoref{thm:main}:

  \begin{Prf}[of \autoref{cor:traces}]
    Equations \eqref{eq:trrot}, \eqref{eq:trcox_general} and \eqref{eq:trcox_inverse_general} make it easy to identify the Hitchin and Barbot components using traces:
    by \eqref{eq:trho_definition} the sign of $t_\rho$ is opposite to that of
    \[\tau \coloneqq a_{12}a_{23}a_{31} = \tr\rho(s_1s_2s_3) + \tr\rho(s_1s_2) + \tr\rho(s_2s_3) + \tr\rho(s_3s_1)\]
    If $\rho$ is not a Coxeter representation then $\tr \rho(s_is_j) \in \{-1,3\}$ for at least one distinct pair $i,j \in \{1,2,3\}$.
    So $\rho$ is in the Hitchin component if and only if $\tr\rho(s_is_j) = 4\cos^2(\frac{\pi}{p_k}) - 1 $ for $\{i,j,k\} = \{1,2,3\}$ and $\tau < 0$, and in the Barbot component if and only if $\tr\rho(s_is_j) = 4\cos^2(\frac{p_k-1}{2p_k}\pi) - 1 = 1 - 2\cos(\frac{\pi}{p_k})$ and $\tau < 0$.
    Together with \eqref{eq:discriminant}, \autoref{thm:main} therefore implies \autoref{cor:traces}.
  \end{Prf}

\subsection{Hyperbolic geometry of Coxeter axes}\label{sec:hyperbolic}

In this section, we describe some aspects of the geometry and combinatorics of Coxeter axes, which will be used in \autoref{sec:nested_boxes} and \autoref{sec:transversality}.
Again, fix a discrete and faithful representation $\rho_0 \colon \Gamma \to \PGL(2,\bR)$.
It is unique up to conjugation and its generators $\rho_0(s_1),\rho_0(s_2),\rho_0(s_3)$ are the reflections on the sides of a hyperbolic triangle $T$ with angles $\tfrac{\pi}{p_1}, \tfrac{\pi}{p_2}, \tfrac{\pi}{p_3}$.
This triangle $T$ is a fundamental domain for $\Gamma$ and its translates tile the hyperbolic plane.

Adding the axes of all conjugates of the Coxeter element $s_1s_2s_3$ (shown in red in \autoref{fig:coxeterline}) gives a finer tessellation.
To understand its geometry, we consider the union $s_2T \cup T \cup s_3T \cup s_3s_1T$ as in the right part of \autoref{fig:coxeterline}.
Let $t_1,t_2,t_3$ be the vertices of $T$ and let $A$ be the altitude triangle of $T$, that is the vertices $a_1,a_2,a_3$ of $A$ are the base points of the three altitudes of $T$.
Note that every infinite geodesic in $\bH^2$ intersects a $\Gamma$--translate of $A$, since the complement of $\Gamma A$ is a disconnected union of bounded polygons.

It is an elementary fact, true in hyperbolic as in Euclidean geometry, that the orthocenter of $T$ is the incenter of $A$; see \cite[Section VI.7]{Fenchel}.
In particular, $\measuredangle t_2 a_2 a_1 = \measuredangle t_2a_2a_3$ and hence the points $s_2a_1, a_2, a_3$ lie on a common geodesic.
By the same argument, $s_3a_1$ and $s_3s_1a_2$ are also on that geodesic, which is therefore the axis of $s_3s_1s_2$.

\begin{center}
  \begin{tikzpicture}[scale=1.55]
    \begin{scope}[shift={(-4.4,0.2)}]
      \node at (0,0) {\includegraphics[width=6.78125cm]{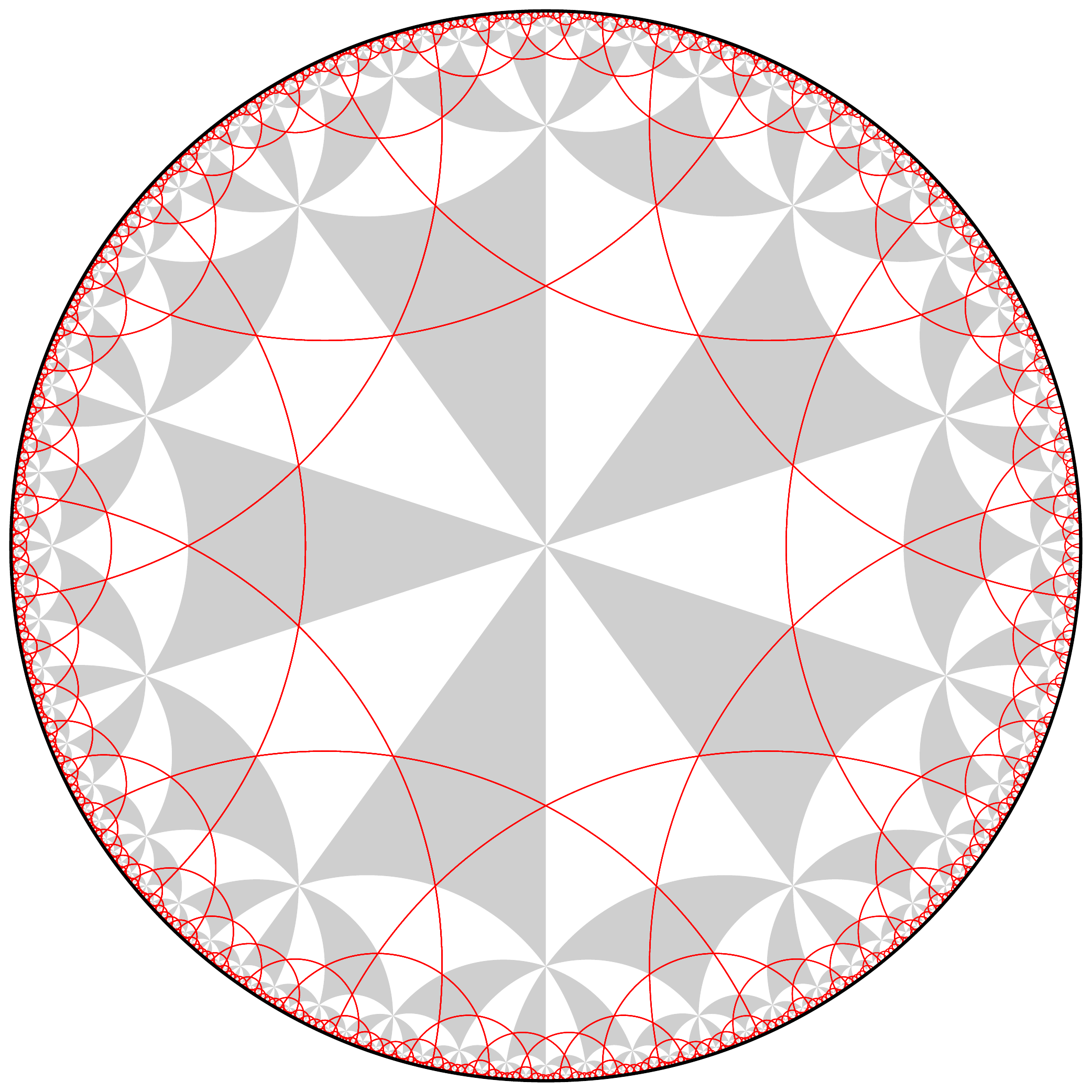}};
      \draw[thick] (0,0) circle (2.145);

      \draw[draw=ForestGreen,fill=ForestGreen!50,fill opacity=0.6] (0,0) -- (162:1.685) arc (35:-35:0.91) -- cycle;
      \node at (-0.5,0.3) {$s_1$};
      \node at (-0.5,-0.3) {$s_2$};
      \node at (-1.6,-0.2) {$s_3$};

      \coordinate (z2minus) at (257.6:2.145);
      \coordinate (z2plus) at (174.4:2.145);

      \draw[->,ultra thick] (z2minus) arc (-12.4:35.9:1.91) coordinate (midpoint);
      \draw[ultra thick] (midpoint) arc (35.9:84.2:1.91);
      \node[anchor=west,xshift=2mm,yshift=-1mm] at (midpoint) {$s_3s_1s_2$};

      \foreach \i in {0,2,4,6,8} \fill (174.4+\i*36:2.145) circle (0.05);
      \foreach \i in {1,3,5,7,9} \fill (257.6+\i*36:2.145) circle (0.05);

      \foreach \i in {0,2,4,6,8} \node at (174.4+\i*36-72:2.35) {$z_{\i}$};
      \foreach \i in {1,3,5,7,9} \node at (257.6+\i*36-144:2.35) {$z_{\i}$};
    \end{scope}

    \begin{scope}[scale=0.95]
    \coordinate (A1) at (30:0.64);
    \coordinate (A2) at (150:0.64);
    \coordinate (A3) at (270:0.64);
    \coordinate (S1) at (-10:1.05);
    \coordinate (S2) at (110:1.05);
    \coordinate (S3) at (230:1.05);

    \begin{scope}[shift={(A2)}]
      \draw[black,fill=black!20] (0,0) -- (-2:0.3) arc (-2:-30:0.3) -- cycle;
      \draw[black,fill=black!20] (0,0) -- (177:0.3) arc (177:150:0.3) -- cycle;
    \end{scope}

    \draw[gray] (90:1.545) -- (270:0.64);
    \draw[gray] (210:1.545) -- (30:0.64);
    \draw[gray] (330:1.545) -- (150:0.64);
    \draw[gray] (A2) -- (-1.98,1.147);

    \draw[thick,dotted,red] (2.15,0.47) -- (2.45,0.515);
    \draw[thick,dotted,blue] (-1.5,0.385) -- (-1.8,0.415);

    \begin{scope}
      \clip (-1.34,-0.77) arc (-60+17.5:-17.5:6.2) arc (180-235:180-281.8:2.55) arc (180-137.5:-5.5:2.48);
      \draw[thick,ForestGreen] (-2.15,0.47) arc (261.5:267.8:14.6) arc (267.8:278.5:14.6);
      \draw[thick,red] (0,-0.64) arc (27.8:35.9:14.6) coordinate (A);
      \draw[thick,blue] (A) arc (-20:-57.7:2.35);
    \end{scope}
    \begin{scope}
      \clip (-1.34,-0.77) arc (-60+17.5:-17.5:6.2) arc (180+17.5:240-17.5:6.2) arc (60+17.5:120+-17.5:6.2) -- cycle;
      \draw[thick,red] (-2.15,0.47) arc (261.5:267.8:14.6) arc (267.8:278.5:14.6);
      \draw[thick,ForestGreen] (0,-0.64) arc (27.8:35.9:14.6) arc (-20:-57.7:2.35) arc (-5:28:1);
      \draw[thick,blue] (0,-0.64) arc (180-27.8:180-35.9:14.6) arc (180+20:180+57.7:2.35) arc (180+5:180-28:1);
    \end{scope}
    \begin{scope}
      \clip (1.34,-0.77) arc (180+60-17.5:180+17.5:6.2) arc (235:281.5:2.55) arc (137.5:186:2.45);
      \draw[thick,blue] (-2.15,0.47) arc (261.5:267.8:14.6) arc (267.8:278.5:14.6);
      \draw[thick,red] (0,-0.64) arc (180-27.8:180-35.9:14.6) coordinate (A);
      \draw[thick,ForestGreen] (A) arc (180+20:180+57.7:2.35);
    \end{scope}
    \begin{scope}
      \clip (1.34,-0.77) arc (148:107:2.2) arc (250:172:0.9) arc (137.5:185:2.48) -- cycle;
      \draw[thick,ForestGreen] (-2.15,0.47) arc (261.5:267.8:14.6) arc (267.8:278.5:14.6);
      \draw[thick,blue] (0,-0.64) arc (180-27.8:180-35.9:14.6) arc (180+20:180+57.7:2.35) coordinate (A);
      \draw[thick,red] (A) arc (180+5:180-28:1);
    \end{scope}

    \draw[very thick,line join=bevel] (-1.34,-0.77) arc (-60+17.5:-17.5:6.2) coordinate arc (180-235:180-281.8:2.55) arc (180-137.5:-5.5:2.48);
    \draw[very thick,line join=bevel] (-1.34,-0.77) arc (-60+17.5:-17.5:6.2) coordinate (T3) arc (180+17.5:240-17.5:6.2) coordinate(T2) arc (60+17.5:120+-17.5:6.2) coordinate (T1) -- cycle;
    \draw[very thick,line join=bevel] (1.34,-0.77) arc (180+60-17.5:180+17.5:6.2) arc (235:281.8:2.55);
    \draw[very thick,line join=bevel] (1.34,-0.77) arc (148:107:2.2) arc (250:172:0.9) arc (137.5:185:2.48) -- cycle;

    \node[anchor=north] at (T1) {$t_3$};
    \node[anchor=north] at (T2) {$t_2$};
    \node[anchor=south] at (T3) {$t_1$};
    \node at (S1) {$s_3$};
    \node at (S2) {$s_2$};
    \node at (S3) {$s_1$};

    \node[anchor=west,yshift=-1.5mm] at (A1) {$a_3$};
    \node[anchor=east,yshift=-1.5mm] at (A2) {$a_2$};
    \node[anchor=north] at (A3) {$a_1$};
    \node at (-1.7,0.2) {$s_2a_1$};
    \node at (1.7,0.2) {$s_3a_1$};
    \node at (2.2,0.65) {$s_3s_1a_2$};

    \node at (0.15,0.6) {$T$};
    \node at (0.15,-0.05) {$A$};
  \end{scope}
  \end{tikzpicture}
  
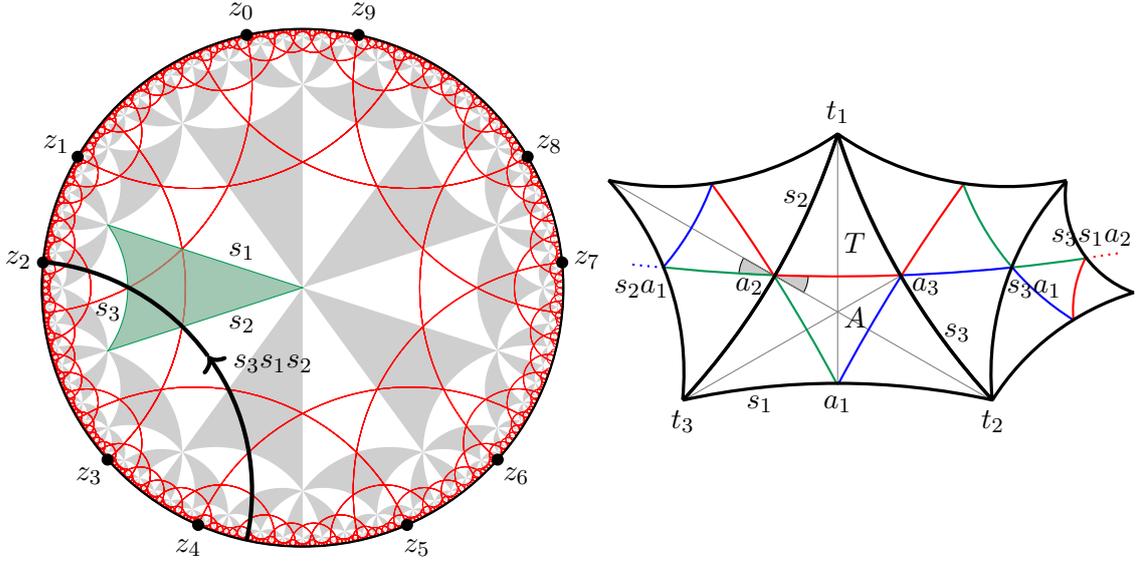
\captionof{figure}{(Left) The tiling of the hyperbolic plane in the case $p_1 = p_2 = p_3 = 5$, with the Coxeter axes in red. (Right) Four triangles along the axis of $s_3s_1s_2$.}
  \label{fig:coxeterline}
\end{center}

Let $F=s_2T \cup T \cup s_3T$ and consider the union $N$ of its orbit under the glide reflection $s_3s_1s_2$.
That is, $s_3s_1s_2$ acts on $N$ with fundamental domain $F$.
We can also define $N$ as the union of all triangles that intersect the axis of $s_3s_1s_2$ (in the $\Gamma$--tessellation of $\bH^2$ by $T$).
In any case, $N$ is a neighborhood of the axis of $s_3s_1s_2$ with two piecewise geodesic boundaries.
We claim that $N$ is convex if $p_1,p_2,p_3 \geq 3$.
To see this, take a look at the vertices on the boundary of $N$.
Every vertex belongs to three triangles in $N$ and the adjacent angles are all $\frac{\pi}{p_k}$ for some $k$.
So their sum is at most $\pi=3 \cdot \frac{\pi}{3}$.
This means $N$ is a convex neighborhood of the axis.
Hence this axis cannot intersect the reflection line of $s_1$.

Now consider the $\langle s_1,s_2 \rangle$--orbit of the point $(s_1s_2s_3)_+$, the black dots in \autoref{fig:coxeterline}. The sector bounded by the reflection lines of $s_1$ and $s_2$ contains exactly one orbit point.
Since the axis of $s_3s_1s_2$ does not intersect the reflection line of $s_1$, and intersects the reflection line of $s_2$ before the reflection line of $s_3$, $(s_3s_1s_2)_+$ is this point.
Then $(s_1s_2s_3)_+ = s_1s_2(s_3s_1s_2)_+$ is two sectors away.
So we can label the orbit points $z_0,\dots,z_{2p_3-1}$ in order along $S^1$, so that $z_0 = (s_1s_2s_3)_+$ and $z_2 = (s_3s_1s_2)_+$, and we have the following identities, which will be essential in \autoref{sec:intersecting_conics} (with indices $\mathrm{mod}\;2p_3$):
\begin{equation}\label{eq:pointlabels}
  s_1z_i = z_{3-i}, \qquad s_2z_i = z_{5-i} \qquad \forall i \in \{0,\dots,2p_3-1\}.
\end{equation}
If we repeat the same argument with $s_1$ and $s_2$ switched, we find that $(s_3s_2s_1)_+ = (s_1s_2s_3)_-$ is in the same sector of $S^1$ bounded by the reflection lines of $s_1$ and $s_2$, just like $z_2 = (s_3s_1s_2)_+$.
In particular, $(s_1s_2s_3)_-$ is in the component of $S^1 \setminus \{z_0,z_3\}$ also containing $z_1$ and $z_2$.
We will use this later in the proof of \autoref{lem:negative_coxeter_box}.

\subsection{Anosov representations}\label{sec:Anosov_representations}

  We will define Anosov representations and list their most important properties.
  Although they can be defined for any hyperbolic group $\Gamma$ and every semisimple Lie group $G$, we restrict to the case of triangle reflection groups $\Gamma$ and $G = \SL(3,\bR)$.
  As before, we fix a discrete and faithful representation $\rho_0 \colon \Gamma \to \PGL(2,\bR)$.
  The corresponding action of $\Gamma$ on $\bH^2$ extends to the visual boundary $S^1 = \partial\bH^2$, which can be identified with the Gromov boundary $\partial\Gamma$ of $\Gamma$ as a word hyperbolic group.

Let $\F$ be the the \emph{flag manifold} in $\bR^3$, that is the space of all pairs $F = (F^{(1)},F^{(2)})$ (called flags) where $F^{(i)}$ is an $i$--dimensional subspace of $\bR^3$ and $F^{(1)} \subset F^{(2)}$.
Alternatively $\F$ is the homogeneous space $\SL(3,\bR)/B$ where $B$ is the subgroup of upper triangular matrices with determinant $1$.
It carries a natural action of $\SL(3,\bR)$.
Two flags $F, F'$ are \emph{transverse} if $F^{(1)} \not\subset F'^{(2)}$ and $F'^{(1)} \not\subset F^{(2)}$.

\begin{Def}\label{def:Anosov}
  A representation $\rho \colon \Gamma \to \SL(3,\bR)$ is \emph{Anosov} if
  \begin{enumerate}
  \item there exists a map
    \[\xi \colon S^1 \to \F \]
    which is $\rho$--equivariant and continuous, maps the attracting fixed point $\gamma_+$ of every infinite order element $\gamma \in \Gamma$ to an attracting fixed point of $\rho(\gamma)$, and $\xi(x)$ and $\xi(y)$ are transverse whenever $x \neq y$, and
  \item for every sequence $\gamma_n \to \infty$ in $\Gamma$ we have
    \[\frac{\sigma_1(\rho(\gamma_n))}{\sigma_2(\rho(\gamma_n))} \to \infty, \quad \text{and} \quad \frac{\sigma_2(\rho(\gamma_n))}{\sigma_3(\rho(\gamma_n))} \to \infty\]
    where $\sigma_1(A) \geq \sigma_2(A) \geq \sigma_3(A) \geq 0$ are the singular values of a matrix $A$.
  \end{enumerate}
  Such a map $\xi$ is unique and is called the \emph{limit curve} or \emph{boundary map} of $\rho$.
  We sometimes use the same notation $\xi$ for the projection of the limit curve to $\RP^2$.
\end{Def}

\begin{Rem}
  The definition of Anosov representations given here is a characterization from \cite{GGKW}, specialized to the case of triangle group representations into $\SL(3,\bR)$.
  Definitions of Anosov representations into more general Lie groups usually have an additional qualifier, e.g. $P$--Anosov, $i$--Anosov, Borel Anosov, projective Anosov etc. In $\SL(3,\bR)$ these notions are equivalent, so we can just call them ``Anosov''.
\end{Rem}

\begin{Fact}\label{fact:anosov_properties}
Anosov representations have a number of desirable properties, including

\begin{enumerate}
\item The set of Anosov representations is open in $\Hom(\Gamma,\SL(3,\bR))$.
\item If $\rho$ and $\rho'$ define the same point in $\chi(\Gamma,\SL(3,\bR))$, then $\rho$ is Anosov if and only if $\rho'$ is Anosov.
\item The image $\rho(\Gamma)$ of an Anosov representation $\rho$ is discrete in $\SL(3,\bR)$.
\item If $\rho$ is Anosov and $\gamma \in \Gamma$ has infinite order then $\rho(\gamma)$ has distinct real eigenvalues.
\item The boundary map varies continuously with the representation. More precisely, the map $\Hom_{\mathrm{Anosov}}(\Gamma,\SL(3,\bR)) \to C^0(S^1,\F)$ mapping a representation $\rho$ to its boundary map $\xi$ is continuous.
\end{enumerate}
\end{Fact}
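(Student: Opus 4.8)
Since this statement collects by-now standard properties of Anosov representations, the plan is to cite the general theory for the substantial items and to record the short arguments for those that follow immediately from \autoref{def:Anosov} and are used repeatedly later. Openness (item \itemnr{1}) and continuity of the boundary map in the representation (item \itemnr{5}) are established, in far greater generality, in \cite{GuichardWienhard,KLP1,GGKW,BPS}: the relevant point is that the singular-value gap condition in part \itemnr{2} of \autoref{def:Anosov} is equivalent to the original flow-theoretic definition of an Anosov representation, and in that formulation both statements are proved there. I would simply quote them; reproducing those proofs is out of scope, and items \itemnr{1}, \itemnr{2} and \itemnr{5} are in fact the only genuinely nontrivial content of the statement.

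Item \itemnr{2} also follows from the general theory. Two representations define the same point of $\chi(\Gamma,\SL(3,\bR))$ precisely when their semisimplifications are conjugate, so it suffices to know that $\rho$ is Anosov if and only if its semisimplification $\rho^{\mathrm{ss}}$ is. This rests on the fact that, for a word-hyperbolic group, being Anosov is equivalent to a uniform linear lower bound for $\log|\lambda_1(\rho(\gamma))| - \log|\lambda_2(\rho(\gamma))|$ and for $\log|\lambda_2(\rho(\gamma))| - \log|\lambda_3(\rho(\gamma))|$ in terms of the stable length of $\gamma$; these quantities depend only on the conjugacy class of $\rho(\gamma)$, hence only on $\rho^{\mathrm{ss}}$ (see \cite{GGKW,KLP1}). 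Applying this equivalence to $\rho$ and to $\rho'$ gives item \itemnr{2}.

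Items \itemnr{3} and \itemnr{4} I would prove directly. For \itemnr{3}: if $\rho(\Gamma)$ were not discrete there would be pairwise distinct $\gamma_n \in \Gamma$ with $\rho(\gamma_n)$ converging to some $g \in \SL(3,\bR)$; since $\Gamma$ is infinite and finitely generated, $\gamma_n \to \infty$ in $\Gamma$, so part \itemnr{2} of \autoref{def:Anosov} forces $\sigma_1(\rho(\gamma_n))/\sigma_2(\rho(\gamma_n)) \to \infty$, contradicting $\sigma_i(\rho(\gamma_n)) \to \sigma_i(g) \in (0,\infty)$. For \itemnr{4}: if $\gamma \in \Gamma$ has infinite order then, by part \itemnr{1} of \autoref{def:Anosov}, the flag $\xi(\gamma_+)$ is an attracting fixed point of $\rho(\gamma)$ acting on $\F$. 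A straightforward analysis of the linear dynamics on the flag manifold shows that an element of $\SL(3,\bR)$ with an attracting fixed point in $\F$ must have three eigenvalues of pairwise distinct modulus; in particular no two of them form a complex-conjugate pair, so all three are real, and they are distinct. The main obstacle, as noted, is simply that items \itemnr{1}, \itemnr{2} and \itemnr{5} are deep inputs that we do not reprove; everything else is a one- or two-line deduction from \autoref{def:Anosov}.
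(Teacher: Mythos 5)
The paper itself gives no proof of these items; it simply refers the reader to \cite{GuichardWienhard,GGKW}. For \itemnr{1}, \itemnr{2} and \itemnr{5} you likewise defer to the literature (for \itemnr{2} via the eigenvalue-gap characterization of Anosovness), so there is nothing to compare there. Your direct argument for \itemnr{3} is correct: in a finitely generated group any infinite set of distinct elements leaves every ball, so $\ell(\gamma_n)\to\infty$, and then the singular-value divergence in part \itemnr{2} of \autoref{def:Anosov} contradicts convergence of $\rho(\gamma_n)$ in $\SL(3,\bR)$.

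Your argument for \itemnr{4}, however, has a genuine gap. The claim that a matrix in $\SL(3,\bR)$ with an attracting fixed point in $\F$ must have eigenvalues of pairwise distinct modulus is false. Take $g = \begin{psmallmatrix}2&1&0\\0&2&0\\0&0&1/4\end{psmallmatrix}$. The flag $F_+ = \big([e_1],\Span(e_1,e_2)\big)$ is a topologically attracting fixed point of $g$ in $\F$ (the contraction inside the $e_1e_2$--plane is only polynomial in $n$, but that is all that ``attracting'' gives you), yet $g$ has the double eigenvalue $2$. What actually excludes such a Jordan block is the transversality built into \autoref{def:Anosov}\,\itemnr{1}, which your argument never invokes: $\xi(\gamma_-)$ must be a $\rho(\gamma)$--fixed flag transverse to $\xi(\gamma_+)$. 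For the $g$ above, the attracting line of $g^{-1}$ is $[e_3]$, and the only $g$--invariant plane containing $e_3$ is $\Span(e_1,e_3)$, which also contains $e_1 = F_+^{(1)}$; so $g$ admits no transverse pair of fixed flags. In general, transversality of the two fixed flags $\xi(\gamma_\pm)$ produces three $\rho(\gamma)$--invariant lines, whence $\rho(\gamma)$ is diagonalizable over $\bR$, and only \emph{then} does the attracting property force the moduli to be distinct, giving three distinct real eigenvalues. Alternatively, \itemnr{4} can simply be read off from the same uniform eigenvalue-gap characterization you already cite for \itemnr{2}. As written, though, the ``straightforward analysis'' step does not hold.
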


See \cite{GuichardWienhard,GGKW} for proofs of these facts and more information on Anosov representations.

The representations $\rho_F$, $\rho_{\mathrm{red}}$ and $\rho_{\mathrm{red}}'$ from \autoref{sec:reducible} are Anosov:
since $\iota$ and $\jmath$ map upper triangular matrices in $\PGL(2,\bR)$ and $\SL^\pm(2,\bR)$ into $B$, they induce maps $\RP^1 \to \F$, which are the boundary maps of $\rho_F$ respectively $\rho_{\mathrm{red}},\rho_{\mathrm{red}}'$.
Here $\RP^1$ is identified with $S^1 = \partial\bH^2$ as the boundary of the upper half--plane model.
It is easy to check that they satisfy all assumptions in \autoref{def:Anosov}.

It is well--known that all representations in the same component as $\rho_F$ (the Hitchin component) are Anosov \cites{ChoiGoldman,Labourie}.
More on Hitchin components of orbifold groups can be found in \cite{ALS}.

To prove that representations are Anosov, we will use another lemma from \cite{GuichardWienhard}, which says that \autoref{def:Anosov}(ii) is redundant for irreducible representations:

\begin{Fact}[{\cite[Proposition 4.10]{GuichardWienhard}}]\label{fact:irreducible_anosov}
  An irreducible representation $\rho \colon \Gamma \to \SL(3,\bR)$ is Anosov if and only if there exists a map $\xi \colon S^1 \to \F$ which is $\rho$--equivariant, continuous and transverse.
\end{Fact}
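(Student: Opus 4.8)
The forward implication is immediate: if $\rho$ is Anosov then its boundary map $\xi$ from \autoref{def:Anosov} is in particular $\rho$--equivariant, continuous, and transverse. For the converse, suppose $\rho$ is irreducible and $\xi = (\xi^{(1)},\xi^{(2)}) \colon S^1 \to \F$ is $\rho$--equivariant, continuous, and transverse. The plan is to recover the two missing ingredients of \autoref{def:Anosov}: that $\xi$ is \emph{dynamics--preserving} (sends $\gamma_+$ to an attracting fixed point of $\rho(\gamma)$), and that $\rho$ has uniformly diverging singular value gaps. Irreducibility is used only for the first of these.

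First I would extract the dynamics--preserving property from irreducibility. The span of $\{\xi^{(1)}(x) : x\in S^1\}$ and the intersection of $\{\xi^{(2)}(x) : x\in S^1\}$ are $\rho(\Gamma)$--invariant subspaces of $\bR^3$, so irreducibility forces the former to equal $\bR^3$ and the latter to be $0$. Fix an infinite order $\gamma\in\Gamma$. By equivariance $\rho(\gamma)$ fixes the transverse pair of flags $\xi(\gamma_+),\xi(\gamma_-)$, hence preserves the decomposition $\bR^3 = \xi^{(1)}(\gamma_+) \oplus \ell \oplus \xi^{(1)}(\gamma_-)$, where $\ell = \xi^{(2)}(\gamma_+)\cap\xi^{(2)}(\gamma_-)$; transversality makes this sum direct and $\ell$ one--dimensional, so $\rho(\gamma)$ is $\bR$--diagonalizable with eigenvalues $\mu_+,\mu_0,\mu_-$ on these three lines. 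For $x\neq\gamma_-$ the north--south dynamics of $\gamma$ on $S^1$ give $\gamma^n x\to\gamma_+$, hence $\rho(\gamma)^n\xi^{(1)}(x) = \xi^{(1)}(\gamma^n x) \to \xi^{(1)}(\gamma_+)$ by continuity; transversality with $x\neq\gamma_-$ says $\xi^{(1)}(x)\not\subset\xi^{(2)}(\gamma_-) = \ell\oplus\xi^{(1)}(\gamma_-)$, so $\xi^{(1)}(x)$ has nonzero component along $\xi^{(1)}(\gamma_+)$, and spanning supplies such an $x$ whose component along $\ell$ is also nonzero; comparing growth rates of the three coordinates then forces $|\mu_+| > |\mu_0|$. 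Running the same argument for $\gamma^{-1}$ (using $\xi^{(1)}(x)\not\subset\xi^{(2)}(\gamma_+)$ for $x\neq\gamma_+$) gives $|\mu_0| > |\mu_-|$. Thus $\rho(\gamma)$ is loxodromic with attracting fixed flag $\xi(\gamma_+)$, i.e.\ $\xi$ is dynamics--preserving (this also reproves \autoref{fact:anosov_properties}(iv) for such $\rho$).

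It remains to upgrade the now dynamics--preserving, transverse, continuous, equivariant map $\xi$ to the uniform singular value estimates of \autoref{def:Anosov}(ii). Here the plan is to form the flat $\bR^3$--bundle over the compact flow space $\big((\partial\Gamma\times\partial\Gamma)\setminus\Delta\big)\times\bR\,/\,\Gamma$ (with $\Delta$ the diagonal; this is the unit tangent bundle of the hyperbolic orbifold $\bH^2/\Gamma$), equipped with the continuous, flow--invariant splitting into the three line subbundles $(x^+,x^-)\mapsto\big(\xi^{(1)}(x^+),\,\xi^{(2)}(x^+)\cap\xi^{(2)}(x^-),\,\xi^{(1)}(x^-)\big)$, which is well defined and continuous by transversality. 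Over a periodic orbit labelled by a conjugacy class $[\gamma]$ the flow rates of these subbundles are $\log|\mu_+| > \log|\mu_0| > \log|\mu_-|$ by the previous step, so the splitting is dominated along periodic orbits; since periodic orbits are dense and the splitting is continuous on a compact base, a compactness argument promotes this to a uniformly dominated splitting, which is an Anosov splitting and hence yields \autoref{def:Anosov}(ii). The main obstacle is precisely this last step --- passing from periodic (or pointwise) domination to \emph{uniform} domination --- which is where the hyperbolicity of $\Gamma$, equivalently the transitivity of the geodesic flow, is essential; rather than carry it out I would invoke the general theory of \cite{GuichardWienhard,GGKW,BPS}, where exactly this implication is established.
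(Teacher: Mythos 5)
The paper does not prove this statement; it is a \emph{Fact} cited directly to \cite{GuichardWienhard} (see the remark after \autoref{fact:anosov_properties}: ``See \cite{GuichardWienhard,GGKW} for proofs of these facts''). So there is no in-paper proof to compare against, and your proposal should be judged on its own as a reconstruction of the cited argument.

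Your first step --- that irreducibility forces the continuous, equivariant, transverse map to be dynamics--preserving --- is correct and carried out in the standard way. The use of irreducibility to make $\Span\{\xi^{(1)}(x)\}=\bR^3$, the direct sum $\bR^3 = \xi^{(1)}(\gamma_+) \oplus \ell \oplus \xi^{(1)}(\gamma_-)$ from transversality, and the north--south dynamics to pin down the strict inequalities $|\mu_+|>|\mu_0|>|\mu_-|$ are all exactly right, as is the observation that the attracting flag of $\rho(\gamma)$ is $(\xi^{(1)}(\gamma_+),\,\xi^{(1)}(\gamma_+)\oplus\ell) = \xi(\gamma_+)$. One small point of hygiene: you need an $x\neq\gamma_-$ whose $\ell$--component is nonzero; since the $\ell$--component condition cuts out a nonempty open subset of $S^1$ and the $\xi^{(1)}(\gamma_+)$--component is automatically nonzero for every $x\neq\gamma_-$ by transversality, this exists --- you gesture at this with ``spanning supplies such an $x$'' but it would be worth spelling out.

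Your second step correctly identifies the reformulation over the flat bundle on $T^1(\bH^2/\Gamma)$ and the role of compactness, and you honestly flag that the passage from pointwise/periodic domination to \emph{uniform} domination is the technical heart which you would delegate to the literature. That delegation is appropriate here since the paper itself cites rather than proves the Fact. I would only caution that ``a compactness argument'' and ``transitivity of the geodesic flow'' somewhat undersell what is needed: continuity of a pointwise--dominated splitting over a compact, transitive base does \emph{not} by itself yield uniform domination; one needs hyperbolicity of the base in a stronger form (shadowing, the specification property, or the explicit constructions in \cite{GuichardWienhard} and \cite{BPS}). In fact the Guichard--Wienhard proof for irreducible representations does not pass through periodic orbits at all; it uses the transverse map to build a continuous positive ``gap'' function directly on the compact flow space. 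Either route works and either is a legitimate thing to cite, but the phrase ``a compactness argument promotes this'' reads as though the step were soft, when it is precisely the content of the cited theorem.

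Overall: the structure is right, step 1 is a correct and complete proof, step 2 is correctly identified and reasonably delegated, matching the paper's treatment of this statement as an external citation rather than something to be proved in-house.
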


While Anosov representations of general hyperbolic groups can have a finite non--trivial kernel, they are always faithful for triangle groups.

\begin{Lem}
If a representation $\rho \colon \Gamma \to \SL(3,\bR)$ is Anosov, then it is faithful.
\end{Lem}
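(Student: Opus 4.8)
The plan is to prove $\ker\rho = \{1\}$ in two steps: first that $\ker\rho$ is finite, and then that a nontrivial finite normal subgroup of $\Gamma$ cannot exist.

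\emph{Step 1: $\ker\rho$ is finite.} Suppose not. Since $\Gamma$ is finitely generated, every metric ball in its Cayley graph is finite, so the infinite subset $\ker\rho$ contains a sequence $\gamma_n$ with $\gamma_n \to \infty$ in $\Gamma$. Because $\rho$ is Anosov, condition (ii) of \autoref{def:Anosov} forces $\sigma_1(\rho(\gamma_n))/\sigma_2(\rho(\gamma_n)) \to \infty$. But $\rho(\gamma_n) = \id$, whose singular values all equal $1$, so this ratio is constantly $1$ — a contradiction. (Alternatively one may invoke \autoref{fact:anosov_properties}(iv): every infinite-order $\gamma \in \Gamma$ has $\rho(\gamma)$ with distinct real eigenvalues, hence $\rho(\gamma) \neq \id$, so $\ker\rho$ consists of torsion elements; as $\Gamma$ has a torsion-free finite-index subgroup, $\ker\rho$ then injects into the finite quotient and is finite.)

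\emph{Step 2: finiteness forces triviality.} Let $N = \ker\rho$, a finite normal subgroup of $\Gamma$, and suppose $N \neq \{1\}$. Using the faithful representation $\rho_0 \colon \Gamma \to \PGL(2,\bR) \cong \Isom(\bH^2)$ we regard $\Gamma$, and hence $N$, as a group of isometries of $\bH^2$. A nontrivial finite group of isometries of $\bH^2$ has nonempty fixed locus, which is a single point (if it contains a rotation or is dihedral) or a geodesic (if it is generated by a single reflection); in either case $\mathrm{Fix}(N)$ is a proper subset of $\bH^2$, and it is $\Gamma$-invariant because $N \trianglelefteq \Gamma$. But $\Gamma$ is a cocompact Fuchsian group, hence non-elementary: it fixes no point of $\bH^2$ and stabilizes no geodesic. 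This contradiction shows $N = \{1\}$, i.e. $\rho$ is faithful.

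Neither step presents a serious obstacle. The only points requiring mild care are the elementary facts that an infinite subgroup of a finitely generated group contains a sequence escaping to infinity, and that the fixed set of a finite group of isometries of $\bH^2$ is exactly a point or a geodesic; both are standard, and the argument is essentially immediate once \autoref{def:Anosov} and the non-elementariness of $\Gamma$ are in hand.
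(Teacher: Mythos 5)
Your proof is correct, and it takes a genuinely different route from the paper's. The paper handles both steps by citation: it invokes \cite[Theorem 1.7]{GuichardWienhard} for finiteness of $\ker\rho$, and then invokes a Coxeter-theoretic fact (any nontrivial normal subgroup of an irreducible infinite Coxeter group is infinite, from \cite{Paris}) to conclude triviality. You instead give direct arguments: Step~1 extracts finiteness straight from the singular-value gap in \autoref{def:Anosov}, which is precisely the proof behind the cited Guichard--Wienhard theorem restricted to this setting (your alternative via \autoref{fact:anosov_properties}(iv) plus Selberg's lemma is also fine, though it imports a harder tool than the problem needs). Step~2 replaces the Coxeter-group citation with the Fuchsian picture: via $\rho_0$ you view $\Gamma$ inside $\Isom(\bH^2)$, observe that a nontrivial finite isometry group has fixed locus a point or a geodesic, note that $\mathrm{Fix}(N)$ is $\Gamma$-invariant because $N\trianglelefteq\Gamma$, and contradict non-elementariness of the cocompact lattice $\Gamma$. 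Both arguments are valid; the paper's is shorter but outsources the work, while yours is self-contained and exploits the hyperbolic geometry already set up in \autoref{sec:hyperbolic} rather than the abstract Coxeter structure. Either is a perfectly good proof for this lemma.
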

\begin{proof}
By \cite[Theorem 1.7]{GuichardWienhard}, the kernel of $\rho$ is finite. But, since $\Gamma$ is an irreducible infinite Coxeter group, any non--trivial normal subgroup of $\Gamma$ is infinite (see e.g. Assertion 2 in the proof of \cite[Proposition 4.3]{Paris}). Here by irreducible, we mean that $\Gamma$ cannot be written as the direct product of two non--trivial subgroups each of which is generated by a subset of the generating set $\{s_1,s_2,s_3\}$. So, the kernel of $\rho$ is trivial, i.e. $\rho$ is faithful.
\end{proof}

\begin{Rem}\label{rem:discrete_faithful_closed}
  While the set of Anosov representations is open, the set of discrete and faithful representations is a closed subset of $\Hom(\Gamma,\SL(3,\bR))$.
  A proof is given in \cite[Theorem 1.1]{GoldmanMillson} or \cite[Theorem 8.4]{Kapovich}.
\end{Rem}

\subsection{Conics in $\RP^2$}\label{sec:conics}

\autoref{sec:nested_boxes} will make extensive use of conics in $\RP^2$, so this section serves to gather some basic facts about them.
All of this material is standard and the proofs are elementary.
They usually proceed by using projective transformations to get to a standard configuration, and then doing a simple computation.

\begin{Def}
  A \emph{conic} in $\RP^2$ is the projectivization of the null cone of a quadratic form $Q$ of signature $(2,1)$ or $(1,2)$ on $\bR^3$.
\end{Def}

In an affine chart a conic appears as an ellipse, parabola, or hyperbola.
An alternative definition may allow ``degenerate conics'' from degenerate qua\-dra\-tic forms, but we require them to be non--degenerate.
Then $\SL(3,\bR)$ acts transitively on the set of conics, so every conic is projectively equivalent to the ``standard conic'' defined by $x^2 + y^2 = z^2$.

If $C$ is a conic, its complement $\RP^2 \setminus C$ has two connected components.
One of them, the ``inside'', is homeomorphic to a disk, the other is a Möbius strip.
$\SL(3,\bR)$ even acts transitively on pairs $(C,w)$ where $C$ is a conic and $w \in \RP^2$ is a point on the inside of $C$.
A standard such pair is given by the conic $x^2 + y^2 = z^2$ and the point $x=y=0$.
The subgroup of $\SL(3,\bR)$ preserving a pair $(C,w)$ is isomorphic to $\rO(2)$.
The same $\rO(2)$ also preserves a projective line, which is given by the ``orthogonal complement'' of $w$ with respect to the form defining $C$.
In the standard model, this corresponds to the line $z = 0$.

This gives an important second characterization of conics as the generic orbits of rotations in $\RP^2$.
By ``rotations'' we generally mean any 1--parameter subgroup of $\SL(3,\bR)$ whose image is isomorphic to $\SO(2)$.
All of these subgroups are conjugate, and they correspond to rotations around the $z$--axis in our standard configuration.
Hence the orbits of such a rotation group are a point, a line, and a 1--parameter family of conics, $x^2 + y^2 = cz^2$ for all $c > 0$ in the standard model.

If $g \in \SL(3,\bR)$ has finite order at least 3 then it is contained in a unique rotation subgroup.
If further $w \in \RP^2$ is not the fixed point of $g$, then its orbit $\{g^n w \mid n \in \bZ\}$ either lies on a line or on a unique $g$--invariant conic.
If $g, h \in \SL(3,\bR)$ are involutions generating a finite dihedral group where $gh$ has at least order 3, then the dihedral group is contained in a unique subgroup isomorphic to $\rO(2)$, and any $gh$--invariant conic $C$ is invariant by the entire $\rO(2)$.

We say that a set of points in $\RP^2$ is \emph{in general position} if no three of them are collinear.
$\SL(3,\bR)$ acts simply transitively on the set of quadruples of points in general position (also called \emph{projective frames)}.
Given such a quadruple, the conics passing through these form a 1--parameter family called a \emph{pencil of conics}.
A standard quadruple and the quadratic forms defining the corresponding pencil of conics are given by
\[\begin{bmatrix}1 \\ 1 \\ 1 \end{bmatrix},\begin{bmatrix}-1 \\ 1 \\ 1 \end{bmatrix},\begin{bmatrix}1 \\ -1 \\ 1 \end{bmatrix},\begin{bmatrix}1 \\ 1 \\ -1 \end{bmatrix} \quad \text{and} \quad \quad a(x^2 - z^2) + b(y^2 - z^2) = 0.\]
Here $a$ and $b$ are any real parameters, giving us a two--dimensional space of quadratic forms, but they only have signature $(2,1)$ or $(1,2)$ if $a \neq 0$, $b \neq 0$, and $a+b \neq 0$.
Projectivizing these forms gives a one--dimensional family of conics with 3 connected components (characterized by whether $e_1$, $e_2$, or $e_3$ lies inside the conic).

These conics sweep exactly once over all points in $\RP^2$ not collinear with any two of the quadruple points.
Hence, for every quintuple of points in general position, there is a unique conic passing through all of them.
A direct consequence of this is that two different conics can intersect in at most 4 points.
In fact, any number of intersections from 0 to 4 is possible.
If two different conics intersect in 4 points, the intersections are necessarily transverse.

\section{Non--Anosov components}\label{sec:non_anosov}

In this section we show that an Anosov representation $\rho \colon \Gamma_{p_1,p_2,p_3} \to \SL(3,\bR)$ of a compact hyperbolic triangle reflection group is either of type $(1,1,1)$ or $p_1,p_2,p_3$ are all odd and $\rho$ is of type $(\frac{p_1-1}{2}, \frac{p_2-1}{2}, \frac{p_3-1}{2})$ (\autoref{def:cartan_matrix}).
The basic topological reason is that a loop with winding number greater than $2$ cannot be embedded in $\RP^2$.
We can restrict our attention to Coxeter representations as all others have a finite image (trivial, $\bZ/2\bZ$ or a dihedral group), and thus cannot be Anosov.

\begin{center}
  \begin{tikzpicture}
    \begin{scope}[xshift=0cm]
      \node at (0,-2.2) {$k = 1$};
      \node[blue] at (1.2,-1) {$\gamma$};
      
      \foreach \i in {90,141,193,244,296,347,399,450}
      {
        \begin{scope}[rotate=\i]
          \draw[thick,blue] (0:1) arc (-60:111:0.43);
        \end{scope}
      };
      
      \draw[fill=black!20] (0,0) circle (1);
      \foreach \i in {90,141,193,244,296,347,399,450} \fill (\i:1) circle (0.05);
      \node at (90:0.75) {$0$};
      \node at (141:0.75) {$1$};
      \node at (193:0.75) {$2$};
      \node at (244:0.75) {$3$};
      \node at (296:0.75) {$4$};
      \node at (347:0.75) {$5$};
      \node at (399:0.75) {$6$};
      \node at (0,0) {$S^2\!\setminus\!D$};
    \end{scope}
    
    \begin{scope}[xshift=8cm]
      \node at (0,-2.2) {$k = 3$};
      \node[blue] at (0.8,-1.2) {$\gamma$};
      
      \foreach \i in {90,141,193,244,296,347,399,450}
      {
        \begin{scope}[rotate=\i]
          \draw[thick,blue] (0:1) -- (-5:1.8);
          \draw[thick,blue] (154:1) -- (159:1.8);
        \end{scope}
      };
      
      \draw[fill=black!20] (0,0) circle (1);
      \foreach \i in {90,141,193,244,296,347,399,450} \fill (\i:1) circle (0.05);
      \node at (90:0.75) {$0$};
      \node at (141:0.75) {$5$};
      \node at (193:0.75) {$3$};
      \node at (244:0.75) {$1$};
      \node at (296:0.75) {$6$};
      \node at (347:0.75) {$4$};
      \node at (399:0.75) {$2$};
      \node at (0,0) {$S^2\!\setminus\!D$};
    \end{scope}
    \begin{scope}[xshift=4cm]
      \node at (0,-2.2) {$k = 2$};
      \node[blue] at (1.5,-1.1) {$\gamma$};
      
      \foreach \i in {90,141,193,244,296,347,399,450}
      {
        \begin{scope}[rotate=\i]
          \draw[dashed,blue] (0:1) arc (-60:162:0.85);
        \end{scope}
      };
      
      \draw[fill=black!20] (0,0) circle (1);
      \foreach \i in {90,141,193,244,296,347,399,450} \fill (\i:1) circle (0.05);
      \node at (90:0.75) {$0$};
      \node at (141:0.75) {$4$};
      \node at (193:0.75) {$1$};
      \node at (244:0.75) {$5$};
      \node at (296:0.75) {$2$};
      \node at (347:0.75) {$6$};
      \node at (399:0.75) {$3$};
      \node at (0,0) {$S^2\!\setminus\!D$};      
    \end{scope}
  \end{tikzpicture}
  \captionof{figure}{The idea of \autoref{lem:nonintersecting_symmetric_curves} in the case $p = 7$. If $R$ is the order $7$ rotation around the center of the disk $D$ (or equivalently, rotation around the center of $S^2 \setminus D$), an $R$--invariant injective curve $\gamma$ can pass through the 7 orbit points in the order required for $k=1$ or $k=3$, but not $k=2$.}
  \label{fig:jordan_curve}
\end{center}

\begin{Prop}\label{lem:nonintersecting_symmetric_curves}
  For integers $p \geq 3$ and $1 \leq k \leq \frac{p}{2}$, let $R \in \SL(3,\bR)$ be a rotation by the angle $\frac{2\pi}{p}$ (that is, $R$ has eigenvalues $1$ and $e^{\pm 2\pi i /p}$) and $\gamma \colon S^1 \to \RP^2$ an injective continuous curve satisfying $\gamma(t+ \frac{1}{p}) = R^k\gamma(t)$ for all $t \in S^1 \cong \bR / \bZ$. Then
  \begin{enumerate}
  \item either $k=1$ and $\gamma$ is null--homotopic,
  \item or $p$ is odd, $k = \frac{p-1}{2}$ and $\gamma$ is not null--homotopic.
  \end{enumerate}
\end{Prop}

\begin{Prf}
  Observe that $\gamma(\frac{1}{\gcd(k,p)}) = \gamma(0)$, so $k$ and $p$ must be coprime.
  In this case $k$ is invertible modulo $p$, i.e. there is an integer $1 \leq l < p$ with $kl \equiv 1$ mod $p$.
  If $p = 3$ the lemma is trivially true, so we can assume $p \geq 4$.

  We will use this simple consequence of the Jordan curve theorem: If $x,y,z,w$ are distinct points on the boundary of a disk in this cyclic order, and $x$ and $z$ as well as $y$ and $w$ are connected by curves in the closed disk, then these curves intersect.

  We pass to the universal cover $S^2 \to \RP^2$ and write $\iota \colon S^2 \to S^2$ for its non--trivial deck transformation, the antipodal involution. Let $\widehat\gamma \colon [0,1] \to S^2$ be one lift of $\gamma$, the other one being $\iota \circ \widehat\gamma$. The matrix $R$ still acts as a rotation by $\frac{2\pi}{p}$ on $S^2$. Its two fixed points cannot be in the image of $\widehat\gamma$ or $\iota \circ \widehat\gamma$. Choose one of them and let $D$ be the smallest $R$--invariant elliptic (i.e. bounded by a conic) closed disk around it which contains the images of $\widehat\gamma$ and $\iota \circ \widehat\gamma$. We can assume that $\partial D$ intersects $\widehat\gamma$ in at least one point $w$ (otherwise replace $D$ by $\iota D$). We may also assume that $\widehat\gamma(0) = w$.

  The symmetry of $\gamma$ can lift in two ways: either $\widehat\gamma(t+\frac{1}{p}) = R^k\widehat\gamma(t)$ or $\widehat\gamma(t+\frac{1}{p}) = \iota(R^k\widehat\gamma(t))$. By continuity one of these relations holds for all $t \in S^1$. In the first case, consider the arcs $\widehat\gamma|_{[0,1/p]}$ and $\widehat\gamma|_{[l/p,(l+1)/p]}$. Their endpoints are
  \[\textstyle\widehat\gamma(\{0,\frac{1}{p}\}) = \{w, R^kw\}, \quad \widehat\gamma(\{\frac{l}{p},\frac{l+1}{p}\}) = \{Rw, R^{k+1}w\}.\]
  If $k \neq 1$ then these four points are distinct and their cyclic order along $\partial D$ is $w$,$Rw$,$R^kw$, $R^{k+1}w$. So the arcs have to intersect, which is a contradiction to the injectivity of $\gamma$. Furthermore, $\widehat\gamma(1) = R^{kp}\widehat\gamma(0) = \widehat\gamma(0)$, so $\gamma$ is null--homotopic.

  Now assume the second case, $\widehat\gamma(t+\frac{1}{p}) = \iota(R^k\widehat\gamma(t))$. Then we consider instead the arcs $\widehat\gamma|_{[0,2/p]}$ and $\iota^l \circ \widehat\gamma|_{[l/p,(l+2)/p]}$. Their endpoints are
  \[\textstyle\widehat\gamma(\{0,\frac{2}{p}\}) = \{w,R^{2k}w\}, \quad \iota^l\widehat\gamma(\{\frac{l}{p},\frac{l+2}{p}\}) = \{Rw, R^{2k+1}w\}.\]
  If these points are distinct their cyclic order is $w,Rw,R^{2k}w,R^{2k+1}w$, which again contradicts $\gamma$ being injective. So $2k$ must be congruent to $-1$, $0$, or $1$ modulo $p$. As $k$ and $p$ are coprime, this only happens if $p$ is odd and $k = \frac{p-1}{2}$. In this case $\widehat\gamma(1) = \iota^pR^{kp}\widehat\gamma(0) = \iota\widehat\gamma(0)$ since $p$ is odd, so $\gamma$ is not null--homotopic.
\end{Prf}

\begin{Lem}\label{lem:orthogonal_anosov}
  Assume that one of $p_1,p_2,p_3$ equals $2$ and $\rho \colon \Gamma \to \SL(3,\bR)$ is a Coxeter representation which is Anosov.
  Then $\rho$ is of type $(1,1,1)$.
\end{Lem}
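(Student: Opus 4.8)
The plan is to push the boundary map down to a curve in $\RP^2$, apply \autoref{lem:nonintersecting_symmetric_curves} to it, and then eliminate the one non--Fuchsian possibility that survives by a signature computation.

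Since $p_1\le p_2\le p_3$, the hypothesis means $p_1=2$, and then $\tfrac1{p_2}+\tfrac1{p_3}<\tfrac12$ forces $p_2,p_3\ge 3$. Let $(q_1,q_2,q_3)$ be the type of $\rho$; as $1\le q_k\le\tfrac{p_k}2$ we have $q_1=1$, so we must show $q_2=q_3=1$. An Anosov representation is faithful, so $\rho(s_1s_2)$ and $\rho(s_3s_1)$ have orders exactly $p_3$ and $p_2$; each has $1$ as an eigenvalue (the $+1$--eigenspace in the proof of \autoref{lem:two_involutions}) and, by \eqref{eq:trrot}, trace $4\cos^2(\tfrac{q_3}{p_3}\pi)-1$ resp.\ $4\cos^2(\tfrac{q_2}{p_2}\pi)-1$, hence is conjugate to $R_3^{q_3}$ resp.\ $R_2^{q_2}$, where $R_k\in\SL(3,\bR)$ denotes a rotation by $\tfrac{2\pi}{p_k}$. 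Let $\xi\colon S^1\to\F$ be the boundary map of $\rho$ and set $\gamma(x)=\xi(x)^{(1)}\in\RP^2$; it is continuous, $\rho$--equivariant, and injective because $\xi$ is transverse.

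Now identify $S^1=\partial\Gamma$ with $\partial\bH^2$ via $\rho_0$. The element $s_1s_2$ acts as the elliptic rotation by $\tfrac{2\pi}{p_3}$ about the vertex of $T$ of angle $\tfrac{\pi}{p_3}$, hence acts on $\partial\bH^2$ conjugate to the rigid rotation of $S^1$ by $\tfrac1{p_3}$; likewise $s_3s_1$ acts conjugate to rotation by $\tfrac1{p_2}$. After conjugating $\rho$ and precomposing $\gamma$ with a homeomorphism of $S^1$ we may assume $\gamma(t+\tfrac1{p_3})=R_3^{q_3}\gamma(t)$, and \autoref{lem:nonintersecting_symmetric_curves} with $p=p_3$, $k=q_3$ gives: \emph{either} $q_3=1$ and $\gamma$ is null--homotopic, \emph{or} $p_3$ is odd, $q_3=\tfrac{p_3-1}2$ and $\gamma$ is not null--homotopic. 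The same argument applied to $s_3s_1$ gives the analogous dichotomy for $q_2$ and $p_2$. Since $\gamma$ is one injective curve and these transformations preserve its homotopy class, the two conclusions must agree about whether $\gamma$ is null--homotopic; therefore either $q_2=q_3=1$, and we are done, or $p_2,p_3$ are odd and $q_2=\tfrac{p_2-1}2$, $q_3=\tfrac{p_3-1}2$. In the latter case $q_1=\tfrac{p_1}2$, so by the proof of \autoref{lem:homeoToR} the Cartan matrix of $\rho$ is equivalent to
\[
  C=\begin{pmatrix}2&c_3&c_2\\ c_3&2&0\\ c_2&0&2\end{pmatrix},
  \qquad c_k=2\cos\!\Big(\tfrac{p_k-1}{2p_k}\pi\Big)=2\sin\tfrac{\pi}{2p_k}\le 2\sin\tfrac{\pi}{6}=1,
\]
using $p_k\ge 3$. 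Then $c_2^2+c_3^2\le 2<4$, so the leading principal minors $2$, $4-c_3^2$, $2(4-c_2^2-c_3^2)$ of $C$ are all positive and $C$ is positive definite. But the representative $\rho_C$ of \autoref{lem:chi_coxeter} preserves the bilinear form given by $C$ (each $\rho_C(s_i)$ is $-1$ times a $C$--orthogonal reflection), so $\rho_C(\Gamma)\subset O(C)\cong O(3)$, a compact group; since $\rho$ and $\rho_C$ have the same character, \autoref{fact:anosov_properties} makes $\rho_C$ Anosov, hence faithful with discrete image --- impossible for the infinite group $\Gamma$ inside a compact group. Hence $\rho$ is of type $(1,1,1)$.

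The main obstacle is the bookkeeping in the third step: turning ``$s_js_i$ is elliptic of order $p_k$ on $\partial\Gamma$'' into the normalized relation $\gamma(t+\tfrac1{p_k})=R_k^{q_k}\gamma(t)$, i.e.\ correctly matching the rotation number $\tfrac1{p_k}$ with the exponent $q_k$ read off from the eigenvalues of $\rho(s_js_i)$, and checking $1\le q_k\le\tfrac{p_k}2$ so that \autoref{lem:nonintersecting_symmetric_curves} applies with that exponent. Granting this dictionary, the rest is two applications of that lemma together with the elementary positive--definiteness computation above.
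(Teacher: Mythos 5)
Your proof is correct, but it takes a genuinely different route from the paper's. The paper proves this lemma by a direct signature analysis: since one $p_i=2$, the Cartan matrix is equivalent to a symmetric one, and the possible signatures $(3,0,0)$, $(2,1,0)$, $(2,0,1)$ are treated one by one (compactness contradiction, the Lee--Marquis lemma identifying the Hitchin component, and Bieberbach's theorem, respectively). You instead invoke \autoref{lem:nonintersecting_symmetric_curves} --- which the paper uses only in \autoref{prop:non_anosov}, after explicitly reducing the $p_i=2$ case to this lemma --- to narrow the type to $(1,1,1)$ or $(1,\tfrac{p_2-1}2,\tfrac{p_3-1}2)$ with $p_2,p_3$ odd, and then kill the latter by a single positive-definiteness computation. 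This buys economy: you avoid Bieberbach and the Lee--Marquis reflection-group lemma entirely, using only the compactness case that the paper also needs; the price is a dependence on the topological curve lemma.

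Two small remarks. First, the step "$\rho(s_1s_2)$ is conjugate to $R_3^{q_3}$" deserves a word: the trace only determines the rotation angle up to sign, so a priori $\rho(s_1s_2)\sim R_3^{\pm q_3}$. This is harmless --- replacing $R_3$ by $R_3^{-1}$ (still a rotation by $\tfrac{2\pi}{p_3}$ in the sense of \autoref{lem:nonintersecting_symmetric_curves}) or reversing the parametrization of $S^1$ restores the relation $\gamma(t+\tfrac1{p_3})=R_3^{q_3}\gamma(t)$, and the lemma's conclusion (the value of $k$ and the homotopy class of $\gamma$) is unchanged under orientation reversal --- but it is worth saying. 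Second, your signature computation is a special case of the first branch of the paper's case analysis, so it is not new machinery, just applied to a narrower target; the topological reduction is what makes the other two signature cases unnecessary.
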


\begin{Prf}
  We showed in the proof of \autoref{lem:homeoToR} that if one of $p_1,p_2,p_3$ equals $2$, then the Cartan matrix $A = ( \alpha_i(b_j) )_{1 \leq i, j \leq 3} = ( a_{ij} )_{1 \leq i, j \leq 3}$ is equivalent to a symmetric matrix.
  We may assume that $A$ is symmetric.
  Then there exists a scalar product $(\cdot,\cdot)$ in $\Span\{b_i\}_{1\leq i \leq 3}$ such that $(b_i,b_j) = a_{ij} = a_{ji}$, and this scalar product is $\rho(\Gamma)$-invariant.
  Since every principal $2 \times 2$ submatrix of $A$ is positive definite, the signature of $A$ is $(3,0,0)$, $(2,1,0)$ or $(2,0,1)$.
  Here, a symmetric matrix has \emph{signature $(p,q,r)$} if the triple $(p,q,r)$ is the number of positive, negative and zero eigenvalues (counted with multiplicity).

  In the case $(p,q,r) = (3,0,0)$, the image $\rho(\Gamma)$ lies in a compact subgroup, which is (a conjugate of) $\rO(3)$, hence $\rho$ cannot be faithful with discrete image.
  
  In the case $(p,q,r) = (2,1,0)$, the image $\rho(\Gamma)$ lies in $\SO(2,1)$ and acts convex cocompactly on the hyperbolic plane $\mathbb{H}^2$ (see e.g. \cite[Theorem 1.8]{GuichardWienhard}). Since $\partial\Gamma$ is homeomorphic to $S^1$, so is the limit set $\Lambda_\rho$ of $\rho(\Gamma)$, which lies in $\partial\mathbb{H}^2$. Consequently, $\Lambda_\rho = \partial\mathbb{H}^2$ and the convex hull of $\Lambda_\rho$ in $\mathbb{H}^2$ is the entire $\mathbb{H}^2$, i.e. the action of $\rho(\Gamma)$ on $\mathbb{H}^2$ is cocompact.
  After possibly negating the generators, we can apply \cite[Lemma 5.4]{LeeMarquis}, to obtain that $\rho(\Gamma)$ is a hyperbolic reflection group, hence $\rho$ is of Hitchin type.

  In the case of $(p,q,r) = (2,0,1)$, the image $\rho(\Gamma)$ lies in $\rO(2) \rtimes \bR^2$. This is a contradiction by Bieberbach's Theorem.
\end{Prf}

\begin{Prop}\label{prop:non_anosov}
  Let $\rho \colon \Gamma_{p_1,p_2,p_3} \to \SL(3,\bR)$ be a representation of type $(q_1,q_2,q_3)$ which is Anosov. Then either $q_1=q_2=q_3=1$ or $p_1,p_2,p_3$ are all odd and $q_i = \frac{p_i-1}{2}$ for all $i \in \{1,2,3\}$.
  Furthermore $t_\rho > 0$, so $\rho$ is in the Hitchin or Barbot component.
\end{Prop}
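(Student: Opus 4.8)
The plan is to reduce the general statement to the topological dichotomy of \autoref{lem:nonintersecting_symmetric_curves} by exhibiting, for a suitable conjugate $\gamma = s_i s_j s_k$ of the Coxeter element, a symmetry relation of the required form for the boundary map $\xi$. First I would dispose of the degenerate case: if some $p_i = 2$, then \autoref{lem:orthogonal_anosov} already forces $\rho$ to be of type $(1,1,1)$, so from now on assume $p_1, p_2, p_3 \geq 3$. In particular $\rho$ is then a Coxeter representation (all non-Coxeter representations have finite image and cannot be Anosov, as noted at the start of the section), so it has a type $(q_1,q_2,q_3)$ with $1 \leq q_k \leq \frac{p_k}{2}$.

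The core of the argument is as follows. Fix $k \in \{1,2,3\}$ and let $\{i,j,k\} = \{1,2,3\}$. The element $r_k = s_i s_j$ has order $p_k$ and $\rho(r_k)$ is a rotation: by \autoref{lem:two_involutions} its eigenvalues are $1, e^{\pm 2\pi i q_k/p_k}$, so $\rho(r_k)$ is conjugate in $\SL(3,\bR)$ to a rotation by $\frac{2\pi q_k}{p_k}$. Since $\rho$ is Anosov it is in particular discrete and faithful, so $r_k$ has order exactly $p_k$ in $\rho(\Gamma)$, which already forces $\gcd(q_k, p_k) = 1$. Let $\xi \colon S^1 \to \F$ be the boundary map and compose with the projection $\F \to \RP^2$, $F \mapsto F^{(1)}$, to get a continuous curve $\bar\xi \colon S^1 \to \RP^2$; by transversality of $\xi$ this composed curve is injective (if $\bar\xi(x) = \bar\xi(y)$ then $\xi(x)^{(1)} = \xi(y)^{(1)} \subset \xi(y)^{(2)}$, contradicting transversality unless $x = y$). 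The fixed point $(r_k)_+ \in S^1 = \partial\Gamma$ is taken by $\bar\xi$ to an attracting fixed point of $\rho(r_k)$ in $\RP^2$; but a rotation with eigenvalue $1$ of multiplicity one and two non-real eigenvalues of modulus $1$ has \emph{no} attracting fixed point in $\RP^2$. Hence $r_k$ cannot have infinite order — which it does not — but more to the point, I must instead use a genuinely infinite-order element. The right choice is to note that $\langle s_i, s_j\rangle$ acts on $S^1$ and $\bar\xi$ intertwines this action with the linear action of $\rho(s_i), \rho(s_j)$; the rotation $\rho(r_k)$ acts on the invariant circle through the relevant eigenvectors, and the $\mathbb{Z}/p_k$-equivariance $\bar\xi(r_k \cdot x) = \rho(r_k)\bar\xi(x)$ together with the identification $\partial\langle s_i,s_j\rangle$-orbit $\cong \mathbb{Z}/p_k \subset S^1$ gives precisely a curve on a circle satisfying $\gamma(t + \frac{1}{p_k}) = \rho(r_k)^{?}\gamma(t)$. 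After normalizing the rotation angle to $\frac{2\pi}{p_k}$ (i.e. replacing $r_k$ by a generator of $\langle r_k\rangle$ realizing the rotation by $\frac{2\pi}{p_k}$, which changes $q_k$ to $1$ on that generator and leaves a residual exponent), we are exactly in the situation of \autoref{lem:nonintersecting_symmetric_curves} with the rotation $R$ and exponent $q_k$ (mod $p_k$): an injective continuous curve $S^1 \to \RP^2$ with $\gamma(t + \frac1{p_k}) = R^{q_k}\gamma(t)$.

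Applying \autoref{lem:nonintersecting_symmetric_curves} then yields, for each $k$, that either $q_k = 1$ (and the restricted curve is null-homotopic in $\RP^2$) or $p_k$ is odd and $q_k = \frac{p_k-1}{2}$ (and the restricted curve is not null-homotopic). It remains to rule out a mixed situation, e.g. $q_1 = 1$ but $q_2 = \frac{p_2-1}{2}$: this is where the global topology of $\bar\xi$ on all of $S^1$ enters. The null-homotopy class of $\bar\xi \colon S^1 \to \RP^2$ is a single element of $\pi_1(\RP^2) = \mathbb{Z}/2$, and each generator $s_i$ (or rather each rotation subgroup) sees a restriction of $\bar\xi$ to an arc whose concatenation over a fundamental set of arcs reconstitutes $\bar\xi$ up to the known deck-transformation bookkeeping; so the homotopy class computed from the $k$-th vertex must be consistent for all three $k$. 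Concretely, I would argue that if $\bar\xi$ is null-homotopic then, lifting to $S^2$, $\bar\xi$ lifts to a closed curve and the analysis of case (i) of \autoref{lem:nonintersecting_symmetric_curves} applies at every vertex, forcing $q_k = 1$ for all $k$; whereas if $\bar\xi$ is not null-homotopic, its $S^2$-lift is an arc joining antipodal points and case (ii) applies at every vertex, forcing $p_k$ odd and $q_k = \frac{p_k-1}{2}$ for all $k$. This homogeneity step — showing the behaviour at one vertex dictates the behaviour at all three — is the main obstacle; it requires carefully tracking how the three rotation subgroups sit inside $\Gamma$ and how the lifts of $\bar\xi$ at the three vertices are related through the deck involution $\iota$ on $S^2$, essentially redoing the lift-choice analysis of the proof of \autoref{lem:nonintersecting_symmetric_curves} simultaneously for all vertices.

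Finally, for the claim $t_\rho > 0$: by \autoref{fact:anosov_properties}(iv), since the Coxeter element $s_1 s_2 s_3 \in \Gamma$ has infinite order, $\rho(s_1 s_2 s_3)$ has distinct real eigenvalues. In the type $(1,1,1)$ case this already rules out $t_\rho < 0$ by the analogue of \autoref{lem:coxeter_eigenvalues} for the Fuchsian family (where $t_\rho < 0$ produces complex eigenvalues), or more directly: the $t_\rho < 0$ locus of type $(1,1,1)$ can be identified and shown to have $\rho(s_1s_2s_3)$ with non-real eigenvalues. In the Barbot-type case, \autoref{lem:coxeter_eigenvalues} states explicitly that $t_\rho < 0$ forces two non-real eigenvalues of $\rho(s_1s_2s_3)$, contradicting \autoref{fact:anosov_properties}(iv); hence $t_\rho > 0$, placing $\rho$ in the Hitchin or Barbot component as defined. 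So the only remaining bit of care is the $t_\rho < 0$ sub-case of type $(1,1,1)$, which I would handle by the same discriminant computation as in \autoref{lem:coxeter_eigenvalues} applied to the Fuchsian Cartan family $c_k = 2\cos\frac{\pi}{p_k}$.
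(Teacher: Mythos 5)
Your argument for determining the type is essentially the paper's: exhibit the boundary curve in $\RP^2$ as satisfying the symmetry $\gamma(t+\tfrac1{p_k}) = R^{q_k}\gamma(t)$ for a rotation $R$ by $\tfrac{2\pi}{p_k}$, invoke \autoref{lem:nonintersecting_symmetric_curves}, and then use the fact that the homotopy class of the single curve $\bar\xi$ is a fixed element of $\pi_1(\RP^2)$ to conclude that the same alternative of \autoref{lem:nonintersecting_symmetric_curves} holds at all three vertices. Two remarks on this part. First, the middle of your second paragraph takes a detour through the (non-existent) ``attracting fixed point of $(r_k)_+$'' before self-correcting; this should simply be cut. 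Second, what you call ``the main obstacle'' — the consistency of the three applications of \autoref{lem:nonintersecting_symmetric_curves} — is not an obstacle at all: \autoref{lem:nonintersecting_symmetric_curves} already tells you, for each $k$, that $q_k=1$ exactly when $\bar\xi$ is null-homotopic and $q_k=\tfrac{p_k-1}{2}$ exactly when it is not. Since the null-homotopy of $\bar\xi$ is a single binary fact independent of $k$, the three conclusions are automatically consistent and there is nothing left to ``carefully track''. The paper disposes of this in one sentence.

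Where you genuinely diverge from the paper is the claim $t_\rho>0$ in the type $(1,1,1)$ case. You propose to redo the discriminant computation of \autoref{lem:coxeter_eigenvalues} with $c_k=2\cos\tfrac{\pi}{p_k}$, i.e.\ to show directly that $t_\rho<0$ forces $\rho(s_1s_2s_3)$ to have non-real eigenvalues. The paper instead gives a geometric argument: null-homotopy plus transversality of the boundary map forces $\xi^{(1)}(S^1)$ into an affine chart, whose convex hull is a properly convex $\rho(\Gamma)$-invariant domain on which $\rho(\Gamma)$ acts cocompactly, and then \cite[Lemma 5.4]{LeeMarquis} identifies $\rho$ as Hitchin. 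Your computational route is plausible, but note that it is not a verbatim rerun of the \autoref{lem:coxeter_eigenvalues} calculation: there the inequalities rely on $0<c_i\leq 1$ (a Barbot-type feature), whereas in the Hitchin-type range $c_i\in(1,2)$ and $c_1^2+c_2^2+c_3^2$ can approach $12$, so the derivative bound $6\sqrt{2u+3}+18-2(c_1^2+c_2^2+c_3^2)\geq 0$ on $u\geq-1$ becomes an equality in the limit and the curves $v^2=f(u)$ and $v^2=g_+(u)$ become tangent at $u_+=-1$. The separation one needs is therefore strictly marginal in that limit, and establishing it for all admissible $(p_1,p_2,p_3)$ would require more delicate estimates than the proof you are imitating. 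You should either carry this out explicitly or, better, use the paper's geometric route, which has the added virtue of producing the stronger conclusion that $\rho$ is actually in the Hitchin component rather than merely satisfying $t_\rho>0$.
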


\begin{Prf}
  We can assume $p_1,p_2,p_3 \neq 2$, otherwise this follows from \autoref{lem:orthogonal_anosov}.
  As $\rho$ is Anosov, it comes with an continuous, injective, and equivariant boundary map $\xi \colon S^1 \to \RP^2$.
  Let $R \in \SL(3,\bR)$ be the rotation in the 1--parameter subgroup containing $\rho(s_1s_2)$, but only by the angle $\frac{2\pi}{p_3}$, so that $\rho(s_1s_2) = R^{q_3}$.
  If we parametrize $\partial\Gamma_{p_1,p_2,p_3} = S^1$ by the unit interval so that the rotation $s_1s_2$ is a shift by $\frac{1}{p_3}$, then the assumptions of \autoref{lem:nonintersecting_symmetric_curves} are satisfied.

  So if $\xi$ is null--homotopic then $q_3 = 1$, while if $\xi$ is not null--homotopic $p_3$ must be odd and $q_3 = \frac{p_3-1}{2}$.
  We can repeat the argument for the rotations $s_2s_3$ and $s_3s_1$ to obtain the analogous constraints for $q_1$ and $q_2$.

    We claim that $t_\rho > 0$. In the case of $q_i = \frac{p_i-1}{2}$ for all $i \in \{1,2,3\}$, it follows from \autoref{lem:coxeter_eigenvalues} since $\rho(s_1s_2s_3)$ has distinct real eigenvalues by \autoref{fact:anosov_properties}(iv).
    Now we assume that $q_i = 1$ for all $i \in \{1,2,3\}$.
    Consider two lines in the image of the dual boundary map, splitting $\RP^2$ into two bigons.
    As $\xi$ is transverse, it intersects each of them exactly once and consists of two arcs between them.
    Since it is null--homotopic, both arcs must lie in the same (closed) bigon. So $\xi(S^1)$ is contained in an affine chart.
    Then its convex hull $C$ is a properly convex set preserved by $\rho(\Gamma)$.
    By the same reason as in the proof of \autoref{lem:orthogonal_anosov}, $\rho(\Gamma)$ acts properly discontinuously and cocompactly on the interior $C^\circ$. Hence $\rho$ is of Hitchin type again by \cite[Lemma 5.4]{LeeMarquis}. That is, $t_\rho > 0$.
  \end{Prf}

\section{Constructing a boundary map}\label{sec:limit_curve}

The setup in this section is more general than in the rest of the paper. Let $\Gamma$ be a cocompact discrete subgroup of the isometries of the hyperbolic plane $\bH^2$, i.e. the fundamental group of a closed hyperbolic $2$--orbifold. Its action extends to $\overline{\bH^2} = \bH^2 \sqcup S^1$.

Let $I \subset S^1$ be a proper closed interval and $T \subset \Gamma$ a finite subset which satisfies
\begin{enumerate}
\item $\gamma I \subsetneq I$ for all $\gamma \in T$,
\item if $\gamma \in T$ fixes an endpoint $x \in \partial I$, then $\gamma$ is hyperbolic and $x$ is its attracting fixed point,
\item $\bigcup_{\gamma \in T} \gamma I = I$.
\end{enumerate}

Let further $\rho \colon \Gamma \to \SL(3,\bR)$ be a representation, and let $\bx \subset \RP^2$ be a closed set with non--empty interior which is contained in an affine chart.
Assume there exists $N \in \bN$ and a special element $\overline t \in T$ such that
\begin{enumerate}
  \setcounter{enumi}{3}
\item $\rho(t) \bx \subset \bx$ for all $t \in T$,
\item $\rho(t_1) \cdots \rho(t_N) \bx \subset \bx^\circ$ if $t_1, \dots, t_N \in T$ and $t_N \neq \overline t$,
\item the intersection of the sets $\rho(\overline t^i)\bx$ for all $i \in \bN$ is a point.
\end{enumerate}
The goal of this section is to show under these assumptions

\begin{Prop}\label{prop:limit_map_exists}
  There exists a $\rho$--equivariant continuous map $\xi \colon S^1 \to \RP^2$ satisfying $\xi(I) \subset \bx$.
\end{Prop}

In \autoref{sec:definition_of_I_T_box} we will define a concrete interval $I$, a set $T$ of group elements and a closed set $\bx$ in $\RP^2$ which satisfy the assumptions above, in the case of Barbot representations of triangle groups.

The proof of \autoref{prop:limit_map_exists} needs some more setup. Fix a basepoint $o \in \bH^2$ and a finite generating set $S \subset \Gamma$, and denote by $\ell \colon \Gamma \to \bN_0$ the word length in $S$. We call a sequence $(g_n)_n \in \Gamma^\bN$ a quasigeodesic ray to $z \in S^1$ if $g_n o \to z$ and $(g_no)_n$ is a quasigeodesic ray in $\bH^2$, i.e. $\dd(o,g_no)$ is bounded by increasing affine linear functions of $n$ from below and above.

We call a sequence $(g_n)_n \in \Gamma^{\bN}$ an \emph{$I$--code} for $z \in S^1$ if $g_n^{-1}g_{n+1} \in T$ and $g_n^{-1}z \in I$ for all $n \geq 1$.
Note that $g_1$ can be any element of $\Gamma$ satisfying the second condition.
By assumption (iii) and the minimality of the $\Gamma$--action on $S^1$, there is an $I$--code for every $z \in S^1$, although it is usually not unique.

To construct the boundary map $\xi$ for a representation $\rho$, we will take an $I$--code $(g_n)_n$ for $z \in S^1$, apply $\rho$ to it, and then take $\xi(z) \in \RP^2$ to be the limit of $(\rho(g_n))_n$ in a suitable sense.
To make this work, we need to show that the limit exists and that it does not depend on the chosen $I$--code for $z$.
The latter part is general and the content of \autoref{lem:quasigeodesic} and \autoref{lem:quasigeodesics_have_same_limits}.
To show the existence of the limit in \autoref{lem:convergent_boxes}, we will use the set $\bx$ which gets mapped into itself by the elements of $\rho(T)$.
The objective of \autoref{sec:nested_boxes} will then be to find such a set for the representations we are interested in.

\begin{Rem}
  The uniqueness part is inspired by \cite[Section 9]{Sullivan} and the existence part follows the strategy of \cite{Schwartz}.
  A similar criterion for general Anosov representations is shown in \cite[Section 5]{BPS}.
\end{Rem}

What does it mean for a sequence $(g_n)_n \in \SL(3,\bR)^\bN$ to converge to $x \in \RP^2$? Let $\mu$ be the unique $\SO(3)$--invariant Borel probability measure on $\RP^2$. Then we say $g_n \to x$ if $(g_n)_*\mu \to \delta_x$ in the weak topology of measures, where $\delta_x$ is the Dirac measure at $x$. Explicitly, this means that
\[\int_{\RP^2} f \circ g_n\,\dd\mu \to f(x)\]
for every continuous function $f$ on $\RP^2$. This mode of convergence is equivalent to the ``flag convergence'' defined in \cite{KLP1}. If $g_n \to x$ and $g \in \SL(3,\bR)$, it is clear from the definition that $gg_n \to gx$, while on the other hand $g_n g \to x$.

\begin{Lem}\label{lem:quasigeodesic}
  Let $(g_n)_n \in \Gamma^{\bN}$ be an $I$--code for $z \in S^1$. Then $(g_no)_n$ is a quasigeodesic ray and $g_no \to z$ in $\overline{\bH^2}$.
\end{Lem}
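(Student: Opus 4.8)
The plan is to show that a code $(g_n)_n$ for $x$ determines a quasigeodesic ray in $\bH^2$ by controlling how the nested intervals $g_n I$ shrink. The key geometric input is that the generating set $T$ is finite and each $\gamma \in T$ maps $I$ \emph{strictly} inside itself, with equality of an endpoint only when $\gamma$ is hyperbolic with that endpoint as attracting fixed point (assumptions (i)--(ii) on $T$). First I would record the basic facts about the nested intervals: since $g_n^{-1}g_{n+1} \in T$ and $g_n^{-1}x \in I$, we have $g_{n+1}I = g_n(g_n^{-1}g_{n+1})I \subseteq g_n I$, so $(g_nI)_n$ is a decreasing sequence of closed subintervals of $I$ all containing $x$. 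The first thing to prove is that $\operatorname{diam}(g_nI) \to 0$ — equivalently $\bigcap_n g_nI = \{x\}$ — since otherwise $g_n o$ could not converge to $x$, and worse, the sequence might fail to be a quasigeodesic.

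For the shrinking, I would argue by compactness. Consider the set $K$ of pairs $(J, \gamma)$ where $J$ is a nondegenerate closed subinterval of $I$, $\gamma \in T$, and $\gamma J \subseteq J \subsetneq I$ — this is not literally compact, but the relevant quantity is the ratio by which $\gamma$ contracts $J$ in a fixed background metric (say the visual metric from $o$, or cross-ratio coordinates on $S^1$). Because $T$ is finite and each $\gamma I \subsetneq I$, there is a uniform bound: either $\gamma$ moves both endpoints of $I$ strictly inward, giving a definite contraction of the whole interval, or $\gamma$ fixes one endpoint $x_0 \in \partial I$, in which case (ii) says $\gamma$ is hyperbolic with attracting fixed point $x_0$, so $\gamma$ still contracts toward $x_0$ with a definite rate away from a neighborhood of $x_0$. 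The subtle point is that near a fixed endpoint the contraction rate degenerates, so a single application of $\gamma$ need not shrink $g_nI$ by a definite factor. The standard fix is to pass to the hyperbolic plane: translate the statement into the statement that $d(o, g_n o)$ grows at least linearly in $n$. Concretely, each step $g_{n+1} = g_n t_n$ with $t_n \in T$ displaces the basepoint by $d(o, t_n o) \le \max_{\gamma\in T} d(o,\gamma o) =: C$, giving the \emph{upper} linear bound $d(o, g_no) \le C n$ for free. For the \emph{lower} bound I would show that the nested intervals force $g_n o$ to escape every compact set at linear speed: since $g_n^{-1} I$ is an interval strictly containing $g_n^{-1}x$ with $g_n^{-1}g_{n+1} \in T$ shrinking it, and the only way the interval $g_nI$ fails to shrink is if infinitely many $t_n$ equal a single hyperbolic element $\overline t$ fixing an endpoint — but that case is exactly the situation where $g_n o$ marches off along the axis of $\overline t$ toward its attracting fixed point at linear speed. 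In all cases the word length $\ell(g_n^{-1})$, or equivalently $d(o, g_n o)$, is bounded below by a positive affine function of $n$.

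The cleanest way to organize this is a pigeonhole/subadditivity argument: because $T$ is finite, there is a uniform $m \in \bN$ and $\epsilon > 0$ such that any product $\gamma_1 \cdots \gamma_m$ of $m$ elements of $T$ satisfies $d(o, \gamma_1\cdots\gamma_m o) \ge \epsilon$ — indeed, if such a product had small translation length it would nearly fix $o$, but a product of $m$ elements of $T$ with $m$ large enough and small displacement would have to be (close to) the identity, which is impossible since $\Gamma$ is discrete and $T$ generates motions of $I$ with definite effect once we are away from endpoints; and the endpoint-fixing elements contribute honest hyperbolic translation. Chaining these blocks of length $m$ gives $d(o, g_{km}o) \ge k\epsilon$, hence $d(o, g_no) \ge \frac{\epsilon}{m} n - \epsilon$, the desired lower linear bound. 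Combined with the upper bound $d(o,g_no)\le Cn$, the sequence $(g_no)_n$ is a quasigeodesic ray in the sense defined above (distances squeezed between increasing affine functions of $n$). Finally, since $g_no \in \bH^2$ escapes to infinity and lies in the shrinking nested intervals $g_nI$ whose common intersection is $\{x\} \subseteq S^1$, the ray $g_no$ converges to $x$ in $\overline{\bH^2}$: any accumulation point of $(g_no)_n$ on $S^1$ must lie in $\overline{g_nI}$ for every $n$, hence equals $x$.

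I expect the main obstacle to be the lower linear bound on $d(o,g_no)$, specifically handling the degenerate contraction near a fixed endpoint of $I$: one must argue that the \emph{only} escape from uniform interval-shrinking is repeated application of a hyperbolic element along its axis, and that this alternative is itself a linearly-escaping quasigeodesic, so that the two mechanisms (uniform shrinking vs. axis translation) together still yield a quasigeodesic. Making the pigeonhole constant $m$ and the gap $\epsilon$ uniform — using only finiteness of $T$, discreteness of $\Gamma$, and the strict inclusions $\gamma I \subsetneq I$ — is the technical heart of the argument; everything else is bookkeeping with the nested intervals.
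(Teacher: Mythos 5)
The critical gap is the ``chaining'' step. You correctly identify that a per-step lower bound on displacement is needed, and you produce one: by discreteness, any non-trivial product of elements of $T$ moves $o$ by some definite $\epsilon>0$ (and one can check that non-trivial products of elements of $T$ are never the identity, since they strictly shrink $I$). But from the per-step bound $d(g_{i}o,g_{i+1}o)\geq\epsilon$ you then assert $d(o,g_{km}o)\geq k\epsilon$, and this does \emph{not} follow: the triangle inequality only gives an upper bound on the total displacement in terms of the consecutive ones, not a lower bound. A priori the individual steps could partially cancel. Your own pigeonhole argument, as written, gives nothing beyond $d(o,g_no)\geq\epsilon$ for all $n$, since any single step already displaces by $\epsilon$; the block length $m$ buys nothing. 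So the lower affine bound on $d(o,g_no)$, which you rightly flag as the technical heart, is exactly what is missing.

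The paper closes this gap with a clean geometric device that enforces the additivity you need. It enlarges $I$ slightly to $I'$ so that $\gamma I'\subset I'^\circ$ for all $\gamma\in T$ (assumptions (i) and (ii) guarantee this: the endpoint-fixing elements of $T$ are hyperbolic with attracting fixed point at that endpoint, hence push any slightly larger interval strictly inward). Then the geodesic $\alpha$ joining the endpoints of $I'$ satisfies $\dist(\gamma\alpha,\alpha)>0$ for each $\gamma\in T$, with a uniform positive minimum $C$ over the finite set $T$. Crucially, the translates $g_i\alpha$ bound a nested sequence of half-planes, so any path from $g_1\alpha$ to $g_n\alpha$ must cross all the intermediate geodesics; this makes the consecutive distances additive:
\[
d(g_no,g_1o)\geq\dist(g_n\alpha,g_1\alpha)\geq\sum_{i=1}^{n-1}\dist(g_{i+1}\alpha,g_i\alpha)=\sum_{i=1}^{n-1}\dist(g_i^{-1}g_{i+1}\alpha,\alpha)\geq C(n-1).
\]
The separating-geodesic structure is the coherence mechanism your argument lacks. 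The rest of your write-up (nested intervals, the upper bound $d(o,g_no)\leq Cn$, and the concluding identification of the limit point with $x$) matches the paper in spirit, though note the paper concludes the limit simply from $g_no$ lying ``in the direction of'' $g_nI$ for all $n$, without needing $\diam(g_nI)\to 0$ separately.
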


\begin{Prf}
  Let $I'$ be a slightly enlarged version of $I$ so that $\gamma I' \subset I'^\circ$ for all $\gamma \in T$. The existence of $I'$ is guaranteed by properties (i) and (ii), but $I'$ will not satisfy property (iii). Let $\alpha$ be the hyperbolic geodesic connecting the two points of $\partial I'$. Then $\dist(\gamma \alpha, \alpha) > 0$ for all $\gamma \in T$. Let $C$ be the minimum of these distances. Since the quasigeodesic property and the limit of $g_no$ do not depend on the basepoint $o$, we can assume that $o \in \alpha$. So
  \[d(g_no, g_1o) \geq \dist(g_n\alpha, g_1\alpha) \geq \sum_{i=1}^{n-1}\dist(g_{i+1} \alpha, g_i\alpha) = \sum_{i=1}^{n-1}\dist(g_i^{-1}g_{i+1} \alpha, \alpha) \geq C(n-1).\]
  Here the second step is due to the fact that the $g_i\alpha$ don't intersect each other, so the geodesic realizing the distance between $g_1\alpha$ and $g_n\alpha$ is split into segments by the other $g_i\alpha$.
  The inequality then shows that $(g_no)_n$ is a quasigeodesic ray (the upper bound is clear). Its limit in $S^1$ is in $g_n I$ for all $n$, so it must be $z$.
\end{Prf}

\begin{Lem}\label{lem:code_difference}
  Let $(g_n)_n, (g_n')_n \in \Gamma^\bN$ be quasigeodesic rays to $z \in S^1$. There exists $N \in \bN$ such that for every $n \in \bN$ there is an $m(n) \in \bN$ with
  \[\ell(g_{m(n)}^{-1} g_n') \leq N.\]
\end{Lem}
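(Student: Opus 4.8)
The plan is to transport the statement into $\bH^2$ via the Milnor--Švarc lemma and then apply the standard geometry of quasigeodesics in a Gromov hyperbolic space. Since $\Gamma$ acts cocompactly by isometries on the proper geodesic hyperbolic space $\bH^2$, the orbit map $\Gamma \to \bH^2$, $\gamma \mapsto \gamma o$, is a quasi-isometry: there are constants $\lambda \geq 1$ and $c \geq 0$ with $\tfrac1\lambda \ell(\gamma) - c \leq \dd(o,\gamma o) \leq \lambda\ell(\gamma) + c$ for all $\gamma \in \Gamma$. Because $\dd(o, g_m^{-1} g_n' o) = \dd(g_m o, g_n' o)$, it therefore suffices to produce a constant $D \geq 0$, depending only on the quasigeodesic parameters of $(g_n)_n$ and $(g_n')_n$, such that for every $n$ there is an index $m = m(n)$ with $\dd(g_n' o, g_m o) \leq D$; then $N := \lambda(D+c)$ does the job.

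To find such a $D$, I would first ``fill in'' each of the two quasigeodesic sequences to a quasigeodesic path in $\bH^2$ by joining consecutive orbit points $g_n o$ and $g_{n+1} o$, which lie at uniformly bounded distance, by geodesic segments. By the stability of quasigeodesics in the hyperbolic space $\bH^2$ (the Morse lemma), each resulting path lies within bounded Hausdorff distance of a geodesic ray; since $g_n o \to x$ and $g_n' o \to x$ in $\overline{\bH^2}$, these geodesic rays $c_1, c_2 \colon [0,\infty) \to \bH^2$ have the common endpoint $c_1(\infty) = c_2(\infty) = x$. Let $D_1$ be a bound for the Hausdorff distance between $\{g_n o : n \in \bN\}$ and the image of $c_1$, and $D_2$ the analogous bound for the primed sequence and $c_2$. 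Finally, two geodesic rays in $\bH^2$ with the same endpoint at infinity are asymptotic, so their images lie within some finite Hausdorff distance $D_3$ of each other (in the upper half-plane model with $x = \infty$ they are vertical rays, for which this is immediate).

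Now I would chain the three estimates: given $n$, the point $g_n' o$ is within $D_2$ of some $c_2(t)$, which is within $D_3$ of some $c_1(s)$, which in turn is within $D_1$ of some $g_m o$; hence $\dd(g_n' o, g_m o) \leq D_1 + D_2 + D_3 =: D$, a bound independent of $n$, and setting $m(n) := m$ completes the proof. There is no serious obstacle here --- this is just the usual fellow-traveling of asymptotic quasigeodesics --- and the only points needing care are (a) that ``quasigeodesic ray'' must be read as an honest $(\lambda,c)$-quasigeodesic, which for codes is exactly what the proof of \autoref{lem:quasigeodesic} supplies once one notes that the lower estimate there applies to any pair of indices and not only to $g_1$, and (b) that the constants $D_1, D_2, D_3$, and hence $D$ and $N$, depend only on the quasigeodesic parameters and not on $n$.
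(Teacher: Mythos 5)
Your proof is correct and follows essentially the same strategy as the paper: stability of quasigeodesics (Morse lemma), chaining distance estimates along reference geodesics, and finally using the quasi-isometry of the orbit map to convert back to word length. The only cosmetic difference is that the paper projects both sequences onto the single geodesic $ox$ and uses the $1$-Lipschitz projection to get density of the projected points, while you introduce one Morse-tracking geodesic ray per sequence and appeal to the fact that rays with the same endpoint at infinity in $\bH^2$ stay at bounded Hausdorff distance — these are interchangeable.
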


\begin{Prf}
  Say $(z_n)_n = (g_no)_n$ and $(z_n')_n = (g_n'o)_n$ are both $(K,C)$--quasigeodesic rays from $o$ to $z$, for some $K$ and $C$. The Morse lemma tells us that both are contained in the $R$--neighborhood of the geodesic $oz$, for some $R$. Denote by $\pi$ the closest point projection of $\bH^2$ onto this geodesic. Since $d(\pi(z_n), \pi(z_{n+1})) \leq d(z_n,z_{n+1}) \leq K + C$, every point on the ray $oz$ is at most distance $R' = \max\{(K+C)/2,d(o,\pi(z_1))\}$ from some $\pi(z_m)$.

  Now for every $n \in \bN$, choose $m(n)$ such that $d(\pi(z_{m(n)}), \pi(z_n')) \leq R'$, and therefore
  \[d(z_{m(n)}, z_n') \leq d(\pi(z_{m(n)}), \pi(z_n')) + 2R \leq R' + 2R.\]
  The statement of the lemma follows since the orbit map of $\Gamma$ is a quasiisometry from the word metric on $\Gamma$ given by $d_\Gamma(g,h) = \ell(g^{-1}h)$.
\end{Prf}

\begin{Lem}\label{lem:quasigeodesics_have_same_limits}
  Let $(g_n)_n, (g_n')_n \in \Gamma^\bN$ be quasigeodesic rays to $z \in S^1$. If $\rho(g_n) \to x$ for some $x \in \RP^2$, then also $\rho(g_n') \to x$.
\end{Lem}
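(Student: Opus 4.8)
The plan is to combine the previous two lemmas: \autoref{lem:code_difference} tells us that the two quasigeodesic rays stay boundedly close to each other along the geodesic $ox$, and we exploit this to show the $\rho$--images converge to the same point. First I would take the integer $N$ and the reindexing $m\colon\bN\to\bN$ furnished by \autoref{lem:code_difference}, so that $\ell(g_{m(n)}^{-1}g_n')\le N$ for all $n$. Set $h_n = g_{m(n)}^{-1}g_n'\in\Gamma$; this is an element of word length at most $N$, so it lies in the finite set $B_N = \{g\in\Gamma : \ell(g)\le N\}$. The key observation is then that $\rho(g_n') = \rho(g_{m(n)})\,\rho(h_n)$, and we want to show this converges to $y$ given that $\rho(g_{m(n)})\to y$ (which holds because $(m(n))_n$ is either cofinal in $\bN$, or bounded; in the latter case $x$ would be the limit of a bounded sequence, impossible since $x\in S^1$, so $m(n)\to\infty$ and hence $\rho(g_{m(n)})\to y$ as a subsequence of a convergent sequence — strictly, we should note $g_n\to y$ means every subsequence converges to $y$, so any reindexing does too).

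The remaining point is to pass the right multiplication by $\rho(h_n)$ through the limit. Here I would use the fact, stated just before \autoref{lem:quasigeodesic}, that if $p_n\to x$ in $\RP^2$ (in the sense of weak convergence of the pushed-forward measures $(p_n)_*\mu\to\delta_x$) and $g\in\SL(3,\bR)$ is \emph{fixed}, then $p_n g\to x$. The subtlety is that $\rho(h_n)$ is not a single fixed group element — it ranges over the finite set $\rho(B_N)\subset\SL(3,\bR)$. But a finite set is in particular compact, and the statement ``$p_n g\to x$ for fixed $g$'' upgrades to ``$p_n g_n\to x$ whenever $(g_n)_n$ stays in a compact subset of $\SL(3,\bR)$'': indeed for any continuous $f$ on $\RP^2$ and any $\varepsilon>0$, the family $\{f\circ g : g\in \rho(B_N)\}$ is finite, hence (trivially) equicontinuous and uniformly bounded, so $\big|\int f\circ(p_ng_n)\,\dd\mu - f(x)\big| \le \max_{g\in\rho(B_N)}\big|\int f\circ g\circ p_n\,\dd\mu - f(x)\big|$ — wait, more carefully: write $\int_{\RP^2} f\circ(p_n g_n)\,\dd\mu = \int_{\RP^2}(f\circ g_n)\,\dd\big((p_n)_*\mu\big)$, and since $(p_n)_*\mu\to\delta_x$ weakly while $f\circ g_n$ ranges over a finite (hence uniformly equicontinuous, uniformly bounded) family of functions, this tends to $f(x)$. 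Applying this with $p_n = \rho(g_{m(n)})$ and $g_n = \rho(h_n)$ gives $\rho(g_n') = \rho(g_{m(n)})\rho(h_n)\to y$, as desired.

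I do not expect any serious obstacle here; the only thing requiring care is the ``finite family of test functions'' argument in the previous paragraph, i.e. making precise that right-translation by elements of a compact (here finite) set is harmless for this mode of convergence. One clean way to phrase it: since $\rho(B_N)$ is finite, it suffices to prove the claim along each of the finitely many constant subsequences $h_n \equiv h$, and for a fixed $h$ the statement ``$p_n\to x \implies p_nh\to x$'' is exactly the observation recorded before \autoref{lem:quasigeodesic}; then a standard pigeonhole/diagonal argument over the finite index set $\rho(B_N)$ assembles these into the general statement. This completes the proof.
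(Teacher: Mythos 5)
Your proposal is correct and follows essentially the same route as the paper: decompose $g_n' = g_{m(n)}h_n$ via \autoref{lem:code_difference}, note $m(n)\to\infty$ so $\rho(g_{m(n)})\to y$, and absorb the right factor $\rho(h_n)$ using that it ranges over a finite set. The ``clean phrasing'' you give at the end — pass to the finitely many constant subsequences $h_n\equiv h$, where $p_n\to x\implies p_nh\to x$ applies directly, then note these cover the whole sequence — is exactly the paper's argument; your earlier equicontinuity formulation is a harmless repackaging of the same idea.
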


\begin{Prf}
  By \autoref{lem:code_difference} there is a sequence $m(n)$ such that $g_n' = g_{m(n)} h_n$ with the $h_n$ coming from a finite set. Clearly $m(n) \to \infty$ as $n \to \infty$, so $\rho(g_{m(n)}) \to x$. For every subsequence along which $h_n$ is constant we have $\rho(g_n') \to x$, so the same is true for the entire sequence.
\end{Prf}

\begin{Lem}\label{lem:vanishing_diam_flag_convergence}
  Let $A \subset \RP^2$ be closed with non--empty interior and $(g_n)_n \in \SL(3,\bR)^\bN$ a sequence satisfying $g_{n+1}A \subset g_n A$ as well as $\operatorname{diam}(g_nA) \to 0$ (in any Riemannian metric on $\RP^2$). Then $g_n \to x$ where $x$ is the unique element in the intersection $\bigcap_{n \in \bN} g_n A$.
\end{Lem}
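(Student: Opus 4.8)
The plan is to first identify the point $x$, and then establish the convergence $g_n\to x$ in the sense defined just before the lemma, namely weak convergence $(g_n)_*\mu\to\delta_x$ where $\mu$ is the $\SO(3)$-invariant probability measure on $\RP^2$. The heart of the argument will be an intermediate claim on singular values.

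First, since $\RP^2$ is compact and $A$ is closed, the sets $g_nA$ are nonempty compact and, by hypothesis, nested decreasing; combined with $\diam(g_nA)\to 0$ this makes $\bigcap_n g_nA$ a single point, which I call $x$, and note $x\in g_nA$ for every $n$. The main step is then to prove that $\sigma_1(g_n)/\sigma_2(g_n)\to\infty$. I would argue by contradiction: pass to a subsequence along which this ratio stays bounded, say by $M$, and write the singular value decomposition $g_n=k_na_nl_n$ with $k_n,l_n\in\SO(3)$ and $a_n=\diag(\sigma_1(g_n),\sigma_2(g_n),\sigma_3(g_n))$. Rescaling $a_n$ by $\sigma_1(g_n)^{-1}$ does not affect its action on $\RP^2$ and keeps it in a compact set of matrices, so after a further subsequence we may assume $k_n\to k$, $l_n\to l$ in $\SO(3)$ and $\sigma_1(g_n)^{-1}a_n\to a=\diag(1,\beta,\gamma)$ with $\beta\ge 1/M>0$; thus $a$ has rank at least $2$, with kernel contained in $\langle e_3\rangle$. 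Using that $A$ has nonempty interior, pick a closed ball $\overline{B'}\subseteq A^\circ$ avoiding the single point $l^{-1}([e_3])$. Then $l_n\overline{B'}\to l\overline{B'}$ lies, for $n$ large, in a fixed compact subset of $\RP^2\setminus\{[e_3]\}$, on which $\sigma_1(g_n)^{-1}a_n$ converges uniformly to $a$, so $g_n\overline{B'}=k_n\big(\sigma_1(g_n)^{-1}a_n\,(l_n\overline{B'})\big)$ converges in the Hausdorff metric to $k\,a(l\overline{B'})$. Since $a$, restricted to $\RP^2\setminus\ker a$, is a submersion onto its image (a projective line if $\gamma=0$, all of $\RP^2$ if $\gamma>0$), the set $a(l\overline{B'})$ contains a nonempty open subset of that image and hence has positive diameter — contradicting $\diam(g_n\overline{B'})\le\diam(g_nA)\to 0$.

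Granting the claim, the convergence follows. Again writing $g_n=k_na_nl_n$ and using that $l_n\in\SO(3)$ preserves $\mu$, one has $(g_n)_*\mu=(k_n)_*(a_n)_*\mu$. From $\sigma_1(g_n)/\sigma_2(g_n)\to\infty$ (hence also $\sigma_1(g_n)/\sigma_3(g_n)\to\infty$) one checks that $a_n[v]\to[e_1]$ for every line $[v]$ with nonzero first coordinate, i.e. for $\mu$-a.e. $[v]$, so dominated convergence gives $(a_n)_*\mu\to\delta_{[e_1]}$. Given any subsequence of $(g_n)$, extract a further one with $k_n\to k$; along it $(g_n)_*\mu\to k_*\delta_{[e_1]}=\delta_{k[e_1]}$. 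To see the limit point $y:=k[e_1]$ equals $x$, fix a closed ball $B_0\subseteq A$ with $c_0:=\mu(B_0)>0$: weak convergence forces $\mu\big(g_{n_j}^{-1}B(y,\varepsilon)\big)\to 1$, so for $j$ large this set meets $B_0$, yielding $p\in A$ with $g_{n_j}p\in B(y,\varepsilon)$; as $x$ and $g_{n_j}p$ both lie in $g_{n_j}A$ and $\diam(g_{n_j}A)\to 0$, we get $\dist(x,y)<2\varepsilon$ for $j$ large, hence $x=y$. So every subsequence of $(g_n)$ has a further subsequence with $(g_n)_*\mu\to\delta_x$, and since the space of probability measures on $\RP^2$ is metrizable, $(g_n)_*\mu\to\delta_x$; that is $g_n\to x$ with $x\in\bigcap_n g_nA$.

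The main obstacle is the singular value claim. The hypothesis only says the \emph{fixed} set $A$ gets squeezed to a point, and one must exploit that $A$ is genuinely two-dimensional to upgrade this to the strong ``flag contraction'' $\sigma_1/\sigma_2\to\infty$. The subtle case is when the rescaled limit $a$ of the diagonal parts is singular (rank exactly $2$), which is why the auxiliary ball $B'$ must be kept away from the kernel direction $[e_3]$: there $a$ still behaves like a submersion and so cannot collapse a two-dimensional set to something of zero diameter.
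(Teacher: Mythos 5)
Your proof is correct and follows essentially the same backbone as the paper's: singular value decomposition $g_n=k_na_nl_n$, use the nonempty interior of $A$ plus $\diam(g_nA)\to 0$ to force $\sigma_2(g_n)/\sigma_1(g_n)\to 0$, conclude $(a_n)_*\mu\to\delta_{[e_1]}$, and match the limit with the unique point in $\bigcap_n g_nA$ via a subsequence argument. Two places differ in execution. For the singular value step, the paper invokes a terse ``open rectangle in homogeneous coordinates'' computation on a single convergent subsequence, whereas you prove $\sigma_1/\sigma_2\to\infty$ as a standalone claim by contradiction, rescaling the diagonal part and explicitly handling the borderline rank-two limit by choosing a ball avoiding the kernel direction $l^{-1}[e_3]$; your version spells out the point the paper leaves implicit. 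For identifying the limit, the paper applies Portmanteau directly to the closed set $g_mA$ to get $\delta_x(g_mA)\ge\mu(A)>0$, hence $x\in g_mA$, whereas you pull back a small ball around the candidate limit $y$ and intersect it with a fixed ball $B_0\subset A$ to find a point of $A$ carried near $y$, then invoke the diameter bound; both work, the paper's being one line shorter. No gaps.
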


\begin{Prf}
  By compactness of $A$ the choice of Riemannian metric doesn't matter.
  So we work with the spherical metric on $\RP^2$.
  Let $g_n = k_n a_n l_n$ be a singular value decomposition for $g_n$, that is $k_n, l_n \in \SO(3)$ and $a_n$ is a diagonal matrix with entries $\lambda_{1,n},\lambda_{2,n},\lambda_{3,n}$ sorted by absolute values, so that $|\lambda_{1,n}| \geq |\lambda_{2,n}| \geq |\lambda_{3,n}|$.
  Passing to a subsequence, we can assume $k_n \to k$ and $l_n \to l$.

  Then $lA$ contains an open rectangle in homogeneous coordinates, which by an elementary computation is compressed to a point only if $\lambda_{2,n}/\lambda_{1,n} \to 0$.
  This implies that $(a_n)_*\mu$ converges to the Dirac measure at $[e_1] \in \RP^2$, hence $(g_n)_*\mu \to \delta_x$ with $x = k[e_1]$.

  Now whenever $n \geq m$ then $g_nA \subset g_mA$ by assumption, so (using \cite[Theorem 13.16]{KlenkeProbabilityTheory})
  \[\delta_x(g_mA) \geq \limsup_{n\to\infty} \,(g_n)_* \mu(g_mA) = \limsup_{n\to\infty} \mu(g_n^{-1}g_mA)\geq \mu(A) > 0,\]
  hence $x \in g_mA$ for all $m$. This $x$ is unique since $\operatorname{diam}(g_nA) \to 0$, so the whole sequence converges.
\end{Prf}

\begin{center}
  \begin{tikzpicture}[scale=0.8]
    \draw[fill=black!20,rounded corners=0.3cm] (-4,-2) -- (-4.75,-1) -- (-4,2) -- (2,2) -- (2.75,1) -- (2,-2) -- cycle;

    \begin{scope}[scale=0.5,rotate=15,xshift=-1cm]
      \draw[fill=black!40,rounded corners=0.15cm] (-4,-2) -- (-4.75,-1) -- (-4,2) -- (2,2) -- (2.75,1) -- (2,-2) -- cycle;
    \end{scope}

    \draw[thick] (-2.58,-1.27) -- (0.68,0.7);
    \draw[thick] (-2.08,-2) -- (-2.08,-1.58);

    \draw[thick,blue] (-0.5,-2.5) -- (-1.5,2.5);

    \fill[blue] (-1.4,2) circle (0.08);
    \fill[blue] (-1.145,0.725) circle (0.08);
    \fill[blue] (-0.75,-1.25) circle (0.08);
    \fill[blue] (-0.6,-2) circle (0.08);

    \node[blue,anchor=south west] at (-1.4,2) {$a$};
    \node[blue,anchor=south east] at (-1.145,0.725) {$x$};
    \node[blue,anchor=north west] at (-0.75,-1.25) {$y$};
    \node[blue,anchor=north east] at (-0.6,-2) {$b$};
    
    \node at (0,-0.5) {$\rho(h)\bx$};
    \node at (1.5,-1) {$\bx$};
    \node at (-1.8,-0.55) {$D$};
    \node at (-2.3,-1.75) {$d$};
  \end{tikzpicture}
  \captionof{figure}{The proof of \autoref{lem:convergent_boxes}. The cross--ratio $[a:x:y:b]$ is bounded by a function of the distances $d$ and $D$. As the cross--ratio is a projective invariant, the same upper bound holds for $\rho(g_{i_k-N})\bx$ and $\rho(g_{i_k})\bx$ in place of $\bx$ and $\rho(h)\bx$, if $g_{i_k-N}^{-1}g_{i_k} = h$. The set of such words is finite, so we get a uniform bound.}
  \label{fig:cross_ratios}
\end{center}

Now let $\bx$ be the set whose existence we assumed in the beginning of the section.

\begin{Lem}\label{lem:convergent_boxes}
  Let $(g_i)_i \in \Gamma^{\bN}$ be an $I$--code for $z \in S^1$.
  Then $\diam(\rho(g_i)\bx) \to 0$ and $\rho(g_i)$ converges to the unique point $y \in \bigcap_{i\in\bN} \rho(g_i)\bx$.
\end{Lem}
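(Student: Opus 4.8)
The plan is to show that the nested compact sets $\rho(g_i)A$ have diameter tending to zero; the convergence $\rho(g_i)\to y$ and the uniqueness of $y$ then follow at once from \autoref{lem:vanishing_diam_flag_convergence}, whose hypothesis is met since (i) gives $\rho(g_{i+1})A=\rho(g_i)\rho(g_i^{-1}g_{i+1})A\subseteq\rho(g_i)A$ as $g_i^{-1}g_{i+1}\in T$. Replacing $(g_i)_i$ by $(g_1^{-1}g_i)_i$, which is again a code and changes every $\rho(g_i)$ only by the fixed homeomorphism $\rho(g_1)$, we may assume $g_1=e$; write $t_i:=g_i^{-1}g_{i+1}\in T$, so that $\rho(g_{n+1})=\rho(t_1)\cdots\rho(t_n)$. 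Put $d:=\lim_i\diam(\rho(g_i)A)\ge 0$; it remains to show $d=0$.

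First I would treat the easy case in which $t_i=\overline t$ for all $i\ge i_0$. Then $\rho(g_n)A=\rho(g_{i_0})\rho(\overline t)^{\,n-i_0}A$ for $n\ge i_0$; by (iii) the sets $\rho(\overline t)^mA$ are nested decreasing with one--point intersection, so $\diam(\rho(\overline t)^mA)\to 0$, and applying the uniformly continuous homeomorphism $\rho(g_{i_0})$ of $\RP^2$ gives $d=0$.

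The substantial case is when $t_i\ne\overline t$ for infinitely many $i$. Applying (ii) to the $N$ letters ending at such an index gives $\rho(g_{i+1})A=\rho(g_{i-N+1})\rho(t_{i-N+1})\cdots\rho(t_i)A\subseteq\rho(g_{i-N+1})A^\circ$ for all large such $i$; choosing these indices $N$--separated and using the nesting, one extracts a subsequence of boxes $\rho(g_{n_k})A$ in which each is \emph{compactly} contained in the previous, each being obtained from its predecessor by post--composition with a map $\rho(h_k)$ for a word $h_k$ in $T$ satisfying $\rho(h_k)A\subseteq A^\circ$. The point is then to upgrade ``compactly nested, infinitely often'' to ``diameter $\to 0$''. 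This is not a soft statement: a decreasing sequence of planar sets each compactly inside the previous need not shrink to a point. One must use that all these boxes are images of the \emph{fixed} compact $A$, which lies in an affine chart, under compositions of the \emph{finitely many} projective maps $\rho(t)$, $t\in T$ (and, if needed, the code constraint $g_i^{-1}x\in I$ to keep the geometry in good position). Concretely, I would either run a singular--value--decomposition analysis in the spirit of the proof of \autoref{lem:vanishing_diam_flag_convergence}, or equip a suitable convex body containing (a slight enlargement of) $A$ with its Hilbert metric, so that a length--$N$ block ending in a non--$\overline t$ letter acts as a \emph{uniform} strict contraction (uniformity coming from finiteness of $T$ and (ii)); since the present case supplies infinitely many such blocks along the code, the Hilbert--diameters, hence the Euclidean diameters, of $\rho(g_n)A$ tend to $0$.

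The main obstacle is exactly this last step. A technicality inside it is that a single $\rho(t)$ need not preserve any fixed convex body---it only maps $A$ into $A$---so the convex body (and possibly a small enlargement of $A$ keeping (i)--(iii)) has to be chosen with care before any Hilbert--metric contraction applies; alternatively one argues by contradiction: if $d>0$ then $B:=\bigcap_i\rho(g_i)A$ has $\diam B=d>0$ and satisfies $B=\rho(g_i)B_i$ with $B_i:=\bigcap_{m>i}\rho(t_i\cdots t_{m-1})A\subseteq A$, and one extracts a contradiction from the infinitely many interior--nestings using the singular--value picture. Once $d=0$ is known, \autoref{lem:vanishing_diam_flag_convergence} completes the proof.
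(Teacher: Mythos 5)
Your proof is incomplete precisely where you flag it to be.  The easy case ($t_i = \overline t$ eventually) is handled correctly, and in the hard case you rightly isolate the real issue: a nested decreasing sequence of compact sets, each compactly inside an earlier one infinitely often, need not have vanishing diameter, and one must exploit the projective structure and the finiteness of $T$.  But you then sketch two possible strategies (a Hilbert--metric contraction, or an SVD analysis) without carrying either one out, and you yourself point out the obstacles — e.g.\ that no fixed convex body is preserved by a single $\rho(t)$, and that $A$ is not assumed convex so that a Hilbert metric on $A^\circ$ is not even defined.  As written the proposal therefore has a genuine gap at the crucial step.

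The paper closes this gap with a direct cross--ratio argument that sidesteps every obstacle you raised.  Fix an affine chart containing $A$ and the Euclidean metric there.  For each of the \emph{finitely many} products $h=t_1\cdots t_N$ with $t_N\neq\overline t$, condition~(ii) gives $\rho(h)A\subset A^\circ$, hence $\dist(\rho(h)A,\,\RP^2\setminus A^\circ)>0$; a short computation then bounds $[a\!:\!x\!:\!y\!:\!b]\le(1+D/d)^2$ uniformly for all collinear quadruples with $x,y\in\rho(h)A$ and $a,b\in\RP^2\setminus A^\circ$.  Now suppose $\diam(\rho(g_i)A)\to c>0$.  Take $x_i,y_i\in\rho(g_i)A$ realizing that diameter and let $a_i,b_i$ be the nearest boundary points of the larger box $\rho(g_{i-N})A$ on the line $x_iy_i$, ordered $a_i,x_i,y_i,b_i$.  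Since all four gaps $|y_i-x_i|,|b_i-a_i|\to c$ while $|x_i-a_i|,|b_i-y_i|\to 0$, the cross ratio $[a_i\!:\!x_i\!:\!y_i\!:\!b_i]\to\infty$.  Passing to a subsequence $i_k$ with $t_{i_k-1}\neq\overline t$ and $i_k\ge i_{k-1}+N$, the element $h=g_{i_k-N}^{-1}g_{i_k}$ is a length--$N$ product in $T$ ending in a non--$\overline t$ letter, and projective invariance of the cross ratio pulls each $[a_{i_k}\!:\!x_{i_k}\!:\!y_{i_k}\!:\!b_{i_k}]$ back to one of the uniformly bounded quantities above — a contradiction.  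This is in the same spirit as your Hilbert--metric idea (the cross ratio is exactly the Hilbert--metric ingredient), but because it compares each pair of nested boxes to the single reference pair $(\rho(h)A,A)$ rather than attempting a composed contraction estimate, it requires no convexity of $A$, no auxiliary preserved body, and no step passing from a Hilbert to a Euclidean diameter.
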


\begin{Prf}
  We need to distinguish two cases: either $g_{i-1}^{-1}g_i = \overline t$ for all but finitely many $i$, or not.
  In the first case, the assumptions in the beginning of the section tell us that the intersection of the sets $\rho(g_i)\bx$ is a single point, hence $\diam(\rho(g_i)\bx) \to 0$.
  With \autoref{lem:vanishing_diam_flag_convergence} this proves the lemma.
  So we now assume that $g_{i-1}^{-1}g_i \neq \overline t$ for infinitely many $i$.

  Fix an affine chart containing $\bx$ and work with the Euclidean metric in this chart.
  Let $h = t_1 \cdots t_N$ be any product of $N$ elements of $T$ with $t_N \neq \overline t$, and let $x, y \in \rho(h)\bx$ as well as $a,b \in \RP^2 \setminus \bx^\circ$ be such that $a,x,y,b$ lie on a projective line in that order.
  Let $D = \diam(\rho(h)\bx)$ and let $d$ be the minimal distance between $\rho(h)\bx$ and $\RP^2 \setminus \bx^\circ$, which is positive since $\rho(h)\bx \subset \bx^\circ$.
  Then the cross ratio satisfies
  \[[a:x:y:b] = \frac{|y-a||b-x|}{|x-a||b-y|} = \left(1 + \frac{|y-x|}{|x-a|}\right)\left(1 + \frac{|y-x|}{|b-y|}\right) \leq (1 + D/d)^2.\]
  Doing this for any $h$ of this form gives a uniform upper bound on these cross ratios.

  We want to show that $\operatorname{diam}(\rho(g_i) \bx) \to 0$ and then employ \autoref{lem:vanishing_diam_flag_convergence}.
  It is clear that this sequence is non--increasing.
  Assume it converges to $c > 0$.
  Then choose, for every $i$, points $x_i, y_i \in \rho(g_i) \bx$ with $|y_i - x_i| = \operatorname{diam}(\rho(g_i)\bx)$.
  Let $a_i,b_i$ be the closest points of the boundary of $\rho(g_{i-N})\bx$ on the projective line through $x_i$ and $y_i$ in either direction, so that the points are ordered $a_i,x_i,y_i,b_i$.

  Then $|y_i - x_i| \to c$ and $|y_i - x_i| < |b_i - a_i| \leq \operatorname{diam}(\rho(g_{i-N})\bx)$, so also $|b_i - a_i| \to c$. By the way the points are ordered, $|y_i - a_i|$ and $|b_i - x_i|$ must also converge to $c$, while $|x_i - a_i|$ and $|b_i - y_i|$ go to $0$.
  Hence the cross ratio $[a_i:x_i:y_i:b_i]$ goes to $\infty$.

  Now choose a subsequence $(g_{i_k})$ for which $g_{i_k-1}^{-1}g_{i_k} \neq \overline t$, and also $i_k \geq i_{k-1} + N$.
  Then since the cross ratio is a projective invariant and $g_{i_k-N}^{-1}g_{i_k}$ is a product of $N$ elements of $T$, the last one different from $\overline t$, the cross ratio $[a_{i_k}:x_{i_k}:y_{i_k}:b_{i_k}]$ equals one of the cross ratios we bounded above, for every $k$.
  This is a contradiction, so $\diam(\rho(g_i)\bx) \to 0$ and $\rho(g_i)$ converges by \autoref{lem:vanishing_diam_flag_convergence}.
\end{Prf}

\begin{Def}
  We define the map
  \[\xi \colon S^1 \to \RP^2\]
  by requiring that $\rho(g_n) \to \xi(z)$ for every $I$--code $(g_n)_n \in \Gamma^{\bN}$ for $z \in S^1$.
\end{Def}

We know that such an $I$--code exists for every $z \in S^1$ and that $\rho(g_n)$ converges by \autoref{lem:convergent_boxes}. The limit is independent of the choice of $I$--code by \autoref{lem:quasigeodesic} and \autoref{lem:quasigeodesics_have_same_limits}.
More generally, $\rho(g_n) \to \xi(z)$ for any quasigeodesic ray $(g_n)_n \in \Gamma^{\bN}$ such that $g_no \to z$. So $\xi$ is well--defined.
Also note that every $z \in I$ has an $I$--code $(g_n)_n$ with $g_1 = 1$, so $\xi(I) \subset \bx$.

\begin{Lem}
  $\xi$ is $\rho$--equivariant.
\end{Lem}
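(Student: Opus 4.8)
The goal is to show that the map $\xi\colon S^1\to\RP^2$ defined via codes is $\rho$--equivariant, i.e.\ $\xi(\gamma x)=\rho(\gamma)\xi(x)$ for all $\gamma\in\Gamma$ and $x\in S^1$.

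The plan is to exploit the flexibility in the choice of code, together with the already--established fact that $\rho(g_n)\to\xi(x)$ for \emph{any} quasigeodesic ray $(g_n)_n$ with $g_no\to x$, not just for codes. First I would fix $x\in S^1$ and a code $(g_n)_n$ for $x$, so that by definition $\rho(g_n)\to\xi(x)$. Given $\gamma\in\Gamma$, consider the sequence $(\gamma g_n)_n$. Since $(g_no)_n$ is a quasigeodesic ray to $x$ (by \autoref{lem:quasigeodesic}) and $\gamma$ acts on $\bH^2$ by an isometry, $(\gamma g_n o)_n$ is again a quasigeodesic ray, now converging to $\gamma x$ in $\overline{\bH^2}$. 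Hence, by the remark following the definition of $\xi$ (that $\rho(h_n)\to\xi(z)$ for any quasigeodesic ray $(h_n)_n$ with $h_no\to z$), we get $\rho(\gamma g_n)\to\xi(\gamma x)$. On the other hand, from $\rho(g_n)\to\xi(x)$ and the elementary property of flag convergence noted earlier in the section --- namely that $g_n\to y$ implies $g'g_n\to g'y$ for fixed $g'\in\SL(3,\bR)$ --- we obtain $\rho(\gamma)\rho(g_n)=\rho(\gamma g_n)\to\rho(\gamma)\xi(x)$. Since limits in this mode of convergence are unique (the target $\RP^2$ is Hausdorff and weak limits of probability measures are unique), $\xi(\gamma x)=\rho(\gamma)\xi(x)$.

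The only point that needs a little care is that $(\gamma g_n)_n$ really is a quasigeodesic ray and that $\xi(\gamma x)$ is computed correctly from it: this is where I would invoke the sentence ``More generally, $\rho(g_n)\to\xi(x)$ for any quasigeodesic ray $(g_n)_n$ with $g_no\to x$,'' which was justified by \autoref{lem:quasigeodesics_have_same_limits} together with \autoref{lem:quasigeodesic}. There is no genuine obstacle here --- the lemma is essentially a formal consequence of the construction --- so the ``hard part'' is merely to phrase it cleanly; all the real work was in \autoref{lem:convergent_boxes} and \autoref{lem:quasigeodesics_have_same_limits}.

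\begin{Prf}
  Let $x \in S^1$ and let $(g_n)_n \in \Gamma^\bN$ be a code for $x$, so that $\rho(g_n) \to \xi(x)$.
  Fix $\gamma \in \Gamma$.
  By \autoref{lem:quasigeodesic}, $(g_no)_n$ is a quasigeodesic ray with $g_no \to x$, so $(\gamma g_n o)_n$ is again a quasigeodesic ray and $\gamma g_n o \to \gamma x$ in $\overline{\bH^2}$.
  As noted above, $\rho$ applied to any quasigeodesic ray converging to a point $z \in S^1$ converges to $\xi(z)$, so $\rho(\gamma g_n) \to \xi(\gamma x)$.
  On the other hand, since $\rho(g_n) \to \xi(x)$ and left multiplication by a fixed element of $\SL(3,\bR)$ is continuous for this mode of convergence, $\rho(\gamma)\rho(g_n) = \rho(\gamma g_n) \to \rho(\gamma)\xi(x)$.
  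By uniqueness of the limit, $\xi(\gamma x) = \rho(\gamma)\xi(x)$.
\end{Prf}
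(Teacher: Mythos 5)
Your proof is correct and follows essentially the same approach as the paper's: fix a code/quasigeodesic ray for $x$, observe that left multiplication by $\gamma$ gives a quasigeodesic ray to $\gamma x$, and use the remark that $\xi$ can be computed from any such ray together with the continuity of left multiplication for flag convergence. The paper's version is just a more compressed rendering of the same argument.
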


\begin{Prf}
  Let $z \in S^1$ and $g \in \Gamma$. If $(g_n)_n \in \Gamma^{\bN}$ is a quasigeodesic ray going to $z$, then $(gg_n)_n$ is a quasigeodesic ray going to $gz$. So
  \[\xi(gz) \leftarrow \rho(g g_n) = \rho(g)\rho(g_n) \rightarrow \rho(g)\xi(z). \qedhere\]
  This shows equivariance.
\end{Prf}

\begin{Lem}
  $\xi$ is continuous.
\end{Lem}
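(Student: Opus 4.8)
The plan is to prove the local statement that for every $x \in S^1$ and every neighborhood $U$ of $\xi(x)$ in $\RP^2$ there is a neighborhood $V$ of $x$ in $S^1$ with $\xi(V) \subseteq U$. Since the $\Gamma$--action on $S^1 = \partial\bH^2$ is minimal, the nonempty open $\Gamma$--invariant set $\bigcup_{\gamma\in\Gamma}\gamma I^\circ$ is all of $S^1$; as $\xi$ is $\rho$--equivariant and each $\rho(\gamma)$ is a homeomorphism of $\RP^2$, it suffices to treat $x \in I^\circ$, where we may choose a code $(g_n)_n$ for $x$ with $g_1 = 1$.

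Two observations reduce the problem to a statement about the arcs $g_nI$. First, whenever $g \in \Gamma$ satisfies $g^{-1}x \in I$, one has $\xi(gI) \subseteq \rho(g)A$: for $z \in gI$ the element $g$ is the first term of a code $(g_m^z)_m$ for $z$ (extend it using the covering property $\bigcup_{t\in T}tI = I$), so \autoref{lem:convergent_boxes} gives $\xi(z) \in \bigcap_m\rho(g_m^z)A \subseteq \rho(g_1^z)A = \rho(g)A$. Second, \autoref{lem:convergent_boxes} also yields $\diam(\rho(g_n)A) \to 0$ and $\xi(x) \in \bigcap_n\rho(g_n)A$, so $\rho(g_n)A \subseteq U$ once $n$ is large. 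Combining the two, it is enough to find $n$ for which $x$ lies in the interior of the arc $g_nI$; then $V := (g_nI)^\circ$ works.

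To produce such an $n$, I would pass to the slightly enlarged interval $I'$ from the proof of \autoref{lem:quasigeodesic}, which satisfies $\gamma I' \subset (I')^\circ$ for all $\gamma\in T$. Applied to the code steps $g_n^{-1}g_{n+1}\in T$, this gives $g_{n+1}I' \Subset (g_nI')^\circ$, and since $(g_no)_n$ is a quasigeodesic ray to $x$ by \autoref{lem:quasigeodesic}, the isometry $g_n$ carries the geodesic joining the two points of $\partial I'$ to a geodesic converging to $x$, whence $\diam(g_nI') \to 0$. Therefore $x \in g_{n+1}I' \subseteq (g_nI')^\circ$, so the sets $(g_nI')^\circ$ form a neighborhood basis of $x$. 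The remaining, and most delicate, point is to bridge the gap between $g_nI'$ and $g_nI$: a point $z$ with $g_n^{-1}z \in I'\setminus I$ lies in the thin collar of $I'$ around an endpoint $p \in \partial I$, and one must still show that $\xi(z)$ is close to $\xi(x)$. This is where the hypotheses on $I$ and $T$ enter: near $p$ the relevant elements of $T$ act as hyperbolic isometries with $p$ as attracting fixed point, so the associated boxes $\rho(\cdot)A$ collapse, and a careful bookkeeping of the finitely many translates of $I$ meeting a neighborhood of $x$ produces a genuine neighborhood $V$ on which $\xi$ stays inside $U$. Carrying out this endpoint analysis precisely is the main work of the argument.
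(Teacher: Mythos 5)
Your first two observations are correct and match the paper's setup: $\xi(gI) \subseteq \rho(g)A$ for every $g \in \Gamma$ (from $\xi(I)\subset A$ and equivariance), and $\diam(\rho(g_n)A) \to 0$ by \autoref{lem:convergent_boxes}. The gap is in the reduction ``it is enough to find $n$ for which $x$ lies in the interior of $g_nI$'': such an $n$ need not exist. Once $g_n^{-1}x$ lands on an endpoint $p$ of $I$, it stays on $\partial I$ for all later $n$, because no $t\in T$ can satisfy $p \in tI^\circ$ (the set $tI^\circ$ is open in $S^1$ and contained in $I$, while $p \in \partial I$); so for such $x$ every $g_nI$ is only a one--sided arc around $x$. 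The attempted repair via the enlarged interval $I'$ does not help, and you yourself point to the reason: for $z$ with $g_n^{-1}z \in I' \setminus I$, the element $g_n$ is not a valid code prefix for $z$, so nothing bounds $\xi(z)$ by $\rho(g_n)A$. The heuristic that ``boxes collapse near $p$'' does not close this either, since the underlying obstruction is that the codes of $z$ and $x$ can diverge at a very early step.

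The paper resolves this differently, by making a careful choice of code rather than enlarging $I$. Whenever $g_n^{-1}x$ is in $I^\circ$ but on the boundary of every arc $tI$ containing it, there are at least two admissible $t\in T$, and one picks the one that makes $x$ the clockwise boundary point of $g_{n+1}I$; this choice then persists for all larger $n$. The arcs $g_nI$ are then shrinking clockwise one--sided neighborhoods of $x$, which gives clockwise semicontinuity of $\xi$ at $x$. Running the same construction with the counter--clockwise convention gives the other one--sided estimate, and the two together yield full continuity. This ``two one--sided codes'' device is the ingredient missing from your argument.
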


\begin{Prf}
  Let $z \in S^1$. We inductively construct an $I$--code $(g_n)_n \in \Gamma^\bN$ for $z$: first we choose $g_1$ so that $g_1^{-1}z \in I^\circ$.
  It exists since $I^\circ \neq \varnothing$ and $\Gamma$ acts minimally on $S^1$.
  Then for every $n \geq 1$, since $g_n^{-1}z \in I$, property (iii) ensures that $g_n^{-1}z \in tI$ for some $t \in T$.
  We set $g_{n+1} = g_n t$.
  If possible, we choose $t$ so that $g_n^{-1}z \in tI^\circ$.
  Otherwise, if $g_n^{-1}z$ is in $I^\circ$ but not in $tI^\circ$ for any $t \in T$, then there are at least two different choices for $t$, of which we choose one which results in $z$ being the clockwise boundary point of $g_{n+1}I$ (for an arbitrary choice of orientation on $S^1$).
  Note that $z$ is then also the clockwise boundary point of all $g_m I$ for $m > n$.
  The sequence constructed this way is a quasigeodesic ray going to $z$ by \autoref{lem:quasigeodesic}.

  Let $\varepsilon > 0$. Since $\xi(z) \in \rho(g_n)\bx$ for all $n$ and $\operatorname{diam}(\rho(g_n)\bx) \to 0$ by \autoref{lem:convergent_boxes} there is some $n$ with
  \[\rho(g_n) \xi(I) \subset \rho(g_n)\bx \subset B_\varepsilon(\xi(z)).\]
  If $g_n^{-1}z \in I^\circ$, then $g_n I$ is a neighborhood of $z$, so this shows continuity at $z$. On the other hand, if $g_n^{-1} z \in \partial I$ then $z$ is the clockwise boundary point of $g_n I$. So in this case we only get semicontinuity of $\xi$ at $z$ in the clockwise direction. But we can repeat the argument replacing ``clockwise'' by ``counter--clockwise'' to get full continuity.
\end{Prf}

This finishes the proof of \autoref{prop:limit_map_exists}.

\section{Nested boxes}\label{sec:nested_boxes}

The goal of this section is to find an interval $I$ in $S^1$, a finite subset $T$ of $\Gamma$, and a closed set $\bx$ of $\RP^2$ which satisfy the assumptions of \autoref{sec:limit_curve}.
In \autoref{sec:intersecting_conics} we use the reflection structure of $\Gamma$ to find that certain orbit points in $\RP^2$ lie on a common conic.
We use this to define suitable choices for $I$, $T$ and $\bx$ in \autoref{sec:definition_of_I_T_box}.
The set $\bx$ will be defined as the convex hull of certain intersection points of conics.
This allows deriving its properties from the order of points along conics.
The remainder of \autoref{sec:nested_boxes} shows that $I$, $T$, and $\bx$ satisfy all the assumptions of \autoref{sec:limit_curve}, culminating in \autoref{lem:box_inclusion_final} as the main result of this section.

Assume $p_1,p_2,p_3 \geq 3$ are odd and not all equal to $3$.
Let $\Gamma = \Gamma_{p_1,p_2,p_3}$ and $\rho \colon \Gamma \to \SL(3,\bR)$ be a representation of type $(\frac{p_1-1}{2},\frac{p_2-1}{2},\frac{p_3-1}{2})$ with parameter $t_\rho \geq \tcrit > 1$ (see \autoref{sec:prelims}).
We assume that $\rho$ is line--irreducible (see \autoref{def:irreducible}).
As noted in \autoref{sec:Coxeter_representations} every Coxeter character has such a representative.

This section will use some long words in $\rho(\Gamma)$. To simplify the notation, we will write $a \coloneqq \rho(s_1)$, $b \coloneqq \rho(s_2)$ and $c \coloneqq \rho(s_3)$ for the remainder of \autoref{sec:nested_boxes}.

\subsection{Intersecting conics}\label{sec:intersecting_conics}

Recall from \eqref{eq:pointlabels} in \autoref{sec:hyperbolic} that there are points $z_0, \dots z_{2p_3-1} \in S^1$, in order along $S^1$, so that $z_0$ is the attracting fixed point of $s_1s_2s_3$ and $s_1z_i = z_{3-i}$ as well as $s_2 z_i = z_{5-i}$ for every $i$.
We treat these indices as elements of $\bZ/2p_3\bZ$ and will sometimes write e.g. $z_{-1}$ instead of $z_{2p_3-1}$.

To each of the $z_i \in S^1$ we define a corresponding point $w_i$ and a line $\ell_i$ in $\RP^2$.
Here $w_0$ shall be the attracting fixed point of $abc = \rho(s_1s_2s_3)$, and $\ell_0$ the attracting fixed line of $abc$.
Then the points in the orbit of $w_0$ and $\ell_0$ by the dihedral group $\langle a,b \rangle$ will be labeled so that, analogously to the $z_i$,
\[aw_i = w_{3-i}, \quad bw_i = w_{5-i}, \quad a\ell_i = \ell_{3-i}, \quad b\ell_i = \ell_{5-i}, \quad \forall i \in \{0,\dots,2p_3-1\}.\]
Let $C \subset \RP^2$ be the unique conic which passes through all the points $w_i$ (see \autoref{sec:conics}).
It is clearly invariant by $a$ and $b$.
The complement of $C$ has two connected components, a disk and a Möbius strip, which we call $M$.
Each of the lines $\ell_i$ intersects $C$ in two points, one of which is $w_i$.
The other point will be called $u_i$ (if $\ell_i$ and $C$ intersected only in $w_i$ we would set $u_i = w_i$, but \autoref{lem:order} below shows this doesn't happen).

\begin{center}
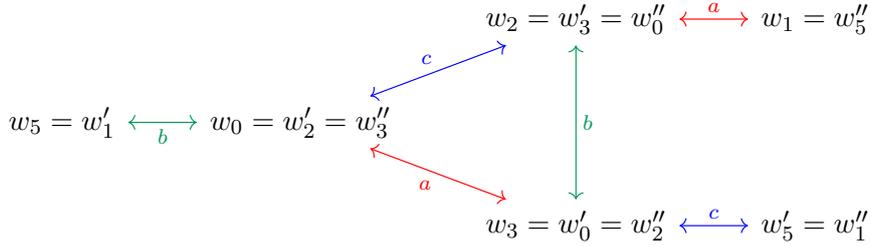

  \begin{tikzcd}
    &&&&& \\
    &&& w_2 = w_3' = w_0'' \arrow[r, leftrightarrow, red, "a"] \arrow[dd, leftrightarrow, ForestGreen, "b"] & w_1 = w_5'' & \\
    & w_5 = w_1' & w_{0} = w_2' = w_3'' \arrow[l, leftrightarrow, ForestGreen, "b"] \arrow[ru, leftrightarrow, blue, "c"] &&& \\
    &&& w_3 = w_0' = w_2'' \arrow[lu, leftrightarrow, red, "a"] \arrow[r, leftrightarrow, blue, "c"] & w_5' = w_1'' &  \\
    &&&&&
  \end{tikzcd}
  \captionof{figure}{Coincidences of the points $w_i,w_i',w_i''$ and how the generators map them to each other. The same relations hold for $\ell_i,\ell_i',\ell_i''$ and $z_i,z_i',z_i''$ (with $s_1,s_2,s_3$ in place of $a,b,c$). It follows from the discussion below that this list of coincidences is complete.}
  \label{fig:conic_mappings}
\end{center}

We adopt the following convention:
for any object $O$ defined using the generators $s_1,s_2,s_3$, its ``primed'' version $O'$ shall have the same definition, except that the generators are cyclically permuted, i.e. $s_2,s_3,s_1$ are used in place of $s_1,s_2,s_3$.
For example, $w_0$ was defined as the attracting fixed point of $abc$, so $w_0'$ is the attracting fixed point of $bca$, and therefore equal to $w_3 = a w_0$.
Cyclically permuting another time, $w_0''$ is the attracting fixed point of $cab$, hence equal to $w_2 = baw_0$.

It is easy to find more coincidences like this between the $w_i$, $w_i'$, and $w_i''$; \autoref{fig:conic_mappings} shows the complete list, which we will use throughout \autoref{sec:nested_boxes}.
Analogous coincidences also hold for the $z_i$ and $\ell_i$, e.g. $z_0' = (s_2s_3s_1)_+ = s_1(s_1s_2s_3)_+ = z_3$.
But this principle does not extend to the points $u_i$:
for example, $u_0'$ is on the intersection of the line $\ell_0' = \ell_3$ with the conic $C'$, while $u_3$ is on the intersection of the same line with $C$.

Every lemma we prove in this section also has a primed and a double--primed version:
their statements are the same, just with every object $O$ replaced by $O'$ or $O''$ and $s_1,s_2,s_3,p_1,p_2,p_3$ replaced by $s_2,s_3,s_1,p_2,p_3,p_1$ or $s_3,s_1,s_2,p_3,p_1,p_2$, respectively.
We only state and prove one version, but will make use of the others as needed.

Now we study how the conics $C$, $C'$ and $C''$ intersect.
First note that they are distinct:
if $C = C'$ or $C = C''$ then $C$ would be preserved by all of $\rho(\Gamma)$.
So $\rho(\Gamma)$ would preserve a symmetric bilinear form of signature $(2,1)$, hence be contained in a conjugate of $\SO(2,1)$. This would imply $\tr \rho(\gamma) = \tr \rho(\gamma^{-1})$ for all $\gamma \in \Gamma$ and therefore $t_\rho = 1$.

Distinct conics can intersect in at most four points (see \autoref{sec:conics}).
So provided that the points $w_i$ are distinct (which is clear if $t_\rho = \tred$ and proved by \autoref{lem:wi_distinct} in general), the identities in \autoref{fig:conic_mappings} show that
\[C \cap C' = \{w_0,w_2,w_3,w_5\}, \qquad C \cap C'' = \{w_0,w_1,w_2,w_3\}.\]
  In fact, the configuration looks like \autoref{fig:conic_intersections} (Left). We first show this in the case $t_\rho = \tred$ (i.e. if $\rho$ is reducible), and then deform to the general case $t_\rho \geq \tcrit$.
  
  \begin{Lem}\label{lem:reducible_order}
    Assume that $t_\rho = \tred > 1$, so $\rho$ is reducible, but line--irreducible.
    Then the $4p_3$ points $w_i$ and $u_i$ are in the cyclic order
    \[\dots, w_0,u_1,u_2,w_3,w_4,u_5,u_6,w_7,\dots, w_{-2}, u_{-1}, u_0,w_1,w_2,u_3,u_4, \dots\]
    along $C$. Also, we have $w_1 \in M'$.
  \end{Lem}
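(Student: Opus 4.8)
The plan is to translate the statement, using the reducible structure at $t_\rho=\tred$, into a picture of two concentric objects in $\RP^2$, and then to settle the one remaining ambiguity by an explicit computation along the line $\ell_0$.

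\emph{Normalization.} First I would use that $\langle a,b\rangle=\langle\rho(s_1),\rho(s_2)\rangle$ is a dihedral group of order exactly $2p_3$, hence finite, hence conjugate into $\SO(3)$; after conjugating I may assume it acts on $\bR^3=\bR^2\oplus\bR e_3$ by the standard dihedral action on $\bR^2$ and by $\pm1$ on $\bR e_3$, so that its unique fixed point in $\RP^2$ is $P:=[e_3]$, which by \autoref{lem:reducible_identify} is the $(-1)$--eigenline of $\rho(s_1s_2s_3)$. The only $\langle a,b\rangle$--invariant real conics are then the circles $\{x_1^2+x_2^2=r^2x_3^2\}$ centred at $P$ in the chart $\{x_3=1\}$, so $C$ is one of these and $P$ lies in the disk component of $\RP^2\setminus C$. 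Since $t_\rho=\tred$, the line $\ell_0$ is the attracting fixed line of $abc=\rho(s_1s_2s_3)$, i.e. the span of its attracting fixed point $w_0$ and of $P$; thus $\ell_0=\overline{w_0P}$ is a diameter of $C$, the point $u_0$ is the one of $C$ antipodal to $w_0$, and likewise $\ell_i=\overline{w_iP}$ and $u_i$ is antipodal to $w_i$.

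\emph{The double cover.} Projection away from $P$ then restricts to a $2$--to--$1$ covering $\pi\colon C\to\RP^1$ whose deck transformation is the antipodal map of $C$, and which is $\rho$--equivariant where $\Gamma$ acts on $\RP^1$ through the quotient representation, i.e. the boundary action of $\rho_0$ on $\RP^1=\partial\bH^2$. As $\pi$ sends attracting fixed points to attracting fixed points, $\pi(w_0)=z_0=(s_1s_2s_3)_+$, hence $\pi(w_i)=z_i$ for all $i$ by equivariance, and $\{w_i,u_i\}=\pi^{-1}(z_i)$. Since the $z_i$ occur in their natural cyclic order on $\RP^1$ (see \autoref{sec:hyperbolic}), the cyclic order of the $4p_3$ points $w_i,u_i$ on $C$ is then completely determined once one knows, for a single index, which of $w_1,u_1$ lies in the same sheet of $\pi$ as $w_0$ — equivalently, whether $w_0$ and $w_1$ are close on $C$ or nearly antipodal. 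The relations $aw_i=w_{3-i}$, $bw_i=w_{5-i}$ do \emph{not} decide this, being internal to $\langle a,b\rangle$, and a short check identifies the cyclic order asserted in the lemma as the one in which $w_0$ and $w_1$ are nearly antipodal.

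\emph{Resolving the sign.} The last step is to pin down this dichotomy by working on the projective line $\ell_0$, which carries $w_0$, $P$, $u_0$, the second intersection $u_2'$ of $\ell_0$ with the conic $C'$ (note $w_0=w_2'\in C'$), and the point $cu_2$ (note $\ell_0=c\ell_2$, as $cw_2=w_0$ and $cP=P$). Using the explicit Cartan matrix at $t_\rho=\tred$ — equivalently, reading the data off the semisimple representative $\rho_{\mathrm{red}}$ together with its degeneration — one computes that $w_0,u_0,u_2',cu_2$ occur on $\ell_0$ in the order of \autoref{fig:conic_intersections}, i.e. with nested arcs $M\cap\ell_0\subset M'\cap\ell_0\subset cM\cap\ell_0$; transporting this back through $\pi$ fixes the sign of the previous paragraph and hence the claimed cyclic order, while the nesting $M\cap\ell_0\subset M'\cap\ell_0$ gives $w_1\in M'$. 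I expect this computation to be the main obstacle: the dihedral symmetry of $\langle a,b\rangle$ is genuinely insufficient, so one must bring in the third generator $c=\rho(s_3)$ and carefully track the conic $C'$ and the conic $cC$ along the single line $\ell_0$; everything else is formal once the normalization and the double cover $\pi$ are in place.
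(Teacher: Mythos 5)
Your setup is correct and in places cleaner than the paper's. Normalizing $\langle a,b\rangle$ into $\SO(3)$, taking $C$ to be a circle centred at the fixed point $P=x$, and observing that projection from $P$ gives a $\langle a,b\rangle$--equivariant double cover $\pi\colon C\to\RP^1$ with $\{w_i,u_i\}=\pi^{-1}(z_i)$ is a nice way to package the paper's opening move (the continuous equivariant map $S^1\to\{$lines through $x\}$). Your claim that $\langle a,b\rangle$--equivariance plus the double--cover constraint reduces the cyclic order to a single binary choice (essentially the paper's cases $(a)$ and $(b)$) is also correct: equivariance forces the ``adjacency bits'' $\delta_i$ to have period $2$, and the cover being connected and degree $2$ forces $\sum_i\delta_i$ to be odd, which for $p_3$ odd pins $(\delta_0,\delta_1)$ to one of two values. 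This replaces the paper's appeal to the rotation angle $\pi-\pi/p_3$ with a purely topological argument, which is a genuine improvement.

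The gap is in ``Resolving the sign.'' The order $w_0,u_0,u_2',cu_2,x$ on $\ell_0$ --- equivalently the nesting $M\cap\ell_0\subset M'\cap\ell_0\subset cM\cap\ell_0$, equivalently $u_0\in M'$ --- holds in \emph{both} cases of the dichotomy, so it cannot resolve it. The paper in fact derives this order synthetically and case--independently: the eigenvalues of $abc$ give $w_0,u_0,cu_2,x$ in order, and transversality of $C$ and $C'$ shows that exactly one of $u_0,u_2$ lies in $M'$ in either case; together these force $u_2'$ between $u_0$ and $cu_2$, hence $u_0\in M'$, regardless of which case we are in. The paper then has to bring in the \emph{double--primed} analogue of this whole analysis (with $C''$, $M''$, $bM''$) to obtain $u_2\not\in M''$ but $u_3\in M''$, which contradicts order $(b)$ because $u_2,u_3$ are adjacent in that case with no $C\cap C''$ crossing between them. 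Your proposal never touches $C''$, so the dichotomy remains unresolved. For the same reason, the final claim ``$M\cap\ell_0\subset M'\cap\ell_0$ gives $w_1\in M'$'' does not follow: $w_1\notin\ell_0$, and the implication $u_0\in M'\Rightarrow w_1\in M'$ relies on $u_0$ and $w_1$ being adjacent on $C$ with no $C\cap C'$ crossing between them --- which is precisely what holds in case $(a)$ and fails in case $(b)$. In short, the last step presupposes the conclusion. A direct matrix computation of the position of $w_1$ on the circle $C$ would work, but then the entire $\ell_0$ discussion would be redundant; the $\ell_0$ data alone, however computed, is insufficient.
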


  \begin{Prf}
    The representation $\rho$ fixes a point $x \in \RP^2$, and there is a continuous map from $S^1$ to the space of lines through $x$, which maps the attracting fixed point of any infinite order $\gamma \in \Gamma$ to the attracting line of $\rho(\gamma)$, so in particular $z_i$ to $\ell_i$.
    Hence the lines $\ell_i$ all pass through the point $x$ and are ordered $\ell_0,\ell_1,\ell_2,\dots$.
    Along the conic $C$, therefore, $w_0$ is followed by either $w_1$ or $u_1$.
    The next point along $C$ after that is either $w_2$ or $u_2$.
    It must actually be $u_2$, as $w_2 = (ba)w_0$ is at an ``angle'' of $\pi - \pi / p_3$ from $w_0$ (more precisely, at parameter $\pi - \pi/p_3$ of the 1--parameter subgroup of $\SL(3,\bR)$ containing $ba$, parametrized with period $2\pi$).
    Applying the same argument to the odd indices, we find that the first four points along $C$ are either
    \[(a) \ w_0,u_1,u_2,w_3 \quad \text{or} \quad (b) \ w_0,w_1,u_2,u_3\]
    After that the same pattern repeats with indices increased by $4$, since $w_{i+4} = (ba)^2w_i$ and $(ba)^2$ rotates $C$ by an angle of $2\pi/p_3$.

    In case $(a)$, the points $w_0,u_2,w_3,u_0,w_2,w_5$ are in this order along $C$, while in case $(b)$ their order is $w_0,u_2,w_5,u_0,w_2,w_3$.
    Here the points $w_0,w_2,w_3,w_5$ are exactly the intersection points of $C$ and $C'$.
    Since the conics intersect transversely, this means that in either case exactly one of the points $u_0$ and $u_2$ is in $M'$.

    Now consider the line $\ell_0 = \ell_2'$, which is fixed by the Coxeter element $abc$.
    It contains the points $w_0 = w_2'$ and $x$, both of which are fixed by $abc$, as well as $u_0$, $cu_2 = cbau_0$ and $u_2'$.
    By \autoref{lem:reducible_identify} the eigenvalues of $abc$ are $-\lambda, -1, \lambda^{-1}$, with $\lambda > 1$, hence the action of $abc$ on $\ell_0$ is orientation--preserving.
    So $u_0$ and $cu_2$ lie in the same component of $\ell_0 \setminus \{w_0,x\}$.
    As $w_0$ is the attracting fixed point of $abc$, the points are in the order $w_0,u_0,cu_2,x$ on $\ell_0$.
    As $x$ is the unique fixed point of the rotations $ab$ and $bc$, it is not contained in $M$ or $M'$. Also $x \not\in cM$ since $cx = x \not\in M$.
    
  \begin{center}
    \begin{tikzpicture}[black]
      \begin{scope}
        \draw[thick] (0,0) -- (6,0);
        \fill (1,0) circle (0.07);
        \fill (2,0) circle (0.07);
        \fill (3,0) circle (0.07);
        \fill (4,0) circle (0.07);
        \fill (5,0) circle (0.07);
        \node at (1,-0.4) {$w_0$};
        \node at (2,-0.4) {$u_0$};
        \node at (3,-0.4) {$u_2'$};
        \node at (4,-0.4) {$cu_2$};
        \node at (5,-0.4) {$x$};

        \draw[|-|,thick,red] (1,0.2) -- (2,0.2);
        \draw[|-|,thick,ForestGreen] (1,0.4) -- (3,0.4);
        \draw[|-|,thick,orange] (1,0.6) -- (4,0.6);

        \node[red] at (1.5,0.9) {$M$};
        \node[ForestGreen] at (2.5,0.9) {$M'$};
        \node[orange] at (3.5,0.9) {$cM$};
        \node at (-0.5,0) {$\ell_0$};
      \end{scope}

      \begin{scope}[xshift=-4cm]
        \node at (0,-1.2) {$w_0$};
        \node at (0.1,-0.1) {$w_3$};
        \node at (0.6,1.2) {$w_4$};
        \node at (-0.2,3) {$w_7$};
        \node at (-1.3,3) {$w_8$};
        \node at (-1.7,0.8) {$w_1$};
        \node at (-2.1,0.5) {$w_6''$};
        \node at (-1.95,-0.4) {$w_9''$};
        \node at (-2.4,-1) {$w_4'$};
        \node at (-3.5,-2.6) {$w_7'$};
        \node at (-3,-3.3) {$w_8'$};
        \node at (-0.7,-2.5) {$w_5$};
        \node at (-0.2,-2.65) {$w_6$};
        \node at (0.4,-2.1) {$w_9$};
        \node at (1.1,-2.25) {$w_4''$};
        \node at (3.1,-2.4) {$w_7''$};
        \node at (3.4,-1.5) {$w_8''$};
        \node at (1.8,-0.2) {$w_5'$};
        \node at (1.7,0.3) {$w_6'$};
        \node at (0.8,0.65) {$w_9'$};
        \node at (-0.9,-0.6) {$w_2$};

        \node[red] at (-1,2.3) {$C$};
        \node[ForestGreen] at (-2.5,-1.9) {$C'$};
        \node[blue] at (2.5,-1.3) {$C''$};

        \begin{scope}[shift={(-0.24,-0.62)}]

          \begin{scope}[rotate=4.5]
            \draw[red,thick] (-0.22,0.93) ellipse (0.86cm and 2.76cm);

            \fill (0.64,0.83) circle (0.07);
            \fill (0.6,1.78) circle (0.07);
            \fill (-0.03,3.63) circle (0.07);
            \fill (-0.45,3.6) circle (0.07);
            \fill (-1.07,1.29) circle (0.07);
            \fill (-0.58,-1.57) circle (0.07);
            \fill (-0.335,-1.8) circle (0.07);
            \fill (0.23,-1.43) circle (0.07);
          \end{scope}
          \begin{scope}[rotate=124.5]
            \draw[ForestGreen,thick] (-0.22,0.93) ellipse (0.86cm and 2.76cm);

            \fill (0.64,0.83) circle (0.07);
            \fill (0.6,1.78) circle (0.07);
            \fill (-0.03,3.63) circle (0.07);
            \fill (-0.45,3.6) circle (0.07);
            \fill (-1.07,1.29) circle (0.07);
            \fill (-0.58,-1.57) circle (0.07);
            \fill (-0.335,-1.8) circle (0.07);
            \fill (0.23,-1.43) circle (0.07);

          \end{scope}
          \begin{scope}[rotate=244.5]
            \draw[blue,thick] (-0.22,0.93) ellipse (0.86cm and 2.76cm);

            \fill (0.64,0.83) circle (0.07);
            \fill (0.6,1.78) circle (0.07);
            \fill (-0.03,3.63) circle (0.07);
            \fill (-0.45,3.6) circle (0.07);
            \fill (-1.07,1.29) circle (0.07);
            \fill (-0.58,-1.57) circle (0.07);
            \fill (-0.335,-1.8) circle (0.07);
            \fill (0.23,-1.43) circle (0.07);
          \end{scope}
        \end{scope}
      \end{scope}
    \end{tikzpicture}
    \reallynopagebreak
    
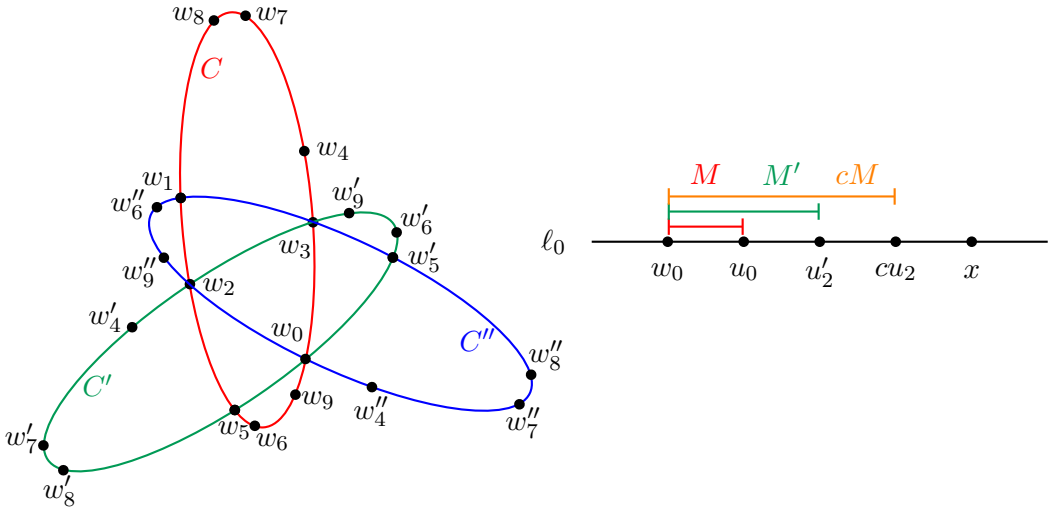
\captionof{figure}{(Left) The configuration of conics $C,C',C''$ and the order of the points $w_i,w_i',w_i''$ on them according to \autoref{lem:reducible_order}, for the case $(p_1,p_2,p_3) = (5,5,5)$. (Right) The order of the points $w_0,u_0,u_2',cu_2,x$ on $\ell_0$ as in the proof of \autoref{lem:reducible_order}.}
    \label{fig:conic_intersections}
  \end{center}

    So the intersection $\ell_0 \cap M$ is the interval between $w_0$ and $u_0$ not containing $x$, $\ell_0 \cap cM$ is the interval from $w_0 = cw_2$ to $cu_2$, and $\ell_0 \cap M'$ is the interval between $w_0 = w_2'$ and $u_2'$.
    As $M'$ is invariant by $c$, we have that exactly one of the points $u_0$ and $cu_2$ is in $M'$.
    So $u_2'$ must lie between $u_0$ and $cu_2$.
    Therefore, we have $u_0 \in M'$, $cu_2 \not\in M'$, $u_2' \not\in M$, and $u_2' \in cM$ (see \autoref{fig:conic_intersections} right).

    The same reasoning applies to the ``double--primed'' situation, giving $u_2 \not\in M''$ and $u_2 \in bM''$, which is equivalent to $u_3 = bu_2 \in M''$.
    But this is a contradiction to order $(b)$:
    if the points $u_2$ and $u_3$ have none of the points $w_i$ between them, they must both be either in $M''$ or not.
    So we have order $(a)$. In this order, the points $u_0$ and $w_1$ are neighbors, so $u_0 \in M'$ implies $w_1 \in M'$.
  \end{Prf}

  Now we consider a general representation $\rho$ with $t_\rho \geq \tcrit$.

  \begin{Lem}\label{lem:wi_distinct}
    The points $w_0,\dots,w_{2p_3-1}$ are pairwise distinct.
  \end{Lem}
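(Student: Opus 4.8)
The plan is to reduce the statement to the triviality of $\Stab_D(w_0)$, where $D=\langle\rho(s_1),\rho(s_2)\rangle$ and $w_0$ is the attracting fixed point of $\rho(s_1s_2s_3)$. Recall that $w_i=\rho(d_i)w_0$, where $d_i\in\langle s_1,s_2\rangle$ sends the index $0$ to $i$ under the dihedral action $s_1\colon i\mapsto 3-i$, $s_2\colon i\mapsto 5-i$ on $\bZ/2p_3\bZ$. This action is free at $0$: a nontrivial rotation $(s_1s_2)^k$ with $0<k<p_3$ sends $0$ to $-2k\not\equiv 0$, while a reflection acts on indices as $i\mapsto c-i$ with $c$ odd and hence cannot fix $0$, since $2p_3$ is even. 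As $\rho(s_1s_2)$ has order exactly $p_3$ (type $(\cdot,\cdot,\tfrac{p_3-1}{2})$, and $\tfrac{p_3-1}{2}$ is coprime to $p_3$), the group $D$ is dihedral of order $2p_3$ and $\rho$ is faithful on $\langle s_1,s_2\rangle$; so $w_i=w_j$ for some $i\neq j$ if and only if some nontrivial $\sigma\in D$ fixes $w_0$. Thus it suffices to prove $\Stab_D(w_0)=\{1\}$, and any nontrivial element of $\Stab_D(w_0)$ is either a rotation $(\rho(s_1)\rho(s_2))^k$ with $0<k<p_3$ or a reflection.

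\emph{Rotations.} Since $q_3=\tfrac{p_3-1}{2}$ is coprime to the odd number $p_3$, the eigenvalues $e^{\pm 2\pi ikq_3/p_3}$ of $(\rho(s_1)\rho(s_2))^k$ are non--real for $0<k<p_3$, so this element has a unique fixed point in $\RP^2$, namely its eigenline for the eigenvalue $1$ — and this line is the same for every $k$, being the eigenvalue--$1$ eigenline of $\rho(s_1s_2)$. If it equals $w_0$, then $\rho(s_3)=\rho(s_1s_2)^{-1}\rho(s_1s_2s_3)$ fixes $w_0$ as well; being an involution, $\rho(s_3)$ acts on the line $w_0$ with eigenvalue $\pm1$, and since $\rho(s_1s_2)$ acts there with eigenvalue $1$, so does $\rho(s_1s_2s_3)$, with eigenvalue $\pm1$. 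But $w_0$ is the attracting fixed point of $\rho(s_1s_2s_3)$, whose dominant eigenvalue has modulus $>1$ by \autoref{lem:coxeter_eigenvalues} (the eigenvalue moduli are distinct — or a simple dominant one together with a double subdominant one at $t_\rho=\tcrit$ — and they multiply to $|\det|=1$). This contradiction shows $\Stab_D(w_0)$ contains no nontrivial rotation, so it is either $\{1\}$ or generated by a single reflection.

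\emph{Reflections, by deformation.} It remains to exclude $\Stab_D(w_0)=\langle\sigma\rangle$ for a reflection $\sigma$. Here I would deform from the reducible point: choose a continuous family $\rho_t$ of line--irreducible representatives of type $(\tfrac{p_1-1}{2},\tfrac{p_2-1}{2},\tfrac{p_3-1}{2})$, $t=t_\rho\in[\tcrit,\infty)$; by \autoref{lem:coxeter_eigenvalues} the dominant eigenvalue of $\rho_t(s_1s_2s_3)$ is simple for all such $t$, so $w_0(t)$, the $w_i(t)$ and the conic $C(t)$ vary continuously. The set of $t$ with all $w_i(t)$ distinct is open and, by \autoref{lem:reducible_order}, contains $\tred$; by the rotation case, the only obstruction to its being closed in $[\tcrit,\infty)$ is that at some limit parameter a reflection $\sigma\in D$ fixes $w_0$. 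Since $\sigma$ then preserves the conic $C$ and fixes $w_0\in C$, the point $w_0$ is one of the two fixed points of the induced reflection of $C\cong\RP^1$, i.e.\ it sits at a ``vertex'' of the arrangement cut out on $C$ by the reflection lines of the $p_3$ reflections in $D$. I expect the main obstacle to be ruling this out: one must show that $w_0$, as the attracting fixed point of $\rho(s_1s_2s_3)$, always lies in the interior of such an arc, by using that in the hyperbolic model the Coxeter fixed point $z_0=(s_1s_2s_3)_+$ is not fixed by any reflection in $\langle s_1,s_2\rangle$ (cf.\ \autoref{sec:hyperbolic}) and transporting this combinatorial position along the deformation by continuity. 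The eigenvalue trick of the rotation case does not apply, since a general reflection in $D$ is a long word in $\rho(s_1),\rho(s_2)$ with no algebraic relation to $\rho(s_1s_2s_3)$; the geometry of the conic and the dynamics of $\rho_t(s_1s_2s_3)$ on it seem genuinely necessary.
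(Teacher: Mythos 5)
Your reduction to showing $\Stab_D(w_0)=\{1\}$ is sound: the dihedral action on indices is free at $0$, and $\rho$ is faithful on $\langle s_1,s_2\rangle$ (the kernel there would have to be a normal subgroup of a dihedral group of odd-prime-free order $2p_3$ meeting neither $\langle s_1s_2\rangle$ nor containing a reflection), so $w_i=w_j$ for $i\neq j$ is indeed equivalent to a nontrivial element of $D$ fixing $w_0$. Your rotation argument is also correct and self-contained: a nontrivial rotation fixing $w_0$ forces $\rho(s_1s_2)$ and hence $\rho(s_3)$ to fix $w_0$, and the eigenvalue computation contradicts $w_0$ being the attracting fixed point. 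This is a genuinely different and cleaner route for that sub-case than anything in the paper.

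However, the reflection case is a genuine gap, and it is not a peripheral one. Note that a coincidence $w_i=w_j$ corresponds to a rotation iff $i-j$ is even and to a reflection iff $i-j$ is odd; when one reduces (as the paper does) via the cyclic order on $C$ to a \emph{first} collision between cyclically adjacent points, the adjacent pairs $(w_0,w_3)$ and $(w_3,w_4)$ both have odd index difference. So the reflection case is in fact the \emph{only} case that arises after the natural reduction, and your proposal explicitly leaves it open, with only a speculative plan (``$w_0$ should lie in the interior of an arc cut out by the reflection mirrors''). Continuity of the order along the deformation does not by itself exclude $w_0$ reaching a boundary point of such an arc at some parameter, which is precisely the collision event you need to rule out; so the sketch as written does not close the argument.

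For comparison, the paper's proof handles exactly this reflection case by a different device: it transports the putative coincidence to the primed and double-primed pictures, where the hypotheses $w_0=w_3$ or $w_3=w_4$ become coincidences $w_i'=w_j'$ or $w_i''=w_j''$ with \emph{even} index difference, and such coincidences are excluded because the conjugates $cb$ and $ac$ act on $C'$ and $C''$ by rotations through \emph{fixed} angles $\tfrac{p_1-1}{p_1}\pi$ and $\tfrac{p_2-1}{p_2}\pi$ independent of $t_\rho$. (When $p_2=3$ one more transport is needed, and the hypothesis that not all $p_i$ equal $3$ enters.) In other words, the paper converts the hard reflection-type coincidence into a rotation-type coincidence on a different conic, where the angle is rigid; that conversion is the missing idea in your proposal, and your rotation argument alone cannot substitute for it.
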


  \begin{Prf}
    We have seen that in the reducible case, the $w_i$ are distinct and in the order $w_0,w_3,w_4,w_7,w_8,\dots$ along $C$.
    The same holds for the $w_i'$ and $w_i''$.
    For contradiction, we take a path of representations starting from a reducible one and follow it until any pair of the $w_i$ or $w_i'$ or $w_i''$ coincides for the first time.
    Without loss of generality, we can assume this collision happens among the $w_i$.
    As $\langle a,b \rangle$ acts transitively on the $w_i$, $w_3$ has to equal another $w_i$. 
    By continuity, this has to be $w_0 = w_3$ or $w_3 = w_4$.

  Note that $cb$ rotates $C'$ by the fixed angle $\frac{p_1-1}{p_1}\pi$ and maps $w_i'$ to $w_{i+2}'$.
  So $w_i' \neq w_j'$ whenever $i-j$ is even and not a multiple of $2p_1$.
  In particular $w_0 = w_2' \neq w_0' = w_3$.
  So we have $w_3 = w_4$, and therefore $w_0'' = w_2 = bw_3 = bw_4 = w_1 = w_5''$.
  The four points $w_0'', w_4'', w_5'', w_6''$ have been in this cyclic order for every representation on the path, so they degenerate to either $w_0'' = w_4'' = w_5''$ or $w_5'' = w_6'' = w_0''$.
  If $p_2 \neq 3$ this is a contradictoin by the same argument as above, only using the rotation $ac$ instead of $cb$.

  On the other hand, if $p_2=3$, then $w_0' = w_2'' = ac w_0'' = ac w_5'' = w_1'' = w_5'$, and we can repeat the same argument with $w_0'$ and $w_5'$ instead of $w_0''$ and $w_5''$.
  Again, this leads to a contradiction unless $p_1 = 3$ and $w_0 = w_2' = cbw_0' = cb w_5' = w_1' = w_5$.
  But $p_1,p_2,p_3$ cannot all be $3$, so repeating the argument another time gives the contradiction we want.
\end{Prf}

  \begin{center}
    \begin{tikzpicture}
      \begin{scope}
        \fill[black!20] (-3.3,0) -- (6.3,0) -- (6.3,1) -- (-3.3,1) -- cycle;
        \draw[very thick,black] (-3.3,0) -- (6.3,0);
        \draw[very thick,black] (-3.3,1) -- (6.3,1);
        \foreach \i in {-1,0,3,4} \fill (\i,0) circle (0.07);
        \foreach \i in {-1,0,3,4} \node[anchor=north] at (\i,0) {$w_{\i}$};
        \foreach \i in {-1,0,3,4} \node[anchor=south,gray] at (\i,1) {$u_{\i}$};
        \foreach \i in {-3,-2,1,2,5,6} \fill (\i,1) circle (0.07);
        \foreach \i in {-3,-2,1,2,5,6} \node[anchor=south] at (\i,1) {$w_{\i}$};
        \foreach \i in {-3,-2,1,2,5,6} \node[anchor=north,gray] at (\i,0) {$u_{\i}$};
        \foreach \i in {-3,-2,-1,0,1,2,3,4,5,6} \draw (\i,0) -- (\i,1);
      \end{scope}

      \begin{scope}[yshift=-2cm]
        \fill[black!20] (-3.3,0) -- (6.3,0) -- (6.3,1) -- (-3.3,1) -- cycle;
        \draw[very thick,black] (-3.3,0) -- (6.3,0);
        \draw[very thick,black] (-3.3,1) -- (6.3,1);
        \foreach \i in {-1,0,3,4} \fill (\i,0) circle (0.07);
        \foreach \i in {-1,0,3,4} \node[anchor=north] at (\i,0) {$w_{\i}$};
        \foreach \i in {-1,3} \node[anchor=south,gray] at (\i+0.8,1) {$u_{\i}$};
        \foreach \i in {0,4} \node[anchor=south,gray] at (\i-0.8,1) {$u_{\i}$};
        \foreach \i in {-3,-2,1,2,5,6} \fill (\i,1) circle (0.07);
        \foreach \i in {-3,-2,1,2,5,6} \node[anchor=south] at (\i,1) {$w_{\i}$};
        \foreach \i in {-3,1,5} \node[anchor=north,gray] at (\i+0.8,0) {$u_{\i}$};
        \foreach \i in {-2,2,6} \node[anchor=north,gray] at (\i-0.8,0) {$u_{\i}$};       
        \foreach \i in {-1,3} \draw (\i,0) -- (\i+0.75,1);
        \foreach \i in {0,4} \draw (\i,0) -- (\i-0.75,1);
        \foreach \i in {-3,1,5} \draw (\i,1) -- (\i+0.75,0);
        \foreach \i in {-2,2,6} \draw (\i,1) -- (\i-0.75,0);
      \end{scope}
    \end{tikzpicture}
    
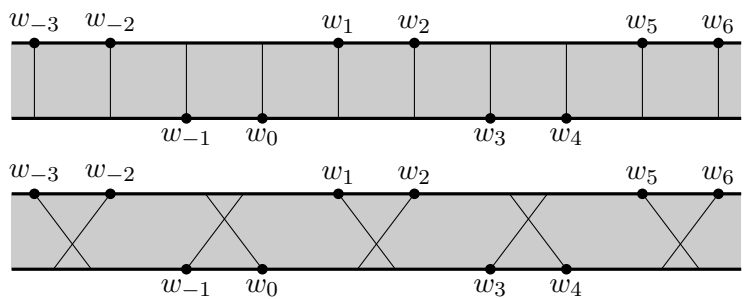
\captionof{figure}{The two possible orders of points along $C$. The shaded area represents the Möbius strip $M$, bounded by $C$, and the thin vertical lines are the $\ell_i$. The lines $\ell_i$ and $\ell_j$ can only intersect in $M$ if $\{i,j\} = \{2k-1,2k\}$ for some $k$.}
    \label{fig:conic_orders}
  \end{center}

\begin{Lem}\label{lem:order}
  The cyclic order of the points $u_i$ and $w_i$ along $C$ is either
  \[\dots, w_0,u_1,u_2,w_3,w_4,u_5,u_6,w_7,\dots, w_{-2}, u_{-1}, u_0,w_1,w_2,u_3,u_4, \dots\]
  as in the reducible case, or differs from it only by switching the order of $u_{2k-1}$ and $u_{2k}$ for all $k$ (or $u_{2k-1} = u_{2k}$ for all $k$), see \autoref{fig:conic_orders}.
  Furthermore, we have $w_1 \in M'$.
\end{Lem}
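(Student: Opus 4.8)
The plan is to start from the reducible representation, where \autoref{lem:reducible_order} already gives the order, and to deform to the general case, keeping track of the points $w_i,u_i$ and the Möbius strips $M,M',M''$ along the way. Concretely, I would choose a continuous path $(\rho_s)_s$ of line--irreducible representatives whose parameter $t_{\rho_s}$ ranges over $[\tcrit,\infty)$; this interval is connected and contains $\tred$, and along the path the conics $C,C',C''$, the points $w_i,u_i$ and the sets $M,M',M''$ all vary continuously. By \autoref{lem:wi_distinct} the points $w_0,\dots,w_{2p_3-1}$ stay pairwise distinct, hence keep the cyclic order $w_0,w_3,w_4,w_7,\dots$ found in its proof; in particular the open arcs of $C$ cut out by the $w_i$ are well defined throughout, and $C\cap C'=\{w_0,w_2,w_3,w_5\}$ stays a set of four distinct points. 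Consequently no $w_i$ with $i\notin\{0,2,3,5\}$ can cross $C'$, so whether such a $w_i$ lies in $M'$ is independent of $s$; since $w_1\in M'$ at $t_\rho=\tred$ this already proves the last assertion of the lemma.

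It remains to locate the $u_i$. In the reducible case the pair $u_{2k-1},u_{2k}$ lies in the open arc $\beta_k$ of $C$ bounded by $w_{2k-2}$ and $w_{2k+1}$, and $\beta_k$ contains no other $w_j$ or $u_j$. As $s$ varies each $u_i$ (the second intersection of the attracting line $\ell_i$ with $C$) moves continuously on $C$, so the cyclic order of the $4p_3$ points can change only if some $u_i$ reaches an endpoint of its arc $\beta_k$: a $u_i$ cannot reach a $w_j$ or a $u_j$ in a different arc, nor produce a tangency $u_i=w_i$, without first crossing such an endpoint. As long as this does not happen, each $u_i$ stays in $\beta_k$ and only the relative position of $u_{2k-1}$ and $u_{2k}$ inside $\beta_k$ can change.

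To exclude a $u_i$ crossing an endpoint of its arc I would transfer the argument of \autoref{lem:reducible_order}. Its only use of reducibility is through the eigenvalue pattern of $abc=\rho(s_1s_2s_3)$; but for $t_\rho>\tcrit$ this element has real eigenvalues of distinct absolute value by \autoref{lem:coxeter_eigenvalues}, and by continuity from the values $-\lambda,-1,\lambda^{-1}$ at $\tred$ the two eigenvalues of largest absolute value are negative (the endpoint $t_\rho=\tcrit$ being a limit of these). Hence $abc$ acts on its attracting line $\ell_0$ preserving orientation, with $w_0$ attracting and the second eigendirection $p_0$ repelling on $\ell_0$; using $cba=(abc)^{-1}$ one gets $cu_2=(abc)^{-1}u_0$ exactly as before, which forces $u_0$ and $cu_2$ to lie in the same component of $\ell_0\setminus\{w_0,p_0\}$ with $u_0$ between $w_0$ and $cu_2$. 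Running the primed and double--primed versions then recovers the relative positions of $u_0,u_2',u_3$ with respect to $M,M',M''$ on which the exclusion of the ``wrong'' order in \autoref{lem:reducible_order} was based; since these are closed conditions they persist along the whole path, and they are incompatible with a $u_i$ slipping past a $w_j$ into a neighbouring arc. Finally the ``all or nothing'' statement follows from equivariance: $(ba)^2$ acts on $C$ as an orientation--preserving rotation sending $w_i\mapsto w_{i+4}$ and $u_i\mapsto u_{i+4}$, so the order of $u_{2k-1},u_{2k}$ inside $\beta_k$ agrees with that of $u_{2k+3},u_{2k+4}$ inside $\beta_{k+2}$, and since $p_3$ is odd the indices $k,k+2,k+4,\dots$ exhaust $\bZ/p_3$; hence all pairs are simultaneously unswapped, all swapped, or all merged, giving the three stated possibilities.

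I expect the main obstacle to be the third step: carrying the geometric picture of \autoref{lem:reducible_order} over to representations that no longer fix a point in $\RP^2$ (so that $p_0$ has to play the role of the global fixed point $x$), and in particular ruling out the degenerate configurations along the path in which some of $w_i,u_i,p_0$ collide. One must check that the only collisions compatible with the positional constraints are the simultaneous merges $u_{2k-1}=u_{2k}$, and that none of them allows a $u_i$ to escape its arc.
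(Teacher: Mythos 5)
Your deformation strategy, via \autoref{lem:wi_distinct}, does give the cyclic order of the $w_i$ along $C$, and your continuity argument for $w_1 \in M'$ (noting $w_1$ never hits $C \cap C'$) is exactly what the paper does. Your $(ba)^2$--symmetry argument for the ``all or nothing'' behavior of the swaps is also correct, and in fact does not even need the deformation.

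The real content, though, is ruling out a $u_i$ crossing a $w_j$, and there your proposal has a genuine gap that you flag but do not close. You want to rerun the argument of \autoref{lem:reducible_order} with the second eigendirection $p_0$ of $\rho(s_1s_2s_3)$ on $\ell_0$ in place of the global fixed point $x$. But the role of $x$ in that proof is not only to provide a second fixed point of the restricted hyperbolic map on $\ell_0$: the crucial step ``$\ell_0 \cap M$ is the interval between $w_0$ and $u_0$ not containing $x$'' (and likewise for $M'$ and $cM$) uses that $x$, being the unique fixed point of the finite--order rotations $ab$ and $bc$, lies outside $M$, $M'$, and $cM$. The substitute $p_0$ is not fixed by $ab$ or $bc$, and there is no a priori reason for it to avoid these Möbius strips, so the comparison of the three intervals along $\ell_0$ — which is what forces $u_2' \in cM$ and ultimately excludes the bad order — breaks down. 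So ``its only use of reducibility is through the eigenvalue pattern'' is not accurate.

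The paper closes exactly this gap with a direct argument that makes no use of the deformation at all: if $u_2 = w_3$, then the line $\ell_2$ contains $w_2$ and $w_3 = bw_2$, hence is $b$--invariant; since $\ell_2 = \ell_0''$ is also fixed by $cab$, the rotation $ca$ fixes it, and a finite--order rotation has a unique fixed line which is in turn fixed by $a$ and $c$ separately — so $\ell_2$ is $\rho(\Gamma)$--invariant, contradicting line--irreducibility. The case $u_2 = w_0 = cw_2$ is symmetric. By the $\langle a,b\rangle$--action this rules out every collision $u_i = w_j$, which together with the continuity set--up you already have yields the lemma. I'd recommend replacing your third step with this irreducibility argument rather than trying to port the reducible geometric picture.
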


\begin{Prf}
  By \autoref{lem:wi_distinct} the order of the $w_i$ must be the same as in the reducible case.
  Following a path of representations starting from the reducible one as in the proof of \autoref{lem:wi_distinct} we see that it is enough to show $u_2 \neq w_0$ and $u_2 \neq w_3$.

  If $w_3 = u_2$, then the line $\ell_2$ contains both $w_2$ and $w_3 = bw_2$, so it is preserved by $b$.
  Since $cab$ also fixes $\ell_2$, so does $ca$. As $ca$ has finite order greater than 2, it has a unique fixed line, which is fixed by $a$ and $c$ individually. So we showed that $\ell_2$ is fixed by $a$, $b$ and $c$, contradicting our assumption that $\rho$ has no fixed line. The case $u_2 = w_0$ is similar.

  We have $w_1 \in M'$ when $t_\rho = \tred$, and this cannot change under continuous deformation, since $w_1$ is never in $C \cap C'$.
\end{Prf}

  Assuming $i \neq j$, we can read off from the order of $w_i,u_i,w_j,u_j$ on $C$ whether the intersection point of $\ell_i$ and $\ell_j$ is in the Möbius strip $M$ or in the disk bounded by $C$.
  If it is in $M$, we say \emph{$\ell_i$ and $\ell_j$ cross in $M$.}
  \autoref{lem:order} shows that $\ell_i$ and $\ell_j$ can only cross in $M$ if $\{i,j\} = \{2k-1,2k\}$ for some $k$; see \autoref{fig:conic_orders}.

  \begin{Lem}\label{lem:u0inM}
    $u_0'' \in M$ and $cu_0 \in M$.
  \end{Lem}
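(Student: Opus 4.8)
The plan is to prove both inclusions first for the (line--irreducible) reducible representation with $t_\rho = \tred$, and then to transport them to all $t_\rho \geq \tcrit$ by a deformation argument in the style of \autoref{lem:wi_distinct} and \autoref{lem:order}.

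\emph{The reducible case.} Here everything can be read off from the configuration on the line $\ell_0$ established in the proof of \autoref{lem:reducible_order}. There the points $w_0, u_0, cu_2, x$ were shown to occur in this cyclic order on $\ell_0$, with $x$ the unique fixed point of $\rho$, and $\ell_0 \cap cM$ was identified with the open sub--arc of $\ell_0$ between $w_0$ and $cu_2$ not containing $x$; since $u_0$ lies on that sub--arc, $u_0 \in cM$, which is the same as $cu_0 \in M$. For the first inclusion I apply the double--primed version of that analysis, which lives on $\ell_0'' = \ell_2$: it gives the cyclic order $w_2, u_0'', bu_2'', x$, identifies $\ell_2 \cap M''$ with the open arc from $w_2$ to $u_0''$ missing $x$, and identifies $\ell_2 \cap bM''$ with the open arc from $w_2$ to $bu_2''$ missing $x$; moreover the proof of \autoref{lem:reducible_order} already records that $u_2 \notin M''$ while $u_2 \in bM''$. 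Reading $\ell_2$ from $w_2$ in the direction away from $x$, these constraints pin down the order $w_2, u_0'', u_2, bu_2'', x$. In particular $u_0''$ lies strictly between $w_2$ and $u_2$ on the arc missing $x$, which is precisely $\ell_2 \cap M$ (since $x \notin M$); hence $u_0'' \in M$.

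\emph{Deformation to the general case.} For arbitrary $\rho$ with $t_\rho \geq \tcrit$, connect its Coxeter character to the reducible one ($t_\rho = \tred$) by the path of characters of type $\bigl(\tfrac{p_1-1}{2},\tfrac{p_2-1}{2},\tfrac{p_3-1}{2}\bigr)$ whose parameter runs monotonically through $[\tcrit,\infty)$, represented by line--irreducible representations depending continuously on $t_\rho$. By \autoref{lem:coxeter_eigenvalues} the top eigenvalue of $\rho(s_1s_2s_3)$ stays simple along this path, so $w_0$ and $\ell_0$ vary continuously, and with them the whole finite configuration: the orbits $w_i,\ell_i,u_i$, the conics $C,C',C''$ (honest conics by \autoref{lem:wi_distinct}), the Möbius strip $M$, and the two points $cu_0$ and $u_0''$. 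A point can move between $M$ and the open disk bounded by $C$ only by crossing $C$, and each of these two alternatives is an open condition along the path. Since $cu_0, u_0'' \in M$ at the reducible end, it therefore suffices to show that $cu_0 \notin C$ and $u_0'' \notin C$ for every representation on the path.

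\emph{Excluding the crossings.} For $u_0''$ this is immediate: $u_0''$ always lies on $C''$, and $C \cap C'' = \{w_0,w_1,w_2,w_3\}$ by the computation preceding \autoref{lem:wi_distinct}; by \autoref{fig:conic_mappings} each of these four points is one of the points $w_j''$, whereas the primed form of \autoref{lem:order} shows that $u_0''$ is distinct from every $w_j''$ (on $C''$ the $u$'s and $w$'s are interleaved in a fixed cyclic order). Hence $u_0'' \notin C$. For $cu_0$ the argument is longer. Suppose $cu_0 \in C$. Since $cu_0$ lies on $\ell_2 = c\ell_0$ and $\ell_2 \cap C = \{w_2, u_2\}$, while $cu_0 = w_2$ would force $u_0 = cw_2 = w_0$, we must have $cu_0 = u_2$. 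Using $u_2 = bau_0$ (the generators permute the $u_i$ exactly as the $w_i$) this gives $\rho(s_1s_2s_3)\,u_0 = u_0$, so $u_0$ is one of the three fixed points of $\rho(s_1s_2s_3)$; as $u_0 \in \ell_0$ (the span of its top two eigenvectors) and $u_0 \neq w_0$, necessarily $u_0$ is the middle eigenvector of $\rho(s_1s_2s_3)$, so that eigenvector lies on $C$. Ruling this out — at $t_\rho = \tred$ the middle eigenvector is the global fixed point $x$, which is not on $C$ (otherwise $x$ would lie in $C \cap C' = \{w_0,w_2,w_3,w_5\}$, impossible since $x$ is the middle fixed point while those are top fixed points or $\langle a,b\rangle$--images of $w_0$, by \autoref{lem:reducible_identify}), and one then shows it cannot begin to hold along the path — is the one genuinely delicate point; everything else is either the reducible computation above or a formal consequence of continuity. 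Once both crossings are excluded, the deformation closes and gives $cu_0 \in M$ and $u_0'' \in M$ for all $t_\rho \geq \tcrit$.
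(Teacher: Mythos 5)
Your strategy — establish the two inclusions at the reducible point $t_\rho = \tred$ and then propagate them along the deformation to all $t_\rho \geq \tcrit$ — is genuinely different from the paper's. The paper instead reads both inclusions off from \autoref{lem:order} (which already holds at every $t_\rho$), via a short argument about the three points $w_2 = w_3'$, $u_2$, $u_3'$ on the single line $\ell_2 = \ell_3'$: it shows $u_0, u_3' \in M'$ and $u_2 \notin M'$, concludes $\ell_2 \cap M' \subset \ell_2 \cap M$, and observes $cu_0 \in \ell_2 \cap M'$ because $u_0 \in M'$ and $c$ preserves $M'$. In particular no second deformation is needed, and there is no need to rule out any ``crossing of $C$'' anew. (Also, the reducible case of your first inclusion can be shortened drastically: $u_0 \in M'$ is one of the explicit conclusions of \autoref{lem:reducible_order}, and $u_0'' \in M$ is literally its double--primed form.)

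The substantive problem is that your deformation argument for $cu_0$ is not closed. You correctly reduce ``$cu_0$ crosses $C$'' to ``$u_0$ coincides with the middle eigenvector of $\rho(s_1s_2s_3)$'' (i.e.\ $cu_0 = u_2$), and you correctly check this fails at $t_\rho = \tred$. But this is exactly the non-degeneracy you need to rule out \emph{along the whole path}, and you do not give an argument: the passage ``one then shows it cannot begin to hold along the path'' is a restatement of the claim, not a proof. The condition ``the middle eigenvector of $\rho(s_1s_2s_3)$ lies on $C$'' is not \emph{a priori} forbidden, and both the eigenvector and $C$ deform continuously, so openness does not help; you would need an independent argument (e.g.\ one like the paper's, which sidesteps the issue entirely by not tracking a moving point across a moving conic). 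As written, the proof of $cu_0 \in M$ for general $t_\rho$ has a genuine hole at precisely the step you flag as ``the one genuinely delicate point.''

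Two smaller remarks. First, in the ``excluding the crossings'' paragraph you should invoke the \emph{double}-primed, not the primed, form of \autoref{lem:order} for $u_0''$ and the $w_j''$. Second, when you discard the case $cu_0 = w_2$ you implicitly use $u_0 \neq w_0$, i.e.\ that $\ell_0$ is not tangent to $C$; this is fine since it follows from \autoref{lem:order}, but it deserves a word.
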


  \begin{Prf}
    The points $w_0,u_2,w_3,u_0,w_1,w_2,w_5$ lie in this cyclic order on $C$.
    Since $w_1 \in M'$ by \autoref{lem:order} and $C \cap C' = \{w_0,w_3,w_2,w_5\}$, we have that $u_0 \in M'$ and $u_2 \not\in M'$.
    Similarly, considering $w_0',w_3',w_1',w_2',u_3',w_5'$ along $C'$ and using that $w_5' = w_1'' \in M$ by \autoref{lem:order}, we find that $u_3' \in M$.

    Now consider the line $\ell_2 = \ell_3'$ which contains the points $w_2 = w_3'$, $u_3'$, and $u_2$.
    Since $u_3' \in M$ and $u_2 \not\in M'$ we see that $\ell_2 \cap M' \subset \ell_2 \cap M$.
    In particular $cu_0 \in \ell_2 \cap M' \subset M$.
    The other statement $u_0'' \in M$ is just the ``double primed'' version of $u_0 \in M'$.
  \end{Prf}

\subsection{Definition of \texorpdfstring{$I$, $T$}{I, T}, and \texorpdfstring{$\bx$}{the box}}\label{sec:definition_of_I_T_box}

In order to apply \autoref{prop:limit_map_exists} and get a boundary map, we first choose an interval $I \subset S^1$ and a finite set $T \subset \Gamma$ satisfying the axioms in the beginning of \autoref{sec:limit_curve}.
Then we construct a closed set $\bx \subset \RP^2$ satisfying the assumptions needed for \autoref{prop:limit_map_exists}, in particular that $\rho(\gamma)\bx \subset \bx$ for all $\gamma \in T$.

Let $I = [z_3,z_0]$, that is the component of $S^1 \setminus \{z_0,z_3\}$ which does not contain the points $z_1,z_2$ (see \autoref{fig:hyperbolic_intervals}).
Next, we define $T$ by
\[Q = \{s_1^\delta(s_1s_2)^j \mid \delta \in \{0,1\}, 1 \leq j \leq \textstyle\frac{p_3-1}{2}\}, \quad T = QQ''Q'.\]
Since $(s_1s_2)^jI'' = (s_1s_2)^j[z_3'',z_0''] = (s_1s_2)^j[z_0,z_2] = [z_{-2j},z_{2-2j}]$ (see \autoref{fig:conic_mappings}), we have
\begin{equation*}
  \label{eq:Iunion}
  \bigcup_{\gamma \in Q}\gamma I'' = \bigcup_{j=1}^{\frac{p_3-1}{2}}[z_{-2j},z_{2-2j}] \cup \bigcup_{j=1}^{\frac{p_3-1}{2}}[z_{1+2j},z_{3+2j}] = [z_{p+1},z_0] \cup [z_3,z_{p+2}] = I.
\end{equation*}
Together with the analogous versions $\bigcup_{\gamma' \in Q'}\gamma'I = I'$ and $\bigcup_{\gamma'' \in Q''}\gamma''I' = I''$, this shows that $\bigcup_{\gamma \in T}\gamma I = I$, so $I$ satisfies assumptions (i) and (iii) of \autoref{sec:limit_curve}.
The only $\gamma \in T$ which fixes an endpoint of $I$ is $\gamma = (s_1s_2)(s_3s_1)(s_2s_3) = (s_1s_2s_3)^2$, the attracting fixed point of which is the endpoint $z_0$.
So assumption (ii) also holds.

It remains to find a definition of $\bx \subset \RP^2$ (and analogously $\bxp$ and $\bxpp$) which satisfies
\begin{equation}\label{eq:box_inclusion}
  \rho(\gamma)\bxpp \subset \bx \quad \forall \gamma \in Q,
\end{equation}
and therefore $\rho(\gamma\gamma''\gamma')\bx \subset \rho(\gamma\gamma'')\bxp \subset \rho(\gamma)\bxpp \subset \bx$ for all $\gamma\gamma''\gamma' \in T$.

\begin{center}
  \begin{tikzpicture}
    \fill[ForestGreen!30] (-108:3.42) arc (-108:12:3.42) -- (12:3.9) arc (12:-108:3.9) -- cycle;
    \fill[blue!30] (12:3.42) arc (12:132:3.42) -- (132:3.9) arc (132:12:3.9) -- cycle;
    \fill[purple!30] (132:3.42) arc (132:252:3.42) -- (252:3.9) arc (252:132:3.9) -- cycle;
    \draw[thick] (12:3.42) -- (12:3.9);
    \draw[thick] (132:3.42) -- (132:3.9);
    \draw[thick] (252:3.42) -- (252:3.9);

    \begin{scope}
      \fill[blue!60] (252:3.91) arc (252:307:3.91) -- (307:3.99) arc (307:252:3.99) -- cycle;
      \fill[blue!60] (307:4.01) arc (307:331.7:4.01) -- (331.7:4.09) arc (331.7:307:4.09) -- cycle;
      \fill[blue!60] (316.7:4.11) arc (316.7:339.5:4.11) -- (339.5:4.19) arc (339.5:316.7:4.19) -- cycle;
      \fill[blue!60] (339.5:4.21) arc (339.5:372:4.21) -- (372:4.29) arc (372:339.5:4.29) -- cycle;
    \end{scope}
    \begin{scope}[rotate=120]
      \fill[red!60] (252:3.91) arc (252:307:3.91) -- (307:3.99) arc (307:252:3.99) -- cycle;
      \fill[red!60] (307:4.01) arc (307:331.7:4.01) -- (331.7:4.09) arc (331.7:307:4.09) -- cycle;
      \fill[red!60] (316.7:4.11) arc (316.7:339.5:4.11) -- (339.5:4.19) arc (339.5:316.7:4.19) -- cycle;
      \fill[red!60] (339.5:4.21) arc (339.5:372:4.21) -- (372:4.29) arc (372:339.5:4.29) -- cycle;
    \end{scope}
    \begin{scope}[rotate=240]
      \fill[ForestGreen!60] (252:3.91) arc (252:307:3.91) -- (307:3.99) arc (307:252:3.99) -- cycle;
      \fill[ForestGreen!60] (307:4.01) arc (307:331.7:4.01) -- (331.7:4.09) arc (331.7:307:4.09) -- cycle;
      \fill[ForestGreen!60] (316.7:4.11) arc (316.7:339.5:4.11) -- (339.5:4.19) arc (339.5:316.7:4.19) -- cycle;
      \fill[ForestGreen!60] (339.5:4.21) arc (339.5:372:4.21) -- (372:4.29) arc (372:339.5:4.29) -- cycle;
    \end{scope}

    \node[blue] at (290:4.2) {$s_2I''$};
    \node[blue] at (318:4.6) {$s_2s_1s_2I''$};
    \node[blue] at (335:5.0) {$s_1s_2s_1s_2I''$};
    \node[blue] at (355:4.85) {$s_1s_2I''$};
    \node[red] at (30:4.4) {$s_1I'$};
    \node[red] at (65:4.55) {$s_1s_3s_1I'$};
    \node[red] at (85:4.5) {$s_3s_1s_3s_1I'$};
    \node[red] at (115:4.65) {$s_3s_1I'$};
    \node[ForestGreen] at (160:4.4) {$s_3I$};
    \node[ForestGreen] at (190:4.9) {$s_3s_2s_3I$};
    \node[ForestGreen] at (210:5.05) {$s_2s_3s_2s_3I$};
    \node[ForestGreen] at (230:4.8) {$s_2s_3I$};

    \node at (0,0) {\includegraphics[width=7cm]{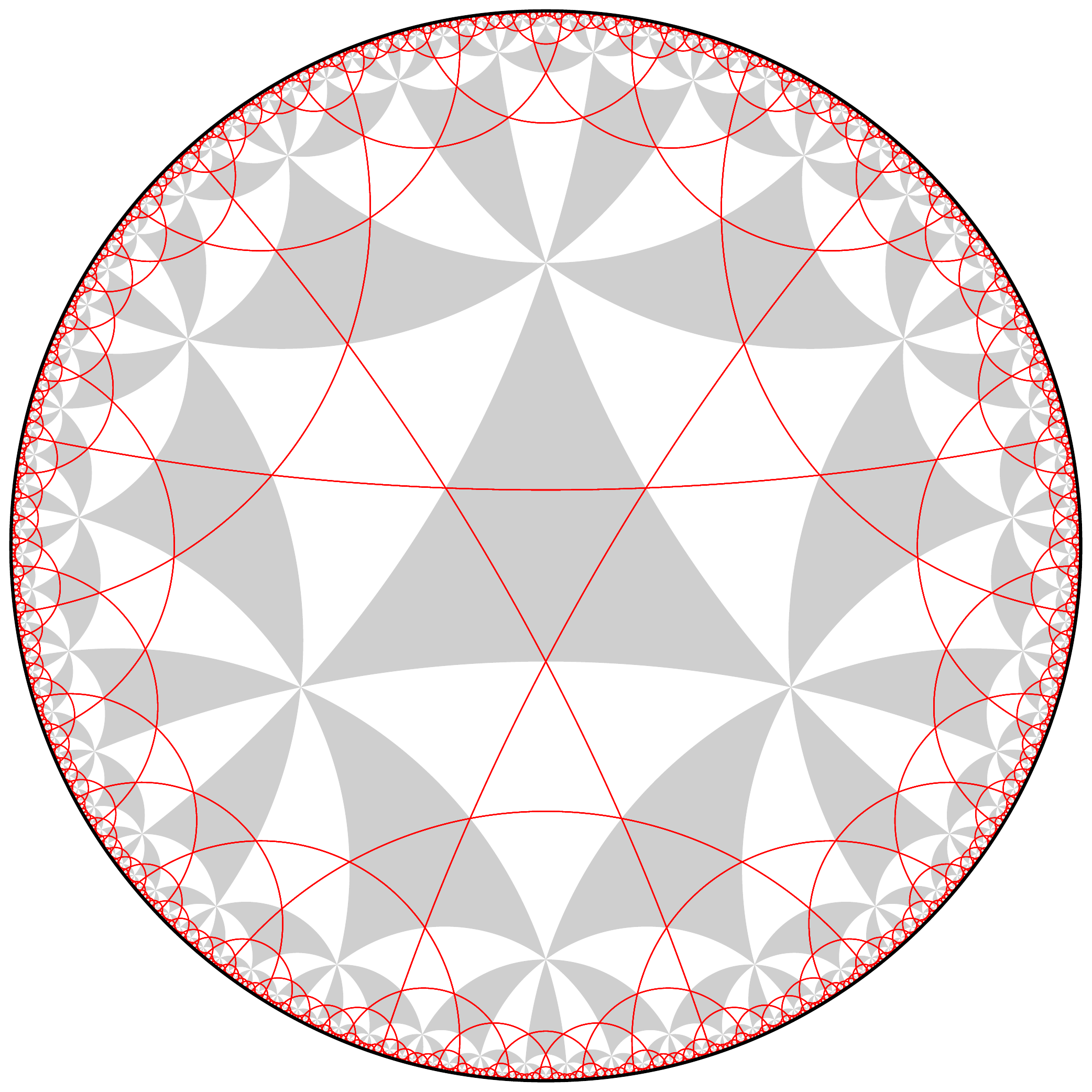}};
    \node at (0:1.1) {$s_1$};
    \node at (120:1.1) {$s_3$};
    \node at (240:1.1) {$s_2$};
    \node at (15:3.7) {$z_0$};
    \node at (67.5:3.7) {$z_1$};
    \node at (135:3.7) {$z_2$};
    \node at (255:3.7) {$z_3$};
    \node at (279.5:3.7) {$z_4$};
    \node at (307.3:3.7) {$z_5$};
    \node at (316.7:3.7) {$z_6$};
    \node at (331.7:3.7) {$z_7$};
    \node at (339.5:3.7) {$z_8$};
    \node at (358:3.7) {$z_9$};
    \node at (42:4.2) {$(s_2s_3s_1)_-$};
    \node at (168.1:4.3) {$(s_1s_2s_3)_-$};

    \foreach \i in {12,67.5,132,252,279.5,307.3,316.7,331.7,339.5,358} \fill (\i:3.43) circle (0.08);

    \fill[black] (48.1:3.43) circle (0.08);
    \fill[black] (168.1:3.43) circle (0.08);

    \node[blue] at (60:3.7) {$I''$};
    \node[purple] at (180:3.7) {$I'$};
    \node[ForestGreen] at (300:3.7) {$I$};
  \end{tikzpicture}
  \captionof{figure}{The intervals $I,I',I''$ and the first level of subintervals. Here $p_1 = p_2 = p_3 = 5$, hence $Q = \{s_2,s_1s_2,s_2s_1s_2,s_1s_2s_1s_2\}$, and $Q'$ and $Q''$ are analogous. The points $(s_1s_2s_3)_-$ and $(s_2s_3s_1)_-$, used in \autoref{lem:transversality_Gamma_orbit}, are also shown.}
  \label{fig:hyperbolic_intervals}
\end{center}

The set $\bx$ is supposed to behave like the interval $I$, so it should be bounded by $\ell_0$ and $\ell_3$ in the direction ``along'' the limit curve.
In the transverse direction, there is no such obvious choice.
A simple idea would be to ``roughly'' bound it by $C$, e.g. to define $\bx$ as convex hull of the points $w_0,w_3,u_0,u_3$.
Unfortunately, this box does not quite satisfy \eqref{eq:box_inclusion}.
We can fix this by cutting off two of its corners.
This yields the convex hexagon we consider below.

  Defining $\bx$ as a convex hull requires some care, as the convex hull is only well--defined in a fixed affine chart.
  For a projective line $\ell$ and points $x_1, \dots, x_n \in \RP^2$ not on $\ell$, we write
  \[\CH{\ell}{x_1,\dots,x_n}\]
  for the convex hull of $x_1,\dots,x_n$ in the affine chart $\RP^2 \setminus \ell$.
  Sometimes we have to switch from one affine chart to another. The following lemma will be useful for this.

  \begin{Lem}\label{lem:ch_chart_change}
    Consider a collection of points $x_1,\dots,x_n \in \overline{M}$ so that $x_i \in \ell_{j_i}$ for some $j_i$.
    Let $\ell_k$ and $\ell_m$ be two other lines which do not cross $\ell_{j_i}$ in $\overline M$ for any $i$.
    Assume there are two different connected components $I_1$ and $I_2$ of $C \setminus \{w_{j_1},u_{j_1},\dots,w_{j_n},u_{j_n}\}$ such that either $\{w_k,w_m\} \subset I_1$ and $\{u_k,u_m\} \subset I_2$, or $\{w_k,u_m\} \subset I_1$ and $\{u_k,w_m\} \subset I_2$.
    Then
    \[\CH{\ell_k}{x_1,\dots,x_n} = \CH{\ell_m}{x_1,\dots,x_n}.\]
  \end{Lem}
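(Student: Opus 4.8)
The plan is to regard $\ell \mapsto \CH{\ell}{x_1,\dots,x_n}$ as a map on the set $U := \{\ell \in (\RP^2)^* : x_i \notin \ell \text{ for all } i\}$ of lines missing all the $x_i$, to show it is \emph{locally constant} on $U$, and then to connect $\ell_k$ to $\ell_m$ by a path inside $U$; the two hypotheses of the lemma are used only in this last step.

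For local constancy I would first record an elementary sub-lemma: if $K := \CH{\ell}{x_1,\dots,x_n}$ happens to be disjoint from another line $\ell'$, then $K$ is still convex in the chart $\RP^2 \setminus \ell'$, so $\CH{\ell'}{x_1,\dots,x_n} \subseteq K$. This is immediate from the fact that the segment between two points $p,q$ in a chart $\RP^2\setminus\ell$ is exactly the arc of the line $\overline{pq}$ missing $\overline{pq}\cap\ell$: if that arc avoids $\ell'$, then $\overline{pq}\cap\ell'$ lies on the complementary arc, so the two segments coincide. Combining this with the observation that $\{\ell : \ell\cap K = \emptyset\}$ is open in $(\RP^2)^*$ for compact $K$ (its complement is the image of a compact incidence locus), one gets for $\ell_0 \in U$ and $\ell$ near $\ell_0$: $\ell$ is disjoint from $K_0 := \CH{\ell_0}{x_1,\dots,x_n}$, hence $\CH{\ell}{x_1,\dots,x_n}\subseteq K_0$, and then running the sub-lemma backwards, $K_0 \subseteq \CH{\ell}{x_1,\dots,x_n}$. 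So the map is locally constant, hence constant on each connected component of $U$ --- and the whole point is that $U$ is typically disconnected, so the hypotheses must be there to force $\ell_k$ and $\ell_m$ into one component.

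It then remains to build a path in $U$ from $\ell_k$ to $\ell_m$. Let $\alpha,\beta$ be the two arcs (possibly equal) of $C$, cut out by the finite set $\{w_{j_i},u_{j_i}\}_i$, that contain $\{w_k,u_k\}$; by the second hypothesis they also contain $\{w_m,u_m\}$, so sliding one intersection point along $\alpha$ and the other along $\beta$ yields a continuous family $(\ell_s)_{s\in[0,1]}$ of secants of $C$ from $\ell_k$ to $\ell_m$ whose endpoints on $C$ stay in the interiors of $\alpha,\beta$. I would then check $\ell_s\in U$ for every $s$: no $\ell_s$ equals any $\ell_{j_i}$ (its endpoints on $C$ avoid the $w_{j_i},u_{j_i}$), so $\ell_s\cap\ell_{j_i}$ is a single point, and by the criterion from the discussion after \autoref{lem:order} (whether two secants of $C$ meet inside the disk bounded by $C$ or inside $M$ is determined by the cyclic order of their four endpoints on $C$) this point lies in $\overline{M}$ if and only if $\ell_k\cap\ell_{j_i}$ does, i.e.\ never; since $x_i\in\overline{M}$ this forces $x_i\notin\ell_s$. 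Thus $\ell_k$ and $\ell_m$ lie in one component of $U$, and the convex hulls agree. The only genuinely geometric ingredient is this invariance of ``crossing $\ell_{j_i}$ in $\overline{M}$'' along the path; everything else is formal, and I expect the main conceptual hurdle to be spotting that local constancy holds on $U$ but not on all of $(\RP^2)^*$.
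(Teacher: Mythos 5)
Your proof is correct and takes essentially the same approach as the paper: you build a path of secants of $C$ joining $\ell_k$ to $\ell_m$ whose endpoints stay inside the same arcs as $\{w_k,u_k\}$ and $\{w_m,u_m\}$, and you observe that the constancy of the cyclic order of endpoints on $C$ forces these secants to miss every $\ell_{j_i}$ in $\overline M$, hence to miss every $x_i$. The only place you go beyond the paper is in spelling out the local-constancy argument (via the sub-lemma that a disjoint chart line preserves convexity) for why such a path implies $\CH{\ell_k}{x_1,\dots,x_n} = \CH{\ell_m}{x_1,\dots,x_n}$; the paper constructs the same path but leaves that last implication implicit.
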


  \begin{Prf}
    Assume for simplicity that $\{w_k, w_m\} \subset I_1$ and $\{u_k,u_m\} \subset I_2$, otherwise exchange the roles of $w_m$ and $u_m$.
    We want to find a continuous path of lines $\ell(t)$ from $\ell_k$ to $\ell_m$ which avoids $x_1,\dots,x_n$.
    Let $w(t)$ and $u(t)$ be continuous paths connecting $w_k$ with $w_m$ within $I_1$ and $u_k$ with $u_m$ within $I_2$.
    The cyclic order of $w_{j_i},w(t),u_{j_i},u(t)$ is constant for every $i \in \{1,\dots,n\}$.
    For all $t$, let $\ell(t)$ be the line through $w(t)$ and $u(t)$.
    Then the intersection point $\ell_{j_i} \cap \ell(t)$ is not in $\overline{M}$, hence $\ell(t)$ cannot contain the point $x_i$.
  \end{Prf}

  Now we can define the box as (see \autoref{fig:box_inclusions})
  \[\bx = \CH{\ell_2}{w_0,w_3,w_5,w_{-2},bcu_0,abcu_0}.\]
  \autoref{lem:ch_chart_change} shows that $\ell_1$ can be used instead of $\ell_2$ in this definition:
  the 6 vertices defining $\bx$ lie on the lines $\ell_0,\ell_3,\ell_5,\ell_{-2}$ and in the order of the $w_i$ and $u_i$ on $C$ both $\{w_1,w_2\}$ and $\{u_1,u_2\}$ are neighboring pairs.
  In particular, this shows $a \bx = \bx$, as $a\ell_2 = \ell_1$ and the set of vertices is invariant.
  As always, $\bxp$ and $\bxpp$ are analogously defined by
  \[\bxp = \CH{\ell_2'}{w_0',w_3',w_5',w_{-2}',cau_0',bcau_0'}, \quad \bxpp = \CH{\ell_2''}{w_0'',w_3'',w_5'',w_{-2}'',abu_0'',cabu_0''}.\]

  \subsection{The box inclusions}

  Our goal is now to show $g\bxpp \subset \bx$ for all $g \in \rho(Q)$, essentially by proving that all vertices of $g\bxpp$ are in $\bx$.
  For most $g$ this is achieved by \autoref{lem:box_in_ch} together with \autoref{lem:wi_in_box}, with a few special cases handled separately afterwards.
  \autoref{fig:box_inclusions} shows the configuration of boxes in an example.

  Many arguments in this section rely on the following simple fact: if $x,y,z$ are three distinct points on a conic $C$, splitting $C$ into three arcs, and $\ell$ is a line intersecting the conic in two of these arcs, then the third arc is completely contained in $\CH{\ell}{x,y,z}$.

  \begin{Lem}\label{lem:wi_in_box}
    $w_i \in \bx$ for all $i \not\in \{1,2\}$, $w_i \in \bx^\circ$ for all $i \not\in \{-2,0,1,2,3,5\}$, and $u_i \in \bx^\circ$ for all $i \not\in \{-1,0,1,2,3,4\}$.
  \end{Lem}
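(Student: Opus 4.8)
The plan is to pin down the location of every $w_i$ and $u_i$ on the conic $C$ relative to the hexagon $\bx$, using only the four vertices of $\bx$ that lie on $C$ --- namely $w_0,w_3,w_5,w_{-2}$ --- together with the elementary fact recorded above the lemma: three distinct points of $C$ cut $C$ into three arcs, and if a line meets $C$ in two of these arcs then the third arc lies in the convex hull, in any chart omitting that line, of the three points.

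First I would fix the combinatorics. By \autoref{lem:order} the $4p_3$ points $w_i,u_i$ occur along $C$ in the standard cyclic order $w_0,u_1,u_2,w_3,w_4,u_5,u_6,w_7,w_8,\dots$, or in the variant transposing $u_{2k-1}$ with $u_{2k}$ for every $k$, which changes nothing below. From this one reads off four facts: the vertices of $\bx$ on $C$ occur in the cyclic order $w_0,w_3,w_{-2},w_5$; the arc of $C$ from $w_0$ to $w_3$ avoiding $w_{-2}$ contains only $u_1,u_2$; the arc from $w_{-2}$ to $w_5$ avoiding $w_0$ contains, among all the $w_i,u_i$, exactly $w_1,w_2,u_{-1},u_0,u_3,u_4$; and $\ell_2\cap C=\{w_2,u_2\}$ with $w_2$ on the arc from $w_{-2}$ to $w_5$ (avoiding $w_0$) and $u_2$ on the arc from $w_0$ to $w_3$. (When $p_3=3$ several of these arcs are empty and the assertion collapses to the fact that the vertices lie in $\bx$; its force, via the primed and doubly primed versions, then comes from whichever of $p_1,p_2,p_3$ is at least $5$.)

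Next I would apply the elementary fact twice. For the triple $w_0,w_3,w_{-2}$: the line $\ell_2$ meets $C$ at $u_2$, on the arc $w_0$-to-$w_3$, and at $w_2$, on the arc $w_{-2}$-to-$w_0$ through $w_5$; hence the remaining arc, from $w_3$ to $w_{-2}$, lies in $\CH{\ell_2}{w_0,w_3,w_{-2}}\subseteq\bx$. Applying the involution $a$ (which preserves $\bx$ and $\bx^\circ$, sends $w_3\mapsto w_0$, $w_{-2}\mapsto w_5$, and $\ell_2\mapsto\ell_1$, the latter usable in place of $\ell_2$ in the definition of $\bx$), or equivalently running the same argument with the triple $w_{-2},w_5,w_0$, shows that the arc from $w_5$ to $w_0$ likewise lies in $\bx$. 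To obtain interior membership I would observe that a point $p$ in the interior of such a third arc lies in $\bx^\circ$: it lies in the closed triangle by the elementary fact, and it cannot lie on an edge, since each edge spans a chord of $C$ meeting $C$ only at that edge's two endpoints, which are distinct from $p$; as the closed triangle is contained in $\bx$, this places $p$ in $\bx^\circ$.

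Finally I would carry out the index bookkeeping. Each $w_i$ is one of $w_0,w_3,w_5,w_{-2}$, or lies in the open arc from $w_3$ to $w_{-2}$ or that from $w_5$ to $w_0$, or is $w_1$ or $w_2$; this yields $w_i\in\bx$ for $i\notin\{1,2\}$ and $w_i\in\bx^\circ$ for $i\notin\{-2,0,1,2,3,5\}$. Similarly, the only $u_i$ outside those two open arcs are $u_1,u_2$ (on the arc $w_0$-to-$w_3$) and $u_{-1},u_0,u_3,u_4$ (on the arc $w_{-2}$-to-$w_5$), so $u_i\in\bx^\circ$ for $i\notin\{-1,0,1,2,3,4\}$. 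The main obstacle is exactly this bookkeeping: one must check, for all odd $p_3$, that the distribution of the $w_i$ and $u_i$ among the four arcs between consecutive vertices matches the three exclusion sets in the statement precisely. The symmetry $a$, which interchanges the arcs $w_3$-to-$w_{-2}$ and $w_5$-to-$w_0$ while fixing the other two, halves this verification, and it is the one place I expect a (mild) subtlety in keeping the indices straight.
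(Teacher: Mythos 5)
Your proof is correct and takes essentially the same route as the paper: split $C$ along three of the box's vertices, apply the ``elementary fact'' with the line $\ell_2$ (using $u_2$ and $w_2$ to locate the two intersection points), conclude that one full arc lies in the convex hull of those three vertices and hence in $\bx$, then invoke the $a$-symmetry of $\bx$ for the other arc, and finally note that the open arcs avoid the edges of the triangle and hence lie in $\bx^\circ$. The only cosmetic difference is that you split along $w_0,w_3,w_{-2}$ where the paper uses $w_0,w_3,w_5$; both lead to the same conclusion.
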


  \begin{Prf}
    Splitting $C$ into three arcs along $w_0$, $w_3$, and $w_5$, the arc from $w_0$ to $w_3$ contains $u_2$, and the arc from $w_3$ to $w_5$ contains $w_2$.
    Hence the arc from $w_5$ to $w_0$ is contained in $\CH{\ell_2}{w_0,w_3,w_5} \subset \bx$.
    Since $\bx$ is invariant by $a$, it also contains the arc from $w_3$ to $w_{-2}$ avoiding $w_0$.
    Together, these contain all points $w_i$ with $i \not\in \{1,2\}$.
    The interiors of these arcs are even contained in $\bx^\circ$.
  \end{Prf}

  \begin{Lem}\label{lem:u0inCH}
    $u_0'', cu_0 \in \CH{\ell_3}{w_2,u_2}^\circ$.
  \end{Lem}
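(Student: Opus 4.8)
The plan is to reduce the statement to two observations: that $u_0''$ and $cu_0$ both lie on the line $\ell_2$, and that both lie in the open Möbius strip $M$, after first checking that $\CH{\ell_3}{w_2,u_2}^\circ$ is precisely $\ell_2\cap M$.

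First I would pin down the lines involved. By the coincidences displayed in \autoref{fig:conic_mappings} — which, by its caption, hold for the $\ell_i$ just as for the $w_i$ — one has $\ell_0''=\ell_2$ and $c\ell_0=\ell_2$: the first is the double--primed form of $\ell_0=\ell_0$ (using $w_0''=w_2$), and for the second one writes $\ell_0=\ell_3''$ and applies the double--primed version $c\ell_i''=\ell_{3-i}''$ of the relation $a\ell_i=\ell_{3-i}$. Since $u_0''$ is by construction a point of $\ell_0''$, and $cu_0$ a point of $c\ell_0$ (this last is already recorded in the proof of \autoref{lem:u0inM}), both $u_0''$ and $cu_0$ lie on $\ell_2$.

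Next I would describe $\CH{\ell_3}{w_2,u_2}$. The line $\ell_2$ meets $C$ exactly in $w_2$ and $u_2$, so $\ell_2\setminus\{w_2,u_2\}$ consists of two arcs, one contained in the disk bounded by $C$ and the other contained in $M$. Since $\{2,3\}$ is not of the form $\{2k-1,2k\}$, \autoref{lem:order} and the remark following it show that $\ell_2$ and $\ell_3$ meet at a point of that disk; in particular this point is distinct from $w_2$ and $u_2$ (which lie on $C$), so $\CH{\ell_3}{w_2,u_2}$ is the honest segment joining $w_2$ and $u_2$ in the chart $\RP^2\setminus\ell_3$, namely the closure of the $M$--arc. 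Hence $\CH{\ell_3}{w_2,u_2}^\circ=\ell_2\cap M$.

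Combining the two steps with \autoref{lem:u0inM}, which gives $u_0'',cu_0\in M$, yields $u_0'',cu_0\in\ell_2\cap M=\CH{\ell_3}{w_2,u_2}^\circ$, as required. The only delicate point is the bookkeeping in the first step — correctly tracking the primed/double--primed relabelling to confirm $\ell_0''=\ell_2$ and $c\ell_0=\ell_2$ — together with reading off from \autoref{lem:order} that the crossing of $\ell_2$ and $\ell_3$ lies in the disk rather than in $M$; everything else is immediate.
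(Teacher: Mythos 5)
Your proof is correct and follows essentially the same route as the paper's: note that $u_0''$ and $cu_0$ lie on $\ell_2$ and in $M$ (by \autoref{lem:u0inM}), and that $\ell_2$ and $\ell_3$ do not cross in $M$ (by the remark after \autoref{lem:order}), so both points are on the $M$--arc of $\ell_2$, which is exactly $\CH{\ell_3}{w_2,u_2}^\circ$. The paper states this in two sentences; your version just spells out the bookkeeping of the primed/double--primed relabelling (correctly) to confirm $\ell_0''=\ell_2=c\ell_0$.
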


  \begin{Prf}
    Both $u_0''$ and $cu_0$ lie on the line $\ell_2$ (since $\ell_2 = \ell_0'' = c\ell_0$), and in $M$ by \autoref{lem:u0inM}.
    Since $\ell_3$ and $\ell_2$ do no intersect within $M$, both points are in $\CH{\ell_3}{w_2,u_2}^\circ$.
  \end{Prf}

  \begin{Lem}\label{lem:w2inCH}
    $w_{-2}'' \in \CH{\ell_3}{w_2,w_0,cu_0}^\circ$.
  \end{Lem}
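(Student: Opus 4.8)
The plan is to verify the statement for the reducible representation ($t_\rho=\tred$) and then deform, showing that $w_{-2}''$ never meets the boundary of the triangle $\CH{\ell_3}{w_2,w_0,cu_0}$ as $t_\rho$ ranges over the connected interval $[\tcrit,\infty)$.

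First I would collect the relevant incidences. Using the mapping relations of \autoref{fig:conic_mappings} and their double--primed versions, $c$ sends $w_0=(abc)_+$ to $(cab)_+=w_0''=w_2$, hence $cw_2=w_0$ as well; likewise $cw_{-2}''=w_5''=w_1$; and $c\,(cu_0)=u_0$. As $w_1,u_0,w_0,w_2$ all lie on $C$, the four points $w_0$, $w_2$, $cu_0$, $w_{-2}''$ all lie on the conic $cC$. Also $w_0=w_3''$ and $w_2=w_0''$ lie on $C''$, while $\ell_2=\ell_0''$ meets $C''$ exactly in $w_0''=w_2$ and $u_0''$. Now the three edges of $\CH{\ell_3}{w_2,w_0,cu_0}$ lie on the lines $\overline{w_0w_2}$, $\ell_2=\overline{w_2\,cu_0}$, and $\overline{w_0\,cu_0}$: the first is a secant of $C''$ meeting it only in $w_3''$ and $w_0''$; the second meets $C''$ only in $w_2$ and $u_0''$; the third is a secant of $cC$ meeting it only in $w_0$ and $cu_0$. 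Since $w_{-2}''$ is distinct from $w_0''=w_2$, from $w_3''=w_0$, and from $u_0''$ (all by \autoref{lem:wi_distinct} and \autoref{lem:order}), and distinct from $cu_0$ as well (otherwise $w_1=c\,w_{-2}''=c\,cu_0=u_0$, contradicting \autoref{lem:order}), the point $w_{-2}''\in C''\cap cC$ lies on none of these three lines, for every $t_\rho\geq\tcrit$. Hence $w_{-2}''$ never lies on $\partial\CH{\ell_3}{w_2,w_0,cu_0}$.

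It remains to check that this triangle is genuinely nondegenerate throughout — $cu_0\notin\ell_3$ by \autoref{lem:u0inCH}, $w_0=w_3''$ and $w_2=w_0''$ are not on $\ell_3=\ell_2''$ (which meets $C''$ only in $w_2''$ and $u_2''$), and $w_0,w_2,cu_0$ are not collinear since $\overline{w_0w_2}$ is a secant of $cC$ and $cu_0\in cC\setminus\{w_0,w_2\}$ — and then to treat the base case $t_\rho=\tred$. There $\rho$ fixes a point $x$, the lines $\ell_i$ (and $\ell_i''$) all pass through $x$, and \autoref{lem:reducible_order} gives the explicit cyclic order of the $w_i$ and $u_i$ on $C$. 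Transporting by $c$, one must show $w_1\in\CH{\ell_1''}{w_0,w_2,u_0}^\circ$ with $\ell_1''=c\ell_3$; by the arc--of--conics fact recalled at the start of this subsection this follows once one knows that $w_1$ lies on the arc of $C$ from $u_0$ to $w_2$ that avoids $w_0$ (immediate from \autoref{lem:reducible_order}) and that $\ell_1''$ meets $C$ in the remaining two arcs. Determining where $\ell_1''$ crosses $C$ in this explicit reducible picture is the step I expect to require the most care.
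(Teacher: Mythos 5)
Your strategy (verify at $t_\rho=\tred$, then deform and show $w_{-2}''$ never meets the three boundary lines) is genuinely different from the paper's, and the deformation part of it is sound: the incidences you list are correct, the distinctness of $w_{-2}''$ from $w_0''$, $w_3''$, $u_0''$ and $cu_0$ does follow (though the step ``$w_{-2}''\neq u_0''$'' deserves a sentence: it reduces, via the dihedral $\langle a'',b''\rangle$-symmetry $u_0''=w_{-2}''\Rightarrow u_2''=w_0''$, to the double-primed version of what is proved inside \autoref{lem:order}), and the nondegeneracy check is exactly what is needed to make the continuity argument legitimate. However, the base case is left as an acknowledged gap, and it is precisely the hard part: you want $w_1\in\CH{\ell_1''}{w_0,w_2,u_0}^\circ$, and the arc-of-conics fact requires knowing which arcs of $C$ the line $\ell_1''=c\ell_3$ meets, which is not directly available.

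The paper sidesteps both the deformation and the base-case difficulty by choosing a chart where the conic intersections are already known. It first applies the arc-of-conics fact on $C$ in the $\ell_3$ chart: since $\ell_3\cap C=\{w_3,u_3\}$ and the cyclic order $w_0,w_3,u_0,w_2,u_3$ is known from \autoref{lem:order}, one gets $w_1\in\CH{\ell_3}{w_0,w_2,u_0}^\circ$ for every $t_\rho$ at once, no deformation needed. Applying $c$ gives $w_{-2}''\in\CH{\ell_5'}{w_3',w_2',cu_0}^\circ$; now both $\ell_5'$ and $\ell_0'=\ell_3$ have known intersections with $C'$, so \autoref{lem:ch_chart_change} replaces $\ell_5'$ by $\ell_0'$ and yields the claim. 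So the lesson is: instead of pushing the affine chart through $c$ to the unknown line $\ell_1''$, keep the argument on a line with known conic intersections and transfer charts afterwards via \autoref{lem:ch_chart_change}. If you want to salvage your version, you could either establish the base case via the explicit reducible picture in \autoref{lem:reducible_order} (tracking where $\ell_1''$ crosses $C$ through the common point $x$), or simply replace the base-case step by the paper's one-shot argument and drop the deformation entirely.
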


  \begin{Prf}
    Since the points $w_0,w_3,u_0,w_2,u_3$ lie in this order on the conic $C$, the arc from $u_0$ to $w_2$ avoiding $w_0$, and in particular the point $w_1$, is contained in $\CH{\ell_3}{w_0,w_2,u_0}^\circ$.
    So $w_{-2}'' = cw_1$ is contained in $c\CH{\ell_3}{w_0,w_2,u_0}^\circ = \CH{\ell_5'}{w_3',w_2',cu_0}^\circ$.

    Further $\CH{\ell_5'}{w_3',w_2',cu_0} = \CH{\ell_0'}{w_3',w_2',cu_0} = \CH{\ell_3}{w_2,w_0,cu_0}$, since $cu_0 \in \ell_3'$ and $cu_0 \in M'$ (by \autoref{lem:u0inM}) and the pairs $w_0',w_5'$ as well as $u_0',u_5'$ each lie on common arcs of $C' \setminus \{w_2',w_3',u_2',u_3'\}$.
  \end{Prf}

  \begin{Lem}\label{lem:cabu0inCH}
    $cabu_0'' \in \CH{\ell_3}{w_2,cu_0}^\circ$.
  \end{Lem}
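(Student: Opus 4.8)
The plan is to reduce the statement to a claim about the cyclic order of a few points on the line $\ell_2$, and then read it off from the contracting dynamics of $cab$ on $\ell_2$. First, observe that $cab = \rho(s_3s_1s_2)$ is the double--primed Coxeter element, so it fixes the line $\ell_0'' = \ell_2$ and its attracting fixed point $w_0'' = w_2 \in \ell_2$; since $u_0'' \in \ell_2$ as well (noted in the proof of \autoref{lem:u0inCH}), we get $cab\,u_0'' \in \ell_2$. Moreover $w_2$ and $cu_0$ lie on $\ell_2$, and because $\ell_2$ and $\ell_3$ do not cross in $\overline M$ (\autoref{lem:order}) while $cu_0 \in M$ (\autoref{lem:u0inM}), the set $\CH{\ell_3}{w_2,cu_0}$ is precisely the closed arc $\ell_2 \cap \overline M$, which runs from $w_2$ through $cu_0$ to the second point $u_2$ of $\ell_2 \cap C$, cut off at $cu_0$. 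So it suffices to locate $cab\,u_0''$ on $\ell_2$ relative to $w_2$, $cu_0$ and $u_2$.

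Next I would record the dynamics of $cab$ on $\ell_2$. As $cab$ is conjugate in $\Gamma$ to $s_1s_2s_3$, its eigenvalues are the ones in \autoref{lem:coxeter_eigenvalues}: real, with pairwise distinct absolute values (using $t_\rho \ge \tcrit$), and $w_2$ is the top eigenline. By \autoref{lem:reducible_identify} these eigenvalues equal $-\lambda,-1,\lambda^{-1}$ (with $\lambda > 1$) at the reducible representation, and since $\det cab = 1$ and the absolute values stay distinct, their signs remain $(-,-,+)$ for all $t_\rho \ge \tcrit$ by continuity. Hence the two top eigenlines span $\ell_2$, the restriction of $cab$ to $\ell_2$ is orientation--preserving, with $w_2$ attracting and one other, repelling, fixed point $n$; it preserves each of the two arcs of $\ell_2 \setminus \{w_2,n\}$ and moves every point of them strictly towards $w_2$.

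Then I would show that along the arc $\ell_2 \cap \overline M$ from $w_2$ to $u_2$ the points $w_2, u_0'', cu_0, u_2$ occur in this order, and that $n \notin \ell_2 \cap \overline M$. At the reducible representation this is the double--primed translate of the order computation carried out in the proof of \autoref{lem:reducible_order} (there $n$ is the global fixed point $x$, which lies off $\overline M$), combined with \autoref{lem:u0inM} and its descriptions of $\ell_2 \cap M$, $\ell_2 \cap M'$ and $\ell_2 \cap cM$; for general $t_\rho \ge \tcrit$ it persists by continuity, since all of $w_2, u_2, u_0'', cu_0, n$ vary continuously with $\rho$ and none of them can collide or cross $C$, by \autoref{lem:wi_distinct} and \autoref{lem:order} (which also keep $w_1 \in M'$, hence $u_0'' \in M$ and $u_2 \notin M'$), and the fact that $cab$ never acquires a repeated eigenvalue. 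Granting this, $cab$ maps the closed arc $\ell_2 \cap \overline M$ strictly into itself, contracting towards $w_2$, so $cab\,u_0''$ lies strictly between $w_2$ and $u_0''$ on this arc, hence strictly between $w_2$ and $cu_0$; that is, $cab\,u_0'' \in \CH{\ell_3}{w_2,cu_0}^\circ$.

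The main difficulty will be the third step: carrying out the order computation of \autoref{lem:reducible_order} in the double--primed picture to fix the cyclic order of $u_0''$, $cu_0$, $u_2$ and $n$ on $\ell_2$, and making the deformation argument airtight — in particular ruling out the repelling fixed point $n$ of $cab$ moving onto $\ell_2 \cap \overline M$, which would invalidate the contraction step.
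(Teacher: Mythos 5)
Your overall strategy---studying the dynamics of $cab = \rho(s_3s_1s_2)$ on its fixed line $\ell_0'' = \ell_2$---is a natural way in, and several preparatory observations are sound. But the argument as written has a load-bearing gap at the ordering claim ``$w_2, u_0'', cu_0, u_2$ occur in this order along $\ell_2 \cap \overline M$,'' i.e.\ that $u_0'' \in \CH{\ell_3}{w_2,cu_0}$. This is not what \autoref{lem:u0inCH} gives: it places \emph{both} $u_0''$ and $cu_0$ in the open arc of $\ell_2 \cap M$ from $w_2$ to $u_2$, but says nothing about their order relative to each other. Nor do the sources you cite supply it: the $c$--translate of the computation in the proof of \autoref{lem:reducible_order} orders $cu_0$, $u_3'$, $u_2$ along $\ell_2$ (it compares $cM$, $M'$, $M$), while its double--primed translate orders $u_0''$, $u_2$, $bu_2''$ (it compares $M''$, $M$, $bM''$); neither compares $M''$ with $cM$, which is precisely what the order of $u_0''$ versus $cu_0$ amounts to. Your step ``$cab\,u_0''$ lies strictly between $w_2$ and $u_0''$, \emph{hence} strictly between $w_2$ and $cu_0$'' rests entirely on this unestablished ordering.

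The ordering is in fact unnecessary, and the fix is small: compute the image of the \emph{other} endpoint. Since $b u_2 = u_3$ and $a u_3 = u_0$, one has $ab\,u_2 = u_0$ and therefore $cab\,u_2 = cu_0$. So $cab$ sends the endpoints $w_2, u_2$ of the arc $\ell_2 \cap \overline M$ to $w_2, cu_0$, and since $cab$ maps that arc into itself and is orientation--preserving on it (as you argue), it maps the open subarc from $w_2$ to $u_2$---which contains $u_0''$ by \autoref{lem:u0inCH}---into the open subarc from $w_2$ to $cu_0$. That is exactly $cab\,u_0'' \in \CH{\ell_3}{w_2,cu_0}^\circ$, with no need for the relative position of $u_0''$ and $cu_0$, and in fact no need for the contraction towards $w_2$ at all. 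The paper's proof is precisely this computation packaged in the $\CH{\cdot}{\cdot}$ and chart--change formalism: apply $ab$ to $u_0'' \in \CH{\ell_3}{w_2,u_2}^\circ$ to get $abu_0'' \in \CH{\ell_1}{w_0,u_0}^\circ$, switch charts via \autoref{lem:ch_chart_change}, and apply $c$ to land in $\CH{\ell_0}{w_2,cu_0}^\circ = \CH{\ell_3}{w_2,cu_0}^\circ$. This also sidesteps the eigenvalue--sign and repelling--fixed--point considerations, which become extra machinery once one just pushes forward the endpoints of the arc.
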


  \begin{Prf}
    Since $u_0'' \in \CH{\ell_3}{w_2,u_2}^\circ$ by \autoref{lem:u0inCH}, we have (using \autoref{lem:ch_chart_change} where necessary)
    \[cabu_0'' \in c\CH{\ell_1}{w_0,u_0}^\circ = c\CH{\ell_2}{w_0,u_0}^\circ = \CH{\ell_0}{w_2,c u_0}^\circ = \CH{\ell_3}{w_2,cu_0}^\circ.\qedhere\]
  \end{Prf}

  \begin{Lem}\label{lem:box_in_ch}
    $\bxpp \subset \CH{\ell_i}{w_0,w_2,u_0,u_2}$ for any $i \not\in \{-1,0,1,2\}$. In particular these $\ell_i$ do not intersect $\bxpp$, and $\ell_0$ and $\ell_2$ only intersect it in its boundary $\partial\bxpp$.
  \end{Lem}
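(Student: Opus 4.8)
The plan is to compute $\bxpp$ explicitly, show that $\ell_i$ is disjoint from $\bxpp$ for the relevant $i$, check that the six vertices of $\bxpp$ lie in $\CH{\ell_i}{w_0,w_2,u_0,u_2}$, and then deduce the inclusion of the whole hexagon by convexity. Since the double--priming convention sends $s_1,s_2,s_3$ to $s_3,s_1,s_2$, hence $a,b,c$ to $c,a,b$, and since $w_0''=w_2$, $w_3''=w_0$, $w_5''=w_1$, $\ell_2''=\ell_3$ by \autoref{fig:conic_mappings}, while $w_{-2}''=cw_1$ was established in the proof of \autoref{lem:w2inCH}, the box $\bxpp$ is
\[\bxpp = \CH{\ell_3}{w_0,\;w_2,\;w_1,\;cw_1,\;abu_0'',\;cabu_0''}.\]
It is readily checked from the definition of $\bx$ that $\bx$ has an edge on $\ell_0$ (between $w_0$ and $abcu_0$) and an edge on $\ell_3$ (between $w_3$ and $bcu_0$); double--priming, $\bxpp$ has an edge on $\ell_2$ and an edge on $\ell_0$.

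Next I would record the order data. By \autoref{lem:order} the four points $w_0,w_2,u_0,u_2$ lie on $C$ in the cyclic order $w_0,u_2,u_0,w_2$ (in both orders permitted by that lemma), splitting $C$ into two \emph{short} arcs --- from $w_0$ to $u_2$, and from $u_0$ to $w_2$, the latter containing $w_1$ --- and two \emph{long} arcs; moreover for $i\notin\{-1,0,1,2\}$ the two intersection points $\ell_i\cap C=\{w_i,u_i\}$ lie one on each long arc. Combined with the statement in \autoref{lem:order} about which pairs $\ell_j,\ell_k$ cross inside $M$ (only consecutive pairs $\{2k-1,2k\}$), this shows that for such $i$ the line $\ell_i$ crosses neither $\ell_0$ nor $\ell_2$ within $\overline M$, and crosses neither short arc of $C$.

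Now for the vertex checks. The vertices $w_0,w_2$ are defining points of $\CH{\ell_i}{w_0,w_2,u_0,u_2}$. The vertex $w_1$ lies on the short arc of $C$ from $u_0$ to $w_2$, so the elementary ``third arc'' fact recalled before \autoref{lem:wi_in_box}, applied to the triple $u_0,w_2,u_2$ (using that $\ell_i$ meets the other two arcs they determine), puts $w_1\in\CH{\ell_i}{u_0,w_2,u_2}^\circ$. For the remaining three vertices: by \autoref{lem:u0inCH} one has $cu_0\in\CH{\ell_3}{w_2,u_2}^\circ$, and since $\ell_2\cap\ell_3\notin\overline M$ this is the arc of $\ell_2$ from $w_2$ to $u_2$ through $M$, which by the non--crossing above lies in $\CH{\ell_i}{w_2,u_2}\subseteq\CH{\ell_i}{w_0,w_2,u_0,u_2}$; consequently $cabu_0''\in\CH{\ell_3}{w_2,cu_0}^\circ$ (\autoref{lem:cabu0inCH}), being a subsegment, lies there too. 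Likewise $abu_0''\in\CH{\ell_1}{w_0,u_0}^\circ$ (push \autoref{lem:u0inCH} forward by $ab$, as done in the proof of \autoref{lem:cabu0inCH}), which is the arc of $\ell_0$ from $w_0$ to $u_0$ through $M$, so it lies in $\CH{\ell_i}{w_0,u_0}\subseteq\CH{\ell_i}{w_0,w_2,u_0,u_2}$. Finally $cw_1=w_{-2}''\in\CH{\ell_3}{w_0,w_2,cu_0}^\circ$ (\autoref{lem:w2inCH}) lies in a triangle whose vertices $w_0,w_2,cu_0$ have all been placed in $\CH{\ell_i}{w_0,w_2,u_0,u_2}$ and whose three edges lie on $\ell_2$, on $\ell_0$, and on the line through $w_0$ and $w_2$; as $\ell_i$ crosses none of these inside $\overline M$, convexity of $\CH{\ell_i}{w_0,w_2,u_0,u_2}$ in the chart $\RP^2\setminus\ell_i$ (with the chart changes handled by \autoref{lem:ch_chart_change}) places $cw_1$ inside it as well.

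To close the argument without circularity I would prove $\ell_i\cap\bxpp=\emptyset$ directly for $i\notin\{-1,0,1,2\}$ by inspecting the six edges of $\bxpp$: two lie on $\ell_0$ and on $\ell_2$, and the remaining four run along $C$ or along $\ell_0,\ell_2$ between consecutive vertices, and the order data of \autoref{lem:order} shows $\ell_i$ crosses none of them. A compact convex polygon disjoint from a projective line is the convex hull of its vertices in the complementary affine chart, so $\bxpp=\CH{\ell_i}{\text{its vertices}}\subseteq\CH{\ell_i}{w_0,w_2,u_0,u_2}$, which also yields the ``in particular'' disjointness; and since $\bxpp$ literally has an edge on $\ell_0$ and one on $\ell_2$, those two lines meet it only in $\partial\bxpp$. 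I expect the real work to be precisely this bookkeeping: faithfully tracking the cyclic order of the $w_j,u_j$ on $C$ through both cases of \autoref{lem:order}, and keeping the many projective convex hulls in mutually compatible affine charts --- which is exactly what \autoref{lem:ch_chart_change} is designed to streamline.
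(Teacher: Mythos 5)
Your overall strategy is the same as the paper's: identify the six vertices of $\bxpp$, show each lies in $\CH{\ell_3}{w_0,w_2,u_0,u_2}$ using Lemmas~\ref{lem:u0inCH}, \ref{lem:w2inCH}, \ref{lem:cabu0inCH} and the cyclic order from \autoref{lem:order}, then use \autoref{lem:ch_chart_change} to pass to arbitrary $i$. Two remarks.

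First, your argument for the vertex $cw_1 = w_{-2}''$ contains a genuine slip. You claim the three edges of the triangle $\CH{\ell_3}{w_0,w_2,cu_0}$ lie on $\ell_2$, on $\ell_0$, and on the line $w_0w_2$. But $cu_0$ is not on $\ell_0$: since $\ell_0=\ell_3''$ one has $c\ell_0 = c\ell_3'' = \ell_0'' = \ell_2$, so $cu_0\in\ell_2$ only. Hence the edge from $w_0$ to $cu_0$ lies on neither $\ell_0$ nor $\ell_2$, and your non-crossing argument for that edge does not apply as written. The conclusion is still true, and the paper's route to it is cleaner and avoids this issue entirely: by \autoref{lem:u0inCH}, $cu_0\in\CH{\ell_3}{w_2,u_2}^\circ$, so $\CH{\ell_3}{w_0,w_2,cu_0}^\circ\subset\CH{\ell_3}{w_0,w_2,u_2}^\circ$, hence $w_{-2}''\in\CH{\ell_3}{w_0,w_2,u_0,u_2}^\circ$, all in the single chart $\RP^2\setminus\ell_3$.

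Second, the preliminary direct verification that $\ell_i\cap\bxpp=\emptyset$ is not needed and as written is not quite convincing (the four non-$\ell_0,\ell_2$ edges of $\bxpp$ are straight segments of the chart, not arcs of $C$, so ``run along $C$'' is imprecise, and ruling out crossings requires a separate argument). The paper sidesteps this: it proves the inclusion entirely in the $\ell_3$-chart first, where $\CH{\ell_3}{w_0,w_2,u_0,u_2}$ is by definition disjoint from $\ell_3$; then a single application of \autoref{lem:ch_chart_change} (with $x_1,\ldots,x_4 = w_0,w_2,u_0,u_2$ on $\ell_0,\ell_2$, and $k=3$, $m=i$) gives $\CH{\ell_3}{w_0,w_2,u_0,u_2}=\CH{\ell_i}{w_0,w_2,u_0,u_2}$ for all $i\notin\{-1,0,1,2\}$, since for those $i$ the pair $\{w_i,u_i\}$ lies in the same two long arcs of $C\setminus\{w_0,u_0,w_2,u_2\}$ as $\{w_3,u_3\}$ and $\ell_i$ does not cross $\ell_0$ or $\ell_2$ in $\overline M$. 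Disjointness from $\ell_i$ is then automatic, with no circularity.
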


  \begin{Prf}
    By definition,
    \[\bxpp = \CH{\ell_2''}{w_0'',w_3'',w_5'',w_{-2}'',abu_0'',cabu_0''} = \CH{\ell_3}{w_2,w_0,w_1,w_{-2}'',abu_0'',cabu_0''}.\]
    We need to show that the last four points are in the convex hull.
    The points $w_0, w_2, u_0, u_2$ split $C$ into 4 connected components, which are alternatingly contained or not contained in $\CH{\ell_i}{w_0,w_2,u_0,u_2}$.
    Since  $w_i,u_i \not\in \CH{\ell_i}{w_0,w_2,u_0,u_2}$, $w_1$ must be contained in it, see \autoref{fig:conic_orders}.
    For $w_{-2}''$ and $cabu_0''$ it follows from \autoref{lem:w2inCH}, \autoref{lem:cabu0inCH} and \autoref{lem:u0inCH}.
    Finally, $abu_0'' \in \CH{\ell_1}{w_0,u_0} = \CH{\ell_3}{w_0,u_0}$ by \autoref{lem:u0inCH} and \autoref{lem:ch_chart_change}.
    This proves the lemma for $i = 3$.
    We can then use \autoref{lem:ch_chart_change} to change $i$ to what we want.
  \end{Prf}

  Below we will use \autoref{lem:box_in_ch} to show that $g \bxpp \subset \bx$ for all $g \in \rho(Q) \setminus \{b, ab\}$.
  The case $g \in \{b,ab\}$, and particularly the vertex $ababu_0''$, needs extra attention.
  \autoref{lem:ababu0_in_box} will show that it is in $\bx$ if $p_2 > 3$ or $p_3 > 3$, but this is false if $p_2 = p_3 = 3$.
  We will work around this issue in \autoref{sec:p2_p3_3}.

  \begin{Lem}\label{lem:ababu0_in_box}
    If $p_2 > 3$ or $p_3 > 3$ then $ababu_0'' \in \bx^\circ$.
  \end{Lem}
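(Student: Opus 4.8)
The plan is to locate the point $ababu_0''$ on a single line $\ell_{-2}$, confine it to a short sub-arc with both endpoints under control, and then conclude by convexity of $\bx$.

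First I would determine which line contains $u_0''$. By definition $\ell_0''$ is the attracting fixed line of $\rho(s_3s_1s_2)=cab$, and since $cab=(ba)(abc)(ba)^{-1}$ it equals $(ba)\ell_0=\ell_2$. Thus $u_0''\in\ell_2$, and \autoref{lem:u0inCH} places it in $\CH{\ell_3}{w_2,u_2}^\circ$, i.e.\ strictly between $w_2$ and $u_2$ on the line $\ell_2$; since moreover $u_0''\in M$ by \autoref{lem:u0inM}, it lies on the open arc of $\ell_2\cap M$ bounded by $w_2$ and $u_2$. Now I apply the projective transformation $abab=(ab)^2$. The relations $a\ell_i=\ell_{3-i}$, $b\ell_i=\ell_{5-i}$ give $ab\,\ell_i=\ell_{i-2}$, $ab\,w_i=w_{i-2}$ and $ab\,u_i=u_{i-2}$, and $M$ is invariant under $\langle a,b\rangle$. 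Applying $abab$ to the previous statement therefore shows that $ababu_0''$ lies on the open arc of $\ell_{-2}\cap M$ bounded by $w_{-2}$ and $u_{-2}$.

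To finish, note that this arc is an honest line segment in the affine chart $\RP^2\setminus\ell_2$ in which $\bx$ is convex: the lines $\ell_{-2}$ and $\ell_2$ do not cross inside $M$ --- their indices are not of the form $\{2k-1,2k\}$, see \autoref{lem:order} --- so the point $\ell_{-2}\cap\ell_2$ lies on the complementary arc of $\ell_{-2}$. One endpoint of our arc is $w_{-2}$, a vertex of $\bx$; the other is $u_{-2}$, which lies in $\bx^\circ$ by \autoref{lem:wi_in_box} provided $-2\notin\{-1,0,1,2,3,4\}$ as a residue modulo $2p_3$, and this holds exactly when $p_3>3$. Convexity of $\bx$ then forces the whole arc, except possibly the endpoint $w_{-2}$, into $\bx^\circ$; in particular $ababu_0''\in\bx^\circ$. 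This disposes of the case $p_3>3$.

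It remains to treat $p_3=3$, which under the hypothesis forces $p_2>3$. Here $ab$ has order $3$, so $ababu_0''=(ab)^{-1}u_0''=ba\,u_0''$, and the computation above still shows that $ba\,u_0''$ lies on the open arc of $\ell_{-2}\cap M$ between $w_{-2}=w_4$ and $u_{-2}=u_4$. What breaks down is that \autoref{lem:wi_in_box} no longer certifies $u_4\in\bx^\circ$ --- for $p_3=3$ it gives nothing --- so one must establish $u_4\in\bx^\circ$ by a separate argument, using the vertices $bcu_0,abcu_0$ of $\bx$ and the fact that $ca=\rho(s_3s_1)$ has order $p_2>3$ to control their position. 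I expect this degenerate case to be the one genuinely delicate point; the generic case $p_3>3$ is essentially immediate from the lemmas already proved.
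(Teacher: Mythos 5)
Your argument for the case $p_3>3$ is correct and essentially the paper's: you place $ababu_0''$ on the open segment of $\ell_{-2}$ between $w_{-2}$ and $u_{-2}$ inside $M$ (the paper does this via \autoref{lem:u0inCH} and \autoref{lem:ch_chart_change}; you give a direct chart-change justification using \autoref{lem:order}, which is fine) and then conclude by convexity because $w_{-2}\in\bx$ and $u_{-2}\in\bx^\circ$ by \autoref{lem:wi_in_box}.

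However, the case $p_3=3$ is a genuine gap: you identify it as the delicate point and sketch a plan (``establish $u_4\in\bx^\circ$ by a separate argument''), but you do not carry it out. Moreover, it is not clear that this plan succeeds: for $p_3=3$, \autoref{lem:wi_in_box} gives no control over \emph{any} $u_i$ (the excluded index set $\{-1,0,1,2,3,4\}$ covers all residues mod $2p_3=6$), and the paper never establishes $u_4\in\bx^\circ$. Instead, the paper abandons the conic $C$ entirely in this subcase and works with $C''$: using $p_3=3$ one rewrites $ababu_0''=bu_5''$, then uses the order of $w_5'',u_2'',w_3'',w_{-2}'',w_2''$ along $C''$ (where $p_2>3$ is what guarantees $u_5''$ sits on the relevant arc) to land $bu_5''$ in $\CH{\ell_2}{w_5,w_4,bw_{-2}''}^\circ\subset\bx^\circ$, invoking \autoref{lem:w2inCH} for $bw_{-2}''$. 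So the hypothesis $p_2>3$ is used through the geometry of $C''$, not through $C$. Your proposal as written does not prove the lemma in the case $p_3=3$, and the route you suggest is different from — and may not be reparable into — the one that actually works.
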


  \begin{Prf}
    By \autoref{lem:u0inCH} and \autoref{lem:ch_chart_change}
    \[ababu_0'' \in \CH{\ell_{-1}}{w_{-2},u_{-2}} = \CH{\ell_2}{w_{-2},u_{-2}}.\]
    If $p_3 > 3$ then $w_{-2} \in \bx$ and $u_{-2} \in \bx^\circ$ by \autoref{lem:wi_in_box}, proving the lemma.

    If however $p_3 = 3$, then $u_{-2} = u_4$, so \autoref{lem:wi_in_box} does not apply.
    In this case, consider the points $w_5'',u_2'',w_3'',w_{-2}'',w_2''$.
    They lie in this order on the conic $C''$.
    Therefore, the arc from $w_3''$ to $w_{-2}''$, which contains $u_5''$ if $p_2 > 3$, is contained in $\CH{\ell_2''}{w_3'',w_5'',w_{-2}''}^\circ$.
    Hence
    \[ababu_0'' = bu_5'' \in \CH{b\ell_2''}{bw_3'',bw_5'',bw_{-2}''}^\circ = \CH{\ell_2}{w_5,w_4,bw_{-2}''}^\circ.\]
    The points $w_4$ and $w_5$ are in $\bx$ by \autoref{lem:wi_in_box} and $bw_{-2}'' \in \CH{\ell_2}{w_3,w_5,bcu_0} \subset \bx$ by \autoref{lem:w2inCH}.
    So $ababu_0'' \in \bx^\circ$.
  \end{Prf}

  \begin{center} 
    \begin{tikzpicture}[scale=1.25] 
      \begin{scope}
        \coordinate (u8) at (0.43,-1.615);
        \coordinate (u6) at (-0.14,1.59);

        \coordinate (w8) at (1.36,2.04);
        \coordinate (w6) at (-0.4,-1.71);
        \coordinate (w3) at (-2.865,2.97);
        \coordinate (bcu0) at (-2.865,-0.34);
        \coordinate (w5) at (-1.135,-2.065);
        \coordinate (ababw-2pp) at (0.71,0.04);
        \coordinate (w7) at (0.63,1.69);
        \coordinate (ababu0pp) at (0.825,-0.055);
        \coordinate (ababcabu0pp) at (1.05,0.8);
        \coordinate (abw-2pp) at (2.67,0.65);
        \coordinate (w9) at (1.42,-1.925);
        \coordinate (abcu0) at (3.1,0.31);
        \coordinate (w0) at (3.1,-3);
        \coordinate (abcabu0pp) at (3.1,-0.53);
        \coordinate (abababu0pp) at (-0.255,0.1);
        \coordinate (u0) at (3.1,3.37);

        \draw[thick,blue,fill=blue!20] (w0) -- (w5) -- (bcu0) -- (w3) -- (w8) -- (abcu0) -- cycle;
        \draw[thick] (w0) -- (u0);
        \draw[thick] (w8) -- (u8);
        \draw[thick] (w6) -- (u6);
        \node[opacity=0.4] at (0,0.15) {\includegraphics[width=10.47619cm,bb=18.38095cm 14.5cm 45.61905cm 36.5cm,clip]{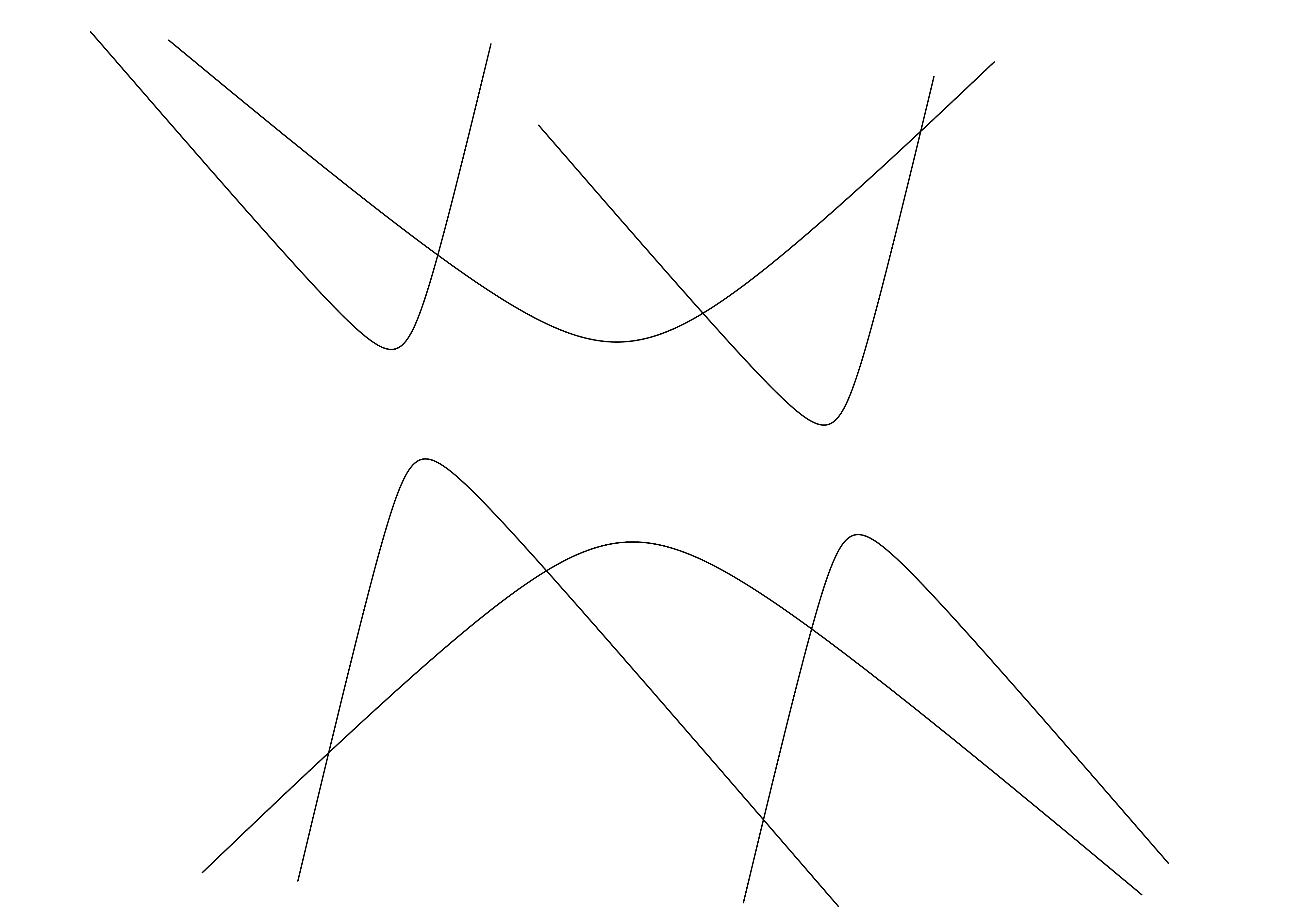}};
        \draw[thick,red,fill=red!20] (w0) -- (w9) -- (ababu0pp) -- (w8) -- (abw-2pp) -- (abcabu0pp) -- cycle;
        \draw[thick,red,fill=red!20] (w8) -- (w7) -- (abababu0pp) -- (w6) -- (ababw-2pp) -- (ababcabu0pp) -- cycle;

        \node[anchor=east] at (w3) {$w_3$};
        \node[anchor=east] at (w5) {$w_5$};
        \node[anchor=north] at (w6) {$w_6$};
        \node[anchor=south] at (u6) {$u_6$};
        \node[anchor=south] at (w7) {$w_7$};
        \node[anchor=south] at (w8) {$w_8$};
        \node[anchor=north] at (u8) {$u_8$};
        \node[anchor=north] at (w9) {$w_9$};
        \node[anchor=west] at (w0) {$w_0$};
        \node[anchor=east] at (u0) {$u_0$};
        \node[anchor=north west] at (abcu0) {$abcu_0$};
        \node[anchor=west] at (abcabu0pp) {$abcabu_0''$};
        \node[anchor=west] at (ababu0pp) {$ababu_0''$};
        \node[anchor=east] at (abw-2pp) {$abw_{-2}''$};
        \node[anchor=south east] at (bcu0) {$bcu_0$};

        \node[blue] at (-1.5,0.5) {$\bx$};
        \node[red] at (0.5,0.5) {\rotatebox{55}{$(ab)^2\bxpp$}};
        \node[red] at (2.3,-1) {$ab\bxpp$};

        \node[gray] at (-3.4,-1.7) {$bcC$};
        \node[gray] at (3.85,-1.9) {$abcC$};
        \node[gray] at (-1.8,-2.8) {$C$};
      \end{scope}
    \end{tikzpicture}
    \captionof{figure}{The relevant points and conics to prove the inclusions $(ab)^j \bxpp \subset \bx$, in the case $p_1 = p_2 = p_3 = 5$. The inclusions $a(ab)^j\bxpp \subset \bx$ follow by $a$--invariance of $\bx$.}
    \label{fig:box_inclusions}
  \end{center}

    \begin{Lem}\label{lem:box_inclusion_pr_not_3}
    Assume that $p_2 > 3$ or $p_3 > 3$ and let $g \in \rho(Q)$.
    Then $g\bxpp \subset \bx$.

    We even have $g\{w_{-2}'',abu_0''\} \subset \bx^\circ$, also $gw_5'' \in \bx^\circ$ if $p_3 > 3$, and also $gw_3'' \in \bx^\circ$ if in addition $g \not\in \{b,ab\}$.
  \end{Lem}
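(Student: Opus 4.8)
The plan is to reduce to the case $g=(ab)^j$ and then check the six vertices of $g\bxpp$ one by one. Since $a\bx=\bx$, hence $a\bx^\circ=\bx^\circ$, and every element of $\rho(Q)$ equals $(ab)^j$ or $a(ab)^j$ with $1\le j\le\frac{p_3-1}{2}$, the assertion for $g=a(ab)^j$ is equivalent to the one for $g=(ab)^j$; moreover $ab=(ab)^1$ and $b=a(ab)^1$, so the extra hypothesis $g\notin\{b,ab\}$ becomes $j\ge2$. Thus it suffices to prove, for $1\le j\le\frac{p_3-1}{2}$, that $(ab)^j\bxpp\subset\bx$, with $(ab)^j$ carrying $w_{-2}''$ and $abu_0''$ into $\bx^\circ$, carrying $w_5''$ into $\bx^\circ$ if $p_3>3$, and carrying $w_3''$ into $\bx^\circ$ if $j\ge2$. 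Recall that $ab$ sends $w_i\mapsto w_{i-2}$, $u_i\mapsto u_{i-2}$, $\ell_i\mapsto\ell_{i-2}$ and fixes $C$ and $M$. Starting from $\bxpp=\CH{\ell_3}{w_2,w_0,w_1,w_{-2}'',abu_0'',cabu_0''}$ and applying $(ab)^j$, the first three vertices become $w_{2-2j},w_{-2j},w_{1-2j}$; applying $(ab)^j$ to the containments of \autoref{lem:w2inCH}, \autoref{lem:u0inCH} and \autoref{lem:cabu0inCH} — and noting that $cu_0\in\CH{\ell_3}{w_2,u_2}^\circ$ lies on the segment of $\ell_2$ between $w_2$ and $u_2$, so $(ab)^jcu_0$ lies on the segment of $\ell_{2-2j}$ between $w_{2-2j}$ and $u_{2-2j}$ — places the remaining three vertices in $\CH{\ell_{3-2j}}{w_{2-2j},w_{-2j},(ab)^jcu_0}^\circ$, in the open segment of $\ell_{-2j}$ between $w_{-2j}$ and $u_{-2j}$, and in $\CH{\ell_{3-2j}}{w_{2-2j},(ab)^jcu_0}^\circ$, respectively. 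Pushing \autoref{lem:box_in_ch} forward, all six vertices, hence $(ab)^j\bxpp$ itself, lie in the quadrilateral $\CH{\ell_{3-2j}}{w_{-2j},w_{2-2j},u_{-2j},u_{2-2j}}$.

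For $j\ge2$ we finish as follows. A short computation of indices modulo $2p_3$ together with \autoref{lem:wi_in_box} shows that $w_{-2j},w_{2-2j}\in\bx^\circ$ and $u_{-2j},u_{2-2j}\in\bx$ (and $w_{1-2j}\in\bx^\circ$ once $p_3>3$). By \autoref{lem:ch_chart_change}, whose hypotheses we verify from the ``crossing in $M$'' criterion of \autoref{lem:order}, the quadrilateral equals $\CH{\ell_2}{w_{-2j},w_{2-2j},u_{-2j},u_{2-2j}}$, which lies in $\bx$ because $\bx$ is convex in the chart $\RP^2\setminus\ell_2$ and contains its four vertices. The interior assertions then follow because the relevant vertex lies in the open part of a convex hull two of whose corners already belong to $\bx^\circ$, and a proper convex combination of a point of $\bx^\circ$ with a point of $\bx$ lies in $\bx^\circ$.

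For $j=1$, i.e. $g=ab$ (which via the reduction also covers $g=b$), the generic argument fails: the quadrilateral $\CH{\ell_1}{w_{-2},w_0,u_{-2},u_0}$ contains $u_0\notin\bx$. Instead we check the vertices of $ab\bxpp=\CH{\ell_1}{w_0,w_{-2},w_{-1},abw_{-2}'',ababu_0'',abcabu_0''}$ directly. By \autoref{lem:wi_in_box}, $w_0,w_{-2},w_{-1}\in\bx$, with $w_{-1}\in\bx^\circ$ when $p_3>3$. By \autoref{lem:w2inCH} and \autoref{lem:cabu0inCH}, $abw_{-2}''\in\CH{\ell_1}{w_0,w_{-2},abcu_0}^\circ$ and $abcabu_0''\in\CH{\ell_1}{w_0,abcu_0}^\circ$; since $w_0$, $w_{-2}$ and the defining vertex $abcu_0$ of $\bx$ all lie in $\bx$ and, because $a\bx=\bx$, the set $\bx$ is also convex in $\RP^2\setminus\ell_1$, both points lie in $\bx$, and $abw_{-2}''$ in fact lies in $\bx^\circ$ since $[w_0,w_{-2}]$ is a diagonal rather than an edge of $\bx$. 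The last vertex $ababu_0''$ lies in $\bx^\circ$ by \autoref{lem:ababu0_in_box}.

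We expect the main obstacle to be precisely the case $j=1$: the clean pushforward of \autoref{lem:box_in_ch} no longer lands inside $\bx$ because $u_0\notin\bx$, which forces the vertex-by-vertex argument above and the separate input \autoref{lem:ababu0_in_box}; the latter handles exactly the vertex $ababu_0''$, which escapes $\bx$ when $p_2=p_3=3$ and is the reason for the hypothesis $p_2>3$ or $p_3>3$. The other source of friction is purely bookkeeping: tracking the labels modulo $2p_3$ and repeatedly invoking \autoref{lem:ch_chart_change} to pass to the chart $\RP^2\setminus\ell_2$ so that ``lies in a convex hull'' upgrades to ``lies in $\bx$''.
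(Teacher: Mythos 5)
Your proof follows the same structure as the paper's: reduce to $g=(ab)^j$ via $a$-invariance of $\bx$; for $j\ge2$ push \autoref{lem:box_in_ch} forward to land $g\bxpp$ in a quadrilateral contained in $\bx$; and treat $j=1$ separately by checking each vertex against \autoref{lem:w2inCH}, \autoref{lem:cabu0inCH}, \autoref{lem:wi_in_box}, and \autoref{lem:ababu0_in_box}. The overall conclusion is correct.

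One factual slip in the $j\ge2$ case needs fixing, though. You claim ``$w_{-2j},w_{2-2j}\in\bx^\circ$'', but this is false for $j=2$: then $w_{2-2j}=w_{-2}$, which is a \emph{vertex} of $\bx$ (appearing in its defining list), hence lies on $\partial\bx$, and $-2$ is indeed one of the excluded indices in \autoref{lem:wi_in_box}. The paper explicitly flags this corner: of the four quadrilateral corners $w_{-2j},w_{2-2j},u_{-2j},u_{2-2j}$, the only one that can touch $\partial\bx$ is $w_{2-2j}=gw_0''$ when $j=2$. The error is harmless --- the lemma requires only $gw_0''\in\bx$, not $\bx^\circ$, and the vertices that do need interior membership are different points --- but as written the misstatement undercuts your subsequent sentence ``the interior assertions then follow because the relevant vertex lies in the open part of a convex hull two of whose corners already belong to $\bx^\circ$'', which is too vague to salvage by itself. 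The cleaner argument is to split the cases: $gw_3''=w_{-2j}$ and $gw_5''=w_{1-2j}$ lie in $\bx^\circ$ directly by \autoref{lem:wi_in_box} (with the stated caveats on $j$ and $p_3$), while $gw_{-2}''$ and $gabu_0''$ lie in open convex hulls each of which has at least one corner in $\bx^\circ$ and the rest in $\bx$, so they are in $\bx^\circ$ by the convex-combination observation you invoke. With that repair the proof matches the paper's.
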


  \begin{Prf}
    We can assume that $g = (ab)^j$ with $1 \leq j \leq \frac{p_3-1}{2}$.
    If $j > 1$ then $\bxpp \subset \CH{\ell_{2+2j}}{w_0,w_2,u_0,u_2}$, so $g\bxpp \subset \CH{\ell_2}{w_{-2j},w_{2-2j},u_{-2j},u_{2-2j}} \subset \bx$ by \autoref{lem:box_in_ch} and \autoref{lem:wi_in_box}.
    Of the points $w_{-2j},w_{2-2j},u_{-2j},u_{2-2j}$ the only one which can hit $\partial\bx$ is $w_{2-2j} = gw_0''$ if $j = 2$, so all other points in the convex hull are in $\bx^\circ$.

    Now suppose $j = 1$.
    Then $gw_0'' = w_0$, $gw_3'' = w_{-2}$, and $gw_5'' = w_{-1}$.
    These points are in $\bx$ by \autoref{lem:wi_in_box}, and even $gw_5'' \in \bx^\circ$ if $p_3 > 3$.
    Further $gw_{-2}'' = \CH{\ell_1}{w_0,w_{-2},abcu_0}^\circ \subset \bx^\circ$ and $gcabu_0'' \in \CH{\ell_1}{w_0,abcu_0} \subset \bx$ by \autoref{lem:w2inCH} and \autoref{lem:cabu0inCH}.
    And finally $gabu_0'' \in \bx^\circ$ by \autoref{lem:ababu0_in_box}.

    This shows all six vertices of $g\bxpp$ are in $\bx$.
    The only thing we still need is that $g\bxpp \cap \ell_2 = \varnothing$, so that we can take the convex hull in the affine chart $\RP^2 \setminus \ell_2$.
    But by \autoref{lem:box_in_ch} $g\bxpp \cap \ell_2 = \varnothing$ if $g\ell_i = \ell_2$ for some $i \not\in \{-1,0,1,2\}$.
    It is easy to see that this is true for all $g \in \rho(Q)$.
  \end{Prf}

  \subsection{The case \texorpdfstring{$p_2=p_3=3$}{p2=p3=3}}\label{sec:p2_p3_3}
  
  If $p_2 = p_3 = 3$ then $g\bxpp$ is generally not contained in $\bx$ for $g \in \rho(Q)$.
  But we can skip one step and prove that $gg''\bxp \subset \bx$ for all $g \in \rho(Q)$ and $g'' \in \rho(Q'')$.
  Again, we do this by showing that the vertices of these boxes are in $\bx$ (see \autoref{fig:box_inclusions_p3}).

  \begin{Lem}\label{lem:pr_3_points}
    Assume $p_2 = p_3 = 3$. The points $bcaw_{-2}'$, $bcacau_0'$, $bcabcau_0'$ and $bacau_0'$ are all contained in $\bx^\circ$.
  \end{Lem}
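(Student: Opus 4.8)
The plan is to follow the pattern of Lemmas~\ref{lem:w2inCH}--\ref{lem:ababu0_in_box}. Each of the four points is a vertex of $bca\,\bxp$ or of $ba\,\bxp$, so I would first confine the relevant vertex of $\bxp$ --- namely $w_{-2}'$, $cau_0'$ or $bcau_0'$ --- to the interior of a convex hull spanned by finitely many of the primed points $w_i', u_i'$, and then push this inclusion forward by $bca$ or $ba$. Since $p_1, p_2, p_3$ are odd and not all equal to $3$, we have $p_1 > 3$; hence the conic $C'$ and its marked points are non--degenerate, and the primed and double--primed versions of Lemmas~\ref{lem:u0inM}, \ref{lem:u0inCH}, \ref{lem:w2inCH} and \ref{lem:cabu0inCH} are available.

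Concretely I would proceed in four steps. First, apply the primed versions of \autoref{lem:u0inCH} and \autoref{lem:w2inCH} to confine $cau_0'$, $bcau_0'$ and $w_{-2}'$ to interiors of convex hulls spanned by a handful of the $w_i', u_i'$ with small indices. Second, rewrite the lines and points occurring there in unprimed or double--primed form via the coincidences of \autoref{fig:conic_mappings} ($w_0' = w_3$, $w_2' = w_0$, $w_3' = w_2$, $\ell_0' = \ell_3$, $\ell_2' = \ell_0$, $\ell_3' = \ell_2$, and their double--primed analogues), each time checking the hypothesis of \autoref{lem:ch_chart_change} by means of \autoref{lem:order} (a crossing inside $M$ can occur only between a neighbouring pair $\{2k-1, 2k\}$). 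Third, apply $bca$, the primed Coxeter element, which fixes $w_0' = w_3$ and its attracting line $\ell_3$, and apply $ba$; this lands the four points in convex hulls of the shape $\CH{\ell_2}{w_i, w_j, w_k}$ or $\CH{\ell_2}{w_i, u_j}$ with indices in a short range modulo $2p_3 = 6$. Fourth, conclude by \autoref{lem:wi_in_box}, together with \autoref{lem:w2inCH} applied to the auxiliary vertices $bcu_0$, $bw_{-2}''$ of $\bx$ and its neighbours, using that the arc of $C$ from $w_5$ to $w_0$ and its $a$--image from $w_3$ to $w_{-2}$ lie in $\bx^\circ$ away from their endpoints.

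The main obstacle is the degeneracy already exploited in the proof of \autoref{lem:ababu0_in_box}: with $p_2 = p_3 = 3$ one has $w_{-2} = w_4$ and $u_{-2} = u_4$, so the first convex hull one writes down for a given point tends to touch $\partial\bx$ and only gives membership in $\bx$, not in $\bx^\circ$. I expect the real work to be, for each of the four points separately, to replace this naive hull by a strictly smaller one --- obtained by slicing $C$, $C'$ or $C''$ into three arcs along well--chosen $w_i$'s and locating the point in the interior of the hull of those arcs, exactly as for $ababu_0''$ --- and then to verify by hand the resulting cyclic orders of the $w_i, u_i$ and of their primed and double--primed versions along the three conics, whose consistency rests on \autoref{lem:wi_distinct} and on $p_1 > 3$.
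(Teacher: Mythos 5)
Your plan follows the same overall route as the paper's proof: bound each of $w_{-2}'$, $cau_0'$, $bcau_0'$ inside small convex hulls, translate those hulls into the unprimed picture via the coincidences of \autoref{fig:conic_mappings} and \autoref{lem:ch_chart_change}, and then apply $bca$ or $ba$; your identification of the degeneracy $w_{-2}=w_4$, $u_{-2}=u_4$ as the central obstacle, to be circumvented by slicing a conic into arcs "as for $ababu_0''$", is exactly the trick used there. Two caveats. First, this is a strategy statement rather than a proof, and the choice of which lemma to deploy for which vertex is the actual content: the paper bounds $bcabcau_0'$ by applying $b$ to \autoref{lem:cabu0inCH} and then taking the double-primed version (not, as you suggest, a primed \autoref{lem:u0inCH}); it bounds $bcacau_0'$ by re-using the hull $\CH{\ell_2}{w_5,w_4,bw_{-2}''}$ established inside the proof of \autoref{lem:ababu0_in_box} (applicable because the primed parameters satisfy $p_2=3$, $p_1>3$), which your outline only gestures at; and for $bcaw_{-2}'$ it starts from the double-primed \autoref{lem:w2inCH}. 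Second, and more substantively, the last point $bacau_0'$ does not fit your uniform ``push forward a pre-existing hull lemma'' scheme: the paper handles it by a fresh order argument along $C'$ (the points $w_0',w_3',w_7',u_0',w_2',u_3'$ lie in this cyclic order), which places $u_0'$ in $\CH{\ell_3'}{w_0',w_7',w_2'}^\circ$, and then pushes forward by $baca$; crucially, one of the resulting hull vertices is $bcaw_{-2}'$, so the four points are not independent and must be proved in the right order. Without this extra idea your fourth step, as written, would not close.
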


  \begin{Prf}
    Applying $b$ to \autoref{lem:cabu0inCH} gives $bcabu_0'' \in \CH{\ell_2}{w_3,bcu_0}^\circ$.
    The ``double primed'' version of this is $abcau_0' \in \CH{\ell_2''}{w_3'',abu_0''}^\circ$, but by \autoref{lem:ch_chart_change} we could also take $\ell_1''$ instead of $\ell_2''$.
    If we apply $bc$ to that we get
    \[bcabcau_0' \in \CH{bc\ell_1''}{bcw_3'',bcabu_0''}^\circ = \CH{\ell_2}{w_3,bcabu_0''}^\circ \subset \CH{\ell_2}{w_3,bcu_0}^\circ \subset \bx^\circ.\]

    Applying $a$ to the double primed version of \autoref{lem:w2inCH} gives $aw_{-2}' \in \CH{\ell_2''}{w_3'',w_5'',abu_0''}^\circ$.
    Again, \autoref{lem:ch_chart_change} allows to replace $\ell_2''$ by $\ell_1''$, and therefore
    \[bcaw_{-2}' \in \CH{bc\ell_1''}{bcw_3'',bcw_5'',bcabu_0''}^\circ = \CH{\ell_2}{w_3,bw_{-2}'',bcabu_0''}^\circ \subset \bx^\circ,\]
    where $bw_{-2}'' \in \bx$ follows from \autoref{lem:w2inCH}.

    Next, in the proof of \autoref{lem:ababu0_in_box} we showed that $ababu_0'' \in \CH{\ell_2}{w_5,w_4,bw_{-2}''}^\circ$ if $p_3=3$ and $p_2 > 3$.
    In the present case we have $p_2 = 3$ and $p_1 > 3$, so $cacau_0' \in \CH{\ell_2''}{w_5'',w_4'',aw_{-2}'}^\circ$.
    Applying $b$ to this shows
    \[bcacau_0' \in \CH{b\ell_2''}{bw_5'',bw_4'',baw_{-2}'}^\circ = \CH{\ell_2}{w_{-2},bw_{-2}'',abw_{-2}''}^\circ \subset \bx^\circ.\]
    Finally, consider the points $w_4 = bacaw_0'$, $bcaw_{-2}' = bacaw_7'$ and $bw_{-2}'' = bacaw_2'$ and the line $\ell_{-2}' = baca\ell_3'$.
    We showed above that these points are in $\bx$.
    The points $w_0',w_3',w_7',u_0',w_2',u_3'$ are in this order along $C'$, so $u_0' \in \CH{\ell_3'}{w_0',w_7',w_2'}^\circ$ and therefore
    \[bacau_0' \in \CH{baca\ell_3'}{bacaw_0',bacaw_7',bacaw_2'}^\circ = \CH{\ell_{-2}'}{w_4,bcaw_{-2}',bw_{-2}''}^\circ \subset \bx^\circ.\]
    For the last inclusion we used that $\ell_{-2}' \cap \bx = \varnothing$ by \autoref{lem:box_in_ch}.
  \end{Prf}

  \begin{center}
    \begin{tikzpicture}[scale=1]
      \coordinate (p1) at (-3.95,3.78);
      \coordinate (p2) at (-3.95,0.55);
      \coordinate (p3) at (-3.95,-1.34);
      \coordinate (p4) at (-3.32,-0.365);
      \coordinate (p5) at (-2.69,-0.49);
      \coordinate (p6) at (-1.56,3.32);
      \coordinate (p7) at (-1.47,-1.92);
      \coordinate (p8) at (0.1,1.75);
      \coordinate (p9) at (3.95-0.152,-3.78-0.185);
      \coordinate (p10) at (3.95-0.152,-0.55-0.185);
      \coordinate (p11) at (3.95-0.152,1.34-0.185);
      \coordinate (p12) at (3.32-0.152,0.365-0.185);
      \coordinate (p13) at (2.69-0.152,0.49-0.185);
      \coordinate (p14) at (1.56-0.152,-3.32-0.185);
      \coordinate (p15) at (1.47-0.152,1.92-0.185);
      \coordinate (p16) at (-0.1-0.152,-1.75-0.185);

      \draw[fill=blue!20] (p1) -- (p3) -- (p14) -- (p9) -- (p11) -- (p6) -- cycle;
      \draw[fill=red!20] (p6) -- (p5) -- (p7) -- (p14) -- (p13) -- (p15) -- cycle;
      \draw[fill=red!20] (p2) -- (p4) -- (p7) -- (p8) -- (p6) -- (p1) -- cycle;
      \node[opacity=0.4] at (0,0) {\includegraphics[width=10cm,bb=5cm 0.5cm 30cm 22.5cm,clip]{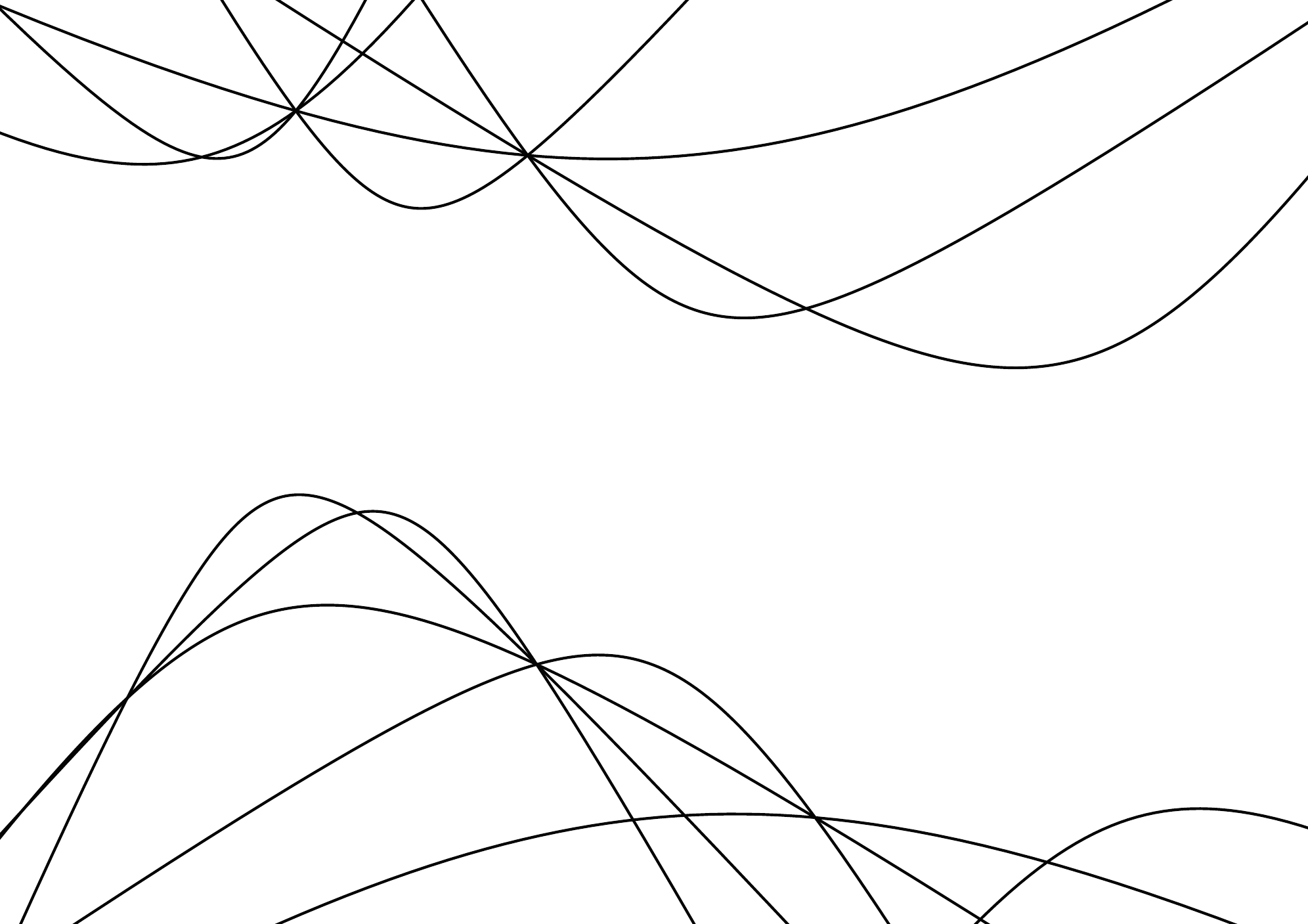}};
      \draw[thick,blue] (p1) -- (p3) -- (p14) -- (p9) -- (p11) -- (p6) -- cycle;
      \draw[thick,red] (p6) -- (p5) -- (p7) -- (p14) -- (p13) -- (p15) -- cycle;
      \draw[thick,red] (p2) -- (p4) -- (p7) -- (p8) -- (p6) -- (p1) -- cycle;

      \node[gray] at (2,4) {$C$};
      \node[gray] at (-0.8,-4.2) {$bacaC'$};
      \node[gray] at (-5.5,-0.8) {$bcabC''$};
      \node[gray] at (-4.8,-1.9) {$bcC$};
      \node[gray] at (4.8,1.9) {$abcC$};
      \node[gray] at (2.8,2.9) {$bcacaC'$};
      \node[gray] at (-2.6,-3.8) {$C$};

      \node[red] at (0.7,-0.7) {$ba\bxp$};
      \node[red] at (-3,2) {$bca\bxp$};
      \node[blue] at (2.7,-2.3) {$\bx$};


      \node[anchor=south] at (p1) {$w_3$};
      \node[anchor=east] at (p2) {$bcabcau_0'$};
      \node[anchor=north] at (p3) {$bcu_0$};
      \node[anchor=south] at (p6) {$w_4$};
      \node[anchor=north] at (p14) {$w_5$};
      \node[anchor=north] at (p9) {$w_0$};
      \node[anchor=west] at (p7) {$bw_{-2}''$};
      \node[anchor=east] at (-4,-0.2) {$bcabu_0''$};
      \node[shift={(0.35cm,0.65cm)}] at (p4) {\rotatebox{65}{$bcaw_{-2}'$}};
      \node[shift={(0.5cm,0.45cm)}] at (p5) {\rotatebox{65}{$bacau_0'$}};
      \node[shift={(-0.5cm,-0.55cm)}] at (p8) {\rotatebox{65}{$bcacau_0'$}};
      \node[shift={(-0.25cm,-0.6cm)}] at (p15) {\rotatebox{65}{$abw_{-2}''$}};
      \node[anchor=south] at (p11) {$abcu_0$};
      \node[anchor=north] at (p13) {$abacau_0'$};

    \end{tikzpicture}
    
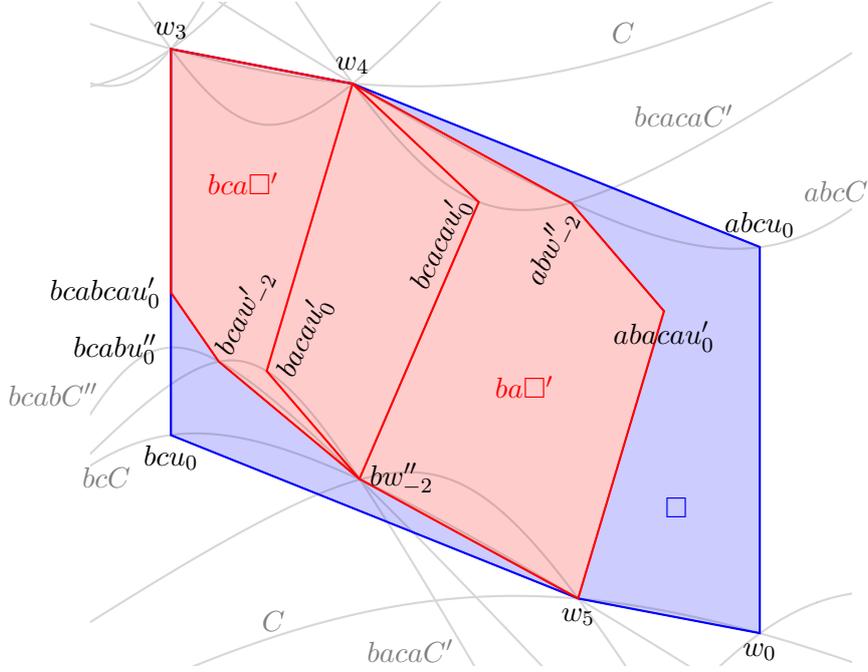
\captionof{figure}{The relevant points for the proof of \autoref{lem:pr_3_points} and the conics defining them, in the case $(p_1,p_2,p_3) = (5,3,3)$. Note that $bca\bxp$ and $ba\bxp$ overlap.}
    \label{fig:box_inclusions_p3}
  \end{center}

   \begin{Lem}\label{lem:box_inclusion_pr_3}
    Assume $p_2 = p_3 = 3$ and let $g \in \rho(Q)$ and $g'' \in \rho(Q'')$. Then $gg''\bxp \subset \bx$. Furthermore, $gg''\{w_{-2}',w_5'\} \subset \bx^\circ \cup \{w_{-2},w_5\}$.
  \end{Lem}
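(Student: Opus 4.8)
The plan is to mimic the proof of \autoref{lem:box_inclusion_pr_not_3}, reducing everything to two explicit inclusions of hexagons which are then checked vertex by vertex. Since $p_3 = 3$ we have $\rho(Q) = \{b, ab\}$, and since $p_2 = 3$ we have $\rho(Q'') = \{a, ca\}$, so $\rho(QQ'') = \{ba, bca, aba, abca\}$. Because $a\bx = \bx$ (hence $a\bx^\circ = \bx^\circ$) and $aba = a\cdot ba$, $abca = a\cdot bca$, it suffices to prove
\[ba\bxp \subset \bx, \qquad bca\bxp \subset \bx,\]
together with $ba\{w_{-2}',w_5'\} \subset \bx^\circ$ and $bca\{w_{-2}',w_5'\} \subset \bx^\circ \cup \{w_{-2}\}$; the cases $g = ab$ then follow by applying $a$, which exchanges $w_{-2}$ and $w_5$ since $aw_{-2} = w_5$.

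First I would compute the six vertices of $ba\bxp$ and $bca\bxp$. Starting from $\bxp = \CH{\ell_2'}{w_0',w_3',w_5',w_{-2}',cau_0',bcau_0'}$ and using the coincidences of \autoref{fig:conic_mappings} (notably $w_0' = w_3$, $w_3' = w_2$, $\ell_3' = \ell_2$), the primed relations $bw_i' = w_{3-i}'$, $cw_i' = w_{5-i}'$, the identity $baw_i = w_{i+2}$, and — essentially because $p_2 = p_3 = 3$ — the braid relations $bab = aba$ and $aca = cac$, I expect to recover the vertex sets depicted in \autoref{fig:box_inclusions_p3}:
\[ba\bxp = \CH{\ell_2}{w_5,\,w_4,\,bw_{-2}'',\,abw_{-2}'',\,bacau_0',\,abacau_0'},\]
\[bca\bxp = \CH{\ell_2}{w_3,\,w_4,\,bw_{-2}'',\,bcaw_{-2}',\,bcacau_0',\,bcabcau_0'}.\]
In particular $ba\cdot bcau_0' = babcau_0' = abacau_0'$ by the braid relation. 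That the convex hulls may be taken in the chart $\RP^2\setminus\ell_2$ is checked as in \autoref{lem:box_inclusion_pr_not_3}: for $ba$ this is immediate from $(ba)^{-1}\ell_2 = ab\ell_2 = a\ell_3 = \ell_2'$ and $\bxp \cap \ell_2' = \varnothing$, and for $bca$ it follows similarly, invoking \autoref{lem:box_in_ch} in its primed form and \autoref{lem:ch_chart_change} if needed.

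Next I would verify that each listed vertex lies in $\bx$. The points $w_3$, $w_4 = w_{-2}$ and $w_5$ are vertices of $\bx$. Applying $b$ to \autoref{lem:w2inCH} gives $bw_{-2}'' \in \CH{\ell_2}{w_3,w_5,bcu_0}^\circ \subset \bx^\circ$ (using $b\ell_3 = \ell_2$, $bw_2 = w_3$, $bw_0 = w_5$), so also $abw_{-2}'' = a(bw_{-2}'') \in \bx^\circ$. Finally $bacau_0'$, $bcacau_0'$, $bcabcau_0'$ and $bcaw_{-2}'$ are in $\bx^\circ$ by \autoref{lem:pr_3_points}, hence $abacau_0' = a(bacau_0') \in \bx^\circ$ as well. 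Since each box is the convex hull, in the chart $\RP^2\setminus\ell_2$, of points of the convex set $\bx$, the two inclusions $ba\bxp\subset\bx$ and $bca\bxp\subset\bx$ follow.

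For the last assertion I would trace which vertices are the images of $w_{-2}'$ and $w_5'$. Comparing the vertex lists, $ba\{w_{-2}',w_5'\} = \{bw_{-2}'',abw_{-2}''\} \subset \bx^\circ$, while $bca\{w_{-2}',w_5'\}$ consists of $bcaw_{-2}' \in \bx^\circ$ together with a point of $\{w_4, bw_{-2}''\} \subset \bx^\circ \cup \{w_{-2}\}$; applying $a$ carries $w_{-2}$ to $w_5$ and gives the statement for $g = ab$. Altogether $gg''\{w_{-2}',w_5'\} \subset \bx^\circ \cup \{w_{-2},w_5\}$. The main obstacle here is purely combinatorial: one must correctly follow all six vertices of both hexagons through the primed and double-primed coincidences and the braid relations valid for $p_2 = p_3 = 3$, and must check at each change of affine chart that the hypotheses of \autoref{lem:ch_chart_change} (the relevant lines not crossing in $\overline M$) are met — this is exactly what makes the degenerate case $p_2 = p_3 = 3$ require a separate argument from the generic one in \autoref{lem:box_inclusion_pr_not_3}.
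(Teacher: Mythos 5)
Your proof is correct and follows essentially the same route as the paper: reduce to $gg'' \in \{ba, bca\}$ via $a\bx = \bx$, compute the images of the six vertices of $\bxp$ using the coincidences of \autoref{fig:conic_mappings} and the braid relations forced by $p_2 = p_3 = 3$, place each vertex in $\bx$ by invoking \autoref{lem:wi_in_box}, \autoref{lem:w2inCH} and \autoref{lem:pr_3_points}, and verify that the image boxes avoid $\ell_2$ so the convex-hull argument goes through. The only organizational difference is that the paper handles three of the vertices of $ba\bxp$ by the symmetry observation that each such vertex is $b$ applied to another vertex and $ba(bx) = (bab)x = (aba)x \in a\bx = \bx$, whereas you list all six images explicitly; you also spell out the chart-avoidance check $(ba)^{-1}\ell_2 = \ell_2'$ for $ba\bxp$ which the paper leaves implicit. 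One tiny slip of terminology: what you want to invoke for $bca\bxp$ is the \emph{double-primed} form of \autoref{lem:box_in_ch} (giving $\bxp \cap \ell_i'' = \varnothing$), not the primed form, and then use $bca\ell_4'' = \ell_2$ — but this does not affect the substance.
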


  \begin{Prf}
    Recall that $\rho(Q)\rho(Q'') = \{ba,aba,bca,abca\}$.
    Since $a\bx = \bx$ we can assume $gg'' \in \{ba,bca\}$.
    First, we show that the vertices of $ba\bxp$ are in $\bx$:
    $baw_0' = w_5 \in \bx$ by \autoref{lem:wi_in_box}, $bacau_0' \in \bx^\circ$ by \autoref{lem:pr_3_points}, and $baw_5' = bw_{-2}'' \in \CH{\ell_2}{w_3,w_5,bcu_0}^\circ \subset \bx^\circ$ by \autoref{lem:w2inCH}.
    The remaining three vertices follow by symmetry:
    for every vertex $x$ of $\bxp$ such that $bax \in \bx$, the point $bx$ is another vertex of $\bxp$ and $ba(bx) = abax \in a\bx = \bx$.

    The vertices of $bca\bxp$ are $bcaw_{-2}'$, $bcacau_0'$, $bcabcau_0'$, which are in $\bx^\circ$ by \autoref{lem:pr_3_points}, $bcaw_3' = bw_{-2}''$ which was shown to be in $\bx^\circ$ in the previous paragraph, and $bcaw_0' = w_3$ and $bcaw_5' = w_{-2}$, also in $\bx$.
    By \autoref{lem:box_in_ch} $\bxp \cap \ell_4'' = \varnothing$, and $bca \ell_4'' = \ell_2$, so $bca\bxp$ also avoids $\ell_2$, hence $bca\bxp \subset \bx$.
  \end{Prf}
  
  \subsection{Iteration}\label{sec:iteration}

  If $p_1 > 3$ and $h = gg''g' \in \rho(QQ''Q') = \rho(T)$ then either $h\bx \subset gg''\bxp \subset g\bxpp \subset \bx$ by \autoref{lem:box_inclusion_pr_not_3}, or, if $p_2 = p_3 = 3$, then $h\bx \subset gg''\bxp \subset \bx$ by \autoref{lem:box_inclusion_pr_3}.
  To apply \autoref{prop:limit_map_exists} we need a bit more, that $h\bx \subset \bx^\circ$ for most $h \in \rho(T)$.
  We get this by carefully examining which vertices of $h\bx$ can end up on the boundary of $\bx$.

  \begin{Lem}\label{lem:strict_inclusion_iteration}
    Assume $p_1 > 3$ and $h \in \rho(T)$. Then $h\{w_{-2},w_5,bcu_0\} \subset \bx^\circ$ and $hw_3 \in \bx^\circ \cup \{w_{-2},w_5\}$. Moreover, $hw_3 \in \bx^\circ$ unless $p_2 = p_3 = 3$.
  \end{Lem}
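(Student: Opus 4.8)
The plan is to track the four vertices $w_{-2}, w_5, bcu_0, w_3$ of $\bx$ (the remaining two vertices $w_0$ and $abcu_0$ lie on the line $\ell_0$, which is preserved by $abc$, so no strict inclusion can hold for them) through the chain of box inclusions already established. Writing $h = gg''g'$ with $g \in \rho(Q)$, $g'' \in \rho(Q'')$ and $g' \in \rho(Q')$, this chain is
\[ h\bx = gg''g'\bx \;\subset\; gg''\bxp \;\subset\; g\bxpp \;\subset\; \bx \]
when $p_2 > 3$ or $p_3 > 3$, obtained from \autoref{lem:box_inclusion_pr_not_3} together with its primed and double--primed versions (the primed version being available because $p_1 > 3$), and the shorter chain $h\bx \subset gg''\bxp \subset \bx$ when $p_2 = p_3 = 3$, where the first step uses the primed \autoref{lem:box_inclusion_pr_not_3} and the second uses \autoref{lem:box_inclusion_pr_3}. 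The key mechanism is that $g'$, $g''$ and $g$ are homeomorphisms of $\RP^2$, hence each maps the interior of a set onto the interior of its image; so once a point lands in $\bxp^\circ$ or $\bxpp^\circ$ it remains in an interior until it reaches $\bx^\circ$.

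First I would feed $w_{-2}, w_5, bcu_0, w_3$ into the primed version of \autoref{lem:box_inclusion_pr_not_3}. Since $p_1 > 3$, its conditional clause (the primed form of the one requiring $p_3 > 3$) applies, and we obtain $g'\{w_{-2}, w_5, bcu_0\} \subset \bxp^\circ$, and $g'w_3 \in \bxp^\circ$ unless $g'$ is one of the two elements $\{c, bc\}$ of $\rho(Q')$ (the primed counterpart of $\{b, ab\}$), in which case the computation in the proof of \autoref{lem:box_inclusion_pr_not_3} gives $g'w_3 \in \{w_{-2}', w_5'\}$, two of the defining vertices of $\bxp$.

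For $w_{-2}, w_5, bcu_0$, and for $w_3$ whenever $g'$ is not exceptional, this finishes the argument: the $g'$--images lie in $\bxp^\circ$, so $g''$ sends them into $(g''\bxp)^\circ \subset \bxpp^\circ$ and then $g$ sends them into $(g\bxpp)^\circ \subset \bx^\circ$; in the case $p_2 = p_3 = 3$ one instead uses that $gg''$ sends $\bxp^\circ$ into $(gg''\bxp)^\circ \subset \bx^\circ$. If $g'$ is exceptional and $p_2 = p_3 = 3$, then \autoref{lem:box_inclusion_pr_3} states precisely $gg''\{w_{-2}', w_5'\} \subset \bx^\circ \cup \{w_{-2}, w_5\}$, giving $hw_3 \in \bx^\circ \cup \{w_{-2}, w_5\}$. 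If $g'$ is exceptional and $p_2 > 3$ or $p_3 > 3$, I would push $w_{-2}'$ and $w_5'$ through $g''$ with the double--primed \autoref{lem:box_inclusion_pr_not_3}: it yields $g''w_{-2}' \in \bxpp^\circ$ unconditionally, and $g''w_5' \in \bxpp^\circ$ once $p_2 > 3$, after which $g$ sends these into $\bx^\circ$. The single leftover configuration is $g'w_3 = w_5'$ with $p_2 = 3$ (hence $p_3 > 3$), where $\rho(Q'')$ consists of only two elements and a short conic--arc computation, of the same flavour as those in the proof of \autoref{lem:box_inclusion_pr_3}, shows $gg''w_5' \in \bx^\circ \cup \{w_{-2}, w_5\}$.

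I expect this last configuration to be the main obstacle: there a short ``$j=1$'' factor collapses the vertex $w_3$ onto a boundary vertex of the next box, so the remaining factors need not move it strictly inside, and one must verify by hand that it can only reach one of the two admissible vertices $w_{-2}, w_5$ — which is exactly why the conclusion allows $hw_3 \in \bx^\circ \cup \{w_{-2}, w_5\}$ rather than $hw_3 \in \bx^\circ$. The remainder is organizational: keeping track of which primed or double--primed refinement of \autoref{lem:box_inclusion_pr_not_3} is licensed by $p_1 > 3$ and by the split into the cases $p_2 > 3$ or $p_3 > 3$ versus $p_2 = p_3 = 3$.
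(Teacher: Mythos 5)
Your proposal follows the paper's proof almost exactly: decompose $h = gg''g'$, push the four non--$\ell_0$ vertices through the primed and double--primed versions of \autoref{lem:box_inclusion_pr_not_3} (respectively \autoref{lem:box_inclusion_pr_3} when $p_2 = p_3 = 3$), exploit that $\rho(\Gamma)$ consists of homeomorphisms so interior is preserved, and treat the exceptional vertex $w_3$ by cases. Your case organization and the identification of the sole troublesome sub-case ($g' \in \{c, bc\}$, $g'w_3 = w_5'$, $p_2 = 3 < p_3$) match the paper.

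The one soft spot is how you resolve that leftover sub-case. You invoke ``a short conic--arc computation, of the same flavour as those in the proof of \autoref{lem:box_inclusion_pr_3},'' but no new conic--arc argument is needed: when $p_2 = 3$ the set $\rho(Q'') = \{a, ca\}$ has only two elements and a direct orbit calculation gives $g''w_5' \in \{w_{-2}'', w_5''\}$ (these are vertices of $\bxpp$, not interior points), after which the unconditional clause of \autoref{lem:box_inclusion_pr_not_3} together with its $p_3 > 3$ clause gives $g\{w_{-2}'', w_5''\} \subset \bx^\circ$, so in fact $hw_3 \in \bx^\circ$ strictly (not merely $\bx^\circ \cup \{w_{-2}, w_5\}$, which the paper reserves for $p_2 = p_3 = 3$). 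So the remaining step is closed by re-applying a lemma you have already cited rather than by fresh conic geometry; you identified the right obstacle but misdiagnosed the tool. Everything else is correct and in parallel with the paper.
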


  \begin{Prf}
    We write $h = gg''g' \in \rho(Q)\rho(Q'')\rho(Q')$.
    If $x \in \{w_{-2},w_5,bcu_0\}$, then $g'x \in \bx'^\circ$ by \autoref{lem:box_inclusion_pr_not_3}, so $hx \in \bx^\circ$.

    \autoref{lem:box_inclusion_pr_not_3} also tells us that $g'w_3 \in \bxp^\circ$ unless $g' \in \{c,bc\}$, in which case $g'w_3 \in \{w_{-2}',w_5'\}$.
    If $p_2 > 3$ then $g''\{w_{-2}',w_5'\} \subset \bxpp^\circ$, so $hw_3 \in \bx^\circ$.
    If $p_2 = 3$ and $p_3 > 3$, then still $g''w_{-2}' \in \bxpp^\circ$, but $g''w_5' \in \{w_{-2}'',w_5''\}$, and $g\{w_{-2}'',w_5''\} \subset \bx^\circ$, so $hw_3 \in \bx^\circ$.
    Finally, in the case $p_2 = p_3 = 3$ we have $hw_3 \in gg'' \{w_{-2}',w_5'\} \subset \bx^\circ \cup \{w_{-2},w_5\}$ by \autoref{lem:box_inclusion_pr_3}.
  \end{Prf}

  \begin{Lem}\label{lem:box_nonintersects_l0}
    Assume $p_2 > 3$ or $p_3 > 3$ and let $g \in \rho(Q)$. Then $g\bxpp \cap \ell_0 = \varnothing$ unless $g = ab$. Similarly, if $p_2 = p_3 = 3$ and $g \in \rho(Q)$, $g'' \in \rho(Q'')$, then $gg''\bxp \cap \ell_0 = \varnothing$ unless $g = ab$ and $g'' = ca$.
  \end{Lem}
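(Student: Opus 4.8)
The plan is to turn the statement into a purely combinatorial computation of indices, with \autoref{lem:box_in_ch} doing the geometric work and the action of $a=\rho(s_1)$, $b=\rho(s_2)$ on the lines $\ell_i$ doing the bookkeeping. First I would record the trivial equivalence
\[g\bxpp\cap\ell_0=\varnothing\quad\Longleftrightarrow\quad\bxpp\cap g^{-1}\ell_0=\varnothing ,\]
and then invoke \autoref{lem:box_in_ch}: the hexagon $\bxpp$ is disjoint from $\ell_i$ for every index $i\notin\{-1,0,1,2\}$ (read mod $2p_3$), and it meets $\ell_0$ and $\ell_2$ only in the vertices $w_0$ and $w_2$. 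So it suffices to decide, for each $g\in\rho(Q)$, whether $g^{-1}\ell_0$ equals one of $\ell_{-1},\ell_0,\ell_1,\ell_2$, and in the two boundary cases to check whether $g$ actually moves the relevant vertex onto $\ell_0$.

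Next I would run the computation. Writing $g=a^\delta(ab)^j$ with $\delta\in\{0,1\}$ and $1\le j\le\frac{p_3-1}{2}$, and using $a\ell_i=\ell_{3-i}$, $b\ell_i=\ell_{5-i}$ (so $ba$ shifts indices by $2$), one gets $g^{-1}\ell_0=\ell_{2j}$ when $\delta=0$ and $g^{-1}\ell_0=\ell_{3+2j}$ when $\delta=1$. A parity-and-range check shows that $2j$ can lie in $\{-1,0,1,2\}\bmod 2p_3$ only for $j=1$, giving $g=ab$ with $g^{-1}\ell_0=\ell_2$; and $3+2j$ can lie in that set only when $p_3=3$ and $j=1$, giving $g=b$ with $g^{-1}\ell_0=\ell_{-1}$. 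For $g=ab$ one computes $ab\cdot w_2=w_0\in\ell_0$, so $g=ab$ is a genuine exception; together with the remark that $\ell_0$ and $\ell_2$ are the only boundary lines of $\bxpp$ this settles all $g\in\rho(Q)$ when $p_3>3$. The $p_2=p_3=3$ clause is handled by the same scheme applied to $\bxp$ taken in the affine chart $\RP^2\setminus\ell_0$ (legitimate since $\ell_2'=\ell_0$ and \autoref{lem:ch_chart_change}) and to the composite elements $gg''\in\rho(Q)\rho(Q'')=\{ba,aba,bca,abca\}$ (cf.\ \autoref{lem:box_inclusion_pr_3}): the index computation for the primed family of lines leaves $gg''=abca$ as the only possible exception, plus leftover geometric cases entirely analogous to the one below.

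The main obstacle is that single genuinely geometric leftover: $p_3=3$, $g=b$, where one must prove $\bxpp\cap\ell_{-1}=\varnothing$ (and the parallel statements for $\bxp$ in the $p_2=p_3=3$ clause). Here I would show that all six vertices of $\bxpp$ lie strictly on one side of $\ell_{-1}$ in the affine chart $\RP^2\setminus\ell_3$ in which $\bxpp$ is defined. Since $\ell_{-1}$ meets $C$ only at $w_{-1}$ and $u_{-1}$, and for $p_3=3$ the order of points on $C$ given by \autoref{lem:order} places $w_0,w_1,w_2$, the point $u_0$, and (via \autoref{lem:u0inM}) the arc $\ell_2\cap M$ containing $cu_0$ all within one arc of $C\setminus\{w_{-1},u_{-1}\}$, hence on one side of $\ell_{-1}$ in that chart, the inclusions $w_{-2}''\in\CH{\ell_3}{w_2,w_0,cu_0}^\circ$, $abu_0''\in\CH{\ell_3}{w_0,u_0}$ and $cabu_0''\in\CH{\ell_3}{w_2,cu_0}^\circ$ supplied by \autoref{lem:u0inCH}, \autoref{lem:w2inCH} and \autoref{lem:cabu0inCH} (together with \autoref{lem:ch_chart_change}) exhibit the three remaining vertices as convex combinations of points on that same side. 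Thus $\bxpp$ misses $\ell_{-1}$, and $b\bxpp\cap\ell_0=\varnothing$. I expect the fiddly part to be the translation of "lies in a prescribed arc of $C$" into "lies in a prescribed half-plane of $\RP^2\setminus\ell_3$"; once that dictionary is set up, the $p_2=p_3=3$ version for $\bxp$ is obtained by replaying the argument verbatim with the primed objects.
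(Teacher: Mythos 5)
Your index arithmetic ($g^{-1}\ell_0 = \ell_{2j}$ or $\ell_{3+2j}$, yielding the exceptions $g=ab$ always and $g=b$ when $p_3=3$) is correct and parallels what the paper does. The gap is in the geometric leftover. You claim that for $p_3=3$ the cyclic order from \autoref{lem:order} places $w_0,w_1,w_2,u_0$ all in one arc of $C\setminus\{w_{-1},u_{-1}\}$; this is false. With indices mod $6$, \autoref{lem:order} gives the cyclic order $w_0,u_1,u_2,w_3,w_4,u_5,u_0,w_1,w_2,u_3,u_4,w_5$, so $w_{-1}=w_5$ and $u_{-1}=u_5$ split $C$ into the arc $w_0,u_1,u_2,w_3,w_4$ and the arc $u_0,w_1,w_2,u_3,u_4$: the point $w_0$ is in the first, while $w_1,w_2,u_0$ are in the second. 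So the ``same arc'' premise fails, and the step ``hence on one side of $\ell_{-1}$'' does not follow from it. The conclusion is salvageable (the two arcs in question are arcs $1$ and $3$ of $C\setminus(\ell_3\cup\ell_{-1})$ and therefore lie in the \emph{same} component of $\RP^2\setminus(\ell_3\cup\ell_{-1})$, because $\ell_3\cap\ell_{-1}$ sits inside the disk bounded by $C$ and consecutive arcs alternate components), but that is a genuinely different and more delicate argument than the one you wrote, not just a ``fiddly translation'' from arcs to half-planes.

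It's worth contrasting with the paper's route, which avoids this issue entirely. Rather than showing $\bxpp$ lies on one side of $\ell_{-1}$ from scratch, the paper exploits the already--established inclusion $b\bxpp\subset\bx$ (from \autoref{lem:box_inclusion_pr_not_3}) together with the fact that $\ell_0$ meets $\bx$ only in $\partial\bx$ (from \autoref{lem:box_in_ch}). These force $b\bxpp\cap\ell_0$, if nonempty, to contain a vertex of $b\bxpp$; then one checks each of the six vertices directly ($w_3,w_4,w_5$ are on the wrong $\ell_j$; $bw_{-2}''$ and $babu_0''$ are in $\bx^\circ$ by \autoref{lem:box_inclusion_pr_not_3}; and $bcabu_0''$ lies on $\ell_3\cap M$, which $\ell_0$ misses). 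This reuses the heavy lifting already done and requires no new position-of-arcs reasoning. The same remark applies to your $p_2=p_3=3$ sketch: the paper again lists the vertices of $ba\bxp$ and $bca\bxp$, observes seven of the ten distinct ones are in $\bx^\circ$ by \autoref{lem:box_inclusion_pr_3}, and checks the remaining three ($w_3,w_5,w_{-2}$) are not on $\ell_0$. I'd recommend either adopting the paper's vertex-checking strategy or, if you keep the ``one side of $\ell_{-1}$'' approach, replacing the false arc claim with the correct bigon argument.
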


  \begin{Prf}
    By \autoref{lem:box_in_ch} $g\bxpp \cap \ell_0 = \varnothing$ if $g\ell_i = \ell_0$ for some $i \not\in \{-1,0,1,2\}$.
    This is true for all $g \in \rho(Q)$ except if $g = ab$ or $g = b$ and $p_3 = 3$.
    So assume $p_3 = 3$ and $b\bxpp \cap \ell_0 \neq \varnothing$.
    The ``primed'' version of \autoref{lem:box_in_ch} shows that $\ell_0 = \ell_2'$ only intersects $\bx$ in its boundary.
    As $b\bxpp \subset \bx$ it follows that $b\bxpp \cap \ell_0$ must contain a vertex of $b\bxpp$.
    These vertices are $bw_0'' = w_3$, $bw_3'' = w_5$, and $bw_5'' = w_4$, which are not on $\ell_0$, $babu_0''$ and $bw_{-2}''$, which are in $\bx^\circ$ by \autoref{lem:box_inclusion_pr_not_3}, and $bcabu_0''$.
    But $bcabu_0'' \in \CH{\ell_2}{w_3,bcu_0} \subset \ell_3 \cap M$ by \autoref{lem:cabu0inCH}, in particular it is not in $\ell_0$.

    Using the same argument in the case $p_2 = p_3 = 3$, if $gg''\bxp \cap \ell_0$ is non--empty it contains a vertex of $gg''\bxp$.
    Now $gg'' \in \{ba,aba,bca\}$, but we can ignore the case $aba$ since $aba\bxp = ba\bxp$.
    If we just list the twelve vertices of $ba\bxp$ and $bca\bxp$ we see they are only ten distinct points.
    We showed in the proof of \autoref{lem:box_inclusion_pr_3} already that seven of them are in $\bx^\circ$.
    The remaining points are $bcaw_0' = w_3$, $baw_0' = w_5$, and $bcaw_5' = baw_3' = w_{-2}$; see \autoref{fig:box_inclusions_p3}.
    None of them are on the line $\ell_0$.
  \end{Prf}

  \begin{Lem}\label{lem:box_nonintersects_l0_iteration}
    Assume $p_1 > 3$ and let $h \in \rho(T)$. Then $h\bx \cap \ell_0 = \varnothing$ unless $h = abcabc$.
  \end{Lem}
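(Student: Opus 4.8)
The plan is to peel the three factors off $h$ one at a time, applying \autoref{lem:box_nonintersects_l0} together with its primed and double--primed versions. Write $h = gg''g'$ with $g \in \rho(Q)$, $g'' \in \rho(Q'')$, $g' \in \rho(Q')$. Since $p_1 > 3$ and $p_1 \le p_2 \le p_3$, all three of $p_1,p_2,p_3$ exceed $3$; in particular the case $p_2 = p_3 = 3$ does not occur, so the box inclusions $g'\bx \subset \bxp$, $g''\bxp \subset \bxpp$, $g\bxpp \subset \bx$ from \autoref{lem:box_inclusion_pr_not_3} (and its primed versions) are available, giving $h\bx \subset gg''\bxp \subset g\bxpp \subset \bx$, and all three versions of \autoref{lem:box_nonintersects_l0} apply.

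First I would dispose of the case $g \neq ab$: then \autoref{lem:box_nonintersects_l0} gives $g\bxpp \cap \ell_0 = \varnothing$, hence $h\bx \cap \ell_0 = \varnothing$. If $g = ab$, I push $\ell_0$ back through $ab$: from $a\ell_i = \ell_{3-i}$ and $b\ell_i = \ell_{5-i}$ one gets $ab\,\ell_2 = \ell_0$, so since $ab$ is a bijection
\[ h\bx \cap \ell_0 \;\subset\; (ab)\bigl(g''\bxp\bigr) \cap \ell_0 \;=\; (ab)\bigl(g''\bxp \cap \ell_2\bigr). \]
Now $\ell_2 = \ell_0''$ and $ca = (ab)''$ under the priming convention (read off, say, from \autoref{fig:conic_mappings}), so the double--primed version of \autoref{lem:box_nonintersects_l0} yields $g''\bxp \cap \ell_2 = \varnothing$ whenever $g'' \neq ca$, and again $h\bx \cap \ell_0 = \varnothing$.

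In the remaining case $g = ab$, $g'' = ca$ we have $h = abca\,g'$ and hence $h\bx = abca\,(g'\bx)$. Here I would use that $\ell_3 = a\ell_0$ and that $\ell_0$ is the attracting fixed line of $abc$, so $abca\,\ell_3 = abc\,\ell_0 = \ell_0$, i.e.\ $(abca)^{-1}\ell_0 = \ell_3$; therefore
\[ h\bx \cap \ell_0 \;=\; abca\,\bigl(g'\bx \cap \ell_3\bigr). \]
Since $\ell_3 = \ell_0'$ and $bc = (ab)'$, the primed version of \autoref{lem:box_nonintersects_l0} gives $g'\bx \cap \ell_3 = \varnothing$ unless $g' = bc$, so $h\bx \cap \ell_0 = \varnothing$ in this case too. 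The only remaining possibility is $g = ab$, $g'' = ca$, $g' = bc$, for which $h = ab\cdot ca\cdot bc = abcabc$, the claimed exception.

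The whole argument is a short three--step peeling; the only delicate point — and the place I expect a slip is most likely — is getting the priming bookkeeping right, namely confirming that $\ell_0$ primes to $\ell_3$ and double--primes to $\ell_2$, that $ab$ primes to $bc$ and double--primes to $ca$, and that the index computations $ab\,\ell_2 = \ell_0$ and $abca\,\ell_3 = \ell_0$ are correct. Once those identifications are pinned down the rest is immediate.
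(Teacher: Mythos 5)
Your three-step peeling is the paper's argument, and your priming and index bookkeeping is all correct: $\ell_0'' = \ell_2$, $\ell_0' = \ell_3$, $(ab)'' = ca$, $(ab)' = bc$, $ab\,\ell_2 = \ell_0$, and $abca\,\ell_3 = abc\,\ell_0 = \ell_0$ are exactly right. However, your dismissal of the case $p_2 = p_3 = 3$ is a genuine gap. The ordering $p_1 \le p_2 \le p_3$ from the introduction is \emph{not} in force in \autoref{sec:nested_boxes}: the hypothesis $p_1 > 3$ is obtained by a cyclic relabeling of the generators, and if the triple in normal order is $(3,3,p_3)$ with $p_3 > 3$ odd (e.g.\ $(3,3,5)$), the relabeled triple is $(p_3,3,3)$, so $p_1 > 3$ while $p_2 = p_3 = 3$. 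In that case \autoref{lem:box_inclusion_pr_not_3} does not supply $h\bx \subset g\bxpp$, and the first part of \autoref{lem:box_nonintersects_l0} (which requires $p_2 > 3$ or $p_3 > 3$) does not apply, so your first peeling step fails. The paper handles this with one sentence: when $p_2 = p_3 = 3$, combine your first two steps into one by going directly to $gg''\bxp$ via \autoref{lem:box_inclusion_pr_3} and invoking the second part of \autoref{lem:box_nonintersects_l0}, which gives $gg''\bxp \cap \ell_0 = \varnothing$ unless $g = ab$ and $g'' = ca$; the third step (peeling $g'$ with the primed version, which \emph{is} available since $p_1 > 3$) then proceeds exactly as you wrote it.
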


  \begin{Prf}
    We write $h = gg''g' \in \rho(QQ''Q')$.
    If $h\bx \cap \ell_0 \neq \varnothing$ and $p_2 > 3$ or $p_3 > 3$, then $g\bxpp \cap \ell_0 \neq \varnothing$, so $g = ab$ by \autoref{lem:box_nonintersects_l0}.
    Hence $g(g''\bxp \cap \ell_0'') = gg''\bxp \cap \ell_0 \neq \varnothing$, so $g'' = ca$ by the same lemma.
    And finally $gg''(g'\bx \cap \ell_0') = h\bx \cap \ell_0 \neq \varnothing$, so $g' = bc$.
    In summary we get $h = gg''g' = abcabc$.
    The case $p_2 = p_3 = 3$ is similar, we just use the second part of \autoref{lem:box_nonintersects_l0} in place of the first two steps.
  \end{Prf}

  \begin{Prop}\label{lem:box_inclusion_final}
    Assume $p_1 > 3$ and let $h_1,h_2,h_3 \in \rho(T)$. Then either $h_3 = abcabc$ or
    \[h_1 h_2 h_3 \bx \subset \bx^\circ.\]
  \end{Prop}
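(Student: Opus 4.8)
\begin{Prf}
The plan is to reduce the inclusion $h_1h_2h_3\bx\subset\bx^\circ$ to a statement about the six vertices of the hexagon $\bx$ and then feed those vertices through \autoref{lem:strict_inclusion_iteration}, keeping the hypothesis $h_3\neq abcabc$ in reserve for the two vertices that lemma does not address (I read $Z$ as $\rho(T)$). Recall first that $h\bx\subset\bx$ for every $h\in\rho(T)$: writing $h=gg''g'$ with $g\in\rho(Q)$, $g''\in\rho(Q'')$, $g'\in\rho(Q')$, the inclusions $g'\bx\subset\bxp$ and $g''\bxp\subset\bxpp$ come from the primed and doubly--primed versions of \autoref{lem:box_inclusion_pr_not_3} (which apply because $p_1>3$), and $g\bxpp\subset\bx$ holds by \autoref{lem:box_inclusion_pr_not_3} if $p_2>3$ or $p_3>3$, while $gg''\bxp\subset\bx$ holds by \autoref{lem:box_inclusion_pr_3} if $p_2=p_3=3$. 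Hence $h_1h_2h_3\bx$ is a compact convex set contained in the affine chart $\RP^2\setminus\ell_2$. A projective transformation carries a convex polygon lying in an affine chart to a convex polygon whose vertices are the images of the old vertices, and a compact set convex in one affine chart is convex in every affine chart containing it; therefore $h_1h_2h_3\bx$ is the convex hull, taken in $\RP^2\setminus\ell_2$, of the six points $h_1h_2h_3 v$, $v$ a vertex of $\bx$. Since $\bx^\circ$ is convex in this chart, it suffices to put each of these six points into $\bx^\circ$.

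For $v\in\{w_{-2},w_5,bcu_0\}$ this is immediate: \autoref{lem:strict_inclusion_iteration} gives $h_3 v\in\bx^\circ$, and since $h_2$ and $h_1$ are homeomorphisms mapping $\bx$ into $\bx$, they map $\bx^\circ$ into $\bx^\circ$, so $h_1h_2h_3 v\in\bx^\circ$. For $v=w_3$, \autoref{lem:strict_inclusion_iteration} gives $h_3 w_3\in\bx^\circ\cup\{w_{-2},w_5\}$; in the first case we conclude as above, and in the second a second use of \autoref{lem:strict_inclusion_iteration} shows $h_2h_3 w_3\in h_2\{w_{-2},w_5\}\subset\bx^\circ$, after which $h_1$ finishes. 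So everything comes down to the two remaining vertices $w_0$ and $abcu_0$.

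These two vertices lie on the edge $\ell_0\cap\bx$ of $\bx$ (since $abc$ fixes the line $\ell_0$), which is exactly the part of $\partial\bx$ that $abcabc$ stabilizes. Here I would use \autoref{lem:box_nonintersects_l0_iteration}: since $h_3\neq abcabc$ we get $h_3\bx\cap\ell_0=\varnothing$, and as $a\rho(T)=\rho(T)$ (because $a\rho(Q)=\rho(Q)$) the same lemma applied to $ah_3$ gives $h_3\bx\cap\ell_3=\varnothing$ as well, except when $h_3=bcabc$. A check of which vertices of $h_3\bx$ can touch the other four edges of $\partial\bx$ --- in the spirit of the vertex analysis in the proof of \autoref{lem:box_nonintersects_l0_iteration}, tracking $w_0$ and $abcu_0$ through the factorization $h_3=gg''g'$ with the help of \autoref{lem:wi_in_box}, \autoref{lem:box_in_ch} and \autoref{lem:ch_chart_change} --- should yield $h_3\{w_0,abcu_0\}\subset\bx^\circ\cup\{w_3,w_5,w_{-2},bcu_0\}$ (including the case $h_3=bcabc$, where $h_3 w_0=w_3$). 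Then $h_2$ and $h_1$ carry this set into $\bx^\circ$, just as for $w_3$, and the proof is complete.

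I expect this last paragraph to be the main obstacle: unlike $w_3$, the vertices $w_0$ and $abcu_0$ sit on $\partial\bx$ for an honest reason --- $w_0$ is an attracting fixed point of $abc$, hence is fixed by $abcabc$ --- so one must genuinely verify that one application of any other element of $\rho(T)$ already pushes both of them into $\bx^\circ$ or onto one of the ``good'' vertices $w_3,w_5,w_{-2},bcu_0$; everything else is formal.
\end{Prf}
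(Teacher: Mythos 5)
Your reduction to placing the six vertices of $h_1h_2h_3\bx$ in $\bx^\circ$, and your treatment of the vertices $w_{-2},w_5,bcu_0,w_3$ via iterated applications of \autoref{lem:strict_inclusion_iteration}, are sound. The gap --- which you flag yourself --- is the claim $h_3\{w_0,abcu_0\}\subset\bx^\circ\cup\{w_3,w_5,w_{-2},bcu_0\}$ for $h_3\neq abcabc$. This does not follow from the lemmas you invoke: \autoref{lem:box_nonintersects_l0_iteration} (together with your $a$-conjugation trick) rules out $h_3w_0,h_3abcu_0$ lying on the two edges of $\bx$ carried by $\ell_0$ and $\ell_3$, but $\bx$ has four more edges, lying on lines that are not among the $\ell_i$, and nothing in the available statements prevents $h_3w_0$ or $h_3abcu_0$ from landing in the \emph{interior} of one of them. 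Such a point is neither in $\bx^\circ$ nor a ``good vertex,'' so the subsequent $h_2,h_1$ step breaks down. Establishing your claim would mean redoing the bookkeeping of \autoref{lem:box_in_ch}--\autoref{lem:box_nonintersects_l0_iteration} for those four extra boundary lines, which is substantial new casework not in the paper.

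The paper's proof sidesteps the issue by reversing the order in which the three factors are spent. It observes that $h_1h_2\bx\cap\partial\bx$, being the trace of one convex hexagon on the boundary of another that contains it, is a union of closed edges and vertices of $h_1h_2\bx$ (supporting-line argument: an interior point of an edge of $h_1h_2\bx$ can meet $\partial\bx$ only if the whole edge does). Two applications of \autoref{lem:strict_inclusion_iteration} put $h_1h_2\{w_{-2},w_5,bcu_0,w_3\}$ in $\bx^\circ$, so the only surviving edge is $h_1h_2\CH{\ell_2}{w_0,abcu_0}\subset h_1h_2\ell_0$; hence $h_1h_2h_3\bx\cap\partial\bx\subset h_1h_2(h_3\bx\cap\ell_0)$, which is empty by \autoref{lem:box_nonintersects_l0_iteration} unless $h_3=abcabc$. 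In short, the paper uses $h_1,h_2$ to confine the intersection to $h_1h_2\ell_0$ and reserves $h_3$ for the ready-made $\ell_0$--avoidance lemma, never needing to locate $h_3w_0$ or $h_3abcu_0$; your plan spends $h_3$ on confining exactly those two points, for which no lemma is available. If you want to salvage a vertex-by-vertex argument, you should at least run your own convexity observation (that $h\bx\cap\partial\bx$ is a union of edges and vertices of $h\bx$) one level earlier, on $h_1h_2\bx$, rather than trying to control $h_3$ directly.
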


  \begin{Prf}
    We already know that $h_1h_2h_3\bx \subset \bx$.
    Assume $h_1h_2h_3 \bx \cap \partial\bx$ is non--empty.
    Then $h_1h_2\bx \cap \partial \bx$ is also non--empty (since $h_3\bx \subset \bx$), and by convexity it must be a union of closed edges and vertices of $h_1h_2\bx$.
    But \autoref{lem:strict_inclusion_iteration} shows that none of $h_1h_2\{w_{-2},w_5,bcu_0,w_3\}$ can be in $\partial\bx$, so
    \[h_1h_2\bx \cap \partial\bx \subset h_1h_2\CH{\ell_2}{w_0,abcu_0} \subset h_1h_2 \ell_0.\]
    So $h_1h_2h_3\bx \cap \partial\bx \subset h_1h_2(h_3\bx \cap \ell_0)$, which is empty by \autoref{lem:box_nonintersects_l0_iteration} unless $h_3 = abcabc$.
  \end{Prf}

  As a convention, we write $g_\pm \in \RP^2$ for the attracting and repelling fixed point and $g^\pm \subset \RP^2$ for the attracting and repelling fixed line of $g \in \SL(3,\bR)$, if they exist.
  This is still defined if $g$ is nondiagonalizable with an eigenvalue of algebraic multiplicity 2.
  The following lemmas will be needed later.
  
  \begin{Lem}\label{lem:coxeter_boundary_box}
    Let $p_1 > 3$ and $h = abcabc$. Then $h\bx \subset \bx^\circ \cup (\ell_0 \cap \bx)$.
  \end{Lem}

  \begin{Prf}
    Recall that $\bx$ is the convex hull of $w_0, w_3, w_5, w_{-2}, bcu_0$, and $abcu_0$.
    Since $\bx^\circ \cup (\ell_0 \cap \bx)$ is convex it suffices that these six points get mapped into this set by $h$.
    Now $w_0, abcu_0 \in \ell_0$ and $\ell_0$ is preserved by $h$, so it only remains to show this for the other four points.
    
    Assume the opposite.
    By \autoref{lem:strict_inclusion_iteration} this is only possible if $p_2 = p_3 = 3$ and $hw_3 \in \{w_{-2}, w_5\}$, or equivalently $bcw_3 \in \{acbaw_{-2}, acbaw_5\}$.
    But $bcw_3 = bcw_0' = w_{-2}'$ and
    \[acbaw_{-2} = acw_0 = acw_3'' = w_5'', \qquad acbaw_5 = acw_7 = acw_1 = acw_5'' = w_7'' = w_1'',\]
    so this would imply $w_{-2}' \in \{w_1'',w_5''\}$.
    But according to the discussion before \autoref{lem:reducible_order} there can be no fifth point in $C' \cap C''$ except $w_0',w_2',w_3',w_5'$.
    So $h\bx \subset \bx^\circ \cup \ell_0$.
  \end{Prf}
  
  \begin{Lem}\label{lem:negative_coxeter_box}
    Let $p_1 > 3$ and $h = abcabc$.
    If $t_\rho > \tcrit$ then the repelling line $h^-$ does not intersect $\bx$. If $t_\rho = \tcrit$ then $h^- \cap \bx = \{h_+\}$.
  \end{Lem}

  \begin{Prf}
    First consider the case $t = \tred$, where $\rho$ has a global fixed point $x \in \RP^2$.
    The order of the lines $\ell_i$ and $h^- = \rho(s_1s_2s_3)^-$ in the pencil of lines through $x$ is the same as the order of the points $z_i$ and $(s_1s_2s_3)_-$ along $S^1$.
    We found in the end of \autoref{sec:hyperbolic} that $z_2$ and $(s_1s_2s_3)_-$ lie in the arc of $S^1 \setminus \{z_0, z_3\}$ not containing $z_5$ and $z_{-2}$.
    Hence $\ell_2$ and $h^-$ also lie in the component of $\RP^2 \setminus (\ell_0 \cup \ell_3)$ opposite to that containing $\ell_5$ and $\ell_{-2}$ (ignoring the point $x$ itself).
    Since $\bx$ is defined as the convex hull of points contained in $\ell_0, \ell_3, \ell_5, \ell_{-2}$, and avoiding $\ell_2$, this implies that $h^-$ does not intersect $\bx$.

    For the general case, continuously deform $\rho$ starting from $t_\rho = \tred$ until $h^-$ and $\bx$ intersect for the first time.
    Let $A = h^- \cap \bx$.
    As the definitions of $\bx$ and $h^-$ change continuously with $t_\rho$, we have $A \subset \partial\bx$.
    Then $hA = h^- \cap h\bx \subset A$ (since $h\bx \subset \bx$, see beginning of this section), so $hA \subset \partial\bx \cap h\bx$.
    By \autoref{lem:coxeter_boundary_box} this implies $hA \subset \ell_0 = h^+$, and in fact $A \subset h^+$.
    But the intersection $h^- \cap h^+$ contains only a single point, the ``neutral'' fixed point $h_0$.
    To be more precise, if $t_\rho > \tcrit$ then $h$ has distinct real eigenvalues and $h_0$ is the middle eigenspace, and if $t_\rho = \tcrit$ then $h$ is nondiagonalizable with an eigenvalue of multiplicty 2 and $h_0 = h_+$ (see \autoref{rem:critical_moment}).
    So $A = \{h_0\}$.

    Note that $h_0$ must be on the boundary of $\bx \cap \ell_0$ within $\ell_0$.
    This means either $h_0 = abcu_0$ or $h_0 = w_0$.
    $h_0 = abcu_0$ would imply that $u_0$ is fixed by $abc$.
    So $cu_0 = bau_0 = u_2 \in C$, contradicting \autoref{lem:u0inM}.
    Therefore, we have $h_0 = w_0 = h_+$.
    By \autoref{lem:coxeter_eigenvalues} this means that $t_\rho = \tcrit$ and $h^- \cap \bx = \{h_+\}$.
  \end{Prf}
  
  \section{Duality}\label{sec:duality}

  The results from the previous two sections allow us to construct boundary maps into $\RP^2$ for representations $\rho$ of type $(\frac{p_1-1}{2},\frac{p_2-1}{2},\frac{p_3-1}{2})$ with parameter $t_\rho \in [\tcrit, \infty)$.
  Now we can leverage two forms of duality: first to extend this to the case $t_\rho \in (0,\tcrit^{-1}]$ and then to also construct a boundary map into the dual projective plane $(\RP^2)^*$.
  Note that by the proof of \autoref{lem:coxeter_eigenvalues} reordering $(p_1,p_2,p_3)$ does not change the value of $\tcrit$.
  Also note that in this section we write $\partial\Gamma$ instead of $S^1$ for the group boundary, to be more precise when two different groups are involved.

  \begin{Lem}\label{lem:parameter_less_than_1}
    Let $\rho \colon \partial\Gamma_{p_1,p_2,p_3} \to \SL(3,\bR)$ have type $(\frac{p_1-1}{2},\frac{p_2-1}{2},\frac{p_3-1}{2})$ and parameter $t_\rho \in (0,\tcrit^{-1}] \cup [\tcrit,\infty)$. Then there exists a continuous $\rho$--equivariant map
    \[\xi^{(1)} \colon \partial\Gamma_{p_1,p_2,p_3} \to \RP^2.\]
  \end{Lem}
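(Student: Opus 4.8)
The plan is to prove the two pieces of the parameter range separately, reducing the case $t_\rho\in(0,\tcrit^{-1}]$ to the case $t_\rho\in[\tcrit,\infty)$ by projective duality. Throughout we may assume $\rho$ is line--irreducible, since every Coxeter character has a line--irreducible representative (\autoref{sec:Coxeter_representations}).

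For $t_\rho\in[\tcrit,\infty)$ this is exactly what \autoref{sec:limit_curve} and \autoref{sec:nested_boxes} were designed to produce. After cyclically relabelling $p_1,p_2,p_3$ — harmless, and possible since they are not all equal to $3$, so at least one exceeds $3$ — we may assume $p_1>3$, the standing hypothesis of \autoref{sec:nested_boxes}. I would then apply \autoref{prop:limit_map_exists} with the interval $I=[z_3,z_0]$ and finite set $T=QQ''Q'$ defined there, with $A$ the box $\bx$, with special element $\overline t=(s_1s_2s_3)^2\in T$, and with $N$ as in \autoref{lem:box_inclusion_final}: hypotheses (i) and (ii) are the box inclusions $\rho(t)\bx\subset\bx$ and $\rho(t_1)\cdots\rho(t_N)\bx\subset\bx^\circ$ (for $t_N\ne\overline t$) assembled in \autoref{lem:box_inclusion_final}, and hypothesis (iii), that $\bigcap_i\rho(\overline t^i)\bx$ is a single point, holds because $\rho(s_1s_2s_3)$ has an attracting fixed point $w_0\in\bx$ by \autoref{lem:coxeter_eigenvalues}, so $\rho(\overline t)=\rho(s_1s_2s_3)^2$ contracts $\bx$ onto $w_0$. \autoref{prop:limit_map_exists} then yields the required continuous $\rho$--equivariant map $\xi^{(1)}\colon\partial\Gamma\to\RP^2$.

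For $t_\rho\in(0,\tcrit^{-1}]$ I would pass to the contragredient representation $\rho^{*}(\gamma)\coloneqq\rho(\gamma)^{-\mathsf T}$, which again lands in $\SL(3,\bR)$. Since each $\rho(s_i)$ is an involution one has $\rho^{*}(s_i)=\rho(s_i)^{\mathsf T}$, so $\rho^{*}$ is again a Coxeter representation and — the transpose exchanging $b_i$ with $\alpha_i$ — its Cartan matrix is the transpose of that of $\rho$. Transposition fixes every $2$--cyclic product $a_{ij}a_{ji}$, so $\rho^{*}$ has the same type, and it replaces the parameter \eqref{eq:trho_definition} by its reciprocal, so $t_{\rho^{*}}=t_\rho^{-1}\in[\tcrit,\infty)$; moreover $\rho^{*}$ preserves a line exactly when $\rho$ preserves a plane, so $\rho^{*}$ is point--irreducible. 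The orthocomplement $L\mapsto L^{\perp}$ for the standard inner product on $\bR^3$ is a diffeomorphism $\RP^2\to(\RP^2)^{*}$ satisfying $(\rho(\gamma)L)^{\perp}=\rho(\gamma)^{-\mathsf T}L^{\perp}=\rho^{*}(\gamma)L^{\perp}$, so a continuous $\rho$--equivariant map $\partial\Gamma\to\RP^2$ is the same datum as a continuous $\rho^{*}$--equivariant map $\partial\Gamma\to(\RP^2)^{*}$. It therefore suffices to produce, for the point--irreducible representation $\rho^{*}$ of type $(\tfrac{p_1-1}2,\tfrac{p_2-1}2,\tfrac{p_3-1}2)$ with parameter $\ge\tcrit$, a continuous equivariant map into $(\RP^2)^{*}$; and this is exactly the construction of Sections~\ref{sec:limit_curve} and~\ref{sec:nested_boxes} carried out in the dual projective plane, with the conics $C,C',C''$ replaced by dual conics, the attracting fixed point of each Coxeter element by its attracting fixed line, the box $\bx\subset\RP^2$ by a box of lines in $(\RP^2)^{*}$, and ``line--irreducible'' by ``point--irreducible'', all the nesting inclusions and their proofs going through verbatim.

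The only step that is not purely formal is this last claim: that the box machinery of \autoref{sec:nested_boxes} is genuinely self--dual, each lemma there having a mirror version in $(\RP^2)^{*}$ with the same proof under the dictionary points $\leftrightarrow$ lines, conics $\leftrightarrow$ dual conics, ``line--irreducible'' $\leftrightarrow$ ``point--irreducible''. Granting that — and the $t_\rho\ge\tcrit$ case, which is where the real work sits and is already done — the lemma follows immediately. (As a byproduct, the dual box construction also supplies the maps $\xi^{(2)}\colon\partial\Gamma\to(\RP^2)^{*}$ used in the remainder of \autoref{sec:duality}.)
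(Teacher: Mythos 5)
Your first part is correct and matches the paper's proof, including the (careful) observation that a cyclic relabelling makes $p_1>3$.

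For $t_\rho\in(0,\tcrit^{-1}]$ your route diverges from the paper's and has a real gap. The paper uses the group automorphism $\psi\colon\Gamma_{p_1,p_3,p_2}\to\Gamma_{p_1,p_2,p_3}$ swapping $s_2$ and $s_3$: conjugating the Cartan matrix by the corresponding permutation matrix inverts $t_\rho$, so $\rho\circ\psi$ is a Coxeter representation of $\Gamma_{p_1,p_3,p_2}$ with parameter $t_\rho^{-1}\ge\tcrit$, the box construction applies \emph{as already proved} to give a map into $\RP^2$, and precomposing with $\partial\psi^{-1}$ yields $\xi^{(1)}$. Nothing needs to be dualized. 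Your proposal instead passes to $\rho^{-\mathsf T}$ and then requires a $\rho^{-\mathsf T}$-equivariant map \emph{into $(\RP^2)^*$}. But Sections~\ref{sec:limit_curve}--\ref{sec:nested_boxes} produce maps into $\RP^2$. If you apply them to $\rho^{-\mathsf T}$ (which has parameter $\ge\tcrit$), you get a map $\eta'\colon\partial\Gamma\to\RP^2$, and composing with $D$ gives a $\rho$-equivariant map into $(\RP^2)^*$ — that is $\xi^{(2)}$, not $\xi^{(1)}$. This is exactly how the paper obtains $\xi^{(2)}$ in \autoref{prop:limit_curve_in_flags}; it cannot be turned around to give $\xi^{(1)}$.

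So your approach genuinely needs the ``dual box construction'' in $(\RP^2)^*$, and the claim that it goes through ``verbatim'' by formal self-duality is where the gap is. The hypotheses change (line-irreducible becomes point-irreducible), the conics $C,C',C''$ become conics through fixed \emph{lines} rather than dual conics of $C,C',C''$, and the specific geometric lemmas — the cyclic order of $w_i,u_i$ along $C$ in \autoref{lem:reducible_order} and \autoref{lem:order}, the deformation argument in \autoref{lem:wi_distinct}, the affine-chart convex hulls and the cross-ratio estimate in \autoref{lem:convergent_boxes} — would all need to be re-proved in the dual configuration, or at least carefully checked to be self-dual. None of that is done in the paper and it is not automatic. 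The $\psi$-trick avoids this entirely and is the intended argument here; the $\rho^{-\mathsf T}$-and-$D$ mechanism is reserved for building $\xi^{(2)}$ in the next proposition, which is where it actually works.
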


  \begin{Prf}
    As discussed in \autoref{lem:reducible_identify} and \autoref{sec:Anosov_representations}, the only reducible representations in this component are $\rho_{\mathrm{red}}$ and $\rho_{\mathrm{red}}'$, and both are Anosov.
    So we can assume $\rho$ is irreducible.
    If $t_\rho \geq \tcrit$ then the set $\bx$ from \autoref{lem:box_inclusion_final} satisfies the assumptions of \autoref{prop:limit_map_exists}, where assumption (vi) follows from \autoref{lem:negative_coxeter_box}.
    So there exists a continuous $\rho$--equivariant map $\xi^{(1)} \colon \partial\Gamma_{p_1,p_2,p_3} \to \RP^2$.

    Now assume $t_\rho \in (0,\tcrit^{-1}]$ and let $\psi \colon \Gamma_{p_1,p_3,p_2} \to \Gamma_{p_1,p_2,p_3}$ be the group isomorphism which fixes $s_1$ and interchanges $s_2$ with $s_3$.
    It is an isometry of Cayley graphs and hence induces a homeomorphism $\partial \psi \colon \partial\Gamma_{p_1,p_3,p_2} \to \partial\Gamma_{p_1,p_2,p_3}$ of the group boundaries, which is $\psi$--equivariant.

    Using the notation $c_i = 2\cos(\frac{p_i-1}{2p_i}\pi)$ as in the proof of \autoref{lem:homeoToR}, the Cartan matrix of $\rho \circ \psi$ is (equivalent to)
    \[\begin{pmatrix}1 & 0 & 0 \\ 0 & 0 & 1 \\ 0 & 1 & 0\end{pmatrix}\begin{pmatrix}2 & -c_3 & -c_2 \\ -c_3 & 2 & -t_{\rho}c_1 \\ -c_2 & -t_{\rho}^{-1}c_1 & 2\end{pmatrix}\begin{pmatrix}1 & 0 & 0 \\ 0 & 0 & 1 \\ 0 & 1 & 0\end{pmatrix} = \begin{pmatrix}2 & -c_2 & -c_3 \\ -c_2 & 2 & -t_\rho^{-1} c_1 \\ -c_3 & -t_\rho c_1 & 2\end{pmatrix}.\]
    Hence $\rho \circ \psi$ is a representation of $\Gamma_{p_1,p_3,p_2}$ of type $(\frac{p_1-1}{2},\frac{p_3-1}{2},\frac{p_2-1}{2})$ with parameter $t_{\rho \circ \psi} = t_\rho^{-1} \in [\tcrit,\infty)$.
    So there exists a continuous $(\rho \circ \psi)$--equivariant boundary map $\xi'^{(1)} \colon \partial\Gamma_{p_1,p_3,p_2} \to \RP^2$ by the above.
    But then
    \[\xi^{(1)} = \xi'^{(1)} \circ \partial\psi^{-1} \colon \partial\Gamma_{p_1,p_2,p_3} \to \RP^2\]
    is $\rho$--equivariant.
  \end{Prf}

  \begin{Prop}\label{prop:limit_curve_in_flags}
    Let $\rho \colon \partial\Gamma_{p_1,p_2,p_3} \to \SL(3,\bR)$ be as in \autoref{lem:parameter_less_than_1}. Then there exists a continuous $\rho$--equivariant map
    \[\xi \colon \partial\Gamma_{p_1,p_2,p_3} \to \F.\]
    into the flag manifold $\F$ (defined in \autoref{sec:Anosov_representations}).
    It maps the attracting (resp. repelling) fixed point $\gamma_\pm$ of the Coxeter element $\gamma = s_1s_2s_3$ to the attracting (repelling) flag of $\rho(\gamma)$.
  \end{Prop}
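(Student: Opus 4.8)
The plan is to assemble $\xi=(\xi^{(1)},\xi^{(2)})$ out of the map $\xi^{(1)}$ produced by \autoref{lem:parameter_less_than_1} and a second map $\xi^{(2)}\colon\partial\Gamma\to\Gr_2(\bR^3)$ obtained by applying the same lemma to the contragredient representation. By \autoref{fact:anosov_properties} and the fact that the reducible representations in this component are Anosov, we may assume $\rho$ is irreducible, and then so is the contragredient $\rho^{*}\colon\Gamma\to\SL(3,\bR)$ defined by $\rho^{*}(\gamma)=\rho(\gamma^{-1})^{\mathsf T}$, where we identify $\SL((\bR^3)^{*})$ with $\SL(3,\bR)$ after fixing a basis. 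If $\rho(s_i)=b_i\otimes\alpha_i-1$ then $\rho^{*}(s_i)=\alpha_i\otimes b_i-1$, so the Cartan matrix of $\rho^{*}$ is the transpose of that of $\rho$; comparing with the normal form in the proof of \autoref{lem:homeoToR} shows $\rho^{*}$ has the same type $(\tfrac{p_1-1}2,\tfrac{p_2-1}2,\tfrac{p_3-1}2)$ with parameter $t_{\rho^{*}}=t_\rho^{-1}\in(0,\tcrit^{-1}]\cup[\tcrit,\infty)$. Hence \autoref{lem:parameter_less_than_1} applied to $\rho^{*}$ gives a continuous $\rho^{*}$--equivariant map $\eta\colon\partial\Gamma\to\mathbb{P}((\bR^3)^{*})$, and we put $\xi^{(2)}(x)=\ker\eta(x)$; this is a continuous $\rho$--equivariant map into $\Gr_2(\bR^3)$.

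Next I would check that $\xi^{(1)}(x)\subset\xi^{(2)}(x)$ for all $x\in\partial\Gamma$, so that $(\xi^{(1)}(x),\xi^{(2)}(x))\in\F$. The incidence set $\{(L,P)\colon L\subset P\}$ is closed in $\mathbb{P}(\bR^3)\times\Gr_2(\bR^3)$, $\xi^{(1)}$ and $\xi^{(2)}$ are continuous and $\rho$--equivariant, and the $\Gamma$--orbit of the attracting fixed point $z_0=\gamma_+$ of $\gamma=s_1s_2s_3$ is dense in $\partial\Gamma$; so it is enough to prove the inclusion at $x=\gamma_+$. Write $E_1,E_2,E_3\subset\bR^3$ for the eigenlines of $\rho(\gamma)$ ordered by decreasing modulus of the eigenvalue (at the endpoint parameters $t_\rho\in\{\tcrit,\tcrit^{-1}\}$, where $\rho(\gamma)$ has a repeated eigenvalue and a Jordan block by \autoref{lem:coxeter_eigenvalues}, read $E_1$ and $E_2\oplus E_3$ as the corresponding pieces of the generalized eigenspace filtration). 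The interval $I=[z_3,z_0]$ admits the code $(g_n)_n=\big((s_1s_2s_3)^{2(n-1)}\big)_n$ for $z_0$, so by assumption (iii) of \autoref{prop:limit_map_exists} and \autoref{lem:vanishing_diam_flag_convergence} the point $\xi^{(1)}(\gamma_+)=\lim_n\rho(\gamma)^{2n}$ is the unique $\rho(\gamma^2)$--fixed point in $\bx$; since $\bx$ has nonempty interior, hence is not contained in any projective line, this is the attracting fixed point of $\rho(\gamma)$ on $\RP^2$, i.e. $E_1$. Running the same argument for $\rho^{*}$ — via \autoref{lem:quasigeodesics_have_same_limits}, and, when $t_{\rho^{*}}<\tcrit^{-1}$, transporting the code along the isomorphism $\psi$ from the proof of \autoref{lem:parameter_less_than_1} — gives $\eta(\gamma_+)=\lim_n\rho^{*}(\gamma)^{n}$, the attracting eigenline of $\rho^{*}(\gamma)=\rho(\gamma^{-1})^{\mathsf T}$, which is the left eigenline of $\rho(\gamma)$ for its smallest eigenvalue, with kernel $E_1\oplus E_2$. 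Thus $\xi^{(1)}(\gamma_+)=E_1\subset E_1\oplus E_2=\xi^{(2)}(\gamma_+)$, and $\xi=(\xi^{(1)},\xi^{(2)})\colon\partial\Gamma\to\F$ is a continuous $\rho$--equivariant map.

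Finally I would identify $\xi(\gamma_\pm)$. The above already gives $\xi(\gamma_+)=(E_1,E_1\oplus E_2)$, the attracting flag of $\rho(\gamma)$. For $\gamma_-$, note that $\ell_0=\mathbb{P}(E_1\oplus E_2)$ is both the attracting fixed line of $\rho(\gamma)$ and the repelling plane of $\rho(\gamma^{-1})$, and that $\xi^{(1)}(z_3)=\rho(s_1)w_0=w_3\notin\ell_0$ because $\ell_0\cap C=\{w_0,u_0\}$ while $w_3$ is neither $w_0$ nor $u_0$ by \autoref{lem:wi_distinct} and \autoref{lem:order}. Choosing $z\in I$ near $z_3$ with $\xi^{(1)}(z)\notin\ell_0$ and $z\neq\gamma_+$, and using $\gamma^{-n}z\to\gamma_-$ together with equivariance and continuity, we get $\xi^{(1)}(\gamma_-)=\lim_n\rho(\gamma^{-1})^{n}\xi^{(1)}(z)=E_3$. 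Dually, $\eta(\gamma_-)=\lim_n\rho^{*}(\gamma^{-1})^{n}$ is the attracting eigenline of $\rho(\gamma)^{\mathsf T}$, namely the left eigenline of $\rho(\gamma)$ for its largest eigenvalue, with kernel $E_2\oplus E_3$, so $\xi(\gamma_-)=(E_3,E_2\oplus E_3)$ is the repelling flag.

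I expect the main obstacle to be the two endpoint parameters $t_\rho\in\{\tcrit,\tcrit^{-1}\}$: there $\rho(\gamma)$ is not diagonalizable, so one must verify that the nested intersections and iterated limits above still converge in $\RP^2$ and $(\RP^2)^{*}$ (the relevant singular value ratios still tend to $\infty$) and that ``attracting/repelling flag'' is correctly read off from the generalized eigenspace filtration. A secondary nuisance is the bookkeeping required to push the description of $\eta$ through the isomorphism $\psi$ in the case $t_{\rho^{*}}\in(0,\tcrit^{-1})$.
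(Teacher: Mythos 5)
Your plan is essentially the same as the paper's: obtain $\xi^{(2)}$ by applying \autoref{lem:parameter_less_than_1} to the contragredient $\rho^{-T}$ (after noting its Cartan matrix is the transpose, hence same type with parameter $t_\rho^{-1}$), verify the incidence $\xi^{(1)}(x)\in\xi^{(2)}(x)$ at the single point $\gamma_+$, and use density of the $\Gamma$--orbit of $\gamma_+$ together with closedness of the incidence variety. Where you diverge is in how you identify $\xi(\gamma_\pm)$ with the eigenflags. You go back to the box machinery, computing $\xi^{(1)}(\gamma_+)$ from the code $(\gamma^{2n})_n$ and assumption~(iii) of \autoref{prop:limit_map_exists}, and for $\gamma_-$ you need $\xi^{(1)}(z_3)=w_3\notin\ell_0$, which you pull from \autoref{lem:wi_distinct} and \autoref{lem:order}. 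The paper's own argument is more self-contained and avoids re-opening \autoref{sec:nested_boxes}: it simply observes that irreducibility forces $\xi^{(1)}(\partial\Gamma)$ to escape any fixed projective line, so one can pick $z$ with $\xi^{(1)}(z)\notin\rho(\gamma)^-$ and $z\neq\gamma_-$, and then $\rho(\gamma)^n\xi^{(1)}(z)\to\rho(\gamma)_+$ by proximality and continuity of $\xi^{(1)}$. This saves you from having to propagate the box lemmas (and the $\psi$--conjugation in the $t_\rho<1$ regime) into this proof.

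The other place where the paper is doing real work that you flag but don't finish is the non-diagonalizable endpoints $t_\rho\in\{\tcrit,\tcrit^{-1}\}$. Your $E_1,E_2,E_3$ notation breaks down there: there is no canonical decomposition $E_2\oplus E_3$, and "ordered by modulus" does not distinguish the Jordan eigenvector from its companion. The paper sidesteps this cleanly: it chooses $\gamma\in\{s_1s_2s_3,s_3s_2s_1\}$ so that $\rho(\gamma)$ is \emph{proximal} (one of the two always is), reads off $\xi^{(1)}(\gamma_+)=\rho(\gamma)_+$ and $\xi^{(2)}(\gamma_-)=\rho(\gamma)^-$ from proximal dynamics exactly as above, and then for the remaining two values uses that a non-diagonalizable element of $\SL(3,\bR)$ has \emph{exactly} two fixed points $\rho(\gamma)_\pm$ and two fixed lines $\rho(\gamma)^\pm$ with $\rho(\gamma)_\pm\in\rho(\gamma)^\pm$, which pins down $\xi^{(1)}(\gamma_-)$ and $\xi^{(2)}(\gamma_+)$ without any Jordan-block asymptotics. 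If you want your version to stand on its own at those two parameter values, you should either adopt that trick or actually carry out the flag-convergence estimates for the Jordan form that your last paragraph defers.

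Finally, a small imprecision: the phrase ``unique $\rho(\gamma^2)$--fixed point in $\bx$, \ldots hence the attracting fixed point'' elides the actual reason, which is that \autoref{lem:vanishing_diam_flag_convergence} gives $\rho(\gamma)^{2n}\to x$ for the unique $x$ in the nested intersection, while independently $\rho(\gamma)^{2n}$ flag-converges to the attracting point of $\rho(\gamma)$; uniqueness of flag limits then identifies $x$. The nonempty-interior hypothesis enters only to guarantee $\mu(\bx)>0$ inside \autoref{lem:vanishing_diam_flag_convergence}, not as a geometric statement about lines.
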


  \begin{Prf}
    As in the proof of \autoref{lem:parameter_less_than_1} we can assume that $\rho$ is irreducible.
    By \autoref{lem:parameter_less_than_1} there is $\rho$--equivariant boundary map $\xi^{(1)} \colon \partial \Gamma \to \RP^2$ .

    To obtain a dual boundary map, we consider the inverse transposed representation, which we call $\rho^{-T}$. Since
    \[\rho(s_i)^{-T} = (b_i \otimes \alpha_i - 1)^T = (\alpha_i^T \otimes b_i^T - 1)\]
    its Cartan matrix is just the transpose of that of $\rho$.
    Hence the type of $\rho^{-T}$ is also $(\frac{p_1-1}{2},\frac{p_2-1}{2},\frac{p_3-1}{2})$, but $t_{\rho^{-T}} = t_\rho^{-1}$.
    By \autoref{lem:parameter_less_than_1} there is a $\rho^{-T}$--equivariant continuous boundary map $\xi' \colon \partial\Gamma \to \RP^2$.
    If we write $D \colon \RP^2 \to (\RP^2)^*$ for the duality induced by the standard scalar product on $\bR^3$, then
    \[\xi^{(2)} = D \circ \xi' \colon \partial\Gamma \to (\RP^2)^*\]
    is $\rho$--equivariant.

    For the remaining part, note that the Coxeter element $\rho(s_1s_2s_3)$ does not have three distinct eigenvalues if $t_\rho \in \{\tcrit^{-1},\tcrit\}$ (\autoref{lem:coxeter_eigenvalues}).
    But we can choose $\gamma \in \{s_1s_2s_3,s_3s_2s_1\}$ so that $\rho(\gamma)$ is proximal, i.e. has a unique eigenvalue of maximal modulus.
    We write $\rho(\gamma)_+$ for its attracting point and $\rho(\gamma)^-$ for its repelling line in the projective plane.

    By irreducibility $\xi^{(1)}(\partial\Gamma)$ cannot be contained in a line, so there is some $z \in \partial \Gamma$ with $\xi^{(1)}(z) \not\in \rho(\gamma)^-$.
    Hence $\xi^{(1)}(\gamma^nz) = \rho(\gamma)^n\xi^{(1)}(z) \to \rho(\gamma)_+$.
    By continuity of $\xi^{(1)}$ there are infinitely many such $z$, so we can assume $z \neq \gamma_-$.
    But then $\gamma^n z \to \gamma_+$ in $\partial\Gamma$, so continuity implies $\xi^{(1)}(\gamma_+) = \rho(\gamma)_+$.
    An analogous argument shows that $\xi^{(2)}(\gamma_-) = \rho(\gamma)^-$.

    If $t_\rho \not\in \{\tcrit^{-1},\tcrit\}$ then $\rho(\gamma^{-1})$ is also proximal, so we can repeat the argument with $\gamma^{-1}$ in place of $\gamma$ and get $\xi^{(1)}(\gamma_-) = \rho(\gamma)_-$ and $\xi^{(2)}(\gamma_+) = \rho(\gamma)^+$.
    If $t_\rho \in \{\tcrit^{-1},\tcrit\}$, $\rho(\gamma)$ is not diagonalizable by \autoref{lem:coxeter_eigenvalues}.
    Hence it has exactly two fixed points $\rho(\gamma)_+$ and $\rho(\gamma)_-$ and two fixed lines $\rho(\gamma)^+$ and $\rho(\gamma)^-$, with $\rho(\gamma)_\pm \in \rho(\gamma)^\pm$.
    Since $\rho(\gamma)_+ \not\in \rho(\gamma)^-$, we necessarily have $\xi^{(1)}(\gamma_-) = \rho(\gamma)_-$ and $\xi^{(2)}(\gamma_+) = \rho(\gamma)^+$.

    In any case $\xi^{(1)}(\gamma_+) \in \xi^{(2)}(\gamma_+)$, and since the orbit of $\gamma_+$ is dense in $\partial\Gamma$, we have $\xi^{(1)}(x) \in \xi^{(2)}(x)$ for all $x \in \partial\Gamma$, so $\xi^{(1)}$ and $\xi^{(2)}$ combine to a map $\xi$ into $\F$.
\end{Prf}

\section{Transversality}\label{sec:transversality}

In this section let $\rho \colon \Gamma_{p_1,p_2,p_3} \to \SL(3,\bR)$ be a representation of type $(\frac{p_1-1}{2}, \frac{p_2-1}{2}, \frac{p_3-1}{2})$ with $t_\rho \in (0, \tcrit^{-1}] \cup [\tcrit, \infty)$, and let $\xi \colon S^1 \to \F$ be its boundary map, which exists by \autoref{prop:limit_curve_in_flags}.
The main result is that $\xi$ is transverse if $t_\rho < \tcrit^{-1}$ or $t_\rho > \tcrit$, that is $\xi^{(1)}(x) \not\in \xi^{(2)}(y)$ whenever $x \neq y$.
This is false if $t_\rho = \tcrit^{-1}$ or $t_\rho = \tcrit$, as the Coxeter element is not diagonalizable by \autoref{lem:coxeter_eigenvalues}, so its attracting and repelling flags are not transverse.
We are first going to prove transversality for pairs in $S^1 = \partial \Gamma$ of which one element is a fixed point of the Coxeter element, then extend this to a certain open subset of pairs, and finally show that the $\Gamma$--orbit of this subset comprises all distinct pairs.

\begin{Lem}\label{lem:coxeter_transverse}
  Assume $t_\rho \geq \tcrit$, let $\gamma = s_1s_2s_3$ and $z \in S^1 \setminus \{\gamma_+,\gamma_-\}$. Then $\xi(z)$ is transverse to $\xi(\gamma_+)$ and $\xi(\gamma_-)$.
\end{Lem}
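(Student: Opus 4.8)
The plan is to run the north--south dynamics of the Coxeter element $\gamma=s_1s_2s_3$ simultaneously on $S^1=\partial\Gamma$ and on $\RP^2$ and $(\RP^2)^*$. Since $t_\rho\in(0,\tcrit^{-1})\cup(\tcrit,\infty)$, \autoref{lem:coxeter_eigenvalues} tells us that $\rho(\gamma)$ has three real eigenvalues $\mu_1,\mu_2,\mu_3$ of pairwise distinct absolute value, say $|\mu_1|>|\mu_2|>|\mu_3|$, with real eigenvectors $v_1,v_2,v_3$; in particular both $\rho(\gamma)$ and $\rho(\gamma)^{-1}$ are proximal. Hence \autoref{prop:limit_curve_in_flags} gives the full attracting and repelling flags $\xi(\gamma_+)=\big([v_1],\langle v_1,v_2\rangle\big)$ and $\xi(\gamma_-)=\big([v_3],\langle v_2,v_3\rangle\big)$, where I write $\xi=(\xi^{(1)},\xi^{(2)})$ and recall that $\xi^{(2)}$ is $\rho$--equivariant for the natural action on $(\RP^2)^*$. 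Unwinding the definition of transversality, the lemma reduces to four statements for every $z\in S^1\setminus\{\gamma_+,\gamma_-\}$: that $\xi^{(1)}(z)\not\subset\langle v_1,v_2\rangle$ and $v_1\notin\xi^{(2)}(z)$ (transversality to $\xi(\gamma_+)$), and that $\xi^{(1)}(z)\not\subset\langle v_2,v_3\rangle$ and $v_3\notin\xi^{(2)}(z)$ (transversality to $\xi(\gamma_-)$).

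All four follow from a single observation. Let $V$ be $\RP^2$ or $(\RP^2)^*$ with its $\rho$--action, let $\eta\colon S^1\to V$ be $\xi^{(1)}$ or $\xi^{(2)}$ accordingly, and let $Z\subset V$ be a proper $\rho(\gamma)$--invariant closed subvariety (a point or a projective line). If $w\in S^1$ satisfies $\eta(w)\in Z$ and $w\neq\gamma_+$, then $\gamma^{-n}w\to\gamma_-$ in $S^1$, so by equivariance $\eta(\gamma^{-n}w)=\rho(\gamma)^{-n}\eta(w)$ remains in $Z$ (which is also $\rho(\gamma)^{-1}$--invariant), and by continuity $\eta(\gamma_-)=\lim_n\eta(\gamma^{-n}w)\in Z$. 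Therefore, if in addition $\eta(\gamma_-)\notin Z$, no such $w\neq\gamma_+$ can exist; the mirror statement, obtained by replacing $\gamma^{-n}$ by $\gamma^{n}$ and $\gamma_-$ by $\gamma_+$, holds by the same reasoning and uses $\gamma^{n}z\to\gamma_+$ for $z\neq\gamma_-$.

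It remains to supply the four subvarieties. For $\xi^{(1)}(z)\not\subset\langle v_1,v_2\rangle$ take $\eta=\xi^{(1)}$ and $Z$ the projective line $\mathbb{P}(\langle v_1,v_2\rangle)\subset\RP^2$, which is $\rho(\gamma)$--invariant as a sum of eigenlines; since $\xi^{(1)}(\gamma_-)=[v_3]\notin Z$ and $z\neq\gamma_+$, the observation applies. For $v_1\notin\xi^{(2)}(z)$ take $\eta=\xi^{(2)}$ and $Z$ the pencil of all $2$--planes containing $v_1$, a $\rho(\gamma)$--invariant line in $(\RP^2)^*$; since $\xi^{(2)}(\gamma_-)=\langle v_2,v_3\rangle$ does not contain $v_1$, we are done, again using $z\neq\gamma_+$. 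The remaining two conditions are symmetric and use $z\neq\gamma_-$: for $\xi^{(1)}(z)\not\subset\langle v_2,v_3\rangle$ use $Z=\mathbb{P}(\langle v_2,v_3\rangle)$ and the fact that $\xi^{(1)}(\gamma_+)=[v_1]\notin Z$; for $v_3\notin\xi^{(2)}(z)$ use the pencil of $2$--planes through $v_3$ and the fact that $\xi^{(2)}(\gamma_+)=\langle v_1,v_2\rangle$ avoids $v_3$.

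I do not expect a serious obstacle here: once \autoref{prop:limit_curve_in_flags} is in hand, the proof is purely a matter of combining equivariance, continuity, and the elementary dynamics of a biproximal element on invariant points and lines, and it is uniform in whether $\rho$ is reducible (where $\xi$ is anyway the Anosov boundary map). The one point that genuinely must be watched is that all four eigenflag values $\xi^{(1)}(\gamma_\pm),\xi^{(2)}(\gamma_\pm)$ are needed, which is exactly why the hypothesis $t_\rho\notin\{\tcrit^{-1},\tcrit\}$ is used: only there does \autoref{lem:coxeter_eigenvalues} make $\rho(\gamma)$ biproximal so that \autoref{prop:limit_curve_in_flags} pins down both the attracting and the repelling flag, while at $t_\rho\in\{\tcrit^{-1},\tcrit\}$ these two flags fail to be transverse to each other and the statement of the lemma breaks down.
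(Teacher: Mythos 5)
Your proof is correct and takes essentially the same approach as the paper: both rely on Proposition~\ref{prop:limit_curve_in_flags} to identify $\xi(\gamma_\pm)$ with the attracting/repelling flags of $\rho(\gamma)$, and then derive a contradiction from equivariance, continuity of $\xi$, the north--south dynamics of $\gamma$ on $S^1$, and $\rho(\gamma)$--invariance of the relevant eigenspaces. The paper just writes out one of the four transversality conditions and dispatches the rest with ``analogously'' and ``apply to $\gamma^{-1}$,'' where you formulate a small abstract observation and then apply it four times; the content is identical.
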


\begin{Prf}
  If $\xi^{(1)}(z) \in \xi^{(2)}(\gamma_+)$, then we would have, by \autoref{prop:limit_curve_in_flags},
  \[\rho(\gamma)_- = \xi^{(1)}(\gamma_-) = \lim_{n\to\infty} \xi^{(1)}(\gamma^{-n}z) = \lim_{n\to\infty} \rho(\gamma)^{-n}\xi^{(1)}(z) \in \xi^{(2)}(\gamma_+) = \rho(\gamma)^+.\]
  This is clearly false if $t_\rho > \tcrit$ and $\rho(\gamma)$ has distinct real eigenvalues.
  In the case $t_\rho = \tcrit$ $\rho(\gamma)$ is non--diagonalizable with eigenvalues of the form $\lambda,\lambda,\lambda^{-2}$ for $\lambda < -1$, according to \autoref{lem:coxeter_eigenvalues}.
  This implies that $\rho(\gamma)_+ \in \rho(\gamma)^-$, but nevertheless $\rho(\gamma)_- \not\in \rho(\gamma)^+$, so we still get $\xi^{(1)}(z) \not\in \xi^{(2)}(\gamma_+)$.

  An analogous proof for $\xi^{(1)}(z) \not\in \xi^{(2)}(\gamma_-)$ would require that $\rho(\gamma)_+ \not\in \rho(\gamma)^-$, so we use a different argument:
  assume that $\xi^{(1)}(z) \in \xi^{(2)}(\gamma_-)$.
  Recall that $\gamma$ acts on the hyperbolic plane as a glide reflection with attracting fixed point $\gamma_+$ (see \autoref{sec:hyperbolic}), which is one of the end points of the interval $I$ (defined in \autoref{sec:definition_of_I_T_box}).
  So by applying $\gamma$ often enough, we can assume that $z \in I$.
  Then $\xi^{(1)}(z) \in \xi^{(1)}(I) \subset \bx$ by \autoref{prop:limit_map_exists} and the construction of $\xi$, but we know from \autoref{lem:negative_coxeter_box} that $\bx$ and $\xi^{(2)}(\gamma_-) = \rho(\gamma)^-$ can only intersect in $\rho(\gamma)_+$.
  So $\xi^{(1)}(z) = \rho(\gamma)_+ = \xi^{(1)}(\gamma_+)$. But this contradicts $\xi^{(1)}(z) \not\in \xi^{(2)}(\gamma_+)$ from the first part of the lemma.
  The remaining statements $\xi^{(1)}(\gamma_\pm) \not\in \xi^{(2)}(z)$ follow by applying both parts to the dual limit curve, as constructed in \autoref{sec:duality}.
\end{Prf}

Recall from \autoref{sec:nested_boxes} the definition of the points $z_0,\dots,z_{2p_1-1} \in S^1$ such that
\[z_0 = (s_1s_2s_3)_+, \quad z_3 = (s_2s_3s_1)_+, \quad s_2s_1 z_i = z_{i+2}\]
and the corresponding points $w_i,u_i \in \RP^2$, and lines $\ell_i \subset \RP^2$.
It follows from \autoref{prop:limit_curve_in_flags} that $\xi(z_i) = (w_i,\ell_i)$ for all $i$.
Also, recall that there is a unique conic $C$ which passes through all the points $w_i$ and $u_i$.
We write $[z_i,z_j]$ for the closed interval in $S^1$ containing the points $z_i,z_{i+1},\dots,z_j$.
As before, there are also ``primed'' and ``double--primed'' versions of these points, obtained by cyclically permuting $s_1,s_2,s_3$ in the definition.

\begin{Lem}\label{lem:intersections_on_intervals}
  Assume $t_\rho \geq \tcrit$.
  Let $k \in \{1,\dots,2p_3\}$ and $z \in [z_k,z_{k+1}]$.
  If $k$ is odd, the intersection points $\xi^{(2)}(z) \cap \xi^{(2)}(z_{k-1})$ and $\xi^{(2)}(z) \cap \xi^{(2)}(z_{k+2})$ are contained in the closed disk bounded by the conic $C$ (i.e. not in the Möbius strip $M$).
  If $k$ is even, the same is true for the intersection points $\xi^{(2)}(z) \cap \xi^{(2)}(z_{k-2})$ and $\xi^{(2)}(z) \cap \xi^{(2)}(z_{k+3})$.
\end{Lem}

\begin{Prf}
  Which side of $C$ the intersection point of two lines $\ell, \ell'$ lies on can be read off the cyclic order along $C$ of their intersection points with $C$.
  Let $P(C)$ be the space of unordered pairs in $C$.
  Let $[z_k,z_{\mathrm{right}}] \subset [z_k,z_{k+1}]$ be the largest interval so that $\xi^{(2)}(z)$ intersects $C$ for all $z \in [z_k,z_{\mathrm{right}}]$ (it will turn out that $\xi^{(2)}(z)$ intersects $C$ for all $z$, i.e. $z_{\mathrm{right}} = z_{k+1}$).  
  Define a map $f \colon [z_k,z_{\mathrm{right}}] \to P(C)$ by mapping $z \in S^1$ to the two intersection points of the line $\xi^{(2)}(z)$ with $C$.
  Then $f(z_k) = \ell_k \cap C = \{w_k,u_k\}$.
  Let $\widetilde f \colon [z_k,z_{\mathrm{right}}] \to C^2$ be the lift of $f$ to ordered pairs such that $\widetilde f(z_k) = (w_k,u_k)$.

  By continuity either $z_{\mathrm{right}} = z_{k+1}$ or $\widetilde f(z_{\mathrm{right}}) = (w,w)$ for some $w \in C$.
  In the latter case, the first component $\widetilde f_1$ would be a continuous arc connecting $w_k$ and $w$ through $C$, and $\widetilde f_2$ would be an arc connecting $u_k$ and $w$.
  By \autoref{lem:coxeter_transverse} we have $\widetilde f_1(z) \neq w_i$ and $\widetilde f_2(z) \neq w_i$ for any $i$ and any $z$ in the open interval $(z_k, z_{k+1})$.
  So together, $\widetilde f_1$ and $\widetilde f_2$ give a continuous arc from $w_k$ to $u_k$ avoiding all other $w_i$.
  This is impossible by \autoref{lem:order}.
  So $z_{\mathrm{right}} = z_{k+1}$.

  Assume $k$ is odd.
  \Autoref{lem:order} also shows that the eight points
  \[u_{k-1},w_k,w_{k+1},u_{k+2},w_{k-1},u_k,u_{k+1},w_{k+2}\]
  are in this cyclic order along $C$ (with $u_k$ and $u_{k+1}$ possibly switched).
  So $\widetilde f_1$ is a path from $w_k$ to $w_{k+1}$ avoiding all other $w_i$, and $\widetilde f_2$ is a path from $u_k$ to $u_{k+1}$ avoiding all $w_i$.
  Hence we get the cyclic order
  \[u_{k-1},w_k,\widetilde f_1(z),w_{k+1},u_{k+2},w_{k-1},\widetilde f_2(z),w_{k+2}\]
  for all $z$ in the open interval $(z_k,z_{k+1})$.
  Looking at the quadruples $u_{k-1},\widetilde f_1(z),w_{k-1},\widetilde f_2(z)$ and $\widetilde f_1(z),u_{k+2},\widetilde f_2(z),w_{k+2}$, we see that $\xi^{(2)}(z)\cap \ell_{k-1}$ and $\xi^{(2)}(z) \cap \ell_{k+2}$ lie on the inside of $C$.

  Now assume $k$ is even and consider the eight points
  \[w_{k-2},u_k,w_{k+1},u_{k+3},u_{k-2},w_k,u_{k+1},w_{k+3}\]
  which are in this cyclic order (with $u_{k-2}$ and $u_{k+3}$ possibly switched if $p_3 = 3$).
  This time $\widetilde f_1$ is a path from $w_k$ to $u_{k+1}$ and $\widetilde f_2$ goes from $u_k$ to $w_{k+1}$, so we get the cyclic order
  \[w_{k-2},\widetilde f_2(z),w_{k+1},u_{k+3},u_{k-2},w_k,\widetilde f_1(z),w_{k+3}.\]
  As above, this shows $\xi^{(2)}(z) \cap \ell_{k-2}$ and $\xi^{(2)}(z) \cap \ell_{k+3}$ lie on the inside of $C$.
\end{Prf}

  \begin{center}
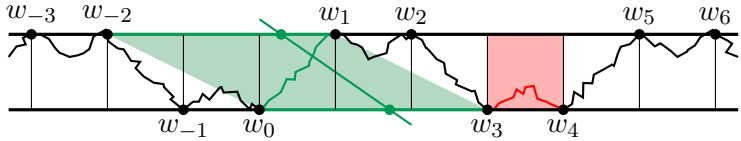

    \begin{tikzpicture}
      \begin{scope}
        \fill[red!30] (3,0) -- (3,1) -- (4,1) -- (4,0) -- cycle;
        \fill[ForestGreen!30] (0,0) -- (-2,1) -- (1,1) -- (3,0) -- cycle;

        \draw[thick,black,fuzzy] (-3.3,0.7) -- (-3,1) -- (-2.5,0.7) -- (-2,1) -- (-1,0) -- (-0.5,0.3) -- (0,0);
        \draw[thick,ForestGreen,fuzzy] (0,0) -- (1,1);
        \draw[thick,black,fuzzy] (1,1) -- (1.5,0.7) -- (2,1) -- (3,0);
        \draw[thick,red,fuzzy] (3,0) -- (3.5,0.3) -- (4,0);
        \draw[thick,black,fuzzy] (4,0) -- (5,1) -- (5.5,0.7) -- (6,1) -- (6.3, 0.7);
        \draw[thick,ForestGreen] (2,-0.2) -- (0,1.2);

        \draw[very thick,black] (-3.3,0) -- (0,0);
        \draw[very thick,black] (3,0) -- (6.3,0);
        \draw[very thick,black] (-3.3,1) -- (-2,1);
        \draw[very thick,black] (1,1) -- (6.3,1);
        \draw[ForestGreen,very thick] (0,0) -- (3,0);
        \draw[ForestGreen,very thick] (-2,1) -- (1,1);
        \foreach \i in {-1,0,3,4} \fill (\i,0) circle (0.07);
        \foreach \i in {-1,0,3,4} \node[anchor=north] at (\i,0) {$w_{\i}$};
        \foreach \i in {-3,-2,1,2,5,6} \fill (\i,1) circle (0.07);
        \foreach \i in {-3,-2,1,2,5,6} \node[anchor=south] at (\i,1) {$w_{\i}$};
        \foreach \i in {-3,-2,-1,0,1,2,3,4,5,6} \draw (\i,0) -- (\i,1);

        \fill[ForestGreen] (1.714,0) circle (0.07);     
        \fill[ForestGreen] (0.285,1) circle (0.07);     
      \end{scope}
    \end{tikzpicture}
    \captionof{figure}{The proof of \autoref{lem:intersections_on_intervals} and \autoref{lem:transversality_on_intervals} in the case $k = 0$ and $j = 3$. The line $\xi^{(2)}(z')$ (in green) can only intersect points in $M$ between $\ell_{-2}$ and $\ell_3$, but $\xi^{(1)}(z)$ is in the red region between $\ell_3$ and $\ell_4$.}
    \label{fig:transversality_proof}
  \end{center}

\begin{Lem}\label{lem:transversality_on_intervals}
  Assume $t_\rho \geq \tcrit$.
  If $z \in [z_j,z_{j+1}]$ and $z' \in [z_k,z_{k+1}]$ such that $\xi^{(1)}(z) \in \xi^{(2)}(z')$, then either $|j-k| \leq 1$ or $j$ and $k$ are even and $|j-k| = 2$.
\end{Lem}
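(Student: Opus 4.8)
The plan is to localize both halves $\xi^{(1)}$ and $\xi^{(2)}$ of the boundary map on the elementary arcs $[z_j,z_{j+1}]$ --- the first directly from the box inclusions of \autoref{sec:nested_boxes}, the second by applying those inclusions to the dual representation --- and then to rule out an incidence $\xi^{(1)}(z)\in\xi^{(2)}(z')$ between distant arcs by a combinatorial argument on the conic $C$. For $\xi^{(1)}$: we have $\xi^{(1)}(I'')=\xi^{(1)}([z_0,z_2])\subset\bxpp$ (the double--primed instance of $\xi^{(1)}(I)\subset\bx$), and \autoref{lem:box_in_ch} says $\bxpp\subset\CH{\ell_n}{w_0,w_2,u_0,u_2}$, hence $\bxpp\cap\ell_n=\varnothing$, for every $n\notin\{-1,0,1,2\}$. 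Pushing this forward by the elements of $\rho(\Gamma)$ that appear in the box decomposition $T=QQ''Q'$ (and in its primed and double--primed versions), and iterating one further level where needed, produces for every $j$ an index window $W_j$ --- equal to $\{j,j+1\}$ when $j$ is odd and contained in $\{j-1,j,j+1,j+2\}$ in general --- together with a region $R_j$, a convex hull of four of the points $w_i,u_i$ on $C$ (independent of the affine chart by \autoref{lem:ch_chart_change}), such that $\xi^{(1)}([z_j,z_{j+1}])\subset R_j$ and $R_j\cap\ell_n=\varnothing$ for all $n\notin W_j$.

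For $\xi^{(2)}$ I would dualize this. By \autoref{prop:limit_curve_in_flags}, $\xi^{(2)}=D\circ\xi'$, where $D$ is the standard duality and $\xi'$ is the $\rho^{-T}$--equivariant boundary map of \autoref{lem:parameter_less_than_1}; and $\xi'$ is produced by the very box construction of \autoref{sec:nested_boxes} applied to $\rho^{-T}$, after conjugating by the isomorphism $\psi$ of \autoref{lem:parameter_less_than_1} to move the parameter back above $\tcrit$. Dualizing the previous step then shows: every line $\ell\in\xi^{(2)}([z_k,z_{k+1}])$ passes through a point of $R_k$ (namely $\xi^{(1)}(z')$), lies in the family of lines interpolating between $\ell_k$ and $\ell_{k+1}$ inside the dual box, and misses the point $w_n$ for all $n\notin W_k'$, where $W_k'$ is a window of the same shape attached to $k$.

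Now suppose $\xi^{(1)}(z)\in\xi^{(2)}(z')$ with $z\in[z_j,z_{j+1}]$, $z'\in[z_k,z_{k+1}]$, and put $p=\xi^{(1)}(z)$, $\ell=\xi^{(2)}(z')$. Then $\ell$ meets $R_j$ (at $p$) and $R_k$ (at $\xi^{(1)}(z')$); each $R_i$ consists of two arcs of $C$ joined along two chords, and \autoref{lem:order} --- with the fact proved there that $\ell_i$ and $\ell_{i'}$ cross in the Möbius strip $M$ only when $\{i,i'\}=\{2t-1,2t\}$ --- pins down exactly which arcs and chords bound each $R_i$. A line meeting two such regions is a chord of $C$ joining their arcs; checking, against the cyclic order of the points $w_i,u_i$ along $C$, whether such a chord can simultaneously avoid every $\ell_n$ with $n\notin W_j$ and contain no $w_n$ with $n\notin W_k'$ eliminates all cases except $|j-k|\le1$ and ``$j,k$ both even with $|j-k|=2$''. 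The complementary incidence $\xi^{(2)}(z)\ni\xi^{(1)}(z')$ is handled identically with $j,k$ and $\rho,\rho^{-T}$ interchanged.

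The genuine work, and the main obstacle, is the bookkeeping in these three steps: sharpening the localization of $\xi^{(1)}$ on a single arc from the crude four--element window down to $\{j,j+1\}$ (which requires controlling where the deeper sub--boxes of \autoref{sec:nested_boxes} lie inside $\bx$); carrying this through $\psi$ and $D$ without losing track of the labels $z_i,w_i,\ell_i,u_i$; and the final case analysis, where the naive ``the windows overlap'' criterion is too permissive --- it would wrongly allow, e.g., $j$ odd with $k=j+2$, or $j$ even with $k=j+3$ --- so one must use the finer arc--and--chord combinatorics on $C$ to discard precisely those near--boundary configurations.
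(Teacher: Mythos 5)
Your approach is genuinely different from the paper's and contains a real gap. The paper makes no use of the box construction from \autoref{sec:nested_boxes} to prove this lemma. Instead it defines a map $f\colon S^1\to P(C)$ sending $x$ to the unordered pair $\xi^{(2)}(x)\cap C$, observes from \autoref{lem:coxeter_transverse} that $w_i\in f(x)$ if and only if $x=z_i$, lifts $f$ on $[z_k,z_{k+1}]$ to $\widetilde f\colon[z_k,z_{k+1}]\to C^2$ with $\widetilde f(z_k)=(w_k,u_k)$, and reads off from \autoref{lem:order} where the two components of $\widetilde f$ can travel: for $k$ odd they land in $[w_k,w_{k+1}]\times(w_{k-1},w_{k+2})$, for $k$ even in $[w_k,w_{k+3})\times(w_{k-2},w_{k+1}]$. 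That bounds where $\xi^{(2)}(z')$ can cut $C$, hence which lines $\ell_i$ it can cross inside $M$; combined with the fact (again from \autoref{lem:coxeter_transverse}) that $\xi^{(1)}$ crosses $\ell_i$ only at $z_i$, this pins $z$ into $[z_{k-1},z_{k+2}]$ (resp.\ $[z_{k-2},z_{k+3}]$), with the parity condition when $|j-k|=2$ falling out automatically because $k$ is even in that branch. In short, the only localization of $\xi^{(1)}$ the paper needs is ``$\xi^{(1)}(x)\in\ell_i\Leftrightarrow x=z_i$,'' and the only localization of $\xi^{(2)}$ comes from the monodromy-free lift of $f$ --- no boxes, no dualization of boxes.

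The gap in your plan is the window claim: that iterating the decomposition $T=QQ''Q'$ yields regions $R_j\supset\xi^{(1)}([z_j,z_{j+1}])$ with $R_j\cap\ell_n=\varnothing$ for $n\notin W_j$ and, when $j$ is odd, $W_j=\{j,j+1\}$. This is asserted, not proved, and the machinery of \autoref{sec:nested_boxes} does not deliver it in any straightforward way. The first level $I=\bigcup_{\gamma\in Q}\gamma I''$ produces subintervals such as $[z_{-2j},z_{2-2j}]$ of $z$-length two that meet only at endpoints, so a single elementary arc $[z_j,z_{j+1}]$ is contained in exactly one of them and the window you extract is the four-element one, not $\{j,j+1\}$. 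Passing to a deeper level via $I''=\bigcup_{\gamma''\in Q''}\gamma'' I'$ introduces the primed points $z_m'$, which do not coincide with the $z_m$ (see \autoref{fig:conic_mappings}: only a few of them agree), so the nested sub-boxes no longer align with the grid $\{z_i\}$, and the claimed refinement of $W_j$ for odd $j$ would be a new, delicate computation rather than a routine push-forward. A second, smaller gap: your reduction to ``a chord of $C$ joining their arcs'' presumes $\xi^{(2)}(z')$ meets $C$ in two points, which the paper explicitly notes is only known a posteriori.
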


\begin{Prf}
  We prove this by contradiction.
  Assuming the conclusion is false, we have one of three cases: either $|j-k| > 1$ and $k$ is odd, or $|j-k| > 1$ and $j$ is odd, or $|j-k| > 2$ and $j,k$ are both even.
  
  First assume $|j-k| > 1$ and $k$ is odd.
  The lines $\ell_{k-1}$ and $\ell_{k+2}$ don't intersect in $M$, so $\overline M \setminus (\ell_{k-1} \cup \ell_{k+2})$ has two connected components.
  The two components of $S^1 \setminus \{z_{k-1},z_{k+2}\}$ are mapped into these two different components by $\xi^{(1)}$.
  So in particular $\xi^{(1)}(z)$ and $\xi^{(1)}(z')$ are in opposite components of $\overline M \setminus (\ell_{k-1} \cup \ell_{k+2})$.
  If $\xi^{(1)}(z) \in \xi^{(2)}(z')$ then $\xi^{(2)}(z')$ is the line through $\xi^{(1)}(z)$ and $\xi^{(1)}(z')$.
  As $\overline M \cap \xi^{(2)}(z')$ is connected, this means that $\xi^{(2)}(z')$ intersects either $\ell_{k-1}$ or $\ell_{k+2}$ within $\overline M$.
  But this contradicts \autoref{lem:intersections_on_intervals}.

  If instead $|j-k| > 1$ and $j$ is odd, we repeat the same argument with $j$ in place of $k$.
  
  If $|j-k| > 2$ and $j,k$ are even we also use the same argument, but with $\ell_{k-2}$ and $\ell_{k+3}$ instead of $\ell_{k-1}$ and $\ell_{k+2}$.
\end{Prf}

  Recall that $I = [z_3,z_0] = [(s_2s_3s_1)_+,(s_1s_2s_3)_+]$, and let $J = [(s_2s_3s_1)_-,(s_1s_2s_3)_-]$, and $K = [z_1,z_2]$.
    We can see in \autoref{fig:hyperbolic_intervals} that $K \subset J$.

  \begin{Lem}\label{lem:transversality_Gamma_orbit}
    Let $A \subset S^1 \times S^1$ be a $\Gamma$--invariant subset which contains $I \times K$, $I' \times K'$, $I'' \times K''$, as well as $K \times I$, $K' \times I'$, and $K'' \times I''$. Then $A$ contains every pair $(x,y) \in S^1 \times S^1$ of distinct points which are not the two fixed points of a conjugate of $s_1s_2s_3$.
  \end{Lem}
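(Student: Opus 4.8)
The plan is to show constructively that every pair of distinct points $(x,y)$ which are not the two fixed points of a conjugate of $s_1s_2s_3$ has a $\Gamma$--translate lying in one of the six rectangles; since $A$ is $\Gamma$--invariant and contains all six, this proves the lemma. It is convenient to picture $(x,y)$ as the endpoints of an oriented geodesic of $\bH^2$: the assertion then reads that the $\Gamma$--orbit of the six ``boxes of geodesics'' exhausts all geodesics except the axes of conjugates of the Coxeter element. The three pairs $(I,K)$, $(I',K')$, $(I'',K'')$ enter symmetrically, via the refinement relations $\bigcup_{\gamma\in Q}\gamma I''=I$, $\bigcup_{\gamma'\in Q'}\gamma' I=I'$, $\bigcup_{\gamma''\in Q''}\gamma'' I'=I''$ and the inclusions $K\subset I''$, $K'\subset I$, $K''\subset I'$, while the last three rectangles are the coordinate flips; so it suffices to say how to reach $I\times K$, any of the six being acceptable.

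First I would code the first coordinate. Since the $\Gamma$--translates of $I$ cover $S^1$ (compare the setup in \autoref{sec:limit_curve}), translate so that $x\in I$ and then subdivide using $\bigcup_{t\in T}tI=I$, obtaining a nested sequence $I=J_0\supset J_1\supset\cdots\ni x$ where $J_{n+1}$ is gotten from $J_n$ by an element of $Q$, $Q''$ or $Q'$ (cyclically) and $J_n$ is a translate of $I$, $I''$ or $I'$ accordingly. The defining group elements form a quasigeodesic ray to $x$ by \autoref{lem:quasigeodesic}, hence $\diam(J_n)\to 0$ and $\bigcap_n J_n=\{x\}$ --- the $\overline{\bH^2}$--shadow of the argument in \autoref{lem:convergent_boxes}. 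As $y\ne x$, some $J_m$ excludes $y$; applying the last subdivision element and relabelling the level, we reach $x\in I''$ and $y\in S^1\setminus(I'')^\circ=[z_0'',z_1'']\cup K''\cup I$ (a short sub--arc of $I'$, then $K''$, then the whole interval $I$). If $y\in K''$, we are done. If $y$ lies in the last piece $I$, swap the roles of $x$ and $y$ and repeat: since $I''\cap I=\{z_0\}$, this round terminates at once with $x$ back in $I''=[z_0,z_1]\cup K$, and if $x\in K$ we are again done.

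What remains is that $y$ or $x$ lies in a short ``bad'' sub--arc (the analogue of $[z_0,z_1]\subset I''$), or that $x$ is a $\Gamma$--translate of a Coxeter attracting fixed point (the one situation not reached by the coding). In the first case I would clear it with a bounded number of moves: powers of the hyperbolic element $\bar t=(s_1s_2s_3)^2$, whose axis runs from $(s_1s_2s_3)_-\in(I')^\circ$ to $z_0\in\partial I$, slide a point of $[z_0,z_1]$ across $K$, and for near--boundary pairs one first applies $s_1$ (using $s_1z_i=z_{3-i}$, so that $s_1I=I$, $s_1K=K$, $s_1[z_0,z_1]=I'$) to move the pair back into the interior of a box. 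In the second case, by conjugacy all such $x$ lie in one orbit; translate to $x=z_0$, note $y\ne(s_1s_2s_3)_-$ by hypothesis, and slide $y$ by powers of $\bar t^{-1}$ into a rectangle, the only exception being the excluded pair. The main obstacle is exactly this final bookkeeping: one must check, bounce by bounce, which of the three sub--arcs of $S^1\setminus(I^{(\cdot)})^\circ$ the non--primary coordinate can occupy --- the combinatorics of the $z_i$ from \autoref{sec:hyperbolic} together with the inclusions above pin this down --- and conclude that the only pair escaping all six rectangles forever is a pair of fixed points of a conjugate of $s_1s_2s_3$. Equivalently, one is verifying that the six larger boxes $I^{(\cdot)}\times J^{(\cdot)}$ and their flips tile the space of geodesics of $\bH^2$ with the Coxeter axes (the red curves of \autoref{fig:coxeterline}) as walls, and that inside each box the $\Gamma$--translates of the smaller rectangle $I^{(\cdot)}\times K^{(\cdot)}$ still suffice.
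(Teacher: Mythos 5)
Your strategy is genuinely different from the paper's, and it leaves a real gap exactly where you flag ``the main obstacle.'' The paper does not code: it uses the geometric fact (recorded in \autoref{sec:hyperbolic}) that every geodesic in $\bH^2$ crosses a $\Gamma$--translate of the altitude triangle $A$, which immediately places a translate of $(x,y)$ or $(y,x)$ into one of $I\times J$, $I'\times J'$, $I''\times J''$ with the \emph{big} interval $J=[(s_2s_3s_1)_-,(s_1s_2s_3)_-]$ (and into the open rectangle if the geodesic is not a Coxeter axis). It then explicitly decomposes $J^\circ$ as a nested union of $\Gamma$--translates of $K,K',K''$ using the auxiliary element $\eta=(s_1s_3s_2)^2$: the identities $[\eta z_2,z_2]=s_1s_3s_2s_1K''\cup s_1s_3K'\cup K$ together with $I\subset s_1s_3I'\subset s_1s_3s_2s_1I''\subset\eta I$ give $I\times[\eta z_2,z_2]\subset A$, and iterating with $\eta^k$ and then reflecting with $s_1$ exhausts $I\times J^\circ$. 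Each step is a short exact calculation, and termination is built in because the union of intervals is $(\eta_+,s_1\eta_+)=J^\circ$.

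By contrast, your plan codes the $x$--coordinate down to a small interval, kicks $y$ out, and then bounces. The problematic branch — $y$ landing in the ``bad'' sub--arc $[z_0'',z_1'']$ (or its mirror $[z_0,z_1]$ after a swap) — is precisely where the work is, and you do not complete it. Two concrete issues. First, the claim that a bounded number of moves by $\bar t=(s_1s_2s_3)^2$ or by $s_1$ clears the bad arc is not justified: $s_1[z_0,z_1]=I'$ sends the bad arc to a full--size interval, so the pair can keep bouncing between configurations of the same type, and you would need an explicit monovariant (like the paper's nested $\eta^k$--intervals) to rule out an infinite loop. Second, the final sentence asserting that ``the only pair escaping all six rectangles forever is a pair of fixed points of a conjugate of $s_1s_2s_3$'' is exactly the statement of the lemma; without the bookkeeping carried out, the proposal is circular at that point. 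The paper's reduction to $I\times J$ via the altitude--triangle tiling sidesteps all of this, which is why it is stated in \autoref{sec:hyperbolic} — if you want to keep your coding approach, you would at minimum need to replace the hand--waving with an explicit nested decomposition of the bad arcs analogous to the paper's use of $\eta$.
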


  \begin{Prf}
    A pair $(x,y)$ of distinct points in $S^1$ defines an oriented geodesic $xy$ in the hyperbolic plane.
    As we noted in \autoref{sec:hyperbolic}, it intersects one of the ``altitude triangles'' bounded by Coxeter axes  (axes of elements conjugate to $s_1s_2s_3$).
    Hence some $\Gamma$--translate of either $(x,y)$ or $(y,x)$ is contained in $I \times J$, or $I' \times J'$, or $I'' \times J''$; see \autoref{fig:hyperbolic_intervals}.
    Due to the symmetry of the assumptions of the lemma, we can assume $(x,y) \in I \times J$, and in fact $(x,y) \in I^\circ \times J^\circ$ if $xy$ is not a Coxeter axis.

    It remains to show $I^\circ \times J^\circ \subset A$.
    To do this, we will decompose $J^\circ$ into a union of translates of $K$, $K'$ and $K''$.
    Let $\eta = (s_1s_3s_2)^2$, and note using \autoref{fig:conic_mappings} that $s_1s_3z_2' = z_1$ and analogously $s_2s_1z_2'' = z_1'$.
    So the intervals $K$ and $s_1s_3K'$ share an endpoint, as do $K'$ and $s_2s_1K''$.
    Therefore,
    \[[\eta z_2, z_2] = s_1s_3s_2s_1K'' \cup s_1s_3K' \cup K\]
    Now $I' = [z_2,z_3]  \subset [z_2,z_4] = s_2s_1[z_0,z_2] = s_2s_1I''$, so
    \[I \subset s_1s_3 I' \subset s_1s_3s_2s_1I'' \subset \eta I.\]
    So not only $I \times K \subset A$ but also $I \times s_1s_3K' \subset s_1s_3(I' \times K') \subset A$ and $I \times s_1s_3s_2s_1K'' \subset s_1s_3s_2s_1(I'' \times K'') \subset A$. Together, this gives $I \times [\eta z_2,z_2] \subset A$.
    Similarly, we find that $I \times [\eta^{k+1}z_2,\eta^kz_2] \subset \eta^k(I \times [\eta z_2,z_2]) \subset A$, for all $k \in \bN$.

    Let $L = (\eta_+,z_2]$ be the union of the sequence $[\eta^{k+1}z_2,\eta^kz_2]$ of adjacent intervals.
    Then $L \cup s_1L = (\eta_+,s_1\eta_+) = J^\circ$, so using $s_1I = I$ we obtain $I \times J^\circ \subset A$.
  \end{Prf}

  \begin{Thm}\label{thm:barbot_transverse}
    Let $\rho \colon \Gamma \to \SL(3,\bR)$ be a representation of type $(\frac{p_1-1}{2}, \frac{p_2-1}{2}, \frac{p_3-1}{2})$ with parameter $t_\rho \in (0,\tcrit^{-1}] \cup [\tcrit,\infty)$, and $\xi \colon S^1 \to \F$ the $\rho$--equivariant continuous boundary map from \autoref{prop:limit_curve_in_flags}.

      Then $\xi^{(1)}$ and $\xi^{(2)}$ are injective.
      If $t_\rho \not\in \{\tcrit^{-1},\tcrit\}$, then $\xi(x)$ and $\xi(y)$ are moreover transverse for every distinct pair $x,y \in S^1$, so $\rho$ is an Anosov representation.
  \end{Thm}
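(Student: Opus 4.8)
The plan is to reduce everything to a line--irreducible representation with $t_\rho \ge \tcrit$, and then to pin down the ``bad set''
\[B = \{(x,y) \in S^1 \times S^1 : \xi^{(1)}(x) \in \xi^{(2)}(y)\}\]
by combining the interval estimate \autoref{lem:transversality_on_intervals} with the combinatorial covering \autoref{lem:transversality_Gamma_orbit}. First, if $\rho$ is reducible then $t_\rho \in \{\tred,\tred^{-1}\}$ by \autoref{lem:reducible_identify}, so $\rho$ is conjugate to $\rho_{\mathrm{red}}$ or $\rho_{\mathrm{red}}'$; these are Anosov (\autoref{sec:Anosov_representations}), and by uniqueness of Anosov boundary maps the $\xi$ of \autoref{prop:limit_curve_in_flags} is the flag map induced by $j\colon\SL^\pm(2,\bR)\to\SL(3,\bR)$, which is injective and transverse by inspection. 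So we may assume $\rho$ irreducible. If $t_\rho \in (0,\tcrit^{-1}]$, the dualities used in the proofs of \autoref{lem:parameter_less_than_1} and \autoref{prop:limit_curve_in_flags} -- the boundary homeomorphism $\partial\psi$ of the swap isomorphism $\psi$ and the duality $D$ on the dual component -- reduce all assertions to those for a representation with parameter $\ge\tcrit$, since $\partial\psi$ and $D$ are bijective and preserve injectivity and transversality. Hence it suffices to treat $t_\rho \ge \tcrit$.

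Assume now $t_\rho > \tcrit$. The set $B$ is closed by continuity of $\xi$, is $\Gamma$--invariant by equivariance, and contains the diagonal because $\xi^{(1)}(x)\subset\xi^{(2)}(x)$. For $(x,y)\in I\times K = [z_3,z_0]\times[z_1,z_2]$ we have $x\in[z_j,z_{j+1}]$ with $j\in\{3,\dots,2p_3-1\}$ and $y\in[z_1,z_2]$, so in \autoref{lem:transversality_on_intervals} the index $k=1$ is odd and $|j-k|\ge2$; hence $(x,y)\notin B$. Applying the lemma instead to the pair $(y,x)$ gives $K\times I\cap B=\varnothing$, and the primed and double--primed versions dispose of $I'\times K'$, $K'\times I'$, $I''\times K''$, $K''\times I''$ in the same way. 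Therefore $A\coloneqq(S^1\times S^1)\setminus B$ satisfies the hypotheses of \autoref{lem:transversality_Gamma_orbit}, so $A$ contains every pair of distinct points except possibly the pairs of fixed points of conjugates of $\gamma=s_1s_2s_3$; equivalently, $B$ is contained in the union of the diagonal and the $\Gamma$--orbit of $\{(\gamma_+,\gamma_-),(\gamma_-,\gamma_+)\}$.

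If in addition $t_\rho \notin\{\tcrit,\tcrit^{-1}\}$, then $\rho(\gamma)$ has distinct real eigenvalues by \autoref{lem:coxeter_eigenvalues}, so $\rho(\gamma)_+\notin\rho(\gamma)^-$; using $\xi^{(1)}(\gamma_+)=\rho(\gamma)_+$ and $\xi^{(2)}(\gamma_-)=\rho(\gamma)^-$ from \autoref{prop:limit_curve_in_flags} we conclude that no $\Gamma$--translate of $(\gamma_+,\gamma_-)$ lies in $B$. Thus $B$ is exactly the diagonal: for all $x\ne y$ we have $\xi^{(1)}(x)\notin\xi^{(2)}(y)$, and by the statement applied to $(y,x)$ also $\xi^{(1)}(y)\notin\xi^{(2)}(x)$, so $\xi$ is transverse. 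In particular $\xi^{(1)}$ and $\xi^{(2)}$ are injective, and since $\rho$ is irreducible, \autoref{fact:irreducible_anosov} shows $\rho$ is Anosov. Combined with the reductions above, this proves both assertions for all $t_\rho\in(0,\tcrit^{-1})\cup(\tcrit,\infty)$.

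It remains to prove that $\xi^{(1)}$ and $\xi^{(2)}$ are injective when $t_\rho\in\{\tcrit,\tcrit^{-1}\}$. Here \autoref{lem:transversality_on_intervals} and \autoref{lem:coxeter_transverse} are no longer available, because $\rho(\gamma)$ is not diagonalizable and its attracting and repelling flags are not transverse; this is the step I expect to be the main obstacle. The plan is to argue directly from the box combinatorics of \autoref{sec:nested_boxes}, which remains valid for all $t_\rho\ge\tcrit$: two distinct points have codes that diverge at a finite stage, so their $\xi^{(1)}$--images land in two boxes $\rho(g)\bx$ and $\rho(h)\bx$ whose intersection, by \autoref{lem:box_inclusion_final} and \autoref{lem:box_nonintersects_l0_iteration}, is confined to a segment of a single line $\ell_i$; one then has to rule out that both images lie on that common segment. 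The input that survives at $t_\rho=\tcrit$ is that $\rho(\gamma)$ is still proximal there (its simple eigenvalue stays dominant), so $\xi^{(1)}(\gamma_+)=\rho(\gamma)_+\ne\rho(\gamma)_-=\xi^{(1)}(\gamma_-)$, and dually for $\xi^{(2)}$ with the two fixed lines. Carrying out this last elimination carefully is the delicate point; everything preceding it is bookkeeping with the structures already built in \autoref{sec:nested_boxes} and \autoref{sec:transversality}.
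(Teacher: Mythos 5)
Your treatment of the cases $t_\rho \in (0,\tcrit^{-1}) \cup (\tcrit,\infty)$ is essentially identical to the paper's: reduce to $\rho$ irreducible with $t_\rho > \tcrit$ via the swap isomorphism, apply \autoref{lem:transversality_on_intervals} on $I\times K$ and its cyclic variants, feed the complement of the bad set into \autoref{lem:transversality_Gamma_orbit}, and separately dispose of the Coxeter fixed-point pairs with \autoref{lem:coxeter_transverse} (equivalently, \autoref{lem:coxeter_eigenvalues} plus \autoref{prop:limit_curve_in_flags}). The bookkeeping with the indices $j,k$ checks out.

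The genuine gap is exactly where you say it is: injectivity at $t_\rho \in \{\tcrit,\tcrit^{-1}\}$. Your sketch (``follow codes of two distinct points until they diverge, and show the resulting boxes overlap only along a line segment'') is not the route the paper takes, and it faces real obstacles: \autoref{lem:box_inclusion_final} guarantees $h_1h_2h_3\bx\subset\bx^\circ$ only when $h_3\neq abcabc$, and controlling the overlap of two boxes along $\ell_0$ for the exceptional tails would require a separate argument that is not a one-liner. The paper sidesteps all of this by reusing \autoref{lem:transversality_Gamma_orbit} a second time: the set $A=\{(x,y): \xi^{(1)}(x)\neq\xi^{(1)}(y)\}$ is $\Gamma$--invariant and symmetric, it contains the Coxeter fixed-point pairs because $\rho(\gamma)$ is still proximal (so $\rho(\gamma)_+\neq\rho(\gamma)_-$ even though the matrix is a single Jordan block on $\rho(\gamma)^-$), and it contains $I\times K$ because the boxes separate: $\xi^{(1)}(I)\subset\bx$ while $K=I''\cap s_1I''$ gives $\xi^{(1)}(K)\subset\bxpp\cap\rho(s_1)\bxpp$, and \autoref{lem:box_in_ch} shows $\bxpp$ misses $\ell_3$ and $\rho(s_1)\bxpp$ misses $\ell_0$, while $\ell_0,\ell_3$ meet $\bx$ only on $\partial\bx$; hence $\xi^{(1)}(I)$ and $\xi^{(1)}(K)$ lie in distinct closed components of $\RP^2\setminus(\ell_0\cup\ell_3)$. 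The cyclic variants follow identically, $\xi^{(2)}$ follows by duality, and \autoref{lem:transversality_Gamma_orbit} finishes the injectivity. You had all the ingredients in hand (the boxes, \autoref{lem:box_in_ch}, the covering lemma); the missing idea is simply that the same $\Gamma$--orbit covering argument used for transversality also works for injectivity, reducing the whole critical case to $\xi^{(1)}(I)\cap\xi^{(1)}(K)=\varnothing$, which is a short box computation rather than a code-divergence argument.
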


  \begin{Prf}
    We can assume that $\rho$ is irreducible, as we already know it is Anosov otherwise (see \autoref{sec:Anosov_representations}, in particular \autoref{fact:anosov_properties}(ii)).
    We can also assume that $t_\rho > 1$, otherwise we consider the representation $\rho \circ \psi$ instead, as in the proof of \autoref{lem:parameter_less_than_1}.

    We first consider the case $t_\rho > \tcrit$.
    If $x \in I = [z_3,z_0]$ and $y \in K = [z_1,z_2]$ then $\xi(x)$ and $\xi(y)$ are transverse by \autoref{lem:transversality_on_intervals}.
    The same is true if $(x,y)$ is in $I' \times K'$ or $I'' \times K''$.
    Transversality is symmetric, so we can apply \autoref{lem:transversality_Gamma_orbit} to extend this to all pairs $(x,y)$ with $x \neq y$, unless $x$ and $y$ are the fixed points of a conjugate $\gamma$ of the Coxeter element.
    But if $x = \gamma_+$ and $y = \gamma_-$, then $\xi(x)$ and $\xi(y)$ are transverse by \autoref{prop:limit_curve_in_flags} since $\gamma$ has distinct real eigenvalues by \autoref{lem:coxeter_eigenvalues}. \autoref{fact:irreducible_anosov} then shows that $\rho$ is Anosov.

    Now assume $t_\rho = \tcrit$.
    We know that $\xi^{(1)}(\gamma_+) \neq \xi^{(1)}(\gamma_-)$ for every conjugate $\gamma$ of the Coxeter element.
    So by \autoref{lem:transversality_Gamma_orbit}, to prove injectivity of $\xi^{(1)}$ it suffices to show $\xi^{(1)}(I)$ does not intersect $\xi^{(1)}(K)$.
    By \autoref{prop:limit_map_exists} $\xi^{(1)}(I) \subset \bx$, and $K = [z_0,z_2] \cap [z_1,z_3] = I'' \cap s_1I''$, so $\xi^{(1)}(K) \subset \rho(s_1)\bxpp \cap \bxpp$.
    But by \autoref{lem:box_in_ch} $\bxpp$ does not intersect $\ell_3$ and $\rho(s_1)\bxpp$ does not intersect $\ell_0$, so $\xi^{(1)}(K)$ is in one component of $\RP^2 \setminus (\ell_0 \cup \ell_3)$.
    Also by \autoref{lem:box_in_ch}, $\ell_0$ and $\ell_3$ intersect $\bx$ only on its boundary, so $\xi^{(1)}(I)$ is in the closure of a component of $\RP^2 \setminus (\ell_0 \cup \ell_3)$.
    These components cannot be the same (see e.g. \autoref{fig:transversality_proof}), hence $\xi^{(1)}$ is injective.
    Using the constructions in \autoref{sec:duality}, the result extends to $\xi^{(2)}$ and to $t_\rho = \tcrit^{-1}$.
\end{Prf}

\begin{Prf}[of \autoref{thm:main}]
  If $\rho \colon \Gamma \to \SL(3,\bR)$ is not a Coxeter representation, it has a finite image, so it cannot be Anosov.
  If $\rho$ is Anosov, it is in the Hitchin or Barbot component by \autoref{prop:non_anosov}, and by \autoref{fact:anosov_properties}(iv) $\rho(\gamma)$ has distinct real eigenvalues for every $\gamma \in \Gamma$ of infinite order, in particular for $\gamma = s_1s_2s_3$.

  Conversely, if $\rho$ is in the Hitchin component it is Anosov by \cite{ChoiGoldman, Labourie}.
  If $\rho$ is in the Barbot component and $\rho(s_1s_2s_3)$ has distinct real eigenvalues, then it has type $(\frac{p_1-1}{2},\frac{p_2-1}{2},\frac{p_3-1}{2})$ and $t_\rho \in (0,\tcrit^{-1}) \cup (\tcrit,\infty)$ by \autoref{lem:coxeter_eigenvalues}.
  So $\rho$ is Anosov by \autoref{thm:barbot_transverse}.
\end{Prf}






\KOMAoption{fontsize}{10pt}
\printbibliography

\vspace{1cm}

\textsc{Gye--Seon Lee} \\
Department of Mathematical Sciences and Research Institute of Mathematics, Seoul National University, Seoul 08826, Republic of Korea \\
\emph{Email:} \texttt{gyeseonlee@snu.ac.kr}

\textsc{Jaejeong Lee} \\
Research Institute of Mathematics, Seoul National University, Seoul 08826, Republic of Korea \\
\emph{Email:} \texttt{jaejeong@snu.ac.kr}

\textsc{Florian Stecker} \\
Department of Mathematics, The University of Texas at Austin, 2515 Speedway, Austin, TX 78712 \\
\emph{Email:} \texttt{math@florianstecker.net}

\end{document}